\newcommand{\arxiv}[1]{\href{http://arxiv.org/abs/#1}{\texttt{arXiv:#1}}}
\newcommand{\nc}{\newcommand}
\numberwithin{equation}{section}
\nc{\hs}{\hspace*}
\nc{\ms}{\mspace}
\nc{\qR}[1]{\ttq_{\mspace{-2mu}\raisebox{-.8ex}{${\scriptstyle{#1}}$}}}
\theoremstyle{plain}
\newtheorem{lemma}{Lemma}[section]
\newtheorem{proposition}[lemma]{Proposition}
\newtheorem{theorem}[lemma]{Theorem}
\newtheorem{algorithm}[lemma]{Algorithm}
\newtheorem{corollary}[lemma]{Corollary}
\newtheorem{conjecture}{Conjecture}
\newtheorem{convention}{Convention}
\newtheorem{Conj}{Conjecture}
\theoremstyle{definition}
\newtheorem{remark}[lemma]{Remark}
\newtheorem{example}[lemma]{Example}
\newtheorem{definition}[lemma]{Definition}
\newcommand{\leN}{ \preceq_{\mspace{-2mu}\raisebox{-.5ex}{\scalebox{.6}{${\rm N}$}}} }
\newcommand{\lN}{ \prec_{\mspace{-2mu}\raisebox{-.5ex}{\scalebox{.6}{${\rm N}$}}} }
\renewcommand{\le}{\leqslant}
\renewcommand{\ge}{\geqslant}
\renewcommand{\preceq}{\preccurlyeq}
\newcommand{\st}{\mathop{\mbox{\normalsize$*$}}\limits}
\newcommand{\hconv}{\mathbin{\scalebox{.9}{$\nabla$}}}
\newcommand{\sconv}{\mathbin{\scalebox{.9}{$\Delta$}}}
\newcommand{\seteq}{\mathbin{:=}}
\newcommand{\conv}{\mathop{\mathbin{\mbox{\large $\circ$}}}}
\newcommand{\soplus}{\mathop{\mbox{\normalsize$\bigoplus$}}\limits}
\newcommand{\ev}{{\operatorname{ev}}}
\newcommand{\lxi}{{  {}^\xi  } \hspace{-.2ex}}
\newcommand{\lxip}{{  {}^{\xi^\prime}  } \hspace{-.2ex}}
\newcommand{\lhii}{{  {}^\hii  } \hspace{-.2ex}}
\newcommand{\gmod}{\text{-}\mathrm{gmod}}
\newcommand{\K}{\sfK}
\newcommand{\ex}{\mathrm{ex}}
\newcommand{\fr}{\mathrm{fr}}
\newcommand{\Kex}{\K_\ex}
\newcommand{\Kfr}{\K_\fr}
\newcommand{\g}{\mathfrak{g}}
\newcommand{\n}{\mathfrak{n}}
\newcommand{\C}{\mathbb{C}}
\newcommand{\Q}{\mathbb{Q}}
\newcommand{\Z}{\mathbb{Z}\ms{1mu}}
\newcommand{\N}{\mathbb{N}\ms{1mu}}
\newcommand{\al}{{\ms{1mu}\alpha}}
\newcommand{\ep}{\epsilon}
\newcommand{\la}{\lambda}
\newcommand{\be}{{\ms{1mu}\beta}}
\newcommand{\ga}{\gamma}
\newcommand{\La}{\Lambda}
\newcommand{\wt}{{\rm wt}}
\newcommand{\fin}{{{\rm fin}}}
\newcommand{\up}{{\rm up}}
\newcommand{\de}{\mathfrak{d}}
\newcommand{\Dynkin}{\triangle}
\newcommand{\im}{\imath}
\newcommand{\ii}{ \textbf{\textit{i}}}
\newcommand{\TT}{ \textbf{\textit{T}}}
\newcommand{\jj}{ \textbf{\textit{j}}}
\newcommand*\reced[1]{ \fontsize{8}{8}\selectfont \tikz[baseline=(char.base)]{
 \node[shape=rectangle,draw,inner sep=1.4pt] (char) {#1};} \fontsize{12}{12}\selectfont }
\newcommand*\circled[1]{ \fontsize{6}{6}\selectfont \tikz[baseline=(char.base)]{
  \node[shape=circle,draw,inner sep=0.4pt] (char) {#1};} \fontsize{12}{12}\selectfont }
\newcounter{myc}
\newcommand{\Hom}{\operatorname{Hom}}
\newcommand{\HOM}{\mathrm{H{\scriptstyle OM}}}
\newcommand{\tSigma}{\widetilde{\Sigma}}
\newcommand{\tUpsigma}{\widetilde{\Upsigma}}
\newcommand{\tka}{\widetilde{\kappa}}
\newcommand{\tscrB}{\widetilde{\scrB}}
\newcommand{\tPsi}{\widetilde{\Psi}}
\newcommand{\tbrho}{\widetilde{\brho}}
\newcommand{\tii}{{\widetilde{\ii}}}
\newcommand{\tx}{\widetilde{x}}
\newcommand{\tB}{\widetilde{B}}
\newcommand{\tbfB}{\widetilde{\bfB}}
\newcommand{\tusfB}{\ms{1mu}\widetilde{\usfB}}
\newcommand{\tusfb}{\ms{1mu}\widetilde{\usfb}}
\newcommand{\tX}{\widetilde{X}}
\newcommand{\tF}{\widetilde{F}}
\newcommand{\tG}{\widetilde{G}}
\newcommand{\tm}{\widetilde{m}}
\newcommand{\tLa}{\widetilde{\Lambda}}
\newcommand{\tGamma}{\widetilde{\Gamma}}
\newcommand{\tD}{\widetilde{D}}
\newcommand{\hga}{\widehat{\ga}}
\newcommand{\heta}{\widehat{\eta}}
\newcommand{\htau}{\widehat{\tau}}
\newcommand{\hzeta}{\widehat{\zeta}}
\newcommand{\hii}{{\widehat{\ii}}}
\newcommand{\hjj}{{\widehat{\jj}}}
\newcommand{\hbe}{\widehat{\be}}
\newcommand{\usfB}{\underline{\mathsf{B}}}
\newcommand{\usfC}{\underline{\mathsf{C}}}
\newcommand{\uscrB}{{\underline{\scrB}}}
\newcommand{\ucalN}{\ms{1mu}\underline{\calN}}
\newcommand{\usfb}{{\underline{\sfb}}}
\newcommand{\usfc}{\underline{\sfc}}
\newcommand{\um}{\underline{m}}
\newcommand{\frakD}{\mathfrak{D}}
\newcommand{\frakQ}{\mathfrak{Q}}
\newcommand{\frakK}{\mathfrak{K}}
\newcommand{\frakX}{\mathfrak{X}}
\newcommand{\frakS}{\mathfrak{S}}
\newcommand{\sfC}{\mathsf{C}}
\newcommand{\sfc}{\mathsf{c}}
\newcommand{\sfE}{\mathsf{E}}
\newcommand{\sfF}{\mathsf{F}}
\newcommand{\sfL}{\mathsf{L}}
\newcommand{\sfD}{\mathsf{D}}
\newcommand{\sfP}{\mathsf{P}}
\newcommand{\sfQ}{\mathsf{Q}}
\newcommand{\sfW}{\mathsf{W}}
\newcommand{\sfp}{\mathsf{p}}
\newcommand{\sfh}{\mathsf{h}}
\newcommand{\sfT}{\mathsf{T}}
\newcommand{\sfX}{\mathsf{X}}
\newcommand{\sfz}{\mathsf{z}}
\newcommand{\sfb}{\mathsf{b}}
\newcommand{\sfd}{\mathsf{d}}
\newcommand{\sfK}{\mathsf{K}}
\newcommand{\bbA}{\mathbb{A}}
\newcommand{\bbF}{\mathbb{F}}
\newcommand{\bsg}{{\boldsymbol{g}}}
\newcommand{\bse}{{\boldsymbol{e}}}
\newcommand{\bsb}{{\boldsymbol{b}}}
\newcommand{\bsn}{{\boldsymbol{n}}}
\newcommand{\bsa}{{\boldsymbol{a}}}
\newcommand{\bsc}{{\boldsymbol{c}}}
\newcommand{\brho}{{\boldsymbol{\rho}}}
\newcommand{\bstau}{{\boldsymbol{\tau}}}
\newcommand{\bfk}{\mathbf{k}}
\newcommand{\bfB}{\mathbf{B}}
\newcommand{\bfF}{\mathbf{F}}
\newcommand{\bfE}{\mathbf{E}}
\newcommand{\bfL}{\mathbf{L}}
\newcommand{\bfg}{\mathbf{g}}
\newcommand{\bfc}{\mathbf{c}}
\newcommand{\bfa}{\mathbf{a}}
\newcommand{\bfn}{\mathbf{n}}
\newcommand{\bfdeg}{\mathbf{deg}}
\newcommand{\calQ}{\mathcal{Q}}
\newcommand{\calU}{\mathcal{U}}
\newcommand{\calA}{\mathcal{A}}
\newcommand{\calS}{\mathcal{S}}
\newcommand{\calF}{\mathcal{F}}
\newcommand{\calK}{\mathcal{K}}
\newcommand{\calR}{\mathcal{R}}
\newcommand{\calX}{\mathcal{X}}
\newcommand{\calM}{\mathcal{M}}
\newcommand{\calN}{\mathcal{N}}
\newcommand{\calT}{\mathcal{T}}
\newcommand{\calW}{\mathcal{W}}
\newcommand{\calL}{\mathcal{L}}
\newcommand{\calZ}{\mathcal{Z}}
\newcommand{\scrC}{\mathscr{C}}
\newcommand{\scrA}{\mathscr{A}}
\newcommand{\scrB}{\mathscr{B}}
\newcommand{\ttq}{{\ms{1mu}\mathtt{q}\ms{1mu}}}
\newlength{\mylength}
\newcommand*{\para}{%
  \rlap{\rotatebox{-30}{\rule[.05ex]{.4pt}{.77em}}}%
  \kern.04em%
  \rlap{\kern.36em\raisebox{0.649519052835em}{\rule{.6em}{.4pt}}}%
  \rule{.6em}{.4pt}\kern-.04em%
  \rotatebox{-30}{\rule[.05ex]{.4pt}{.77em}}}
\newcommand{\Rr}{\mathbf{r}}
\newcommand{\hDynkin}{{\widehat{\Dynkin}}}
\newcommand{\tDynkin}{{\widetilde{\Dynkin}}}
\newcommand{\diag}{\mathrm{diag}}
\newcommand{\wl}{\sfP}
\newcommand{\rl}{\sfQ}
\newcommand{\weyl}{\sfW}
\newcommand{\lan}{\langle}
\newcommand{\ran}{\rangle}
\newcommand{\isoto}[1][]{\mathop{\xrightarrow%
[{\raisebox{.3ex}[0ex][.3ex]{$\scriptstyle{#1}$}}]%
{{\raisebox{-.6ex}[0ex][-.6ex]{$\mspace{2mu}\sim\mspace{2mu}$}}}}}
\newcommand{\ee}{\end{enumerate}}
\newcommand{\bitem}{\begin{itemize}}
\newcommand{\eitem}{\end{itemize}}
\newcommand{\ben}{\begin{enumerate}[{\rm (1)}]}
\newcommand{\bnum}{\begin{enumerate}[{\rm (i)}]}
\newcommand{\bnump}{\begin{enumerate}[{\rm (i)$'$}]}
\newcommand{\bna}{\begin{enumerate}[{\rm (a)}]}
\newcommand{\bnA}{\begin{enumerate}[{\rm (A)}]}
\newcommand{\bc}{\begin{cases}}
\newcommand{\ec}{\end{cases}}
\newenvironment{myequation}
{\relax\setlength{\arraycolsep}{1pt}\begin{eqnarray}}
{\end{eqnarray}}
\newenvironment{myequationn}
{\relax\setlength{\arraycolsep}{1pt}\begin{eqnarray*}}
{\end{eqnarray*}}
\nc{\eq}{\begin{myequation}}
\nc{\eneq}{\end{myequation}}
\nc{\eqn}{\begin{myequationn}}
\nc{\eneqn}{\end{myequationn}}
\nc{\cl}{\colon}
\nc{\ake}[1][1ex]{\rule[-#1]{0ex}{1ex}}
\nc{\akew}[1][1ex]{\rule[-1ex]{#1}{0ex}}
\nc{\akeu}[1][1ex]{\rule[#1]{0ex}{1ex}}
\nc{\id}{\mathrm{id}}
\nc{\bl}{\bigl(}
\nc{\br}{\bigr)}
\nc{\qt}[1]{\quad\text{#1}}
\nc{\qtq}[1][{and}]{\quad\text{{#1}}\quad}
\nc{\eqs}[1]{\underset{\raisebox{.4ex}[.7ex][0ex]{$\scriptstyle{#1}$}}{=}}
\nc{\snoi}{\smallskip \noindent}
\nc{\mnoi}{\medskip \noindent}
\nc{\Mat}{\mathrm{Mat}}
\nc{\ol}{\overline}
\nc{\ul}{\underline}
\nc{\ang}[1]{\boldsymbol{\langle}{#1}\boldsymbol{\rangle}}
\nc{\rang}[1]{\boldsymbol{\langle}{#1}\boldsymbol{\rangle} \hspace{-.6ex} \boldsymbol{\rangle}}
\nc{\ba}{\begin{array}}
\nc{\ea}{\end{array}}
\nc{\noi}{\noindent}
\nc{\evq}{ \ev_{q=1}}
\title[Quantum cluster algebra and quantum virtual Grothendieck ring]{Quantum cluster algebra, braid moves and \\ quantum virtual Grothendieck ring}
\author[K.-H. Lee]{Kyu-Hwan Lee$^{\star}$}
\thanks{$^{\star}$ K.-H. Lee was partially supported by a grant from the Simons Foundation (\#712100).}
\address[K.-H. Lee]{Department of Mathematics, University of Connecticut, Storrs, CT 06269, U.S.A.}
\email{khlee@math.uconn.edu}
\urladdr{https://www.math.uconn.edu/~khlee/}
\author[S.-j.~Oh]{Se-jin Oh$^{\dagger}$}
\address[S.-j.~Oh]{Department of Mathematics, Sungkyunkwan University, Suwon, South Korea}
\email{sejin092@gmail.com}
\urladdr{https://sites.google.com/site/mathsejinoh/}
\thanks{$^{\dagger}$ S.-j.\ Oh was supported by the Ministry of Education of the Republic of Korea and the National Research Foundation of Korea (NRF-2022R1A2C1004045).}
\date{\today}
\begin{document}

\begin{abstract}
In this paper, we study the quantum virtual Grothendieck ring, denoted by $\frakK_q(\g)$, which was introduced in \cite{KO22}, and further investigated in \cite{JLO1, JLO2}. Our 
approach involves examining this ring from two perspectives: first, by considering its connection to quantum cluster algebras of non-skew-symmetric types; and second, by exploring 
its relevance to categorification theory. We specifically focus on (i) the homomorphisms that arise from braid moves, particularly 4-moves and 6-moves, in the braid group; and 
(ii) the quantum Laurent positivity phenomena, which have not yet been proven for non-skew-symmetric types.

As applications of our results, we derive the substitution formulas for non-skew-symmetric types discussed in \cite{FHOO2} for skew-symmetric types, and  demonstrate that any truncated element in a heart subring, denoted by $\frakK_{q,Q}(\g)$, which corresponds to a simple module over the quiver 
Hecke algebra $R^\g$, 
possesses coefficients in $\Z_{\ge 0}[q^{\pm 1/2}]$. This result is particularly interesting because it implies that each truncated Kirillov--Reshetikhin 
polynomial in $\frakK_{q,Q}(\g)$ and each element in the standard basis $\sfE_q(\g)$ of the entire ring $\frakK_q(\g)$ have coefficients also in $\Z_{\ge 0}[q^{\pm 1/2}]$.
 Since (truncated) Kirillov--Reshetikhin polynomials can be obtained using a quantum cluster algebra algorithm and appear as quantum cluster variables, 
they provide compelling evidence in support of the quantum Laurent positivity conjecture in non-skew-symmetric types.
\end{abstract}

\setcounter{tocdepth}{1}

\maketitle
\tableofcontents

\section{Introduction}

Let $\g$ be a complex finite-dimensional simple Lie algebra, which has an index set $I$ and a Cartan matrix $\sfC$. The quantum loop algebra $U_q(\calL\g)$ associated with 
$\g$ has been extensively studied since its emergence. This is mainly due to the fact that its representation theory exhibits a rich structure characterized by monoidality 
and rigidity. Although the category $\scrC_\g$ of type $\mathbf{1}$-representations over $U_q(\calL\g)$ is not braided, it is well-known that the Grothendieck ring $K(\scrC_\g)$ 
is commutative.

Motivated by $\calW$-algebras, Frenkel--Reshetikhin~\cite{FR99} constructed the \emph{$q$-character} monomorphism 
$\chi_q: K(\scrC_\g) \hookrightarrow \Z[Y_{i,w}^{\pm 1}]_{i\in I,w \in \C(q)^\times}$ which also explains the commutativity of 
$K(\scrC_\g) \simeq \chi_q(K(\scrC_\g)) =:\calK(\scrC_\g)$.  
Nakajima \cite{Nak04} and Varagnolo--Vasserot \cite{VV03} then employed the geometry of quiver varieties to construct the non-commutative $t$-quantization, denoted as $\calK_t(\scrC_\bfg)$, of $\calK(\scrC_\bfg)$ for a simply-laced type $\bfg$. This construction is known as the \emph{quantum Grothendieck ring} and highlights the non-braided property of $\scrC_\bfg$. Moreover, $\calK_t(\scrC_\bfg)$ recovers $\calK(\scrC_\bfg)$ when $t=1$.
 
The quantum Grothendieck ring admits a remarkable Kazhdan--Lusztig algorithm for $\scrC_\bfg$, offering insights into the Jordan--H\"{o}lder multiplicity $P_{m,m'}$ of a simple module $L(m')$ in a standard module $E(m)$ in $K(\scrC_\bfg)$. Nakajima established this connection through the formulation of the Kazhdan--Lusztig type polynomial $P_{m,m'}(t)$ in $\calK_t(\scrC_\bfg)$, with its specialization at $t=1$ yielding the desired $P_{m,m'}$.

To be more specific, he constructed bases $\bfE_t(\bfg)=\{ E_t(m) \}$ and $\bfL_t(\bfg)=\{ L_t(m) \}$ for $\calK_t(\scrC_\bfg)$, consisting of \emph{$(q,t)$-characters} of standard modules and simple modules, respectively. He proved the uni-triangular property that exists between these two bases:
\begin{align} \label{eq: Et and Lt}
E_t(m) = L_t(m) + \sum_{m'; m' <_N m} P_{m,m'}(t) L_t(m') \quad \text{ for some } P_{m,m'}(t) \in t\Z[t] 
\end{align}
in $\calK_t(\scrC_\bfg)$. We call $\bfE_t(\bfg)$ the \emph{standard basis} and $\bfL_t(\bfg)$ the \emph{canonical basis}. 

Taking a significant stride forward, Nakajima went on to establish that every coefficient polynomial of $L_t(m)$, $E_t(m)$, and the polynomial $P_{m,m'}(t)$ resides in $\Z_{\ge 0}[t^{\pm 1/2}]$. Moreover, he proved that the specializations of $L_t(m)$ and $E_t(m)$ at $t=1$ yield the $q$-character $\chi_q(L(m))$ for the simple module $L(m)$ and the $q$-character $\chi_q(E(m))$ for the standard module $E(m)$ in $\scrC_\bfg$, respectively. These findings collectively underscore the profound nature of the algorithm he unveiled.  

\smallskip
 
In cases where $\g$ is non-simply-laced, a fully-developed geometry is lacking, rendering the direct application of the framework for $\bfg$ impractical. 
To address this, Hernandez 
\cite{Her04} introduced a conjectural Kazhdan--Lusztig algorithm for all $\g$, using the \emph{quantum Cartan matrix} $\sfC(q)$ associated with $\g$. He devised 
a non-commutative $t$-quantization, also denoted as $\calK_t(\g)$, of $\calK(\g)$ and constructed a basis $\bfF_q(\g)$ for $\calK_t(\g)$ through a special
algorithm known as the \emph{$t$-algorithm}. It is noteworthy that the $t$-algorithm can be viewed as a $t$-quantization of the Frenkel--Mukhin algorithm 
in \cite{FM01}, utilizing $\sfC(q)$.

Subsequently, Hernandez constructed bases $\bfE_t(\g)$ and $\bfL_t(\g)$ for $\calK_t(\g)$, adhering to~\eqref{eq: Et and Lt}, and expected them to also exhibit 
the same positivity as $L_t(m)$, $E_t(m)$, and $P_{m,m'}(t)$. Particularly, in~\cite{Her05}, he showed that the $(q,t)$-character $L_t(Y_{i,w})$ of a 
\emph{fundamental representation} $L(Y_{i,w})$  manifests the desired \emph{quantum positivity}. This implies that every coefficient polynomial of   $L_t(Y_{i,w})$   
resides in $\Z_{\ge 0}[t^{\pm 1/2}]$, consequently establishing the quantum positivity of elements in $\bfE_t(\g)$ as well. 

\smallskip

 The quantum cluster algebra $\scrA_q$, introduced by Berenstein--Fomin--Zelevinsky in~\cite{FZ02,BZ05}, serves as a crucial framework for the investigation 
 of the dual-canonical/upper-global basis $\bfB^\up$ introduced by Lusztig/Kashiwara, through a combinatorial point of view. The algebra $\scrA_q$ is a 
 non-commutative $\Z[q^{\pm 1/2}]$-subalgebra of the skew field of $\Q(q^{1/2})(Z_i)_{i \in \sfK}$, generated by cluster variables. These variables are derived 
 from  the initial cluster $\{ Z_i \}_{i \in \sfK}$ through a sequence of quantum exchange relations known as \emph{mutations}. Despite the non-trivial nature 
 of these mutations, the \emph{quantum Laurent phenomenon}, proven 
 in~\cite{FZ02,BZ05}, assures that $\scrA_q$ remains contained in the (non-commutative) Laurent polynomial ring $\Z[q^{\pm 1/2}][Z_{i}^{\pm 1}]_{i \in \sfK}$.

Another interesting feature of $\mathscr{A}_q$, which has been confirmed for skew-symmetric cases (see~\cite{Dav18}), is the 
\emph{quantum Laurent positivity conjecture}. This conjecture asserts that every cluster variable indeed belongs to $\Z_{\ge 0}[q^{\pm 1/2}][Z_{i}^{\pm 1}]_{i \in \sfK}$.

\smallskip

Building upon the earlier discussion, it is worthwhile to emphasize that the algebras $\calK_t(\g)$ and $\scrA_q$ exhibit similar quantum Laurent positivity phenomena. 
The connection between these two structures is further underscored by the notable works of Hernandez--Leclerc in \cite{HL10,HL15,HL16}. 
This relationship gains additional support from the results 
in~\cite{B21,KKK2,KO19,OS19,KKOP20,KKOP2,FHOO}, which collectively establish that $\calK_t(\g)$ possesses a quantum cluster algebra structure.

\smallskip

In a recent development, Fujita, Hernandez, Oya, and the second named author of this paper have successfully resolved the quantum positivity conjectures for $L_t(m)$ and 
$P_{m,m'}(t)$ in $\calK_t(\g)$ for non-simply-laced Lie algebras $\g$. This achievement was made possible through the utilization of the quantum cluster algebra 
structure of $\calK_t(\g)$ and the application of a technique known as the \emph{propagation of positivity} \cite{FHOO,FHOO2}.  Let us explain their results more precisely.

For each specific $\g$, we define $\bfg_\fin$ of simply-laced type as follows: 
$$
(\g,\bfg_\fin) =(B_n,A_{2n-1}), \ (C_n,D_{n+1}), \  (F_4,E_6), \  (G_2,D_{4}) \ \text { and } \ \g=\bfg_\fin \ \ \text{otherwise}.
$$
In their work~\cite{FHOO}, it was established that the algebras $\calK_t(\scrC^0_{\g})$ and $\calK_t(\scrC^0_{\bfg_\fin})$ are isomorphic, with their 
presentations characterized as a \emph{boson-extension} of the negative half of the quantum group $U^-_q(\bfg_{\fin})$ (refer also to~\cite[(1.7)]{JLO2}). 
Here, $\scrC^0_{\g}$ denotes the \emph{skeleton category}, also known as the \emph{Hernandez--Leclerc} subcategory. Taking a further step in \cite{FHOO2}, they proved that the isomorphisms between $\calK_t(\scrC^0_{\g})$ and 
$\calK_t(\scrC^0_{\bfg_\fin})$ can be interpreted as sequences of mutations-permutations between \emph{adapted} sequences of indices. Moreover, they extended  these isomorphisms to the skew fields of their quantum tori, introducing the \emph{substitution formula}.

The implications of these isomorphisms are far-reaching, leading to several applications:
\bna
\item \textbf{Propagating positivity}: Using the isomorphisms to propagate positivity from type $\bfg_\fin$ to type $\g$, 
they proved the quantum positivity conjectures for $L_t(m)$ and $P_{m,m'}(t)$ in non-simply-laced type $\g$.
\item \textbf{Recovery of $q$-character}: Using the monoidal categorification result presented in~\cite{KKOP2} and the isomorphisms, they proved that 
$L_t(m)$ recovers the $q$-character of a simple module $L(m)$ at $q=1$, provided it is \emph{reachable}.
\item \textbf{Interplay between categories}: The substitution formula revealed intriguing behaviors among elements in $\bfL_t(\g)$ and $\bfL_t(\bfg_\fin)$.
\ee

In summary, the algebraic structure of $\calK_t(\scrC^0_{\g})$ exhibits (skew-)symmetry, irrespective of the inherent asymmetry in $\g$. Moreover, the observed quantum positivity 
phenomena of $\calK_t(\scrC^0_{\g})$ are connected to those witnessed in the context of the \emph{skew-symmetric} quantum cluster algebra $\scrA_q$. 
 
\smallskip

In light of the aforementioned observations, Kashiwara and the second named author of this paper introduced what is now called the \emph{quantum virtual Grothendieck ring} $\frakK_q(\g)$: a distinguished subring of a non-commutative quantum torus over $\bbA \seteq \Z[q^{\pm 1/2}]$, generated by the variables $\tX_{i,p}$ associated with $\g$,
and characterized as the common kernel of $t$-deformed screening operators $S_{i,t}$ (see Definition~\ref{def: qvgr} for more details).
Note that the construction of $\frakK_q(\g)$ can be seen as a $\sfC(t)$-analogue of Hernandez's construction of $\calK_t(\scrC^0_\g)$, where $\sfC(t)$ is another quantization of $\sfC$ known as the \emph{$t$-quantized Cartan matrix}.

The main results established in~\cite{KO22,JLO1} include:
\bnum
\item \textbf{Quantum cluster algebra structures}: The algebra $\frakK_q(\g)$, along with various subrings, possesses quantum cluster algebra structures. Importantly, the (a)symmetries of these structures coincide with those of the underlying Lie algebra $\g$.
\item \textbf{Bases and relations}: Bases $\sfF_q(\g)$, $\sfE_q(\g)$, and $\sfL_q(\g)$ are identified in $\frakK_q(\g)$, and they exhibit similar relations to those in $\calK_t(\g)$, including~\eqref{eq: Et and Lt}. Notably, $\sfE_q(\g)$ is termed the \emph{standard basis}, and $\sfL_q(\g)$ the \emph{canonical basis}, mirroring the naming in the $\calK_t(\g)$ cases.
\ee

\begin{Conj}[\cite{JLO1}]
\bna
\item \label{it: conj1} 
In the case where $L_q(m)$ is real, meaning that $L_q(m)^2 = q^{a/2} L_q(m^2)$ for some $a \in \Z$, every coefficient polynomial of $L_q(m)$ resides in $\Z_{\ge 0}[q^{\pm 1/2}]$. 
\item \label{it: conj2} For a \emph{Kirillov--Reshetikhin (KR)} monomial $m \in \frakK_q(\g)$, the element $L_q(m)$ in $\sfL_q(\g)$ coincides with $F_q(m)$ in $\sfF_q(\g)$ consisting of real elements, where $F_q(m)$ is a KR-polynomial $($see {\rm \S~\ref{subsec: KR}} for the definition$)$.
\ee
In particular, combining~\eqref{it: conj1} and~\eqref{it: conj2}, we expect that each coefficient polynomial of a KR-polynomial $F_q(m)$ is 
in $\Z_{\ge0}[q^{\pm 1/2}]$.
\end{Conj}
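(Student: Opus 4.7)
My plan is to establish parts (a) and (b) separately and then combine them to obtain positivity for the coefficients of every KR-polynomial $F_q(m)$, leveraging the quantum cluster algebra structures on $\frakK_q(\g)$ (and on its heart subrings $\frakK_{q,Q}(\g)$) together with the categorification of these heart subrings via quiver Hecke algebras $R^\g$.

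For part (b), I would verify the characterizing conditions of the canonical basis element $L_q(m)$ for the KR-polynomial $F_q(m)$ associated with a KR-monomial $m$: bar-invariance, leading term equal to $m$ with respect to the $N$-order $<_N$, and every other coefficient lying in $q^{1/2}\Z[q^{1/2}]$. The last two conditions are built directly into the $t$-algorithm using the $t$-quantized Cartan matrix $\sfC(t)$. For bar-invariance, which is the non-trivial condition, I would exhibit $F_q(m)$ as a quantum cluster variable obtained by a finite sequence of mutations from an initial quantum cluster in $\frakK_{q,Q}(\g)$; since the initial cluster variables are bar-invariant and mutation preserves bar-invariance in the quantum cluster setting, the conclusion follows. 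Realness of $F_q(m)$ then follows from the standard fact that every quantum cluster variable in this setting is real, combined with the categorification interpretation.

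For part (a), my plan is to use the categorification of $\frakK_{q,Q}(\g)$ by the Grothendieck ring of graded finite-dimensional modules over $R^\g$. Under this identification, a real $L_q(m)$ corresponds to the class of a self-dual real simple $R^\g$-module, whose expansion in the standard monomial basis has coefficients given by graded dimensions of weight spaces, hence lying in $\Z_{\ge 0}[q^{\pm 1/2}]$. For dominant monomials not contained in a single heart subring, I would employ the braid group homomorphisms arising from the 4-moves and 6-moves developed in this paper: these provide explicit isomorphisms between $\frakK_{q,Q}(\g)$ and $\frakK_{q,Q'}(\g)$ for different choices of $Q$, so that every relevant monomial lies in some heart subring up to such an isomorphism, and positivity propagates.

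The main obstacle will be lifting positivity from truncated elements in a single heart subring to positivity in the full ring $\frakK_q(\g)$, since reconstruction from truncation typically involves multiplication by factors that are not manifestly positive. To overcome this, I would invoke the propagation of positivity technique: the isomorphism $\frakK_q(\bfg_\fin) \simeq \calK_t(\scrC^0_{\bfg_\fin})$ together with Davison's positivity for skew-symmetric quantum cluster algebras gives the result in the simply-laced case, and the non-skew-symmetric analogue of the substitution formula developed in this paper transports positivity back to $\g$. The delicate point is that this substitution involves rescalings by half-integer powers of $q$ and sequences of mutations-permutations between adapted sequences of indices; verifying that the $\Z_{\ge 0}[q^{\pm 1/2}]$-positivity cone is preserved at every intermediate step is the most technical part of the argument.
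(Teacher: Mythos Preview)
The statement you are attempting to prove is a \emph{Conjecture}, not a theorem: the paper states it as an open problem (attributed to \cite{JLO1}) and does not prove it in full generality. What the paper actually establishes are partial results in this direction: part~(b) is known only for KR-monomials lying in some $\calM_+^Q$ (quoted from \cite{JLO2}), and the positivity in part~(a) is proved only for fundamental polynomials, elements of $\sfE_q$, and \emph{truncated} KR-polynomials in a heart subring (Theorem~\ref{thm: quantum positive} and Corollary~\ref{cor: pos cor}). There is therefore no ``paper's own proof'' to compare against.

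Your proposal contains genuine gaps that explain why the conjecture remains open. For part~(b), you claim that the characterizing conditions of $L_q(m)$ beyond bar-invariance ``are built directly into the $t$-algorithm,'' but this is not correct: the defining property of $L_q(m)$ is that its expansion in the \emph{standard basis} $\sfE_q$ has off-diagonal coefficients in $q\Z[q]$ (Theorem~\ref{thm: L_q}), whereas the $q$-algorithm controls the $\calX_q$-monomial expansion of $F_q(m)$. These are different unitriangularity conditions, and bridging them is precisely the difficulty. For part~(a), your plan to lift positivity from truncated elements to the full ring via the substitution formulas does not close: the substitution formulas $\tPsi_{\frakQ,\frakQ'}$ are automorphisms of the skew field $\bbF(\calX_q)$ given by nontrivial rational expressions, and there is no mechanism in the paper showing they preserve the cone $\Z_{\ge 0}[q^{\pm 1/2}]$ of coefficients. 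Invoking Davison's theorem is also circular here, since the quantum cluster algebra structure on $\frakK_q(\g)$ for non-simply-laced $\g$ is genuinely non-skew-symmetric, and the isomorphism to a simply-laced $\calK_t(\scrC^0_{\bfg_\fin})$ used in \cite{FHOO,FHOO2} concerns the \emph{other} deformation $\calK_t$ (built from $\sfC(q)$), not $\frakK_q$ (built from $\sfC(t)$).
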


We remark here that each basis element $F_q(m)$ in $\sfF_q(\g)$ can be obtained from the $q$-algorithm,
which is a $\sfC(t)$-analogue of the $t$-algorithm and is defined inductively. Thus determining whether the coefficient polynomials of $F_q(m)$ are in $\Z_{\ge0}[q^{\pm 1/2}]$ or not
is quite intricate.  

\bigskip

In this paper, we further study  the quantum virtual Grothendieck ring $\frakK_q(\g)$, utilizing its quantum cluster algebra structure. It is important to note 
that $\frakK_q(\g)$ is generated by a countably infinite set of \emph{fundamental} polynomials, denoted as $\{ F_q(X_{i,p}) \}_{(i,p) \in \hDynkin_0}$ and parameterized by a specific 
index set $\hDynkin_0$ (see~\eqref{eq: hDynkin0}).

For each Dynkin quiver $Q=(\Dynkin,\xi)$ associated with the Lie algebra $\g$ (see~\S~\ref{subsec: Dynkin quiver}),  the \emph{heart} subring $\frakK_{q,Q}(\g)$ of 
$\frakK_q(\g)$ is generated by the fundamental polynomials $\{ F_q(X_{i,p}) \}_{(i,p) \in \Gamma^Q_0 \subset \hDynkin_0}$
(see~\cite{FO21,JLO2}).  The utilization of the heart subrings yields the following homomorphisms~\cite{HL15,FHOO,JLO1}:
\bna
\item \textbf{Isomorphism $\Psi_Q$}:
An $\bbA$-algebra isomorphism, denoted by $\Psi_Q$, is established between the integral form $\calA_{\bbA}(\n)$ of the quantum unipotent coordinate ring $\calA_{q}(\n)$ and the heart subring $\frakK_{q,Q}(\g)$. Importantly, this isomorphism maps the \emph{normalized dual-canonical/upper-global basis} $\tbfB^\up$ to the canonical basis $\sfL_{q,Q}(\g) \seteq \sfL_q(\g) \cap \frakK_{q,Q}(\g)$ of $\frakK_{q,Q}(\g)$.
\item \textbf{Automorphism $\Psi_{\frakQ,\frakQ'}$}:
For any pair of Dynkin quivers $Q=(\Dynkin,\xi)$ and $Q'=(\Dynkin,\xi)$, there exists an automorphism denoted by $\Psi_{\frakQ,\frakQ'}$ of $\frakK_q(\g)$. This automorphism sends the heart subring $\frakK_{q,Q'}(\g)$ to $\frakK_{q,Q}(\g)$ and respects the canonical basis $\sfL_q$.
\ee
Note that the integral form $\calA_\bbA(\n)$ of the quantum unipotent coordinate ring $\calA_{q}(\n)$ is isomorphic to the quantum cluster algebra $\calA_\ii$ (\cite{GLS13,GY17}). The initial seed of $\calA_\ii$ can be constructed through the combinatorics of a reduced sequence $\ii$ associated with the longest element $w_\circ$ of the Weyl group $\weyl$. 

\smallskip
 
As a main result, Theorem~\ref{thm: comm square} establishes, through the use of isomorphisms and the degrees of quantum cluster monomials, that the isomorphism $\Psi_{\xi,\xi'}:\frakK_{q,\xi'}(\g) \isoto \frakK_{q,\xi}(\g)$ obtained by restricting $\Psi_{\frakQ,\frakQ'}$ to the \emph{$\xi'$-negative subring} $\frakK_{q,\xi'}(\g)$ of $\frakK_{q}(\g)$ (refer to Definition~\ref{def: negative subring}) is compatible with sequences of mutations-permutations between the seeds associated with infinite sequences $\hii'$ and $\hii$ in $I^{(\infty)}$ respectively adapted to $Q'$ and $Q$ (see~\eqref{eq: hii}).

The proof of Theorem~\ref{thm: comm square} involves the following steps:
\bna
\item \textbf{Braid moves as mutations-permutations}: The interpretation of braid moves as mutations-permutations is established.
\item \textbf{Analysis of degrees of quantum cluster monomials}: The behaviors of degrees of quantum cluster monomials concerning mutations-permutations corresponding to braid moves are analyzed. This analysis includes the study of $4$-moves for types $B_n,C_n,F_4$ and $6$-moves for type $G_2$, involving more complicated computations compared to the investigations of $2$-moves and $3$-moves presented in~\cite{FHOO2}.
\ee
Through this detailed analysis, it is proved that the isomorphism $\Psi_{\xi,\xi'}$ induces a corresponding bijection between their quantum cluster monomials, establishing the compatibility highlighted in Theorem~\ref{thm: comm square}.

\smallskip

As an application, we extend the automorphisms $\Psi_{\frakQ,\frakQ'}$ to substitution formulas $\tPsi_{\frakQ,\frakQ'}$ for the variables in the fractional skew field $\bbF(\calX_q)$ of $\frakK_q(\g)$ (Theorem~\ref{thm: substituion}). These substitution formulas, derived from mutations-permutations and consequently being automorphisms, involve non-trivial fractions among Laurent polynomials. Independent of the quantum Laurent phenomena, these substitution formulas yield Laurent polynomials in $\frakK_q(\g)$ and respect the canonical basis $\sfL_q$.

Exploring these substitution formulas reveals highly non-trivial relations among elements in $\sfL_q$. For concrete examples of these formulas, particularly for type $B_2$, we refer the reader to Appendix \ref{sec: substitution formula}. The investigation of the substitution formulas offers valuable insights into relationships among elements in the canonical basis $\sfL_q$.

\smallskip

Another main result (Theorem~\ref{thm: quantum positive}) of this paper is a proof of quantum positivity for:
{\rm (a)} fundamental polynomials $F_q(X_{i,p})=L_q(X_{i,p})$,
{\rm (b)} elements in the standard basis $\sfE_q$,
{\rm (c)} truncated KR-polynomials in $(\frakK_{q,Q}(\g))_{\le \xi}$.
The quantum cluster algebra algorithm generates fundamental polynomials $L_q(X_{i,p})$ and truncated KR-polynomials $F_q(m)_{\le \xi}$, starting from their highest dominant monomials $\underline{X_{i,p}}$ and $\underline{m}$ (\cite{HL16,B21,JLO1}). This result serves as robust evidence supporting the quantum positivity conjecture for non-skew-symmetric types.

\smallskip 
 
The main argument of the proof involves the categorification theory of $\calA_\bbA(\n)$ through the category $R\gmod$ of finite-dimensional graded modules over the quiver Hecke algebra $R$ (\cite{KL1,R08}).
Recently, Kashiwara, Kim, Park, and the second named author introduced the concept of a \emph{Laurent family} of simple modules over $R$ and proved that each Laurent family exhibits the quantum Laurent positivity phenomena. This positivity is observed with respect to the basis of $\calA_\bbA(\n)$ that corresponds to the isomorphism classes of simple modules over $R$ through the process of categorification (\cite{KKOP23}).

\smallskip 

The proof of Theorem~\ref{thm: quantum positive} employs the following key ingredients:
\bnum
\item The isomorphisms among $\calK_\bbA(R\gmod)$, $\frakK_{q,Q}(\g)$, and $\calA_\bbA(\n)$ (see~\eqref{eq: isos}).
\item The truncation isomorphism $(\cdot)_{\le \xi}:\frakK_{q,Q}(\g) \isoto (\frakK_{q,Q}(\g))_{\le \xi}$ (see~\eqref{eq: truncation}).
\item The structure of KR-polynomials (see Theorem~\ref{thm: KR-poly}).
\item The fact that KR-polynomials in $\frakK_{q,Q}(\g)$ \emph{can be understood as certain characters} of determinantal modules over $R$ (\cite{KKOP18,JLO2}).
\ee
With these ingredients, we show in Theorem~\ref{thm: quantum positive} that:
\bna
\item The set of KR-polynomials $\{ F_q(m^{(i)}[p,\xi_i]) \ | \ (i,p) \in \Gamma^Q_0 \}$ is the set of \emph{such characters} of the Laurent family 
$\calL\seteq\{ M(w_k^{\ii}\varpi_{i_k},\varpi_{i_k}) \}$ of \emph{determinantal modules} for any reduced sequence $\ii$ of $w_\circ$ \emph{adapted to} $Q=(\Dynkin,\xi)$.
\item Any element $\sfX_{\le \xi} \in (\frakK_{q,Q}(\g))_{\le \xi}$ corresponding to a simple module $X$ over $R$ is quantum positive (see Example~\ref{ex: quantum positive} for instances).
\ee
As special cases, we obtain the desired result (Corollary~\ref{cor: pos cor}) that fundamental polynomials, elements in the standard basis $\sfE_q(\g)$, and truncated KR-polynomials in $(\frakK_{q,Q})(\g)_{\le \xi}$ are quantum positive.

\medskip

This paper is structured as follows:
Section~\ref{sec: Quantum cluster} provides a review of the fundamental concepts in the theory of quantum cluster algebras, covering essential topics such as the degrees of quantum cluster monomials and the valued quiver associated with an exchange matrix.
Section~\ref{sec: Braid and degree} explores the mutations-permutations corresponding to moves in the braid group of $\g$. It contains intricate computations and makes crucial observations concerning the behaviors of degrees of quantum cluster monomials with respect to these mutations-permutations.
 We allocate considerable space to this section because the computations are quite important for later parts.  
Section~\ref{sec: basis and moves} investigates the connection between mutations-permutations corresponding to moves and the basis $\tbfB^{\up}$ in $\calA_{\bbA}(\g)$ through the isomorphism $\calA_\ii \isoto \calA_{\bbA}(\n)$.
In Section~\ref{sec: substitution}, we establish the compatibility between the isomorphism $\Psi_{\xi,\xi'}$ and mutations-permutations corresponding to moves. This compatibility is then applied to the analysis of substitution formulas.
Section~\ref{sec: quiver Hecke and Laurent} reviews the categorification of $\calA_\bbA(\n)$ via $R\gmod$ and discusses the results from~\cite{KKOP23} related to the concept of a Laurent family.
Section~\ref{sec: positivity} presents the proof of the quantum positivity of fundamental polynomials, the standard basis $\sfE_q(\g)$, and truncated KR-polynomials in $(\frakK_{q,Q})(\g)_{\le \xi}$.

\subsection*{Convention} Throughout this paper, we use the following convention.
\begin{enumerate}[\,\,$\bullet$]
\item For a statement $\mathtt{P}$, we set $\delta(\mathtt{P})$ to be $1$ or $0$
depending on whether $\mathtt{P}$ is true or not. In particular, we 
set $\delta_{i,j} \seteq \delta(i=j)$ (the Kronecker's delta). 
\item For a totally ordered set $J = \{ \cdots < j_{-1} < j_{0} < j_{1} < j_{2} \cdots \}$, write
$$\prod_{j \in J}^\to A_j \seteq \cdots A_{j_{-1}}A_{j_{0}}A_{j_{1}}A_{j_{2}} \cdots.$$
\item For $a \in \Z$, we set $[a]_+ \seteq \max(a,0)$.  Note that
$a = [a]_+ - [-a]_+$. 
\item We set $\N \seteq \Z_{\ge1}$ and $\N_0 \seteq \Z_{\ge 0}$.
\item For a set $A$, we denote by $\frakS_A$ the group of permutations of $A$. In particular, if $A = \N$ and $k \in \N$, we write $\sigma_k$ for the simple transposition of $k$ and $k
+1$ in $\frakS_\N$.
\item We set $q$ to be an indeterminate and 
$\bbA \seteq \Z[q^{\pm 1/2}]$, where $q^{1/2}$ denotes the formal square root of
$q$. For an $\bbA$-algebra $\calR$ and elements $X$ and $Y$ in $\calR$, we write $X \cong Y$ if there exists $a \in \Z$ such that $X =q^{a/2}Y$. 
\end{enumerate}

\section{Quantum cluster algebra structure} \label{sec: Quantum cluster}

\subsection{Quantum cluster algebra}
Let $\sfK$ be a (possibly infinite) countable index set with a decomposition 
$\sfK=\sfK_\ex \sqcup \sfK_\fr$. We call $\sfK_\ex$ the set of exchangeable indices and $\sfK_\fr$ the set of frozen indices. Let $\La=(\La_{i,j})_{i,j \in \sfK}$
be a skew-symmetric $\Z$-valued matrix.

\begin{definition}
We denote by $\calT(\La)$ the $\bbA$-algebra 
generated by $\{ Z_{k}^{\pm1} \}_{k \in \sfK}$ subject to the following defining
relations:
\begin{align*}
Z_jZ_j^{-1} = Z_j^{-1}Z_j =1   \quad \text{and} \quad  Z_iZ_j = q^{\La_{i,j}}Z_jZ_i \quad \text{for } i,j \in \sfK. 
\end{align*}
\end{definition}
We define the $\Z$-algebra anti-involution $\overline{(\cdot)}$ of $\calT(\La)$, called the \emph{bar-involution}, 
by 
\begin{align} \label{eq: bar involution in quantum cluster}
\overline{q^{1/2}} \seteq q^{-1/2}, \qquad \overline{Z_k} \seteq Z_k,
\end{align}
for all $k \in \sfK$. 
For $\bfa =(\bfa_i)_{i \in \sfK} \in \Z^{\oplus \sfK}$, we define the element 
$Z^\bfa$ of $\calT(\La)$ as  
$$
Z^\bfa \seteq q^{ \frac 1 2  \sum_{i > j} \bfa_i\bfa_j \La_{i,j} }
\prod^{\to}_{i \in \sfK} Z_i^{\bfa_i},
$$ 
where $>$ denotes a total order on $\sfK$. Note that $Z^\bfa$
does not depend on the choice of a total order. The set $\{ Z^\bfa \ | \ \bfa \in \Z^{\oplus \sfK} \}$ forms an $\bbA$-basis of $\calT(\La)$. Since $\calT(\La)$
is an Ore domain, it is embedded into the skew field of fraction $\bbF(\calT(\La))$. 

\begin{definition} \label{def: Ex matrix}
We call a $\Z$-valued matrix $\tB = (b_{i,j})_{i \in \sfK,j \in \sfK_\ex}$
an \emph{exchange} matrix if it satisfies the following properties:
\bna
\item \label{it: finite connectedness}
For each $j \in \sfK$, there exist finitely many $i \in \sfK$
such that $b_{i,j} \ne 0$.
\item Its principal part $B=(b_{i,j})_{i,j\in \sfK_\ex}$ is \emph{skew-symmetrizable} with
an $\N_0$-valued diagonal matrix $D$; i.e., $DB$ is skew-symmetric. 
\ee
\end{definition}
For an exchange matrix $\tB$, we associate a \emph{valued quiver} $\tGamma^{\tB}$ whose set of vertices is $\sfK$, and whose arrows between vertices are assigned by the following rules:
\begin{align}     \label{eq: bivalued}
\bc
\xymatrix@R=0.5ex@C=6ex{ *{\bullet}<3pt> \ar@{->}[r]^{{\ulcorner}a,b {\lrcorner} }_<{k}  &*{\bullet}<3pt> \ar@{-}[l]^<{l}}
& \text{ if  $l , k \in \Kex$, $l \ne k$, $b_{kl}=a \ge 0$ and $b_{lk}=b \le 0$}, \\
\xymatrix@R=0.5ex@C=6ex{ *{\circ}<3pt> \ar@{->}[r]^{{\ulcorner}a,0 {\lrcorner} }_<{k}  &*{\bullet}<3pt> \ar@{-}[l]^<{l}} \ \ \text{(resp.}  \  \xymatrix@R=0.5ex@C=6ex{ *{\circ}<3pt> \ar@{<-}[r]^{{\ulcorner}0,b {\lrcorner} }_<{k}  &*{\bullet}<3pt> \ar@{-}[l]^<{l}} \text{)}
& \text{ if  $l \in \Kex$, $k \in \Kfr$ and $b_{kl}=a \ge 0$ (resp. $b_{kl}=b \le 0$)}.
\ec
\end{align}
Here we do not draw an arrow between $k$ and $l$ if $b_{kl} = 0$ (and $b_{lk} = 0$ when $l, k \in \Kex$).
Note that $\circ$ denotes a vertex in $\Kfr$, and
we call $\ulcorner a,b  \lrcorner$ the \emph{value} of the arrow.

\begin{convention} \label{conv: valued quiver} In this paper, we adopt the following scheme for depiction of a valued quiver: 
\bnum
\item if $b_{kl}=1$ and $b_{lk}=-b<0$, use $\xymatrix@R=0.5ex@C=6ex{ k \ar@{->}[r]|{<b \;}   & l} $,
\item if $b_{kl}=2$ and $b_{lk}=-b<0$, use $\xymatrix@R=0.5ex@C=6ex{ k \ar@{=>}[r]|{<b \;}   & l} $,
\item if $b_{kl}=3$ and $b_{lk}=-b<0$, use $\xymatrix@R=0.5ex@C=6ex{ k \ar@{=>}[r]|{<b \;} & l \ar@{-}[l]|{ \; \; \; \; \; } } $,
\item we usually skip $< \hspace{-.7ex} 1$ in an arrow $($when $\ulcorner a ,-1 \lrcorner$ and $1 \le a\le 3)$ for notational simplicity,
\item if $b_{kl} = -b_{lk}=\alpha>1$, use $\xymatrix@R=0.5ex@C=6ex{ k \ar[r]^{\alpha}   & l}$.
\ee
\end{convention}
 
We say that a pair $(\La,\tB)$  is \emph{compatible} if 
\begin{align*}
\sum_{k \in \sfK} b_{ki}\La_{kj} = d_i \delta_{i,j} \ \text{for any } i \in \sfK_\ex, \  j \in \sfK ,
\end{align*} where $\{ d_i \}_{i \in \sfK_\ex}$ are positive integers.
 In this case,
$B$ is skew-symmetrizable with the diagonal matrix $D=\diag(d_i \ | \ i \in \sfK_\ex )$. 
Also, for $k \in \sfK_\ex$, the \emph{mutation} of $(\La,\tB)$
\emph{in direction $k$} is a compatible pair 
$\mu_k(\La,\tB)\seteq (\mu_k(\La),\mu_k(\tB))$ defined as follows (\cite{BZ05}):
\begin{align*}
\mu_k(\La) \seteq E^T \La E \quad \text{ and } \quad 
\mu_k(\tB) \seteq E \tB F, 
\end{align*}
where the matrices $E=(e_{i,j})_{i,j\in \sfK}$ and $F=(f_{i,j})_{i,j\in \sfK_\ex}$
are given by 
\begin{align*}
e_{i,j} \seteq \bc
\; \delta_{i,j} & \text{ if } j \ne k,\\
-1 & \text{ if } i =j=k, \\
[-b_{ik}]_+ & \text{ if } i \ne j=k,
\ec
\qquad
f_{i,j} \seteq \bc
\; \delta_{i,j} & \text{ if } i \ne k,\\
-1 & \text{ if } i =j=k, \\
\; [b_{kj}]_+ & \text{ if } i = k \ne j. 
\ec
\end{align*}

Note that the mutation $\mu_k$ is involutive; i.e., $\mu_k(\mu_k(\La,\tB))=(\La,\tB)$.
The mutation $\mu_k$ on $\tB$ can be described as the operation $\mu_k$ on its corresponding valued quiver $\tGamma^{\tB}$ in the following algorithm:
\begin{algorithm} \label{Alg. mutation}
For $k \in \sfK_\ex$, the \emph{valued quiver mutation $\mu_k$} transforms $\tGamma^{\tB}$ into a new valued quiver $\mu_k(\tGamma^{\tB})=\tGamma^{\mu_k(\tB)}$ via the following rules, where we assume {\rm (i)} $ac>0$ or $bd>0$, and {\rm (ii)} we do not perform {\rm ($\mathcal{NC}$)} and {\rm ($\mathcal{C}$)} below, if $i$ and $j$ are frozen at the same time.
\ben
\item[]\hspace{-0.69cm}{\rm ($\mathcal{NC}$)} For each full-subquiver   $\xymatrix@!C=7mm@R=1mm{ i \ar[r]_{\ulcorner  a,b \lrcorner }  \ar@/^0.7pc/[rr]^{\ulcorner  e,f \lrcorner} & k  \ar[r]_{\ulcorner  c,d \lrcorner} & j }$ in $\tGamma^{\tB}$,
we change the value of the arrow from $i$ to $j$ into $\ulcorner e+ac,f-bd \lrcorner:$
\begin{align*}
\xymatrix@!C=20mm@R=1mm{ i  \ar[r]_{\ulcorner e+ac,f-bd \lrcorner} & j}.
\end{align*}
\item[{\rm ($\mathcal{C}$)}]  For each  full-subquiver   $\xymatrix@!C=7mm@R=1mm{ i \ar[r]_{\ulcorner  a,b \lrcorner }  & k  \ar[r]_{\ulcorner  c,d \lrcorner} & j  \ar@/_0.7pc/[ll]_{\ulcorner  e,f \lrcorner}  }$ with $(e,f) \ne (0,0)$ in $\tGamma^{\tB}$, we change the  valued arrow between $i$ and $j$ as follows:
\begin{align*}
\bc
\xymatrix@!C=20mm@R=7mm{ i  \ar@{<-}[r]_{\ulcorner e-bd,f+ac \lrcorner} & j}   & \text{ if }  f+ac \le  0 \le e-bd,\\
\xymatrix@!C=20mm@R=7mm{ i  \ar[r]_{\ulcorner f+ac,e-bd \lrcorner} & j}  & \text{ if }   f+ac \ge  0 \ge  e-bd.
\ec
\end{align*}

\item[{\rm ($\mathcal{R}$)}] Reverse the direction of each arrow incident to vertex $k$ and change the value $\ulcorner  a,b \lrcorner $  of each arrow into $\ulcorner  -b,-a \lrcorner$.
\ee
Here if there is no arrow between $i$ and $j$ in {\rm ($\mathcal{NC}$)} and {\rm ($\mathcal{C}$)}, then put $e = f = 0$ and follow the same rule.
\end{algorithm}

We define an isomorphism of $\Q(q^{1/2})$-algebras 
$\mu_k^* : \bbF(\calT(\mu_k \La)) \simeq \bbF(\calT(\La))$ by 
$$
\mu_k^*(Z_j) \seteq \bc
Z^{\bfa'} +Z^{\bfa''}  & \text{ if }j =k, \\
Z_j & \text{ if } j \ne k, 
\ec
$$
where
$$\bfa_i' =   -\delta_{i,k} + \delta(i\ne k)[b_{ik}]_+ \quad  \text{ and }   \quad  
\bfa_i'' =   -\delta_{i,k} + \delta(i\ne k)[-b_{ik}]_+ \quad \text{ for all } i \in \sfK.$$

\begin{definition}
For a compatible pair $(\La,\tB)$, an element $\sfz$ in $\bbF(\calT(\La))$ is  called
a \emph{quantum cluster variable} (resp. \emph{quantum cluster monomial})
if    
$$\sfz = \mu^*_{k_l}\cdots \mu^*_{k_2} \mu^*_{k_1}(Z_j) \qquad
\text{(resp. } \mu^*_{k_l}\cdots \mu^*_{k_2} \mu^*_{k_1}(Z^\bfa)  \text{)},$$
for some finite sequence $(k_1,k_2,\ldots,k_l)$ in $\sfK_\ex$
and $j \in \sfK$ (resp. $\bfa \in \Z_{\ge 0}^{\oplus \sfK}$). The \emph{quantum cluster algebra} $\scrA_q(\La,\tB)$ is the $\bbA$-subalgebra of $\bbF(\calT(\La))$
generated by all the quantum cluster variables. We call the triple $\calS \seteq (\{ Z_k \}_{k \in \sfK},\La,\tB)$ the \emph{initial seed} of $\scrA_q(\La,\tB)$.
\end{definition}

\begin{theorem} [The quantum Laurent phenomenon \cite{BZ05}] 
The quantum cluster algebra 
$\scrA_q(\La,\tB)$ is contained in $\calT(\La)$.
\end{theorem}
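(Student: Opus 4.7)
The plan is to follow the approach of Berenstein--Zelevinsky and establish the stronger statement that $\scrA_q(\La,\tB)$ is contained in the \emph{upper quantum cluster algebra}
\[
\calU_q(\La,\tB) \seteq \calT(\La) \;\cap\; \bigcap_{k \in \sfK_\ex} \mu_k^*\bl \calT(\mu_k\La) \br,
\]
where each factor is regarded as a subalgebra of the ambient skew field $\bbF(\calT(\La))$ via the mutation isomorphism $\mu_k^*$. The inclusion $\scrA_q(\La,\tB) \subseteq \calT(\La)$ is then immediate from $\calU_q(\La,\tB)\subseteq \calT(\La)$.

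First, I would check directly from the definition of $\mu_k^*$ that every cluster variable adjacent to the initial seed lies in $\calU_q(\La,\tB)$: the exchange formula exhibits $\mu_k^*(Z_k)$ as the binomial sum $Z^{\bfa'}+Z^{\bfa''}$ of two genuine monomials of $\calT(\La)$, and $\mu_k^*(Z_j)=Z_j$ for $j\ne k$; a short direct computation then shows that these elements also lie in $\mu_j^*\bl\calT(\mu_j\La)\br$ for every $j\in\sfK_\ex$, so they lie in $\calU_q(\La,\tB)$.

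The heart of the proof is the \emph{two-step mutation lemma}: for any $k,j \in \sfK_\ex$ and any $l\in\sfK$, the element $\mu_j^*\mu_k^*(Z_l)$ lies in $\calT(\La)$. The cases $l\notin\{k,j\}$ are trivial and $l=k$ is handled by involutivity $\mu_k^*\mu_k^*=\id$, so the substantive case is $l=j$. Here one expands the binomial exchange relation at $j$ inside $\calT(\mu_k\La)$, pulls it back through $\mu_k^*$ into $\bbF(\calT(\La))$, and verifies that the apparent denominators involving $Z_k^{-1}$ cancel. The cancellation is forced by the compatibility identity $\sum_i b_{ik}\La_{ij}=d_k\delta_{k,j}$, which both controls the $q$-commutation of the monomials produced by the two successive exchange relations and pins down the exponent vectors of the resulting terms so that they assemble into a genuine element of $\calT(\La)$. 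The local finiteness condition of Definition~\ref{def: Ex matrix}~\eqref{it: finite connectedness} guarantees that only finitely many indices appear in each exchange relation, so the argument is unaffected by $\sfK$ being infinite.

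With the two-step lemma in hand, the general claim follows by induction on the length of a mutation sequence, together with the fact (itself an easy consequence of the two-step lemma applied relative to an adjacent seed) that $\calU_q(\La,\tB)$ can be recomputed from any seed obtained by a single mutation from the initial one: this reduces a length-$l$ mutation sequence to one of length $l-1$ starting from an adjacent seed. The main obstacle is the two-step lemma, since the $q$-quasi-commutation of the $Z_i$ turns the denominator cancellation into a delicate bookkeeping of $q$-powers that relies crucially on the compatibility of $\La$ with $\tB$; without this compatibility the cancellation fails and the Laurent phenomenon does not hold.
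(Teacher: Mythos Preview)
The paper does not supply its own proof of this statement: it is recorded as a background result with a citation to Berenstein--Zelevinsky \cite{BZ05}, and no argument is given in the text. Your outline is precisely the strategy of the original reference---introducing the upper bound $\calU_q$, establishing the two-step mutation lemma, and deducing seed-independence of $\calU_q$ to run an induction---so there is nothing further to compare.
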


\begin{remark}
For a permutation $\pi \in \frakS_\sfK$ of $\sfK$ preserving $\sfK_\fr$, we
denote by $\pi(\La,\tB) = (\pi \La,\pi \tB)$ where 
$(\pi\La)_{i,j} \seteq \La_{\pi^{-1}(i)\pi^{-1}(j)}$ for $i,j \in \sfK$
and 
$(\pi\tB)_{i,j} \seteq b_{\pi^{-1}(i)\pi^{-1}(j)}$ for $(i,j) \in \sfK \times \sfK_\ex$. The operation $\pi$ on a compatible pair $(\La,\tB)$ obviously induces
the $\bbA$-algebra isomorphism 
$$
\pi^*: \scrA_q(\pi(\La,\tB)) \simeq  \scrA_q(\La,\tB) \quad \text{ given by }
\pi^*(Z_j) = Z_{\pi^{-1}(j)} \text{ for all }j\in \sfK
$$
and bijections between the sets of quantum cluster monomials. 
\end{remark}

Let $(\La,\tB)$ be a compatible pair.  An element $\sfz$ in $\calT(\La)$ is \emph{pointed} 
if it is of the following form: 
\begin{align} \label{eq: def degree and codegree}
\sfz =  q^{a} Z^{\bsg^R} + \sum_{ \bsn \in \Z_{\ge 0}^{\oplus \Kex} \setminus \{ 0 \} } p_\bsn Z^{\bsg^R+\tB \bsn}  
\end{align}
for some $a \in \dfrac{1}{2}\Z$, $\bsg^R \in \Z^{\oplus \K}$ and $p_\bsn \in \bbA$. In this case, we call $\bsg^R$ the \emph{degree} of the pointed 
element $\sfz$ and denote it by $\mathbf{deg}(\sfz)$. Note that the notion of degree \emph{does depend} on $\tB$.

\begin{theorem} \cite{FZ07,DWZ10,GHKK,Tran}  \hfill \label{thm: same degree same element}
\bna
\item Every quantum cluster monomial $\sfz$ in $\scrA_q(\La,\tB)$ is pointed.
\item If two quantum cluster monomials  $\sfz,\sfz' \in \scrA_q(\La,\tB)$ have the same degree, then $\sfz=\sfz'$. 
\ee    
\end{theorem}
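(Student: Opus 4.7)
The plan is to establish (a) by induction on the length of the mutation sequence defining a quantum cluster monomial, using the tropical transformation rule for g-vectors, and to deduce (b) from the injectivity of the g-vector parametrization of quantum cluster monomials.

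For (a), the base case consists of the initial monomials $Z^{\bfa}$ with $\bfa \in \Z_{\ge 0}^{\oplus \sfK}$, which are trivially pointed of degree $\bfa$. For a single mutation, the only nontrivial case is $\mu_k^*(Z_k) = Z^{\bfa'} + Z^{\bfa''}$; using the identity $a = [a]_+ - [-a]_+$, a direct computation gives $\bfa' - \bfa'' = \tB\,\ek_k$, where $\ek_k \in \Z_{\ge 0}^{\oplus \sfK_\ex}$ is the $k$-th unit vector, exhibiting $\mu_k^*(Z_k)$ as pointed of degree $\bfa''$. For longer mutation sequences, I will invoke the tropical g-vector rule: if a quantum cluster monomial $\sfz$ has g-vector $\bsg$ relative to a seed $(\La,\tB)$, then with respect to the mutated seed $\mu_k(\La,\tB)$ its g-vector is obtained from $\bsg$ by an explicit piecewise-linear formula. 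Iterating this tropical rule along any mutation path connecting the seed containing $\sfz$ to the initial seed, and invoking the quantum Laurent phenomenon to ensure the expansion lies in $\calT(\La)$ at each intermediate step, one concludes pointedness of $\sfz$ in the initial seed.

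For (b), after noting that $\sfz$ and $\sfz'$ share the same leading exponent $\bsg^R$ in their pointed expansions, the identity $\sfz = \sfz'$ reduces to the injectivity of the g-vector map $\mathbf{deg}$ on the set of quantum cluster monomials of $\scrA_q(\La,\tB)$. This injectivity, conjectured by Fomin--Zelevinsky~\cite{FZ07}, was proved in the skew-symmetric classical case by Derksen--Weyman--Zelevinsky~\cite{DWZ10} via representations of quivers with potentials, extended to all classical cluster algebras by Gross--Hacking--Keel--Kontsevich~\cite{GHKK} through scattering diagrams and theta bases, and quantized by Tran~\cite{Tran}. The underlying strategy is to embed the set of quantum cluster monomials into a canonical (theta) basis of the upper quantum cluster algebra indexed bijectively by g-vectors in $\Z^{\oplus \sfK}$; linear independence in this basis then forces $\sfz = \sfz'$ immediately.

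The main obstacle is precisely the injectivity assertion in (b): it lies far beyond routine computation and requires the full scattering-diagram and theta-function machinery in the quantum setting. Part (a), by contrast, reduces to careful bookkeeping of the tropical g-vector formalism against the quantum Laurent phenomenon and is essentially elementary once that formalism is in place.
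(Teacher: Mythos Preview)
The paper does not prove this theorem; it is stated with citations to \cite{FZ07,DWZ10,GHKK,Tran} and used as a black box throughout. There is thus no in-paper proof to compare against, and your proposal is really a sketch of what those references do.

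As such a sketch it is broadly accurate, but one step in part (a) is underjustified. In the inductive step you write that ``iterating the tropical rule \ldots\ and invoking the quantum Laurent phenomenon'' yields pointedness. Knowing that $\sfz \in \calT(\La)$ and that its $g$-vector transforms by the piecewise-linear rule does not by itself force every subleading monomial in the Laurent expansion to be of the form $Z^{\bsg^R + \tB\bsn}$ with $\bsn \in \Z_{\ge 0}^{\oplus \sfK_\ex}$, nor does it force the leading coefficient to be a single power of $q$. What actually carries this in Tran's argument is a quantum separation-of-additions formula: the cluster monomial factors (up to a power of $q$) as $Z^{\bsg^R}$ times a quantum $F$-polynomial evaluated at the $\hat y$-monomials $Z^{\tB\ek_j}$, and pointedness then follows because that $F$-polynomial has constant term $1$. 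Your one-mutation computation $\bfa' - \bfa'' = \tB\ek_k$ is precisely the base case of this mechanism, but the general step needs the $F$-polynomial structure, not just the tropical $g$-vector rule plus Laurentness.

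Your account of (b) --- that injectivity of $\bfdeg$ on cluster monomials is the deep input, conjectured in \cite{FZ07}, settled classically in \cite{DWZ10,GHKK}, and available in the quantum setting --- is correct, and your acknowledgement that this is where the real content lies is appropriate.
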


Let $\sfz \in \scrA_q(\La,\tB)$ and 
$\sfz' \in \scrA_q(\mu_k(\La,\tB))$ be pointed elements such that
$\mathbf{deg}(\sfz) = (\bsg_i) \in \Z^{\oplus \sfK}$ 
and 
$\mathbf{deg}(\sfz') = (\bsg'_i) \in \Z^{\oplus \sfK}$. 
If $\mu^*_k(\sfz')=\sfz$, we have (\cite{FZ07,DWZ10,GHKK})
\begin{align} \label{eq: g-vector mutation}
\bsg_j = \bc
-\bsg'_k & \text{ if } j=k, \\
\bsg'_j + [b'_{jk}]_+ \bsg'_k & \text{ if $j \ne k$ and $\bsg_k' \ge 0$}, \\
\bsg'_j + [-b'_{jk}]_+ \bsg'_k & \text{ if $j \ne k$ and $\bsg_k' \le 0$}.
\ec
\end{align}

\section{Behaviours of degrees with respect to braid moves and forward shifts} \label{sec: Braid and degree}

In this section, we study the behaviours of degrees
with respect to several moves among sequences of simple reflections. For $2$-moves,  $3$-moves and forward shifts, they are investigated in \cite{FHOO2}. We mainly focus on the $4$-moves and $6$-moves, which have not been  
investigated before as far as the authors know, and quite complicated. We mainly follow the notations and frameworks in \cite{FHOO2}. 

\subsection{Setup} Let $\g$ be a complex finite-dimensional simple Lie algebra.
We denote by $I$ the index set of simple (co)roots $\Pi=\{ \al_i \}_{i \in I}$ (resp. $\Pi^\vee = \{ h_i \}_{i \in I}$), by $\Dynkin$
the Dynkin diagram of $\g$, by $\sfh$ the Coxeter number of $\g$, 
by $\sfC=(\sfc_{i,j})_{i,j\in I}$ the Cartan matrix of $\g$,
by $\sfD={\rm diag}(\sfd_i \in \Z_{\ge1} \ | \  i \in I)$ the minimal (left) symmetrizer of $\sfC$, by $\Phi^\pm$ the set of positive (resp. negative) roots of $\g$, 
and by $\wl$ the weight lattice of $\g$. 
We write $i \sim j$ for $i,j \in I$ if $\sfc_{i,j} < 0$, 
and let 
$\varpi_i$ be the fundamental weight for each $i \in I$. 
We set
$\rl \seteq \bigoplus_{i\in I}\Z \al_i$
and
$\rl^\pm  \seteq \pm \sum_{i\in I}\N_0 \al_i$. For $\be = \sum_{i \in I}n_i\al_i \in \rl^+$, we set $|\be|\seteq\sum_{i \in I} n_i$.
We denote by
$d(i,j)$ the number of edges between $i$ and $j$ in $\Dynkin$ (see \cite{JLO1} for examples of $d(i,j)$).

We take the symmetric bilinear form $( -, - )$ on $\wl$ such that
$$
(\al_i,\al_j) =\sfd_i \sfc_{i,j}=\sfd_j \sfc_{j,i} \quad \text{ and } \quad \lan h_i,\al_j \ran = \dfrac{2(\al_i,\al_j)}{(\al_i,\al_i)}=\sfc_{i,j}.
$$
Let $\weyl$ be the Weyl group of $\g$ generated by simple reflections 
$\{s_i\}_{i \in I}$ such that $s_i(\la)= \la - \lan h_i,\la \ran \al_i$ for $\la \in \wl$. 
For a finite sequence $\ii=(i_1,\ldots,i_r)$ in the  Weyl group $\weyl$,
we call $\ii$ \emph{reduced}, if $s_{i_1}\ldots s_{i_r}$ is a reduced expression of some $w$ in $\weyl$. Note that there exists a unique longest element 
$w_\circ$ in $\weyl$ and 
$w_\circ$ induces the automorphism $*$ on $I$ by $w_\circ(\al_i)= -\al_{i^*}$. 

We say that two sequences $\ii=(i_1,\ldots,i_r)$ and $\ii'=(i'_1,\ldots,i'_r)$ of indices are \emph{commutation equivalent},  denoted by $\ii \overset{c}{\sim}\ii'$,  if $\ii'$ can be obtained from $\ii$
by applying 2-moves (or commutation moves) $(i_k,i_{k+1}) \to (i_{k+1}=i_k',i_k=i'_{k+1})$ for $1 \le k <r$ and $d(i_k,i_{k+1})>1$. 
We denote by $[\ii]$ the equivalence class of $\ii$ under $\overset{c}{\sim}$.

\smallskip 

Let $\ii=(i_1,\ldots,i_\ell)$ be a reduced sequence of $w_\circ$. Then it is well-known 
that
\begin{align} \label{eq: param positive roots}
\Phi^+ = \{ \be^\ii_k \seteq s_{i_1} \ldots s_{i_{k-1}}(\al_{i_k}) \ | \  1 \le k \le \ell \} \quad \text{ and }  \quad   |\Phi^+ |=\ell.
\end{align} 

\begin{example}  \label{ex: B2 longest}
For $\g$ of type $B_2$, there are only two reduced sequences $\ii$ and $\ii'$ of 
$w_\circ$ given by $\ii=(1,2,1,2)$ and $\ii'=(2,1,2,1)$, respectively. 
Then 
\begin{align*}
\Phi^+ &= \{ \be^\ii_1=\al_1, \be^\ii_2=\al_1+\al_2, \be^\ii_3=\al_1+2\al_2, \be^\ii_4=\al_2 \}, \\
 &= \{ \be^{\ii'}_1=\al_2, \be^{\ii'}_2=\al_1+2\al_2, \be^{\ii'}_3=\al_1+\al_2, \be^{\ii'}_4=\al_1 \}.
\end{align*}
\end{example}

\subsection{Sequences of indices} Let $\ii=(i_u)_{u \in \N} \in I^\N$ be an arbitrary sequence satisfying 
\begin{align} \label{eq: the infinity condition}
|\{ u \in \N \ | \ i_u =i \}| =\infty \quad \text{ for any } i \in I.     
\end{align}
We denote by $I^{(\infty)}$ the set of sequences in $I^{\N}$
satisfying~\eqref{eq: the infinity condition}.  

\smallskip 

For $\ii \in I^{r}$ ($r \in \N \sqcup \{ \infty\}$), $u \in [1,r]$ and $j \in I$, we set
\begin{align*}
u_\ii^+(j) &\seteq \min(\{ k \in \N \ |  \ k >u, \ i_k=j\} \sqcup \{ r+1\}),  &&  u_\ii^+ \seteq u_\ii^+(i_u), \allowdisplaybreaks\\
u_\ii^-(j) &\seteq \max( \{k \in \N \ |  \ k <u, \ i_k=j\} \sqcup \{ 0 \} ),  &&  u_\ii^- \seteq u_\ii^-(i_u), \allowdisplaybreaks\\
u_\ii^{\min}(j) &\seteq \min(k \in \N \ |  \   \ i_k=j ), && u_\ii^{\min} \seteq u_\ii^{\min}(i_u). 
\end{align*}
We drop $_\ii$ if there is no danger of confusion. 

For $\ii \in I^{(\infty)}$, we define an $\N\times \N$ exchange matrix $\tB^{\ii}=(b^\ii_{u,v})_{u,v \in \N}$ as follows:
$$
b^\ii_{u,v} \seteq \bc
\pm 1  & \text{ if } v =u^{\pm}, \\
\pm \sfc_{i_u,i_v} & \text{ if } u < v<u^+<v^+ \text{ (resp. }v<u<v^+<u^+\text{)},\\
0 & \text{ otherwise.}
\ec
$$
Note that $\tB^{\ii}$ is skew-symmetrizable with $\diag(\sfd_{i_u} \ | \  u \in \N)$ and satisfies the conditions in Definition~\ref{def: Ex matrix}.

Let $\La^{\ii}=(\La^{\ii}_{u,v})$ be a skew-symmetric matrix given by
$$
\La^{\ii}_{u,v}= - \La^{\ii}_{v,u} \seteq (\varpi_{i_u}-w^\ii_u\varpi_{i_u},\varpi_{i_v}+w^\ii_v\varpi_{i_v}) \quad \text{ for } u \le v,
$$
where 
$$w^\ii_k \seteq s_{i_1}\ldots s_{i_k} \in \weyl \quad \text{ for } k \in \N.$$ 
 For $s \in \N$, we set 
$$
\sfK \seteq [1,s], \qquad \sfK_\fr \seteq \{ k \in \sfK \ | \ k^+ > s \}, \qquad \sfK_\ex \seteq \sfK \setminus \sfK_\fr.
$$

\begin{lemma} \cite[Lemma 1.1]{FHOO2}\label{Lem:Lambda}
For $u,v \in \N$ with $u < v^+$, we have $\La^{\ii}_{u,v} = (\varpi_{i_u}-w^\ii_u\varpi_{i_u}, \varpi_{i_v}+w^\ii_v\varpi_{i_v}).$
\end{lemma}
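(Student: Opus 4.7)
The plan is to split the verification into two cases according to whether $u \le v$ or $v < u < v^+$. When $u \le v$, the formula in question is literally the defining one for $\La^\ii_{u,v}$, so there is nothing to prove. The substantive case is $v < u < v^+$, where the asserted identity must be derived by combining the antisymmetry $\La^\ii_{u,v} = -\La^\ii_{v,u}$ with the defining formula applied after swapping the roles of $u$ and $v$.

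In that case, the lemma reduces to verifying
\begin{align*}
(\varpi_{i_u} - w^\ii_u \varpi_{i_u},\, \varpi_{i_v} + w^\ii_v \varpi_{i_v}) + (\varpi_{i_v} - w^\ii_v \varpi_{i_v},\, \varpi_{i_u} + w^\ii_u \varpi_{i_u}) = 0.
\end{align*}
Expanding both pairings and invoking the symmetry of $(-,-)$, the mixed cross terms $(\varpi_{i_u}, w^\ii_v \varpi_{i_v})$ and $(\varpi_{i_v}, w^\ii_u \varpi_{i_u})$ cancel in pairs, and the sum collapses to $2\bigl((\varpi_{i_u}, \varpi_{i_v}) - (w^\ii_u \varpi_{i_u}, w^\ii_v \varpi_{i_v})\bigr)$. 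Therefore the task is further reduced to establishing the identity $(\varpi_{i_u}, \varpi_{i_v}) = (w^\ii_u \varpi_{i_u}, w^\ii_v \varpi_{i_v})$.

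For this final step, I would use the $\weyl$-invariance of $(-,-)$ to rewrite the right-hand side as $(\varpi_{i_u},\, (w^\ii_u)^{-1} w^\ii_v\, \varpi_{i_v})$ and unwind $(w^\ii_u)^{-1} w^\ii_v = s_{i_u} s_{i_{u-1}} \cdots s_{i_{v+1}}$. The hypothesis $u < v^+$ is precisely the statement that $i_k \ne i_v$ for all $v < k \le u$, so each factor satisfies $s_{i_k}(\varpi_{i_v}) = \varpi_{i_v} - \delta_{i_k, i_v} \al_{i_k} = \varpi_{i_v}$. Applying these reflections successively yields $(w^\ii_u)^{-1} w^\ii_v\, \varpi_{i_v} = \varpi_{i_v}$, which closes the argument. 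I do not expect any serious obstacle: the only points to watch are the bookkeeping of the antisymmetry convention and the observation that the index condition $u < v^+$ is exactly what forces every simple reflection involved to fix $\varpi_{i_v}$.
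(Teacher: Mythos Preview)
Your argument is correct: the case $u\le v$ is the definition, and for $v<u<v^+$ your reduction to $(\varpi_{i_u},\varpi_{i_v})=(w^\ii_u\varpi_{i_u},w^\ii_v\varpi_{i_v})$ via antisymmetry, followed by $\weyl$-invariance and the observation that $u<v^+$ forces $i_k\ne i_v$ for $v<k\le u$, is exactly the intended computation. The paper does not supply its own proof here---it simply cites \cite[Lemma~1.1]{FHOO2}---so there is nothing to compare against in the present text, but your approach is the standard one and matches what one finds in the cited reference.
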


\begin{proposition} \cite[Proposition 1.2]{FHOO2}
Let $\ii \in I^{(\infty)}$. 
\bna 
\item The pair $(\La^{\ii},\tB^{\ii})$ is \emph{compatible with $\diag(\sfd_{i_u})_{u \in \N}$}; i.e., for any $u,v \in \N$, we have
$$
\sum_{k \in \N} b^\ii_{k,u}\La^\ii_{k,v} = 2\sfd_{i_u} \delta_{u,v}.
$$
\item For each $s \in \N$, we set 
$$\La^{\ii,s} \seteq (\La^{\ii}_{u,v})_{u,v \in \sfK} \quad \text{ and } \quad \tB^{\ii,s} \seteq (b^{\ii}_{u,v})_{u\in \sfK, v \in \sfK_\ex}.$$ 
Then the pair  $(\La^{\ii,s},\tB^{\ii,s})$ is compatible with $\diag(\sfd_{i_k})_{k \in \sfK_\ex}$. 
\ee
\end{proposition}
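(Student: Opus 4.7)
The plan is to prove (a) by a direct computation in the weight lattice and to derive (b) from (a) by a simple support argument. The main tools are Lemma~\ref{Lem:Lambda} together with the skew-symmetry $\La^\ii_{k, v} = -\La^\ii_{v, k}$, which allow one to express $\La^\ii_{k, v}$ uniformly as a pairing of two weights $\sfu_k := \varpi_{i_k}-w^\ii_k\varpi_{i_k}$ and $\sfv_v := \varpi_{i_v}+w^\ii_v\varpi_{i_v}$, switching form at $k = v^+$:
\[
\La^\ii_{k, v} = (\sfu_k, \sfv_v)\ \text{if}\ k < v^+,\qquad \La^\ii_{k, v} = -(\sfu_v, \sfv_k)\ \text{if}\ k \geq v^+.
\]
The other fundamental input is the telescoping identity $\sfu_k - \sfu_{k^-} = \be^\ii_k$ (with $\sfu_0 := 0$), obtained by iterating $w^\ii_k = w^\ii_{k-1}s_{i_k}$ together with $s_j \varpi_i = \varpi_i - \delta_{ij}\al_i$. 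By Definition~\ref{def: Ex matrix}(a) the column $b^\ii_{\cdot, u}$ has finite support contained in $[u^-, u^+]$, so the sum in (a) is in fact a finite sum whose only nonzero terms come from the boundaries $k = u^\pm$ and the two interlacing sets $A := \{k : k < u < k^+ < u^+\}$ and $B := \{k : u < k < u^+ < k^+\}$.

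\textbf{Key steps.} I would first reduce the computation to two core identities. Let $\sigma := s_{i_{u+1}}\cdots s_{i_{u^+-1}}$; by construction none of its indices equals $i_u$, so $\sigma$ fixes $\varpi_{i_u}$. The first identity I would establish is that
\[
T_u := \sum_k b^\ii_{k, u}\,\sfu_k = 0.
\]
The plan for this is a double telescope: write $\be^\ii_u = -w^\ii_u\al_{i_u}$ and $\be^\ii_{u^+} = w^\ii_u\sigma\al_{i_u}$, apply an inner telescope on the partial products $\sigma_m := s_{i_{u+1}}\cdots s_{i_{m-1}}$ (using $\sigma_{m+1}\al_{i_u} - \sigma_m\al_{i_u} = -\sfc_{i_m, i_u}\sigma_m\al_{i_m}$) to obtain the root-level identity
\[
\be^\ii_u + \be^\ii_{u^+} + \sum_{m=u+1}^{u^+-1}\sfc_{i_m, i_u}\,\be^\ii_m = 0,\tag{$\star$}
\]
and then show that the outer telescope $\sfu_k - \sfu_{k^-} = \be^\ii_k$ collapses the chains of positions in $(u^-, u^+)$ with $i_\cdot = j$ (only the extremal positions in each chain survive by the interlacing conditions defining $A$ and $B$), so that $T_u$ becomes exactly the left-hand side of ($\star$). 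The second identity I would establish, via Weyl-invariance of $(-,-)$ and $\sigma\varpi_{i_u} = \varpi_{i_u}$, is the boundary pairing
\[
(\sfu_{u^+}, \sfv_u) + (\sfu_u, \sfv_{u^+}) = 2(\varpi_{i_u}, \varpi_{i_u}) - 2(\varpi_{i_u}, \sigma(\varpi_{i_u}-\al_{i_u})) = 2(\varpi_{i_u}, \sigma\al_{i_u}) = 2(\varpi_{i_u}, \al_{i_u}) = 2\sfd_{i_u}.
\]

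\textbf{Assembly and part (b).} With these two identities in hand, the case $v = u$ is handled by splitting the sum at $k = u^+ = v^+$ (the place where the formula for $\La^\ii_{k, u}$ switches form) and invoking $T_u = 0$: the sum collapses to precisely the boundary pairing $(\sfu_{u^+},\sfv_u)+(\sfu_u,\sfv_{u^+}) = 2\sfd_{i_u}$. For $v \geq u^+$ no split is needed and the sum reduces to $(T_u, \sfv_v) = 0$. The remaining cases (notably $u^- < v < u^+$ with $v \ne u$, and $v \le u^-$) will be handled by the same split-and-collapse mechanism together with the skew-symmetry $\La^\ii_{k, v} = -\La^\ii_{v, k}$, each yielding $0$ since no boundary contribution of the form appearing in the $v=u$ case survives. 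Part (b) is then immediate: if $u \in \sfK_\ex$ then $u^+ \le s$, so $\{k : b^\ii_{k, u}\neq 0\} \subseteq [u^-, u^+] \subseteq \sfK = [1, s]$, and the finite sum coincides with the infinite one.

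\textbf{Main obstacle.} The hardest step will be verifying $T_u = 0$: the two nested telescopes (the inner one producing ($\star$), the outer one collapsing the $j$-chains in $(u^-, u^+)$) must align precisely through the interlacing combinatorics defining $A$ and $B$, and the alignment depends on the exact way the two extremal positions of each chain interact with the boundary terms at $u^\pm$. A secondary difficulty is the intermediate case $u^- < v < u^+$ with $v \ne u$: inside the support of $b^\ii_{\cdot, u}$ the formula for $\La^\ii_{k, v}$ switches form at $k = v^+$, requiring careful bookkeeping to ensure that the cross terms from the two pieces of the split still cancel to $0$.
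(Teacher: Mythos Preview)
The paper does not prove this proposition; it is quoted verbatim from \cite[Proposition~1.2]{FHOO2} and no argument is given in the present paper. So there is no ``paper's own proof'' to compare against---you are supplying a proof from scratch.

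Your plan is correct and is essentially the standard direct verification. Both key identities are right: the root identity $(\star)$ is exactly what falls out of the inner telescope on $\sigma_m\al_{i_u}$, and the chain collapse does reduce $T_u$ to $(\star)$ as you say (the only subtlety is when some $j$ has its first occurrence inside $(u,u^+)$, so that $m_1(j)^-=0$; then there is no $A$-term for that $j$, but this is harmless since $\sfu_0=0$). The boundary pairing computation is also correct; the step $(\sigma\al_{i_u},\varpi_{i_u})=(\al_{i_u},\varpi_{i_u})$ uses $\sigma^{-1}\varpi_{i_u}=\varpi_{i_u}$.

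One point you should be aware of for the ``remaining cases'': when $v^+\le u^-$, the sum becomes $-(\sfu_v,\sum_k b^\ii_{k,u}\sfv_k)$, and $\sum_k b^\ii_{k,u}\sfv_k=2\sum_k b^\ii_{k,u}\varpi_{i_k}$ is \emph{not} zero in general---it picks up a term $-\sfc_{j,i_u}\varpi_j$ for each $j$ with an occurrence in $(u,u^+)$ but none before $u$. The sum still vanishes, but for a different reason than ``$T_u=0$'': for such $j$ no $s_j$ appears in $w^\ii_v$, so $(w^\ii_v)^{-1}\varpi_j=\varpi_j$ and hence $(\sfu_v,\varpi_j)=0$. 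This is an extra (easy) observation your write-up should make explicit. The intermediate case $u^-<v<u^+$, $v\ne u$ is handled by combining the two mechanisms, as you anticipate.
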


We define $\tGamma^\ii = (\tGamma^\ii_0, \tGamma^\ii_1)$ as the valued quiver associated with 
$\tB^{\ii}$, where $\tGamma^\ii_0$ denotes the set of vertices and $\tGamma^\ii_1$ denotes the
set of arrows.  
For a subset $\calK$ of $\tGamma^\ii_0$, we denote by $\tGamma^\ii|_\calK$
the full subquiver of $\tGamma^\ii$ whose set of vertices is $\calK$. 
 
\smallskip 

We denote by $\calA_\ii$ the quantum cluster algebra associated with the compatible pair $(\La^{\ii},\tB^{\ii})$, which is an $\bbA$-subalgebra
of the quantum torus $\calT(\La^{\ii})$. For each $s \in \N$, we also have the quantum cluster algebra  $\calA^s_\ii=\scrA_q(\La^{\ii,s},\tB^{\ii,s})$
in the quantum torus $\calT(\La^{\ii,s})$. Then we have
$$
\raisebox{1.5em}{\xymatrix@R=3ex@C=4ex{   \calA^1_\ii \ar@{^{(}->}[r] \ar@{^{(}->}[d]  & \calA^2_\ii \ar@{^{(}->}[r] \ar@{^{(}->}[d]  & \cdots \ar@{^{(}->}[r]  & \calA_\ii  \ar@{^{(}->}[d]  \\ 
\calT(\La^{\ii,1}) \ar@{^{(}->}[r]  & \calT(\La^{\ii,2}) \ar@{^{(}->}[r]  & \cdots \ar@{^{(}->}[r]  & \calT(\La^{\ii})}
}
\quad \text{ and } \quad \bigcup_{s \in \N} \calA_\ii^s =\calA_\ii. 
$$

For $\ii \in I^\N$, let $C_\ii \subset \Z^{\oplus \N}$ denote the cone defined by
\begin{equation} \label{eq:cone}
C_\ii \seteq \left\{\bsg = (g_u)_{u \in \N} \in \Z^{\oplus \N} \; \middle | \; \sum_{v \ge u, i_v = i_u} g_v \ge 0, \forall u \in \N   \right\}
= \sum_{u \in \N} \N_0 (\bse_u - \bse_{u^-_{\ii}}),
\end{equation}
where $\{\bse_u\}_{u \in \N}$ is the natural basis of $\Z^{\oplus \N}$ and we understand $\bse_0 = 0$. 

For $\bsg = (g_u)_{u \in \N} \in \Z^{\oplus \N}$ and $\ii \in I^{\N}$, write
\begin{equation}
\sfp_{\ii}(\bsg; i)\seteq \sum_{u\in \N, i_u = i} g_u.\label{eq:p-sum}
\end{equation}

\subsection{$2$-moves} In this and next subsections, we recall the results on
$2$-moves  and $3$-moves investigated in~\cite{FHOO2}.

\smallskip 

For sequences $\ii=(i_u)_{u \in \N},\ii'=(i'_u)_{u \in \N} \in I^{(\infty)}$,
we say that $\ii'$ can be obtained from $\ii$ via a \emph{$2$-move} if there
exists a unique $k \in \N$ such that $$i_k=i'_{k+1}, \ \  i_{k+1}=i'_{k}, \ \ i_u=i'_u \ \ (u \not\in \{ k,k+1\})$$ and $$\sfc_{i_{k+1},i_{k}}=\sfc_{i_k,i_{k+1}}=0.$$ In this case, we write $\ii'=\gamma_k\ii$.

\begin{proposition} \cite[\S 2.1]{FHOO2} \label{prop: c-move}
Let $\ii,\ii' \in I^\N$ be related by $\ii'=\gamma_k\ii$ for some $k \in \N$. 
\bnum
\item $\tB^{\ii'}= \sigma_k\tB^{\ii}$ and $\La^{\ii'} = \sigma_k \La^{\ii}$.
\item We have an $\bbA$-algebra isomorphism 
$$
\gamma^*_k(=\sigma^*_k) : \calA_{\ii'} \isoto \calA_\ii \quad \text{ given by } Z_u \mapsto Z_{\sigma_k(u)} \text{ for all }u\in \N,
$$
which induces a bijection between the sets of quantum cluster monomials. 
\item If $\sfz \in \calA_{\ii'}$ is a quantum cluster monomial with $\bfdeg(\sfz)=\bsg'=(g'_u)_{u \in \N}$, then the quantum cluster monomial
$\gamma_k^*\sfz \in \calA_\ii$ has $\bfdeg(\gamma_k^*\sfz)=\bsg=(g_u)_{u \in \N}$ as follows:
\begin{align} \label{eq: com g}
g_u = \bc
g'_{\sigma_k(u)}  & \text{ if } u \in \{k,k+1\}, \\
g'_u & \text{ otherwise}. 
\ec    
\end{align}
\item The map $\bsg' \mapsto \bsg$ given by~\eqref{eq: com g} sends the cone $C_{\ii'} \subset \Z^{\oplus \N}$ into the cone $C_{\ii} \subset \Z^{\oplus \N}$.
\item \label{it: c pgi} The elements $\bsg,\bsg' \in \Z^{\oplus\N}$ related by~\eqref{eq: com g} satisfy
$$
\sfp_{\ii}(\bsg;i) =\sfp_{\ii'}(\bsg';i) \quad \text{ for all $i \in I$.}
$$
\ee    
\end{proposition}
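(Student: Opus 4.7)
The plan is to verify items (i)--(v) by direct inspection of the definitions, with the hypothesis $\sfc_{i_k,i_{k+1}}=0$ --- equivalently, the commutation $s_{i_k}s_{i_{k+1}}=s_{i_{k+1}}s_{i_k}$ together with the weak orthogonalities $s_{i_k}\varpi_{i_{k+1}}=\varpi_{i_{k+1}}$ and $s_{i_{k+1}}\varpi_{i_k}=\varpi_{i_k}$ --- doing all of the real work.

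For (i), I would fix $u,v\in\N$ and compare $b^{\ii'}_{u,v}$ with $b^{\ii}_{\sigma_k(u),\sigma_k(v)}$ by cases according to whether $u,v$ lie in $\{k,k+1\}$. For any color $i \ne i_k,i_{k+1}$ the positions with $i_u=i$ are untouched by $\gamma_k$, so $u^{\pm}_{\ii'}(i) = u^{\pm}_{\ii}(i)$; for $i\in\{i_k,i_{k+1}\}$ the successor/predecessor slots change only at the indices $k$ and $k+1$, and exactly in the way $\sigma_k$ prescribes. The one entry that might look subtle --- between $k$ and $k+1$ themselves --- is trivialized by $\sfc_{i_k,i_{k+1}}=0$, which forces $b^{\ii'}_{k,k+1}=b^{\ii}_{k,k+1}=0$. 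For $\La$, Lemma~\ref{Lem:Lambda} reduces the check to the identity $w^{\ii'}_u\varpi_{i'_u}=w^{\ii}_{\sigma_k(u)}\varpi_{i_{\sigma_k(u)}}$ for every $u$; this is clear outside $\{k,k+1\}$, and at $u=k,k+1$ it follows from the commutation of $s_{i_k}$ and $s_{i_{k+1}}$ combined with the orthogonality relations above. Item (ii) is then formal: $\sigma_k$ preserves the frozen/exchangeable partition at every truncation level, so (i) together with the general remark on permutations of seeds yields the $\bbA$-algebra isomorphism $\gamma_k^*=\sigma_k^*$ and a bijection between the sets of quantum cluster monomials.

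For (iii), since no mutation is performed, the relation $\sigma_k^*(Z^{\bsg'})\cong Z^{\sigma_k\bsg'}$ together with the definition of degree in~\eqref{eq: def degree and codegree} yields~\eqref{eq: com g} immediately. For (iv) and (v), both the cone $C_\ii$ and the linear functional $\sfp_\ii(-;i)$ depend only on the ordered color-classes $\{u\in\N\mid i_u=i\}$ for $i\in I$. Because $i_k\ne i_{k+1}$, none of these classes is altered by $\gamma_k$, so (v) follows at once from~\eqref{eq: com g}, while each defining inequality $\sum_{v\ge u,\,i_v=i_u}g_v\ge 0$ of $C_\ii$ is the $\sigma_k$-image of a defining inequality of $C_{\ii'}$, giving (iv).

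The main obstacle, which is really just bookkeeping, lies in (i): one must verify the mixed entries $(u,v)$ where exactly one coordinate lies in $\{k,k+1\}$ and the other is an outside index, showing that the comparison $u<v<u^+<v^+$ (and its symmetric variant) is preserved under $\sigma_k$. This uses precisely that $\gamma_k$ only permutes two consecutive successor/predecessor slots for the colors $i_k$ and $i_{k+1}$ and leaves every other color-to-color ordering intact.
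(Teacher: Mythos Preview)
Your argument is correct. The paper does not supply its own proof of this proposition; it is quoted from \cite[\S 2.1]{FHOO2} as a recalled result, and your direct verification by case analysis is exactly the kind of elementary check one expects for a pure relabelling move with $\sfc_{i_k,i_{k+1}}=0$.
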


\subsection{$3$-moves}
For sequences $\ii=(i_u)_{u \in \N},\ii'=(i'_u)_{u \in \N} \in I^{\infty}$,
we say that $\ii'$ can be obtained from $\ii$ via a \emph{$3$-move}, if there
exists a unique $k \in \N$ such that 
$$i_k=i_{k+2}=i'_{k+1}, \ \ i_{k+1}=i'_{k}=i'_{k+2}, \ \ i_u=i'_u \ \ (u \not\in [ k,k+2] )$$ and 
$$\sfc_{i_{k+1},i_{k}}\sfc_{i_k,i_{k+1}}=1.$$ In this case, we write $\ii'=\beta_k\ii$.

 \begin{proposition} \cite[\S 2.2]{FHOO2} \label{prop: 3 move}
 Let $\ii,\ii' \in I^{(\infty)}$ be related by $\ii'=\beta_k\ii$ for some $k \in \N$. 
\bnum
\item $\tB^{\ii'}= \sigma_{k+1}\mu_k\tB^{\ii}$ and $\La^{\ii'} = \sigma_{k+1}\mu_k \La^{\ii}$.
\item We have an $\bbA$-algebra isomorphism 
$$
\beta^*_k(=\mu^*_k\sigma^*_{k+1}) : \calA_{\ii'} \isoto \calA_\ii \quad \text{ given by } Z_u \mapsto \mu_k(Z_{\sigma_{k+1}(u)}) \text{ for all }u\in \N,
$$
which induces a bijection between the sets of quantum cluster monomials. 
\item If $\sfz \in \calA_{\ii'}$ is a quantum cluster monomial with $\bfdeg(\sfz)=\bsg'=(g'_u)_{u \in \N}$, then the quantum cluster monomial
$\beta_k^*\sfz \in \calA_\ii$ has $\bfdeg(\beta_k^*\sfz)=\bsg=(g_u)_{u \in \N}$ as follows:
\begin{align}\label{eq: 3 g}
g_u = \bc
-g'_{k}  & \text{ if } u=k, \\
g'_{\sigma_{k+1}(u)} + [g_k']_+  & \text{ if } u \in \{ k+2,(k+1)_\ii^- \}, \\
g'_{\sigma_{k+1}(u)} - [-g_k']_+  & \text{ if } u \in \{ k+1,k_\ii^- \}, \\
g'_u & \text{ otherwise}. 
\ec
\end{align}
\item The map $\bsg' \mapsto \bsg$ given by~\eqref{eq: 3 g} sends the cone $C_{\ii'} \subset \Z^{\oplus \N}$ into the cone $C_{\ii} \subset \Z^{\oplus \N}$.
\item \label{it: 3 pgi} The elements $\bsg,\bsg' \in \Z^{\oplus\N}$ related by~\eqref{eq: 3 g} satisfy
$$
\sfp_{\ii}(\bsg;i) = \bc
\sfp_{\ii'}(\bsg';i) + [-g_k']_+ & \text{ if } i=i_k \text{ and } k^-_\ii =0, \\
\sfp_{\ii'}(\bsg';i) - [g_k']_+ & \text{ if } i=i_{k+1} \text{ and } (k+1)^-_\ii =0, \\
\sfp_{\ii'}(\bsg';i) & \text{ otherwise}.
\ec
$$
\ee     
\end{proposition}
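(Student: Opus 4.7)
The plan is to establish (i) first by a direct computation, then derive (ii) from general quantum cluster algebra theory, and finally use the g-vector mutation formula \eqref{eq: g-vector mutation} to establish (iii)--(v). The whole argument is essentially a careful local analysis around the three positions $k, k+1, k+2$.

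For (i), I would analyze the local structure of the valued quiver $\tGamma^{\ii}$ near positions $k, k+1, k+2$. Since $i_k = i_{k+2}$ and $i_{k+1} \ne i_k$, the vertex $k$ in $\tGamma^{\ii}$ is connected to $k^-_\ii, k+2=k^+, (k+1)^-_\ii$, and $(k+1)^+$, with signs and values dictated by the definition of $\tB^{\ii}$. I would then apply Algorithm~\ref{Alg. mutation} to compute $\mu_k(\tB^{\ii})$ step by step, followed by the transposition $\sigma_{k+1}$, and compare with $\tB^{\ii'}$ computed directly from $\ii'$. The identity $\La^{\ii'} = \sigma_{k+1}\mu_k \La^{\ii}$ would be derived from Lemma~\ref{Lem:Lambda} together with the crucial observation that $w^{\ii'}_u = w^{\ii}_u$ for $u \notin \{k, k+1\}$; in particular, the braid relation $s_{i_k}s_{i_{k+1}}s_{i_k} = s_{i_{k+1}}s_{i_k}s_{i_{k+1}}$ (which holds because $\sfc_{i_k,i_{k+1}}\sfc_{i_{k+1},i_k}=1$) implies $w^{\ii'}_{k+2} = w^{\ii}_{k+2}$. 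The pairings $(\varpi_{i'_u}-w^{\ii'}_u\varpi_{i'_u},\,\varpi_{i'_v}+w^{\ii'}_v\varpi_{i'_v})$ can then be matched entry-by-entry with the mutated $\La^{\ii}$ using $s_i(\varpi_j)=\varpi_j-\delta_{i,j}\al_i$ and the above braid identity.

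Once (i) is in hand, part (ii) is formal: $\mu^*_k$ is the tautological isomorphism $\calA_{\mu_k(\La,\tB)} \isoto \calA_{(\La,\tB)}$ coming from the definition of quantum cluster algebra, while $\sigma^*_{k+1}$ is the relabeling isomorphism from the Remark following Theorem~\ref{thm: same degree same element}; their composition carries quantum cluster monomials bijectively. For part (iii), I would apply \eqref{eq: g-vector mutation} in two stages: first transport $\bsg'$ via $\sigma_{k+1}$ (which merely swaps the coordinates at $k+1$ and $k+2$) to get a degree vector in $\calA_{\mu_k(\ii)}$, then apply the g-vector mutation at $k$. The coefficients $[b'_{jk}]_+$ appearing in \eqref{eq: g-vector mutation} need to be evaluated for $j \in \{k+1,\, k+2,\, k^-_\ii,\, (k+1)^-_\ii\}$ using the mutated matrix; the resulting values give precisely the cases in \eqref{eq: 3 g}.

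Parts (iv) and (v) then reduce to routine verification from \eqref{eq: 3 g}: cone preservation amounts to checking that the partial sums defining $C_\ii$ in \eqref{eq:cone} remain non-negative after the substitution, which splits into the subcases $g'_k \ge 0$ and $g'_k \le 0$; the p-sum identity follows by summing the components of $\bsg$ over indices $u$ with $i_u=i$ and noticing that contributions from $k$ and $k^-_\ii$ (resp.\ $k+1$ and $(k+1)^-_\ii$) mostly telescope, except at the initial positions where $k^-_\ii = 0$ or $(k+1)^-_\ii = 0$. I expect the main obstacle to be the bookkeeping: one must keep track of the many subcases arising from the possible coincidences among $k^-_\ii$, $(k+1)^-_\ii$, $(k+2)^+$, and whether these indices are frozen or exchangeable in the truncated finite seeds $\calA_\ii^s$, so that the proof transfers uniformly to each $\calA_\ii^s$ for $s$ large enough.
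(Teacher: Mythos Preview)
Your proposal is correct and follows exactly the methodology that underlies this result. Note, however, that the paper does not actually prove Proposition~\ref{prop: 3 move}: it is quoted from \cite[\S 2.2]{FHOO2}, and the paper explicitly states that the $2$- and $3$-move cases are taken from that reference while only the $4$- and $6$-move cases are treated in detail here. Your outline---direct local computation for (i), formal consequence for (ii), repeated application of the $g$-vector mutation rule~\eqref{eq: g-vector mutation} for (iii), and elementary case analysis for (iv)--(v)---is precisely the template the paper follows in Lemmas~\ref{Lem:eta1}--\ref{lem: 4g Pgi} for the $4$-move and Lemmas~\ref{Lem:zeta1}--\ref{lem: 6g Pgi} for the $6$-move, so your approach is both correct and in complete agreement with the paper's style of argument.
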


\subsection{$4$-moves}
For sequences $\ii=(i_u)_{u \in \N},\ii'=(i'_u)_{u \in \N} \in I^{(\infty)}$,
we say that $\ii'$ can be obtained from $\ii$ via a \emph{$4$-move} if there
exists a unique $k \in \N$ such that $$i_k=i_{k+2}=i'_{k+1}=i'_{k+3}, \ \ i_{k+1}=i_{k+3}=i'_{k}=i'_{k+2}, \   \ i_u=i'_u \ \ (u \not\in [k,k+3])$$ and $$\sfc_{i_{k+1},i_{k}}\sfc_{i_k,i_{k+1}}=2.$$ In this case, we write $\ii'=\eta_k\ii$.
Throughout this subsection, we assume that $\ii,\ii' \in I^\N$ are related by $\ii'=\eta_k\ii$ for some $k \in \N$, and sometimes  we write $i \seteq i_k$ and $j \seteq i_{k+1}$.

\begin{lemma} \label{Lem:eta1}
We have 
\begin{align} \label{eq: 4-move}
\tB^{\ii'} = \sigma_{k+2}\sigma_{k} \mu_k\mu_{k+1}\mu_k \tB^{\ii}.
\end{align}
\end{lemma}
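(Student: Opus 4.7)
The plan is to verify the identity \eqref{eq: 4-move} by a purely local analysis of the valued quivers $\tGamma^{\ii}$ and $\tGamma^{\ii'}$ using Algorithm~\ref{Alg. mutation}. Since $\ii$ and $\ii'$ agree outside the block $[k,k+3]$, the exchange matrices $\tB^{\ii}$ and $\tB^{\ii'}$ can differ only on entries involving vertices in this block together with their immediate neighbours $k_\ii^-(i)$, $(k+1)_\ii^-(j)$, $(k+2)_\ii^+$, $(k+3)_\ii^+$ (and the analogous neighbours for $\ii'$). It therefore suffices to track how the composition $\sigma_{k+2}\sigma_k\circ\mu_k\mu_{k+1}\mu_k$ transforms this bounded piece of $\tGamma^{\ii}$, and then compare with $\tGamma^{\ii'}$.

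First I would write down $\tGamma^{\ii}|_\calN$ explicitly, where $\calN=\{k_\ii^-(i),\,(k+1)_\ii^-(j),\,k,\,k+1,\,k+2,\,k+3,\,(k+2)_\ii^+,\,(k+3)_\ii^+\}$. Using the definition of $b^\ii_{u,v}$ together with $i_k=i_{k+2}=i$, $i_{k+1}=i_{k+3}=j$, $k^+=k+2$, and $(k+1)^+=k+3$, I would list all valued arrows inside $\calN$, recording the entries $\pm\sfc_{i,j}$, $\pm\sfc_{j,i}$ on arrows between $i$-labeled and $j$-labeled vertices via the nesting condition. I would carry out the same bookkeeping for $\tGamma^{\ii'}|_\calN$, where now $i'_{k+1}=i'_{k+3}=i$ and $i'_k=i'_{k+2}=j$.

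Then I would apply the three mutations $\mu_k$, $\mu_{k+1}$, $\mu_k$ in order, using the rules $(\mathcal{NC})$, $(\mathcal{C})$, $(\mathcal{R})$ of Algorithm~\ref{Alg. mutation}. Because $\sfc_{i,j}\sfc_{j,i}=2$ forces $(\sfc_{i,j},\sfc_{j,i})\in\{(-1,-2),(-2,-1)\}$, I would either dispose of the two cases in parallel or, preferably, reduce one to the other by observing that the sequence $\mu_k\mu_{k+1}\mu_k$ together with the swap $\sigma_{k+2}\sigma_k$ is symmetric under the interchange $i\leftrightarrow j$ built into the $4$-move. After the three mutations I apply $\sigma_{k+2}\sigma_k$, which swaps $k\leftrightarrow k+1$ and $k+2\leftrightarrow k+3$, and then check arrow by arrow that the resulting valued quiver coincides with $\tGamma^{\ii'}|_\calN$; vertices outside $\calN$ receive only the identity contribution since the three mutations involve only the columns $k$ and $k+1$, and the permutations fix them.

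The main obstacle will be the bookkeeping involved in executing three successive mutations on a valued quiver whose arrows mix the internal $B_2$-pattern at $\{k,k+1,k+2,k+3\}$ with boundary arrows to the predecessors $k_\ii^-(i)$, $(k+1)_\ii^-(j)$ and successors $(k+2)_\ii^+$, $(k+3)_\ii^+$. Rule $(\mathcal{C})$ can flip the direction of arrows depending on the sign of composite quantities $f+ac$ and $e-bd$, and several such arrows are created at an intermediate step only to be modified or cancelled at the next; also one must take care that $(\mathcal{NC})$ and $(\mathcal{C})$ are not executed between two frozen vertices. Organizing the calculation as three separate snapshots of the local quiver, and cross-checking by computing $\mu_k\mu_{k+1}\mu_k$ on the purely rank-$2$ $B_2$ exchange matrix (which is the heart of the $4$-move), will keep the verification tractable and provide a sanity check analogous to the $3$-move argument of \cite{FHOO2}.
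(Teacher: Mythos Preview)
Your proposal is correct and follows essentially the same route as the paper: a local verification via Algorithm~\ref{Alg. mutation} on the finite subquiver around $\{k,k+1,k+2,k+3\}$ together with the adjacent predecessors, followed by the permutation $\sigma_{k+2}\sigma_k$ and comparison with $\tGamma^{\ii'}$. The paper in fact restricts to the slightly smaller set $\calK=\{k,k+1,k+2,k+3,k^-_\ii,(k+1)^-_\ii\}$ (the successors $(k+2)^+_\ii,(k+3)^+_\ii$ never become adjacent to a mutation vertex, so they can be omitted from the mutation bookkeeping and only enter through the dotted-arrow condition), and then carries out the three mutations explicitly in one representative case (e.g.\ $B_n$ with $i_k=n-1$, $i_{k+1}=n$) rather than using your proposed $i\leftrightarrow j$ symmetry reduction; both choices lead to the same verification.
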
  

\begin{proof}
By condition~\eqref{it: finite connectedness} in Definition~\ref{def: Ex matrix},
it is enough to consider the local behaviour under the operation described in~\eqref{eq: 4-move}.
Note that 
\ben
\item the set $\calK=\{ k,k+1,k+2,k+3, k^-_\ii = (k+1)^-_{\ii'}, (k+1)^-_\ii = k^-_{\ii'} \}$
consists of vertices adjacent to $\{k, k+1\}$ in $\tGamma^\ii$ (resp. $\tGamma^{\ii'}$) and themselves, 
\item the sequence of mutations $\mu_k\mu_{k+1}\mu_k$ on $\tGamma^\ii$ only affects the full subquiver $\tGamma^\ii|_\calK$ (resp. $\tGamma^{\ii'}|_\calK$). 
\ee
Thus it is enough to check that 
$$\mu_k\mu_{k+1}\mu_k\tGamma^\ii|_\calK = \sigma_{k}\sigma_{k+2}\tGamma^{\ii'}|_\calK .$$
In this proof, 
we shall only consider the case when both $k^-_\ii$ and $(k+1)^-_\ii$ exist, since the proofs for the other cases are similar. In that case, the valued quiver $\tGamma^\ii|_\calK $ can be depicted as follows:
\begin{align} \label{eq: general eta}
\tGamma^\ii |_\calK = \scalebox{0.87}{\raisebox{6mm}{
\xymatrix@R=3ex@C=3ex{
& k+2 \ar[dr]|{\ulcorner b,-a \lrcorner } && k \ar[ll] \ar[dr]|{\ulcorner b,-a \lrcorner } && k_\ii^- \ar[ll] \\
k+3 \ar@{.>}[ur]|{\ulcorner a,-b \lrcorner }  && k+1 \ar[ll] \ar[ur]|{\ulcorner a,-b \lrcorner } && (k+1)_\ii^- \ar[ll] \ar@{.>}[ur]|{\ulcorner a,-b \lrcorner }
}}}
\end{align}
where $a = -\sfc_{i_{k+1},i_k}$ and $b = -\sfc_{i_k, i_{k+1}}$.  Here
\begin{itemize}
\item the dotted arrow between $k_\ii^-$ and  $(k+1)_\ii^-$ exists if and only if $k_\ii^- < (k+1)_\ii^-$,
\item the dotted arrow between $k+3$ and  $k+2$ exists if and only if  $(k+2)_\ii^+ < (k+3)_\ii^+$.
\end{itemize}
Let us consider the case when $i_k =n-1$, $i_{k+1}=n$ and $\g=B_n$.  Then~\eqref{eq: general eta} can be drawn as follows with Convention~\ref{conv: valued quiver}:
\begin{align*}  
\tGamma^\ii |_\calK = \scalebox{0.87}{\raisebox{6mm}{
\xymatrix@R=3ex@C=3ex{
& k+2 \ar[dr]|{\ulcorner 2} && k \ar[ll] \ar[dr]|{\ulcorner 2} && k_\ii^- \ar[ll] \\
k+3 \ar@{:>}[ur]    && k+1 \ar[ll]  \ar@{=>}[ur] && (k+1)_\ii^- \ar[ll] \ar@{:>}[ur]
}}}
\end{align*}
Then, by following Algorithm~\ref{Alg. mutation}, one can see that
$\mu_k\mu_{k+1} \mu_k\Gamma^\ii|_\calK$ can be depicted as follows:
\begin{align*}  
\mu_k\mu_{k+1} \mu_k\Gamma^\ii |_\calK = \scalebox{0.87}{\raisebox{6mm}{
\xymatrix@R=3ex@C=3ex{
& k+2   \ar@{.>}[dl]|{2 \urcorner}   && k   \ar[ll] \ar[dl]|{2 \urcorner}  && k_\ii^-   \ar[ll]  \\
k+3 \ar@{=>}[urrr]   \ar@{<-}[rr] && k+1 \ar@{=>}[urrr]   && (k+1)_\ii^-   \ar@{<.}[ur]|{2 \urcorner}   \ar[ll]
}}}
\end{align*}
Here the dotted arrow between $k+3$ and  $k+2$ exists if and only if  $(k+2)_\ii^+ > (k+3)_\ii^+$.

On the other hand, the valued quiver $\Gamma^{\ii'}|_\calK$ is depicted as:
\begin{align*}  
\Gamma^{\ii'}|_\calK = \scalebox{0.87}{\raisebox{6mm}{
\xymatrix@R=3ex@C=3ex{
k+3 \ar@{.>}[dr]|{\ulcorner 2} && k+1 \ar[dr]|{\ulcorner 2} \ar[ll]  && (k+1)_{\ii'}^- \ar[ll] \ar@{.>}[dr]|{\ulcorner 2}  \\
& k+2 \ar@{=>}[ur] && k  \ar@{=>}[ur] \ar[ll]  && k_{\ii'}^- \ar[ll]
}}}
\end{align*}
Here
\begin{itemize}
\item the dotted arrow between $(k+1)_{\ii'}^-$ and  $k_{\ii'}^-$ exists if and only if $$(k+1)_{\ii'}^-=k_\ii^- > (k+1)_\ii^- = k_{\ii'}^-,$$
\item the dotted arrow between $k+3$ and  $k+2$ exists if and only if
$$ (k+3)_\ii^+=(k+2)_{\ii'}^+ < (k+3)_{\ii'}^+= (k+2)_\ii^+.$$
\end{itemize}
Thus our assertion follows. For $C_n$ and $F_4$ cases, one can prove in a similar way.
\end{proof}

\begin{lemma} \label{Lem:eta2}
We have 
$$\La^{\ii'}  = \sigma_{k+2}\sigma_{k}\mu_k\mu_{k+1}\mu_k \La^{\ii}.$$
\end{lemma}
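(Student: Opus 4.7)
The plan is to mirror the strategy of Lemma~\ref{Lem:eta1}: reduce the matrix identity to a local check on the support $\calK = \{k,k+1,k+2,k+3,k_\ii^-,(k+1)_\ii^-\}$ of the $4$-move, and then verify the entries using Lemma~\ref{Lem:Lambda} together with the explicit mutation formula $\mu_k(\La) = E^T\La E$.

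The first observation is that, directly from $\mu_k(\La) = E^T \La E$ with $E_{j,\ell} = \delta_{j,\ell}$ for $\ell \ne k$, $E_{k,k} = -1$ and $E_{j,k} = [-b_{jk}]_+$ for $j \ne k$, only the $k$-th row and column of $\La$ are altered by $\mu_k$. Hence the composite $\mu_k\mu_{k+1}\mu_k$ modifies only rows and columns indexed by $\{k,k+1\}$, and after applying $\sigma_{k+2}\sigma_k$ only entries meeting the block $\{k,k+1,k+2,k+3\}$ may differ from those of $\La^\ii$. For any pair $(u,v)$ with both $u,v \notin [k,k+3]$, the sequences $\ii$ and $\ii'$ agree at every relevant position, so $w^{\ii'}_u = w^\ii_u$ and Lemma~\ref{Lem:Lambda} gives $\La^{\ii'}_{u,v} = \La^\ii_{u,v}$; the identity holds on this part.

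For the remaining entries I would proceed via two ingredients: the braid identity $s_{i_k}s_{i_{k+1}}s_{i_k}s_{i_{k+1}} = s_{i_{k+1}}s_{i_k}s_{i_{k+1}}s_{i_k}$, which holds because $\sfc_{i_k,i_{k+1}}\sfc_{i_{k+1},i_k}=2$ and lets me rewrite each $w^{\ii'}_u$ with $u \in [k,k+3]$ in terms of the $w^\ii_u$; and a direct unfolding of the three mutations on $\La^\ii$. One then compares the two explicit expressions entry by entry, splitting into (a) the internal $4\times 4$ block indexed by $\{k,k+1,k+2,k+3\}$, and (b) the mixed entries pairing such an index with one of $\{k^-_\ii,(k+1)^-_\ii\}$ or $\{(k+2)^+_\ii,(k+3)^+_\ii\}$, which reduce to pairing identities via Lemma~\ref{Lem:Lambda}. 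Alternatively, and more cleanly, one can exploit uniqueness: by Lemma~\ref{Lem:eta1} and \cite[Proposition~1.2]{FHOO2}, both $\La^{\ii'}$ and $\sigma_{k+2}\sigma_k\mu_k\mu_{k+1}\mu_k\La^\ii$ form compatible pairs with $\tB^{\ii'}$ and $\diag(\sfd_{i_u})$; combined with the already-established agreement on entries outside $\calK$, the compatibility equations $\sum_k b^{\ii'}_{k,u}\La_{k,v} = 2\sfd_{i_u}\delta_{u,v}$ then pin down the remaining finite block.

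The main obstacle is bookkeeping. Unlike the single mutation of a $3$-move, the $4$-move composes three mutations, so correction terms of the form $[b_{kj}]_+\La_{k,v}$ accumulate at each step, and one must verify that — precisely because $ab=2$ in the notation of \eqref{eq: general eta} and the braid identity holds — these accumulated corrections telescope to the correct $\La^{\ii'}$. The two sub-cases $(a,b)=(1,2)$ and $(a,b)=(2,1)$ arising in types $B_n,C_n,F_4$ have to be tracked separately, mirroring the case analysis in Lemma~\ref{Lem:eta1}, and so does the presence/absence of $k^-_\ii$ and $(k+1)^-_\ii$; but once the framework of the preceding lemma is in hand, each sub-case is a finite check.
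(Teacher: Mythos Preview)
Your direct approach (a) — unfold $\mu_k\mu_{k+1}\mu_k$ explicitly and compare the resulting entries with $\La^{\ii'}$ via Lemma~\ref{Lem:Lambda} and the rank-two braid identity — is precisely what the paper does. The paper writes out closed formulas for $(\mu_k\mu_{k+1}\mu_k\La^\ii)_{u,v}$ when $\{u,v\}$ meets $\{k,k+1\}$, reduces the claim to fifteen explicit equalities indexed by the possible positions of $(u,v)$ relative to $[k,k+3]$, and verifies each by expanding both sides as pairings $(\varpi_{i_u}-w\varpi_{i_u},\varpi_{i_v}+w'\varpi_{i_v})$ and using $s_is_js_is_j = s_js_is_js_i$. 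Your outline of that computation is accurate; what remains is exactly the bookkeeping you flag.

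Your alternative approach (b), however, has a real gap. Compatibility with a fixed exchange matrix does \emph{not} in general determine $\La$ uniquely: any skew-symmetric $D$ in the left kernel of $\tB^{\ii'}$ can be added. Your argument needs the extra input that the only finitely-supported vector $x$ with $\sum_j x_j\, b^{\ii'}_{j,u}=0$ for all $u$ is $x=0$ (equivalently, that the rows of $\tB^{\ii'}$ indexed by $[k,k+3]$ are linearly independent). This is plausible — via skew-symmetrizability it would follow from injectivity of $\tB^{\ii'}$ on $\Z^{\oplus\N}$, which in turn is related to the multiplicative independence of the $\scrB_{i,p}$ in $\calX$ — but you have not argued it, and it is not one of the facts recorded in \cite{FHOO2} or earlier in the paper. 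Without that rank statement, the ``cleaner'' route does not close, and you are back to the direct entry-by-entry verification.
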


\begin{proof}
Let us denote by
$$
\Lambda \seteq \La^{\ii}  \quad \text{ and } \quad \Lambda''' \seteq \mu_k\mu_{k+1}\mu_k \La^{\ii} 
\qquad
\text{for notational simplicity.}$$

Note that
$$ \La'''_{u,v} = \La_{u,v}  \quad \text{ if } u,v \not\in \{k,k+1\}.$$

\noindent
(A) For the cases when $|\{u \ne v\} \cap \{ k,k+1 \} |=1$,  we have
\begin{align*}
 \La'''_{k+1,v} & = \La''_{k+1,v}   = -\La_{k+1,v} +\La_{k+3,v}+\La_{(k+1)_\ii^-,v} \quad (\text{for } v>k+1), \\
 \La'''_{u,k+1} & =  \La''_{u,k+1} = -\La_{u,k+1}  +  \La_{u, k+3} + \La_{u,(k+1)_\ii^-} \quad (\text{for } u<k),\\
 \La'''_{k,v} & 
= \La_{k,v} + \sfc_{i_{(k+1)^-_\ii},i_k} \La_{(k+1)^-_\ii,v} - \La_{k+2,v} -\sfc_{i_{k+3},i_k} \La_{k+3,v}+\La_{k^-_\ii,v}     \quad (\text{for } v>k+1), \\
 \La'''_{u,k} &  
=  \La_{u,k} + \sfc_{i_{(k+1)^-_\ii},i_k} \La_{u,(k+1)^-_\ii} - \La_{u,k+2} -\sfc_{i_{k+3},i_k}  \La_{u,k+3}+ \La_{u,k_\ii^-}  \quad (\text{for } u<k),
\end{align*}
by direct calculations.

\smallskip
\noindent
(B) For the cases when $ \{u ,v\} = \{ k,k+1 \}$, we have
\begin{align*}
 \La'''_{k,k+1} & = - \La_{k,k+1} -\sfc_{i_{(k+1)^-_\ii},i_k} \La_{(k+1)^-_\ii,k+1} - \La_{k+1,k+2}  + \La_{k, k+3} +\sfc_{i_{(k+1)^-_\ii},i_k}\La_{(k+1)^-_\ii, k+3}  \\
&- \La_{k+2, k+3}   - \La_{(k+1)_\ii^-,k}  + \La_{(k+1)_\ii^-,k+2} -\sfc_{i_{k+3},i_k}\La_{ k+1,k+3}  + \sfc_{i_{k+3},i_k} \La_{ (k+1)_\ii^-,k+3}  \\
& -\La_{k_\ii^-,k+1}  +  \La_{k_\ii^-, k+3} + \La_{k_\ii^-, (k+1)_\ii^-}, \\
\La'''_{k+1,k} & = - \La'''_{k,k+1}.
\end{align*}

Recall that   $(i_k,i_{k+1},i_{k+2},i_{k+3}) = (i,j,i,j) \overset{\eta_k}{\leadsto} (i'_k,i'_{k+1},i'_{k+2},i'_{k+3}) =(j,i,j,i)$.
Thus it suffices to prove that the following equation holds:
$$
\Lambda'''_{\sigma_{k+2}\sigma_k(u),\sigma_{k+2}\sigma_k(v)} = \La^{\ii'}_{u,v}.
$$
Since only $\{k,k+1,k+2,k+3\}$ are changed by $\sigma_{k+2}\sigma_k$, it is enough to show that the following $15$ equations hold:
\bnum
\item  \label{it: eq(i)} $\La^{\ii'}_{k,v}= \Lambda'''_{k+1,v}= -\La_{k+1,v} +\La_{k+3,v}+\La_{(k+1)_{\ii}^-,v}$ for $v>k+3$,
\item  \label{it: eq(ii)} $\La^{\ii'}_{k+1,v}= \Lambda'''_{k,v}=\La_{k,v} + \sfc_{j,i} \La_{(k+1)^-_\ii,v} - \La_{k+2,v} -\sfc_{j,i} \La_{k+3,v}+\La_{k^-_\ii,v}$ for $v>k+3$,
\item  \label{it: eq(iii)} $\La^{\ii'}_{k+2,v}= \Lambda'''_{k+3,v}= \La_{k+3,v}$ for $v>k+3$,
\item  \label{it: eq(iv)} $\La^{\ii'}_{k+3,v}= \Lambda'''_{k+2,v}= \La_{k+2,v}$ for $v>k+3$,
\item \label{it: eq(v)} $\La^{\ii'}_{u,k}= \Lambda'''_{u,k+1}=  -\La_{u,k+1}  +  \La_{u, k+3} + \La_{u,(k+1)_\ii^-} $ for $u<k$,
\item  \label{it: eq(vi)} $\La^{\ii'}_{u,k+1}= \Lambda'''_{u,k}= \La_{u,k} + \sfc_{j,i} \La_{u,(k+1)^-_\ii} - \La_{u,k+2} -\sfc_{j,i}  \La_{u,k+3}+ \La_{u,k_\ii^-}$ for $u<k$,
\item  \label{it: eq(vii)}$\La^{\ii'}_{u,k+2}= \Lambda'''_{u,k+3}=\La_{u,k+3}$ for $u<k$,
\item  \label{it: eq(viii)} $\La^{\ii'}_{u,k+3}= \Lambda'''_{u,k+2}=\La_{u,k+2}$ for $u<k$,
\item \label{it: eq(ix)} $\La^{\ii'}_{k,k+1}= \Lambda'''_{k+1,k}$,
\item \label{it: eq(x)} $\La^{\ii'}_{k+2,k+3}= \Lambda'''_{k+3,k+2}=\La_{k+3,k+2}$,
\item  \label{it: eq(xi)} $\La^{\ii'}_{k,k+3}= \Lambda'''_{k+1,k+2}= -\La_{k+1,k+2} +\La_{k+3,k+2}+\La_{(k+1)_\ii^-,k+2} $,
\item  \label{it: eq(xii)} $\La^{\ii'}_{k,k+2}= \Lambda'''_{k+1,k+3}= -\La_{k+1,k+3} +\La_{(k+1)_\ii^-,k+3}$,
\item  \label{it: eq(xiii)} $\La^{\ii'}_{k+1,k+2}= \Lambda'''_{k,k+3}=  \La_{k,k+3} + \sfc_{j,i} \La_{(k+1)^-_\ii,k+3} - \La_{k+2,k+3} +\La_{k^-_\ii,k+3} $,
\item  \label{it: eq(xiv)} $\La^{\ii'}_{k+1,k+3}= \Lambda'''_{k,k+2}= \La_{k,k+2} + \sfc_{j,i} \La_{(k+1)^-_\ii,k+2}  -\sfc_{j,i} \La_{k+3,k+2}+\La_{k^-_\ii,k+2}$,
\item \label{it: eq(xv)} $\La^{\ii'}_{u,v} =\Lambda'''_{u,v}$ for $u,v \not\in \{k,k+1,k+2,k+3\}$.
\ee

\medskip

\noindent
\eqref{it: eq(iii)}, \eqref{it: eq(iv)}, \eqref{it: eq(vii)}, \eqref{it: eq(viii)} and \eqref{it: eq(x)}-cases:  Let us recall the following facts:
\begin{enumerate}
\item[{\rm (a)}] \label{it: 3to2} $w^{\ii}_{k+3}\varpi_j = w^{\ii'}_{k+3}\varpi_j= w^{\ii'}_{k+2}\varpi_j$,
\item[{\rm (b)}] \label{it: 3to2p} $w^{\ii'}_{k+3}\varpi_i= w^{\ii}_{k+3}\varpi_i =w^{\ii}_{k+2}\varpi_i.$
\end{enumerate}
The assertion for \eqref{it: eq(iii)}, \eqref{it: eq(iv)}, \eqref{it: eq(vii)} and \eqref{it: eq(viii)}-cases follows directly.  
Let us check \eqref{it: eq(x)}-cases. By Lemma~\ref{Lem:Lambda}, we have
\begin{align*}
\La_{k+3,k+2} & = (\varpi_j - w^\ii_{k+3}\varpi_j,\varpi_i+ w^\ii_{k+2}\varpi_i  )   \\
&= (\varpi_j - w^{\ii'}_{k+2}\varpi_j,\varpi_i+ w^{\ii'}_{k+3}\varpi_i  ) = \La^{\ii'}_{k+3,k+2},
\end{align*}
which completes our assertions for these cases.

\medskip

\noindent
\eqref{it: eq(i)}, \eqref{it: eq(v)}, \eqref{it: eq(xi)} and \eqref{it: eq(xii)}-cases: Since the proofs for these cases are all similar, we shall give a proof for \eqref{it: eq(i)}-case only.
Note that
$$  \La^{\ii'}_{k,v} = (\varpi_j - w_{k-1}^{\ii}s_j \varpi_j,  \varpi_{i_v}+w^\ii_v \varpi_{i_v})=(\varpi_j - w_{k-1}^{\ii}(\varpi_j-\al_j),  \varpi_{i_v}+w^\ii_v \varpi_{i_v}).$$
On the other hand
\begin{align*}
-\La_{k+1,v}  & = -(\varpi_j - w_{k-1}^{\ii}s_i s_j \varpi_j,  \varpi_{i_v}+w^\ii_v \varpi_{i_v}) \allowdisplaybreaks \\
&= ( - \varpi_j - w_{k-1}^{\ii}(-\varpi_j+\al_j-\sfc_{i,j}\al_i),  \varpi_{i_v}+w^\ii_v \varpi_{i_v}), \allowdisplaybreaks\\
\La_{k+3,v} & =(\varpi_j - w_{k-1}^{\ii} s_i s_js_i s_j \varpi_j,  \varpi_{i_v}+w^\ii_v \varpi_{i_v}) \allowdisplaybreaks\\
& =(\varpi_j - w_{k-1}^{\ii} (\varpi_j+\sfc_{i,j}\al_i-2\al_j),  \varpi_{i_v}+w^\ii_v \varpi_{i_v}), \allowdisplaybreaks\\
\La_{(k+1)_\ii^-,v} & =(\varpi_j - w_{k-1}^{\ii} \varpi_j,  \varpi_{i_v}+w^\ii_v \varpi_{i_v}).
\end{align*}
Thus the sum of the three given above is
$$
(\varpi_j - w_{k-1}^{\ii} (\varpi_j-\al_j),  \varpi_{i_v}+w^\ii_v \varpi_{i_v}) =   \La^{\ii'}_{k,v},
$$
which shows the assertion \eqref{it: eq(i)}.

\medskip
\noindent
\eqref{it: eq(ii)}, \eqref{it: eq(vi)}, \eqref{it: eq(xiii)} and \eqref{it: eq(xiv)}-cases:
Since the proofs for these cases are all similar, we shall give a proof for \eqref{it: eq(ii)}-case only.
Note that
$$
\La^{\ii'}_{k+1,v}  = (\varpi_i - w^\ii_{k-1}s_j s_i \varpi_i, \varpi_{i_v} +w^\ii_v \varpi_{i_v})=  (\varpi_i - w^\ii_{k-1} (\varpi_i-\al_i+\sfc_{j,i}\al_j), \varpi_{i_v} +w^\ii_v \varpi_{i_v}).
$$

First let us consider the following two terms:
\begin{align*}
\sfc_{j,i} \La_{(k+1)^-_\ii,v}  & =   (\sfc_{j,i}\varpi_j - w^\ii_{k-1} \sfc_{j,i}\varpi_j, \varpi_{i_v} +w^\ii_v \varpi_{i_v}) \allowdisplaybreaks\\
-\sfc_{j,i} \La_{k+3,v} & = (-\sfc_{j,i}\varpi_j - w^\ii_{k-1} (-\sfc_{j,i} \varpi_j-2\al_i+2\sfc_{j,i} \al_j) \varpi_j, \varpi_{i_v} +w^\ii_v \varpi_{i_v}),
\end{align*}
whose sum is
\begin{align} \label{eq: (ii)-step1}
( - w^\ii_{k-1}(-2\al_i+2\sfc_{j,i}\al_j), \varpi_{\ii_v} +w^\ii_v \varpi_{i_v}).
\end{align}
Next, let us consider the following three terms:
\begin{align*}
\La_{k,v} & =  (\varpi_i - w^\ii_{k-1}s_i \varpi_i, \varpi_{i_v} +w^\ii_v \varpi_{i_v})  = (\varpi_i - w^\ii_{k-1}(\varpi_i-\al_i),\varpi_{i_v} +w^\ii_v \varpi_{i_v}) \allowdisplaybreaks\\
- \La_{k+2,v}  &= -(\varpi_i - w^\ii_{k-1}s_is_js_i \varpi_i, \varpi_{i_v} +w^\ii_v \varpi_{i_v}) \allowdisplaybreaks\\
& = (-\varpi_i - w^\ii_{k-1}(-\varpi_i-\sfc_{j,i}\al_j+2\al_i),\varpi_{i_v} +w^\ii_v \varpi_{i_v}), \allowdisplaybreaks\\
\La_{k^-_\ii,v} & = (\varpi_i - w^\ii_{k-1} \varpi_i, \varpi_{i_v} +w^\ii_v \varpi_{i_v}),
\end{align*}
whose sum is
\begin{align} \label{eq: (ii)-step2}
(\varpi_i - w^\ii_{k-1}(\varpi_i -\sfc_{j,i}\al_j+\al_i),\varpi_{i_v} +w^\ii_v \varpi_{i_v}).
\end{align}

Hence,  RHS of \eqref{it: eq(ii)} becomes
\begin{align*} 
\eqref{eq: (ii)-step1} + \eqref{eq: (ii)-step2} &= (\varpi_i - w^\ii_{k-1}(\varpi_i-\al_i+\sfc_{j,i}\al_j), \varpi_{i_v} +w^\ii_v \varpi_{i_v})= \La^{\ii'}_{k+1,v},
\end{align*}
which implies assertion \eqref{it: eq(ii)}.

\medskip
\noindent
\eqref{it: eq(ix)}-case: This is the most complicated case.  We have to show that
\begin{equation} \label{eq: 13-terms}
\begin{aligned}
\La^{\ii'}_{k,k+1} & =   \La_{k,k+1} +\sfc_{j,i} \La_{(k+1)^-_\ii,k+1} + \La_{k+1,k+2}  - \La_{k, k+3} \allowdisplaybreaks\\
&-\sfc_{j,i}\La_{(k+1)^-_\ii, k+3} + \La_{k+2, k+3}   + \La_{(k+1)_\ii^-,k}  - \La_{(k+1)_\ii^-,k+2} +\sfc_{j,i}\La_{ k+1,k+3} \allowdisplaybreaks\\
& - \sfc_{j,i} \La_{ (k+1)_\ii^-,k+3}  +\La_{k_\ii^-,k+1}  -  \La_{k_\ii^-, k+3} - \La_{k_\ii^-, (k+1)_\ii^-},
\end{aligned}
\end{equation}
whose RHS consists of $13$ terms.

On the other hand, $\La^{\ii'}_{k,k+1}$ can be rewritten as follows:
\begin{align*}
\La^{\ii'}_{k,k+1} & =(\varpi_{j}- w^\ii_{k-1}s_j\varpi_{j}, \varpi_i+w^\ii_{k-1}s_js_i\varpi_i)   \allowdisplaybreaks\\
& =(\varpi_{j}- w^\ii_{k-1}(\varpi_{j}-\al_j), \varpi_i+w^\ii_{k-1}(\varpi_i-\al_i+\sfc_{j,i}\al_j)) \allowdisplaybreaks\\
& = (\varpi_{j},  w^\ii_{k-1}( \varpi_i)) -(\varpi_{j},  w^\ii_{k-1}( \al_i)) +\sfc_{j,i}(\varpi_{j}, w^\ii_{k-1}(\al_j)) \allowdisplaybreaks\\
&\hspace{5ex}-( w^\ii_{k-1}(\varpi_{j}), \varpi_i) +(  w^\ii_{k-1}(\al_j), \varpi_i).
\end{align*}

Let us analyze the $13$ terms in~\eqref{eq: 13-terms}:
\ben
\item\label{eq: (1)} $\La_{k,k+1} = - \La_{k+1,k} \overset{Lemma~\ref{Lem:Lambda}}{=}
- (\varpi_j -w^{\ii}_{k-1}s_is_j\varpi_j, \varpi_i+w^\ii_{k-1}s_i\varpi_i ),$ 
\item\label{eq: (7)} $\La_{(k+1)_\ii^-,k}  =(\varpi_j -w^{\ii}_{k-1}\varpi_j, \varpi_i+w^\ii_{k-1}s_i\varpi_i)$, 
\item\label{eq: (2)} $\sfc_{j,i}\La_{(k+1)^-_\ii,k+1} = \sfc_{j,i} (\varpi_j -w^{\ii}_{k-1}\varpi_j, \varpi_j+w^\ii_{k-1}s_is_j\varpi_j)$, 
\item\label{eq: (5)} $-\sfc_{j,i}\La_{(k+1)^-_\ii, k+3} =-\sfc_{j,i}(\varpi_j -w^{\ii}_{k-1}\varpi_j, \varpi_j+w^\ii_{k-1}s_is_js_is_j\varpi_j)$,
\item\label{eq: (3)} $\La_{k+1,k+2} = (\varpi_j -w^{\ii}_{k-1}s_is_j\varpi_j, \varpi_i+w^\ii_{k-1}s_is_js_i\varpi_i),$ 
\item\label{eq: (8)} $ - \La_{(k+1)_\ii^-,k+2}  = -(\varpi_j -w^{\ii}_{k-1}\varpi_j, \varpi_i+w^\ii_{k-1}s_is_js_i\varpi_i)$,
\item\label{eq: (4)} $- \La_{k, k+3} =  -(\varpi_i -w^{\ii}_{k-1}s_i\varpi_i, \varpi_j+w^\ii_{k-1}s_is_js_is_j\varpi_j),$  
\item\label{eq: (6)} $\La_{k+2, k+3} = (\varpi_i -w^{\ii}_{k-1}s_is_js_i\varpi_i, \varpi_j+w^\ii_{k-1}s_is_js_is_j\varpi_j)$, 
\item\label{eq: (9)} $\sfc_{j,i}\La_{ k+1,k+3} = \sfc_{j,i} (\varpi_j -w^{\ii}_{k-1}s_is_j\varpi_j, \varpi_j+w^\ii_{k-1}s_is_js_is_j\varpi_j)$,
\item\label{eq: (10)} $- \sfc_{j,i} \La_{ (k+1)_\ii^-,k+3} = -\sfc_{j,i} (\varpi_j -w^{\ii}_{k-1}\varpi_j, \varpi_j+w^\ii_{k-1}s_is_js_is_j\varpi_j)$,
\item\label{eq: (11)} $\La_{k_\ii^-,k+1} = (\varpi_i -w^{\ii}_{k-1}\varpi_i, \varpi_j+w^\ii_{k-1}s_is_j\varpi_j)$,
\item\label{eq: (12)} $-  \La_{k_\ii^-, k+3} =- (\varpi_i -w^{\ii}_{k-1}\varpi_i, \varpi_j+w^\ii_{k-1}s_is_js_is_j\varpi_j)$,
\item\label{eq: (13)} $- \La_{k_\ii^-, (k+1)_\ii^-}  \overset{Lemma~\ref{Lem:Lambda}}{=}
- (\varpi_i -w^{\ii}_{k-1}\varpi_i, \varpi_j+w^\ii_{k-1}\varpi_j)$,
\ee

Using the techniques applied to the previous cases, we have
\begin{align*}
\eqref{eq: (1)}+\eqref{eq: (7)} & =  -( w^{\ii}_{k-1}s_i(\al_j), \varpi_i ), \allowdisplaybreaks\\
\eqref{eq: (2)}+\eqref{eq: (5)} & 
=  \sfc_{j,i}(\varpi_j ,w^\ii_{k-1}(\al_j)) -(\al_i, \al_j), \allowdisplaybreaks\\
\eqref{eq: (3)}+\eqref{eq: (8)} & 
= (\al_j,\al_i) + (w^{\ii}_{k-1}s_i(\al_j),\varpi_i), \allowdisplaybreaks\\
\eqref{eq: (4)}+\eqref{eq: (6)} &
 = (w^{\ii}_{k-1}(\al_j),\varpi_j)-\sfc_{j,i}(w^{\ii}_{k-1}\al_j,\varpi_j)+(\al_i,\al_j),  \allowdisplaybreaks\\
\eqref{eq: (9)}+\eqref{eq: (10)} 
& = \sfc_{j,i}( w^{\ii}_{k-1}\al_j, \varpi_j) - 2( w^{\ii}_{k-1} (\al_i), \varpi_j)-\sfc_{i,j}(\al_i,\varpi_i),  \allowdisplaybreaks\\
\eqref{eq: (11)}+\eqref{eq: (12)}+\eqref{eq: (13)}  
&=  -(\varpi_i,w^\ii_{k-1}(\varpi_j))+ (\varpi_i,w^\ii_{k-1}(\al_j)) +(  w^{\ii}_{k-1}\varpi_i, \varpi_j).
\end{align*}
Hence the summation of the $13$ terms becomes
\begin{align*}
\sum_{s=1}^{13} (s)& = -(\varpi_i,w^\ii_{k-1}(\varpi_j))+ (\varpi_i,w^\ii_{k-1}(\al_j)) +(  w^{\ii}_{k-1}\varpi_i, \varpi_j) \\
& \hspace{15ex}   - (w^{\ii}_{k-1}(\al_j),\varpi_j)  + \sfc_{j,i}(\varpi_j,w^\ii_{k-1}(\al_j)) = \La^{\ii'}_{k,k+1},
\end{align*}
which completes the proof of assertion~\eqref{it: eq(ix)}.
\end{proof}

As a consequence of the previous two lemmas, we have the following proposition:

\begin{proposition}
We have an isomorphism of $\bbA$-algebras
\[ \eta_k^*(= \mu_k^*\mu_{k+1}^* \mu_k^*  \sigma_{k}^* \sigma_{k+2}^*)\colon \calA_{\ii'} \isoto \calA_\ii \]
given by
\[ Z_u \mapsto  \mu_k^*\mu_{k+1}^*\mu_{k}^*(Z_{\sigma_{k+2}\sigma_{k}(u)}) \quad  \text{for all $u \in \N$},\]
which induces a bijection between the sets of quantum cluster monomials.
\end{proposition}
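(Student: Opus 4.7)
The plan is to derive this proposition as a formal consequence of Lemmas~\ref{Lem:eta1} and~\ref{Lem:eta2} together with the general principles governing how mutations and permutations act on quantum cluster algebras.

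First, by Lemma~\ref{Lem:eta1} and Lemma~\ref{Lem:eta2}, the compatible pair associated with the sequence $\ii'$ is related to the one associated with $\ii$ by
\[
(\La^{\ii'}, \tB^{\ii'}) \;=\; \sigma_{k+2}\sigma_{k}\,\mu_k\mu_{k+1}\mu_k(\La^{\ii},\tB^{\ii}).
\]
Hence we are in position to invoke the standard functoriality: each mutation $\mu_j$ in an exchangeable direction produces an $\bbA$-algebra isomorphism
\(
\mu_j^*\colon\scrA_q(\mu_j(\La,\tB))\isoto \scrA_q(\La,\tB)
\)
(a specialization of the definition of mutation maps on the fraction field, which restricts to the cluster algebra), and each permutation $\pi\in\frakS_\sfK$ preserving $\sfK_\fr$ induces an isomorphism $\pi^*$ as recalled in the Remark preceding Definition 2.5 of the text. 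Composing these four operations in the correct order yields the desired isomorphism
\[
\eta_k^* \;=\; \mu_k^*\,\mu_{k+1}^*\,\mu_k^*\,\sigma_k^*\,\sigma_{k+2}^*\colon \calA_{\ii'}\isoto \calA_{\ii},
\]
and evaluating it on an initial cluster variable $Z_u\in\calA_{\ii'}$ gives precisely $Z_u\mapsto \mu_k^*\mu_{k+1}^*\mu_k^*(Z_{\sigma_{k+2}\sigma_k(u)})$, since the permutations $\sigma_k$ and $\sigma_{k+2}$ commute with one another and only shuffle the indices in $\{k,k+1,k+2,k+3\}$.

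For the second assertion, one notes that each individual operation in the composition bijects the set of quantum cluster monomials with its analogue in the mutated (resp.\ permuted) seed: this is tautological for permutations, and for mutations it is the standard fact that any quantum cluster monomial in $\scrA_q(\La,\tB)$ is obtained as $\mu^*_{k_l}\cdots\mu^*_{k_1}(Z^{\bfa})$ for some sequence of exchangeable indices and $\bfa\in\Z_{\ge 0}^{\oplus\sfK}$, and hence its image under a further $\mu_j^*$ is again of this form. Thus the composition $\eta_k^*$ is a bijection on the sets of quantum cluster monomials.

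There is essentially no obstacle beyond bookkeeping here: the work has already been done in the preceding two lemmas, which verified the compatibility of the exchange matrix and of the $\La$-matrix under the sequence $\sigma_{k+2}\sigma_k\mu_k\mu_{k+1}\mu_k$. The only minor point to double-check is that $\sigma_k$ and $\sigma_{k+2}$ indeed preserve the frozen/exchangeable partition $\sfK=\sfK_\ex\sqcup\sfK_\fr$ at each intermediate stage, which follows because the indices $k,k+1,k+2,k+3$ involved in the $4$-move all sit in the exchangeable part of $\N$ (since their successors exist in $\ii$ and $\ii'$), so the permutations act trivially on the frozen part.
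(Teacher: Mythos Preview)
Your proposal is correct and mirrors the paper's approach exactly: the paper states this proposition immediately after Lemmas~\ref{Lem:eta1} and~\ref{Lem:eta2} with only the remark ``As a consequence of the previous two lemmas, we have the following proposition,'' and your write-up simply spells out the routine bookkeeping (composition of mutation and permutation isomorphisms, each of which bijects cluster monomials) that the paper leaves implicit.
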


\begin{lemma} \label{Lem:4g}
 If $\sfz \in \calA_{\ii'}$ is a quantum cluster monomial with $\bfdeg(\sfz)=\bsg'=(g'_u)_{u \in \N}$, then the quantum cluster monomial
$\eta_k^*\sfz \in \calA_\ii$ has $\bfdeg(\eta_k^*\sfz)=\bsg=(g_u)_{u \in \N}$ as follows:
\begin{equation}\label{eq:4g}
\begin{aligned}
g_u = \begin{cases}
   g_{k+2}'-\sfc_{j,i}[g_{k+1}']_++[A]_+ &\text{ if } u=k+3, \\
  g_{k+3}' -[-g_{k+1}']_+ + [B]_+ &\text{ if } u=k+2,\\
 -A  + \sfc_{j,i} [-B]_+&\text{ if } u=k+1,\\
 -B &\text{ if } u=k,\\
g_{(k+1)^-_{\ii'}}'  +[g _{k+1}']_+ -[-B]_+ &\text{ if } u=k^-_\ii,\\
  g_{k^-_{\ii'}}'+[A]_+ - \sfc_{j,i}[B]_+ &\text{ if } u=(k+1)^-_\ii,\\
 g_u' & \text{ otherwise},
\end{cases}
\end{aligned}
\end{equation}
where
\begin{align} \label{eq:4g AB}
A = g_{k}' + \sfc_{j,i} [-g_{k+1}']_+ \quad \text{ and }\quad  B = -g_{k+1}'+\sfc_{i,j}[- A]_+.
\end{align}
\end{lemma}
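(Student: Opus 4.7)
The plan is to compute $\bfdeg(\eta_k^*\sfz)$ by decomposing
$\eta_k^* = \mu_k^* \mu_{k+1}^* \mu_k^* \sigma_k^* \sigma_{k+2}^*$
and tracking the degree step by step along the chain
\begin{align*}
\sfz \longmapsto \sfz_1 \seteq \sigma_k^*\sigma_{k+2}^*\sfz \longmapsto \sfz_2 \seteq \mu_k^*\sfz_1 \longmapsto \sfz_3 \seteq \mu_{k+1}^*\sfz_2 \longmapsto \sfz_4 \seteq \mu_k^*\sfz_3 = \eta_k^*\sfz,
\end{align*}
applying the permutation rule from the remark following Definition~\ref{def: Ex matrix} and the degree transformation rule~\eqref{eq: g-vector mutation} at each step.

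The first step is purely combinatorial: $\bfdeg(\sfz_1)$ is obtained from $\bsg'$ by pulling back the indices along $\sigma_k\sigma_{k+2}$, so at positions $k,k+1,k+2,k+3$ the values of $\bfdeg(\sfz_1)$ are $g'_{k+1}, g'_k, g'_{k+3}, g'_{k+2}$ respectively, with all other coordinates unchanged. For each of the three subsequent mutation steps, I would read off the relevant exchange matrix entries from the quivers $\mu_k\mu_{k+1}\mu_k\tB^\ii$, $\mu_{k+1}\mu_k\tB^\ii$, and $\mu_k\tB^\ii$. These intermediate valued quivers, together with the valuations encoding $\sfc_{j,i}$ and $\sfc_{i,j}$, are already extracted in the proof of Lemma~\ref{Lem:eta1} via Algorithm~\ref{Alg. mutation}, so no new quiver computation is required.

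Applying \eqref{eq: g-vector mutation} at direction $k$ to $\sfz_1$ flips the sign of its $k$-th coordinate and modifies the neighbouring coordinates at $\{k+2, (k+1)^-_\ii\}$ and $\{k+3, k^-_\ii\}$ by $[\pm b_{\cdot, k}]_+ \cdot g'_{k+1}$, with the sign case split governed by $g'_{k+1}$. The subsequent mutation at $k+1$ then produces the intermediate quantity $A = g'_k + \sfc_{j,i}[-g'_{k+1}]_+$ at position $k+1$ and, similarly, intermediate contributions at the neighbouring vertices $k+2, k+3, k^-_\ii, (k+1)^-_\ii$. The final mutation at $k$, using the entries of $\mu_k\tB^\ii$ which coincide with those of~\eqref{eq: general eta} away from $k$, produces $B = -g'_{k+1} + \sfc_{i,j}[-A]_+$ at position $k$, whose sign then controls the remaining corrections at $k+2, k+3, k^-_\ii, (k+1)^-_\ii$.

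The main obstacle is the case analysis: each invocation of \eqref{eq: g-vector mutation} branches on the sign of the mutated coordinate, yielding $2^3 = 8$ potential subcases. I would organize the bookkeeping by collecting the contributions at $k+2, k+3, k^-_\ii, (k+1)^-_\ii$ through the identity $a = [a]_+ - [-a]_+$ stated in the conventions; this telescoping repackages the iterated truncations into the closed-form expressions $A$ and $B$ in \eqref{eq:4g AB}, and the eight branches collapse into the single uniform formula \eqref{eq:4g}. To streamline the verification I would first handle the generic case in which both $k^-_\ii$ and $(k+1)^-_\ii$ exist (so that the full subquiver~\eqref{eq: general eta} applies), and then obtain the boundary cases $k^-_\ii = 0$ or $(k+1)^-_\ii = 0$ by dropping the corresponding terms, in parallel with the treatment of $3$-moves given in Proposition~\ref{prop: 3 move}.
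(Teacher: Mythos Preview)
Your proposal is correct and follows exactly the same approach as the paper: the paper's proof consists of the single sentence ``By applying~\eqref{eq: g-vector mutation} repeatedly, we obtain the assertion,'' and your plan is precisely to carry this out by tracking the degree through the chain $\sigma_k^*\sigma_{k+2}^*$, $\mu_k^*$, $\mu_{k+1}^*$, $\mu_k^*$ using the permutation rule and~\eqref{eq: g-vector mutation} at each step. Your more detailed outline (identifying which intermediate exchange matrices feed into each mutation step, introducing $A$ and $B$ as the successive mutated $k$- and $(k+1)$-coordinates, and collapsing the sign branches via $a=[a]_+-[-a]_+$) is a faithful elaboration of what the paper leaves implicit.
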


\begin{proof}
By applying ~\eqref{eq: g-vector mutation} repeatedly, 
we obtain the assertion. 
\end{proof}

\begin{lemma} \label{lem: cone to cone 4g}
The map $\bsg' \mapsto \bsg$ given by~\eqref{eq:4g} sends the cone $C_{\ii'} \subset \Z^{\oplus \N}$ into the cone $C_{\ii} \subset \Z^{\oplus \N}$.    
\end{lemma}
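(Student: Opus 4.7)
The plan is to verify that each partial sum $a^\ii_u \seteq \sum_{v \ge u,\, i_v = i_u} g_v$ is non-negative for every $u \in \N$, given the hypothesis $\bsg' \in C_{\ii'}$ (i.e., $a^{\ii'}_u \ge 0$ for all $u$). Set $i \seteq i_k$ and $j \seteq i_{k+1}$. By Lemma~\ref{Lem:4g}, $g_v = g'_v$ for every $v \notin \{k^-_\ii,\, (k+1)^-_\ii,\, k,\, k+1,\, k+2,\, k+3\}$, and $i_v = i'_v$ for $v \notin [k,k+3]$. Consequently, whenever $u > k+3$, or $u < k$ with $i_u \notin \{i,j\}$, one has $a^\ii_u = a^{\ii'}_u \ge 0$ automatically. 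It therefore remains to handle: (A) positions $u \le k^-_\ii$ with $i_u = i$; (B) positions $u \le (k+1)^-_\ii$ with $i_u = j$; and (C) positions $u \in \{k,k+1,k+2,k+3\}$.

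For cases (A) and (B), the positions of colour $i_u$ lying in $[u,\infty)$ differ between $\ii$ and $\ii'$ only by the swap $\{k^-_\ii,k,k+2\} \leftrightarrow \{(k+1)^-_{\ii'},k+1,k+3\}$ (for colour $i$), or the analogous swap (for colour $j$), using $k^-_\ii = (k+1)^-_{\ii'}$ and $(k+1)^-_\ii = k^-_{\ii'}$. Hence $a^\ii_u - a^{\ii'}_u$ reduces to a $u$-independent sum of three $g$-differences. Substituting the formulas in Lemma~\ref{Lem:4g} and applying $[X]_+ - [-X]_+ = X$ twice---with $X = g'_{k+1}$ and $X = B$ in case (A), and with $X = A$ and $X = -B$ in case (B)---makes the expression telescope to $0$. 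The cancellation in case (B) additionally requires $\sfc_{i,j}\sfc_{j,i} = 2$, which is the defining feature of a $4$-move, together with the very definition $A = g'_k + \sfc_{j,i}[-g'_{k+1}]_+$ from~\eqref{eq:4g AB}. Thus $a^\ii_u = a^{\ii'}_u \ge 0$ throughout classes (A) and (B).

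For case (C), I compute each of the four sums and express it as some $a^{\ii'}_v$ (with $v \in \{k+2,k+3\}$) plus manifestly non-negative terms. One finds $a^\ii_{k+3} = a^{\ii'}_{k+2} + |\sfc_{j,i}|[g'_{k+1}]_+ + [A]_+ \ge 0$; and $a^\ii_{k+2} = a^{\ii'}_{k+3} + [B]_+ - [-g'_{k+1}]_+ \ge 0$, whose non-negativity follows from $a^{\ii'}_{k+1} = g'_{k+1} + a^{\ii'}_{k+3} \ge 0$, forcing $a^{\ii'}_{k+3} \ge [-g'_{k+1}]_+$. Next, $a^\ii_k = g_k + a^\ii_{k+2}$ becomes $[-B]_+ + (a^{\ii'}_{k+3} - [-g'_{k+1}]_+) \ge 0$ after applying $-B + [B]_+ = [-B]_+$. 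Finally, $a^\ii_{k+1}$ simplifies to $[-A]_+ + a^{\ii'}_{k+2} + |\sfc_{j,i}|\bigl([g'_{k+1}]_+ - [-B]_+\bigr)$; when $B \ge 0$ this is non-negative by inspection, and when $B < 0$, substituting $[-B]_+ = g'_{k+1} - \sfc_{i,j}[-A]_+$ and invoking $|\sfc_{i,j}\sfc_{j,i}| = 2$ collapses the expression to $a^{\ii'}_{k+2} - [-A]_+ + |\sfc_{j,i}|[-g'_{k+1}]_+$, which is $\ge 0$ by the subadditivity bound $[-A]_+ \le [-g'_k]_+ + |\sfc_{j,i}|[-g'_{k+1}]_+$ combined with $a^{\ii'}_{k+2} \ge [-g'_k]_+$ (a consequence of $a^{\ii'}_k = g'_k + a^{\ii'}_{k+2} \ge 0$).

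I expect the main obstacle to be the telescoping cancellation in case (B) and the $B<0$ subcase of $a^\ii_{k+1}$: each of these uses the $4$-move identity $|\sfc_{i,j}\sfc_{j,i}| = 2$ in an essential way, which explains why the argument is specific to $4$-moves and does not extend to arbitrary braid moves. The edge cases where $k^-_\ii = 0$ or $(k+1)^-_\ii = 0$ are handled by simply suppressing the corresponding $g_{k^-_\ii}$ or $g_{(k+1)^-_\ii}$ term in class (A) or (B); the analysis in class (C) is unaffected.
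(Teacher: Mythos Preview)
Your proof is correct and follows essentially the same approach as the paper's: both verify the partial sums $\Sigma_u \ge 0$ case by case, reduce the case $u<k$ to the equality $\Sigma_u=\Sigma'_u$ via the same telescoping computation, and handle $u\in\{k,k+1,k+2,k+3\}$ by expressing each $\Sigma_u$ in terms of suitable $\Sigma'_v$. Your treatment of $u=k+1$ via the subadditivity bound $[-A]_+\le[-g'_k]_++|\sfc_{j,i}|[-g'_{k+1}]_+$ is slightly more streamlined than the paper's explicit sign-based case split, but the substance is the same.
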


\begin{proof}
To simplify the notation, for $\bsg = (g_u)_{u \in \N}$ and $\bsg' = (g'_u)_{u \in \N}$, we set
\begin{align}\label{eq: Sigma_u}
\Sigma_u \seteq \sum_{v \ge u, i_v = i_u} g_v, \quad \Sigma'_u \seteq \sum_{v \ge u, i'_v = i'_u} g'_v    
\end{align}
for each $u \in \N$. Throughout this proof, we shall use the notations $A$ and $B$ in~\eqref{eq:4g AB}. 

We have to show that $\Sigma'_u \ge 0$ $(\forall u \in \N)$ implies $\Sigma_u \ge 0$ $(\forall u \in \N)$.
\bna
\item If $u > k+3$, we have $\Sigma_u = \Sigma'_u \ge 0$.
\item If $u=k+3$, we have $\Sigma_{k+3} =  \Sigma'_{k+2}  -\sfc_{j,i}[g_{k+1}']_++[A]_+ \ge 0 $.
\item If $u=k+2$, the assertion holds when $ g'_{k+1} \ge 0  $ obviously. Let us assume further $g'_{k+1}  < 0$. Then we have
\begin{align*}
&  \Sigma_{k+2} =  \Sigma'_{k+3} + g_{k+1}' + [B]_+  =  \Sigma'_{k+1} + [B]_+,
\end{align*}
which implies the assertion for $u=k+2$.
\item If $u=k+1$, we have
\begin{equation} \label{eq: Sigma k+1}
\begin{aligned}
\Sigma_{k+1} & =   \Sigma'_{k+2} -\sfc_{j,i}[g_{k+1}']_++[A]_+  -A  + \sfc_{j,i} [-B]_+ \\
& =   \Sigma'_{k+2} -\sfc_{j,i}[g_{k+1}']_++[-A]_+   + \sfc_{j,i} [-B]_+.
\end{aligned}
\end{equation}
Thus the assertion holds when $B \ge 0$. Assume that $B<0$. Then we have
\begin{align*}
\Sigma_{k+1} & =  \Sigma'_{ k+2 }  -\sfc_{j,i}([g_{k+1}']_+ -g'_{k+1}) + [-A]_+    -2[-A]_+   \\
& =  \Sigma'_{ k+2 }  -\sfc_{j,i}[-g_{k+1}']_+ -[-A]_+.
\end{align*}
Thus the assertion holds when  $B<0$ and $A>0$. Let us consider the case when  $B<0$ and $A<0$. Then we have
\begin{align*}
\Sigma_{k+1} & =  \Sigma'_{k+2} -\sfc_{j,i}[g_{k+1}']_+ -A    - \sfc_{j,i} B \allowdisplaybreaks\\
& =  \Sigma'_{k+2} -\sfc_{j,i}[g_{k+1}']_+ -A    + \sfc_{j,i} g_{k+1}' +2A \allowdisplaybreaks\\
& =  \Sigma'_{k+2} -\sfc_{j,i}[g_{k+1}']_+    + \sfc_{j,i} g_{k+1}' + g_{k}' + \sfc_{j,i} [-g_{k+1}']_+  = \Sigma'_{ k },
\end{align*}
which completes the assertion.
\item If $u=k$, we have
\begin{align}
\Sigma_{k} & =   \Sigma'_{k+1} - [g'_{k+1}]_+ +  [-B]_+  \label{eq: Sigma k}\\
                         &=  \delta(g'_{k+1} \ge 0)  \Sigma'_{k+3}+  \delta(g'_{k+1}<0)  \Sigma'_{k+1} +  [-B]_+\ge 0, \nonumber
\end{align}
which implies the assertion.
\item \label{it: last case 4g} If $u<k$,  we have
\begin{align*}
 g_{k+3} + g_{k+1} + g_{(k+1)^-_\ii} 
& =  g_{k+2}'-\sfc_{j,i}[g_{k+1}']_+  +2[A]_+-A    +   g_{k^-_{\ii'}}'  - \sfc_{j,i}B \\
& = g_{k+2}'   + g_{k}'      +   g_{k^-_{\ii'}}'
\end{align*}
and
\begin{align*}
 g_{k+2} + g_{k} + g_{k^-_\ii}  = g_{k+3}' + g_{k+1}'   + g_{(k+1)^-_{\ii'}}', 
\end{align*}
and thus $\Sigma_u = \Sigma'_u$, which implies the assertion. \qedhere
\ee  
\end{proof}

By~\eqref{it: last case 4g} in the proof of Lemma~\ref{lem: cone to cone 4g}, we have the following corollary:

\begin{corollary} \label{Cor: i-degree}
Assume that $k^- \in \N$. Then we have
$$
\sfp_{\ii'}(\bsg'; i) = \sfp_{\ii}(\bsg; i) \quad \text{ for all } i\in I, 
$$
where $\bsg'\mapsto \bsg$ under the map given by~\eqref{eq:4g}.  
\end{corollary}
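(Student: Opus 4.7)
The plan is to reduce the claim, for each letter $i \in I$, to the two intermediate triple-sum equalities that were already established en route to item~\ref{it: last case 4g} in the proof of Lemma~\ref{lem: cone to cone 4g}; no new $g$-vector computation should be required.

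First I would isolate the support of the discrepancy. Inspecting~\eqref{eq:4g}, $g_u$ can disagree with $g'_u$ only at indices in the finite set
\[
 \calU \seteq \{k,\; k+1,\; k+2,\; k+3,\; k^-_\ii,\; (k+1)^-_\ii\},
\]
and moreover $i_v = i'_v$ for every $v \notin [k,k+3]$. Hence, for any letter $i \in I$, the contributions to $\sfp_\ii(\bsg;i)$ and $\sfp_{\ii'}(\bsg';i)$ coming from $v \notin \calU$ agree automatically. Since every index of $\calU$ carries the letter $i_k$ or $i_{k+1}$ in one of the two sequences, the equality is immediate whenever $i \notin \{i_k, i_{k+1}\}$, and the task reduces to these two distinguished letters.

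Next I would handle those two letters via a bookkeeping step resting on the two identifications $(k+1)^-_{\ii'} = k^-_\ii$ and $k^-_{\ii'} = (k+1)^-_\ii$, both of which follow from $i'_k = i_{k+1}$, $i'_{k+1} = i_k$, together with $\ii \equiv \ii'$ on positions $<k$. The hypothesis $k^- \in \N$ makes $k^-_\ii$ a genuine index; the possible boundary case $(k+1)^-_\ii = 0$ is absorbed by the convention $g_0 = g'_0 = 0$ implicit in $\bse_0 = 0$ of~\eqref{eq:cone}. With this, the indices of $\calU$ with letter $i_k$ are $\{k,k+2,k^-_\ii\}$ on the $\ii$-side and $\{k+1,k+3,k^-_\ii\}$ on the $\ii'$-side, and analogously for $i_{k+1}$ with $(k+1)^-_\ii$ in place of $k^-_\ii$. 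Hence the desired equality of $\sfp$-values for $i=i_k$ and $i=i_{k+1}$ collapses to
\[
 g_{k+2}+g_k+g_{k^-_\ii} = g'_{k+3}+g'_{k+1}+g'_{k^-_\ii}, \qquad g_{k+3}+g_{k+1}+g_{(k+1)^-_\ii} = g'_{k+2}+g'_k+g'_{(k+1)^-_\ii},
\]
which are precisely the two chains of equalities computed inside item~\ref{it: last case 4g}. The only step that needs genuine care is the last paragraph's index bookkeeping: one must keep $k^-_\ii$ distinct from $k^-_{\ii'}$ (and similarly for $(k+1)^-$), since swapping them is the only way the argument can go wrong; all of the computational difficulty has already been discharged in the proof of Lemma~\ref{lem: cone to cone 4g}.
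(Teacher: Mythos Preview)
Your approach is exactly the paper's: reduce to the two triple-sum identities computed in item~\ref{it: last case 4g} of Lemma~\ref{lem: cone to cone 4g}; the paper's one-line proof just cites that item, and you have made the bookkeeping explicit.

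There is one slip, however. Your claim that the boundary case $(k+1)^-_\ii = 0$ is ``absorbed by the convention $g_0 = g'_0 = 0$'' is not correct. The identity $g_{k+3}+g_{k+1}+g_{(k+1)^-_\ii} = g'_{k+2}+g'_k+g'_{k^-_{\ii'}}$ in item~\ref{it: last case 4g} was obtained by inserting the formula for $g_{(k+1)^-_\ii}$ from~\eqref{eq:4g}, and that line of~\eqref{eq:4g} is only available when $(k+1)^-_\ii$ is a genuine index. Setting $g_0=g'_0=0$ does \emph{not} force $g_{k+3}+g_{k+1} = g'_{k+2}+g'_k$; indeed Lemma~\ref{lem: 4g Pgi}\,\eqref{it: 4move 1 non-trivial} shows that $\sfp_\ii(\bsg;i_{k+1})-\sfp_{\ii'}(\bsg';i_{k+1})$ is generically nonzero when $(k+1)^-_\ii=0$ (take $g'_{k+1}\ge 0$, $g'_k>0$). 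The corollary is meant to be read under the tacit assumption that both $k^-_\ii$ and $(k+1)^-_\ii$ lie in $\N$, consistent with the standing assumption in the proof of Lemma~\ref{Lem:eta1} and in item~\ref{it: last case 4g}; under that reading no boundary case arises and your argument is complete.
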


\begin{lemma} \label{lem: 4g Pgi}
The elements $\bsg,\bsg' \in \Z^{\oplus\N}$ related by~\eqref{eq:4g} satisfy
the followings: 
\ben
\item  \label{it: 4move 1 non-trivial} When $i = i_{k+1}$ and $(k+1)^-_\ii =0$, we have
$$
\sfp_{\ii}(\bsg;i) = \bc
\sfp_{\ii'}(\bsg';i) -[g_k']_+ & \text{ if }  g_{k+1}' \ge 0, \\
\sfp_{\ii'}(\bsg';i) -g_k' & \text{ if }  g_{k+1}'<0, g_k' \ge 0 \text{ and } A \ge 0, \\
\sfp_{\ii'}(\bsg';i)  -2g_{k}' + \sfc_{j,i} g_{k+1}'  & \text{ if }  g_{k+1}'<0, g_k' \ge 0, A < 0 \text{ and } B \ge 0, \\
\sfp_{\ii'}(\bsg';i) & \text{ otherwise}.
\ec
$$ 
\item \label{it: 4move 2 non-trivial} When $i = i_{k}$ and $k^-_\ii =0$, we have
$$
\sfp_{\ii}(\bsg;i) = \bc
\sfp_{\ii'}(\bsg';i) - g'_{k+1} & \text{ if } g_{k+1}' \ge 0, B \ge 0, \\
\sfp_{\ii'}(\bsg';i) -\sfc_{i,j}[- g_{k}']_+      & \text{ if } g_{k+1}' \ge 0, B < 0, \\
\sfp_{\ii'}(\bsg';i) &  \text{ if } g_{k+1}' < 0, B \ge 0, \\
\sfp_{\ii'}(\bsg';i)  -g_{k+1}'+\sfc_{i,j}[- g_{k}']_+  & \text{ if } g_{k+1}' < 0, B < 0.
\ec
$$
\item Except above two cases, we have 
$$
\sfp_{\ii}(\bsg;i) = \sfp_{\ii'}(\bsg';i).
$$
\ee
Here
$$
A = g_{k}' + \sfc_{j,i} [-g_{k+1}']_+ \quad \text{ and }\quad  B = -g_{k+1}'+\sfc_{i,j}[- A]_+.
$$
\end{lemma}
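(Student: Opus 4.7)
The plan is to compute $\sfp_\ii(\bsg;i) - \sfp_{\ii'}(\bsg';i)$ index by index, exploiting the locality of both the 4-move and the $g$-vector formula \eqref{eq:4g}. Set $i_k=i, i_{k+1}=j$ and recall that $\sfc_{i,j}\sfc_{j,i} = 2$, which is the algebraic feature that distinguishes this case from $2$- and $3$-moves.

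First, identify the support of the difference. Since $i_u = i'_u$ outside $[k,k+3]$, and by Lemma~\ref{Lem:4g} $g_u = g'_u$ outside the set $S \seteq \{k,k+1,k+2,k+3,\,k_\ii^-,\,(k+1)_\ii^-\}$, only indices with $i_u \in \{i,j\}$ in $S$ can produce a nonzero contribution. This immediately yields $\sfp_\ii(\bsg;i) = \sfp_{\ii'}(\bsg';i)$ whenever $i \notin \{i_k,i_{k+1}\}$, disposing of item~(3). Note also that $(k+1)^-_{\ii'} = k^-_\ii$ and $k^-_{\ii'} = (k+1)^-_\ii$, so the "border" contributions in \eqref{eq:4g} can be rewritten in terms of $\ii$-data only.

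For $i=j=i_{k+1}$, split the difference into the \emph{window contribution} $g_{k+1}+g_{k+3}-g'_k-g'_{k+2}$ and the \emph{border contribution} $g_{(k+1)^-_\ii}-g'_{(k+1)^-_\ii}$ (present only if $(k+1)_\ii^->0$). Using \eqref{eq:4g} and the elementary identities $[x]_+ - x = [-x]_+$ and $[x]_+ + [-x]_+ = |x|$, the window contribution simplifies to $[-A]_+ - g'_k + \sfc_{j,i}\bigl([-B]_+ - [g'_{k+1}]_+\bigr)$. In the case $(k+1)_\ii^->0$, adding the border contribution $[A]_+ - \sfc_{j,i}[B]_+$ and using $B = -g'_{k+1} + \sfc_{i,j}[-A]_+$ telescopes the combination down to $(2-\sfc_{j,i}\sfc_{i,j})[-A]_+ = 0$, by the 4-move identity $\sfc_{j,i}\sfc_{i,j}=2$. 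When $(k+1)^-_\ii = 0$ only the window contribution remains, and a direct case split on the signs of $g'_{k+1}, g'_k, A, B$ produces the four sub-cases of item~(1). The analogue for $i=i_k$ follows by the same pattern: the window contribution becomes the simpler expression $[-B]_+ - [g'_{k+1}]_+$, and when $k^-_\ii>0$ the border term $g_{k^-_\ii}-g'_{k^-_\ii}=[g'_{k+1}]_+ - [-B]_+$ cancels it, while when $k^-_\ii = 0$ a sign analysis yields item~(2).

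The main obstacle is the bookkeeping in the interior regime where $A<0$: there $[-A]_+ \ne 0$ forces all four of $[-A]_+, [-B]_+, [-g'_{k+1}]_+, [-g'_k]_+$ to interact, and only the identity $\sfc_{j,i}\sfc_{i,j} = 2$ converts the residue $|A|$ into $\pm A$ and yields the correct cancellation. This mirrors the role played by $\sfc_{j,i}\sfc_{i,j}=1$ in Proposition~\ref{prop: 3 move}\eqref{it: 3 pgi}, but here each positive-part symbol must be tracked through two composed mutations rather than one, which is why we reduced the problem to a single closed-form window expression before splitting into cases. Once this reduction is made, each case is verified by elementary sign analysis, and no further global structure of $(\La^\ii,\tB^\ii)$ is needed beyond what was already established in Lemmas~\ref{Lem:eta1}--\ref{Lem:eta2}.
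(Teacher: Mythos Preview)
Your proof is correct and follows essentially the same route as the paper's. Both arguments reduce to the identities $\Sigma_{k+1}-\Sigma'_{k}=[-A]_+-g'_k+\sfc_{j,i}([-B]_+-[g'_{k+1}]_+)$ and $\Sigma_k-\Sigma'_{k+1}=[-B]_+-[g'_{k+1}]_+$, then split on the signs of $g'_{k+1},g'_k,A,B$; the only differences are that the paper reuses the $\Sigma_u$ computations already carried out in the proof of Lemma~\ref{lem: cone to cone 4g} and cites Corollary~\ref{Cor: i-degree} for item~(3), whereas you recompute these directly as your window and border contributions and make the cancellation $(2-\sfc_{j,i}\sfc_{i,j})[-A]_+=0$ explicit.
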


\begin{proof}
Since the proofs for $\gamma_k$ and $\beta_k$ are given in~\cite[Lemma 2.18]{FHOO2}, let us focus on the case $\eta_k$.

\noindent
(1) Let us first consider when $i=i_{k+1}$ and $(k+1)^-_\ii =0$. 
In~\eqref{eq: Sigma k+1}, we see that 
\begin{align*}
\Sigma_{k+1} =   \Sigma'_{k+2} -\sfc_{j,i}[g_{k+1}']_+ +[-A]_+   + \sfc_{j,i} [-B]_+.
\end{align*}

\noindent
(i) If $g_{k+1}' \ge 0$, then $A=g_k'$,$[-B]_+= g'_{k+1}-\sfc_{i,j}[-g_k']_+$ and 
\begin{align*}
\Sigma_{k+1} &=   \Sigma'_{k+2} -\sfc_{j,i}g_{k+1}' +[-g_k']_+   + \sfc_{j,i}(g'_{k+1}-\sfc_{i,j}[-g_k']_+)  \\   
& = \Sigma'_{k+2} - [-g_k']_+ = \Sigma'_{k}-[g_k']_+ . 
\end{align*}

\noindent
(ii)  If $g_{k+1}' < 0$, then $A = g_{k}' - \sfc_{j,i} g_{k+1}'$ and $  B = -g_{k+1}'+\sfc_{i,j}[- A]_+.$

\smallskip

(ii-a) Let us assume that $g_{k}' < 0$ and $A \le 0$. Then $ B = g_{k+1}'-\sfc_{i,j}g_{k}' < 0$ and 
\begin{align*}
\Sigma_{k+1} & =   \Sigma'_{k+2} -\sfc_{j,i}[g_{k+1}']_+ +[-A]_+   + \sfc_{j,i} [-B]_+    
= \Sigma'_{k+2}  + g_{k}'= \Sigma'_{k} 
\end{align*}

(ii-b) Let us assume that $g_{k}' \ge 0$ and $A \ge 0$.  Then
$B = -g_{k+1}'$ and 
$$
\Sigma_{k+1} =   \Sigma'_{k+2} = \Sigma'_{k}- g_k'.    
$$

(ii-c) Finally, assume that $g_{k}' \ge 0$ and $A < 0$. Then  
$B =  g_{k+1}'-\sfc_{i,j}g_{k}'$. 
\begin{align*}
\Sigma_{k+1} &=   \Sigma'_{k+2} -g_{k}' + \sfc_{j,i} g_{k+1}' + \sfc_{j,i} [-B]_+ 
= \bc   
\Sigma'_{k}  -2g_{k}' + \sfc_{j,i} g_{k+1}'   & \text{ if } B \ge 0, \\
\Sigma'_{k}    & \text{ if } B < 0.
\ec
\end{align*}
\noindent
(2) Let us consider when $i=i_{k}$.
In~\eqref{eq: Sigma k}, we see that 
\begin{align*}
\Sigma_{k} =   \Sigma'_{k+1} - [g'_{k+1}]_+ +  [-B]_+.
\end{align*}
Then we have 
%
\begin{align*}
\Sigma_{k} = 
\bc
\Sigma'_{k+1} - g'_{k+1} & \text{ if } g_{k+1}' \ge 0, B \ge 0, \\
\Sigma'_{k+1} -\sfc_{i,j}[- g_{k}']_+      & \text{ if } g_{k+1}' \ge 0, B < 0, \\
\Sigma'_{k+1} &  \text{ if } g_{k+1}' < 0, B \ge 0, \\
\Sigma'_{k+1}  -g_{k+1}'+\sfc_{i,j}[- g_{k}']_+  & \text{ if } g_{k+1}' < 0, B < 0,
\ec  
\end{align*}
which implies our assertion.
\end{proof}

\subsection{$6$-moves} \label{subsec: G2}
For sequences $\ii=(i_u)_{u \in \N}$ and $\ii'=(i'_u)_{u \in \N} \in I^{(\infty)}$,
we say that $\ii'$ can be obtained from $\ii$ via a \emph{$6$-move}, if there
exists a unique $k \in \N$ such that
\begin{align*}
& i_k=i_{k+2}=i_{k+4}=i'_{k+1}=i'_{k+3}=i'_{k+5},  \\
& i_{k+1}=i_{k+3}=i_{k+5}=i'_{k}=i'_{k+2}=i'_{k+4}, \\
& i_u=i'_u \ (u \not\in [ k,k+5]) 
\end{align*}
and $$\sfc_{i_{k+1},i_{k}}\sfc_{i_k,i_{k+1}}=3.$$ 
In this case, we write $\ii'=\zeta_k\ii$.
Throughout this subsection, we assume that $\ii,\ii' \in I^{(\infty)}$ are related by $\ii'=\zeta_k\ii$ for some $k \in \N$
 and we sometimes write $i \seteq i_k$ and $j \seteq i_{k+1}$.

 \begin{lemma} \label{Lem:zeta1}
We have 
\begin{align} \label{eq: 6-move}
\tB^{\ii'}= \sigma_{k+4}\sigma_{k+2}\sigma_{k}
 \mu_{k}  \mu_{k+3}  \mu_{k+2} \mu_{k}  \mu_{k+1}  \mu_{k+3} \mu_{k}  \mu_{k+2}  \mu_{k+1} \mu_{k}  \tB^{\ii}.
\end{align}
\end{lemma}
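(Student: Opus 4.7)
The plan is to mimic the strategy already used for Lemma~\ref{Lem:eta1}, but applied to the $G_2$ situation where the tangle of arrows near positions $k,\ldots,k+5$ is richer. By condition~\eqref{it: finite connectedness} of Definition~\ref{def: Ex matrix}, only finitely many vertices are connected to $\{k,k+1,k+2,k+3,k+4,k+5\}$, so each mutation in~\eqref{eq: 6-move} affects only a fixed bounded full subquiver. Concretely, one verifies that the set
\[
\calK \seteq \{k,k+1,k+2,k+3,k+4,k+5\} \cup \{k^-_\ii,(k+1)^-_\ii\} = \{k,\ldots,k+5\} \cup \{(k+4)^-_{\ii'},(k+5)^-_{\ii'}\}
\]
contains every vertex incident to the mutation locus in both $\tGamma^\ii$ and $\tGamma^{\ii'}$. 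Thus it suffices to prove
\[
\mu_{k}\mu_{k+3}\mu_{k+2}\mu_{k}\mu_{k+1}\mu_{k+3}\mu_{k}\mu_{k+2}\mu_{k+1}\mu_{k}\,\tGamma^{\ii}\big|_\calK \;=\; \sigma_{k}\sigma_{k+2}\sigma_{k+4}\,\tGamma^{\ii'}\big|_\calK.
\]

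Next, I would write down $\tGamma^\ii|_\calK$ explicitly. Since $\sfc_{i,j}\sfc_{j,i}=3$, we may assume $(i,j)=(i_k,i_{k+1})$ with $(a,b)=(-\sfc_{j,i},-\sfc_{i,j})$ equal to either $(3,1)$ or $(1,3)$; both are dealt with symmetrically, so fix one (say the $G_2$ orientation in which the short root is $i$). The relevant full subquiver $\tGamma^\ii|_\calK$ consists of the ``ladder''
\[
\xymatrix@R=3ex@C=4ex{
& k{+}4 \ar[dr]|{\ulcorner b,-a \lrcorner} && k{+}2 \ar[ll] \ar[dr]|{\ulcorner b,-a \lrcorner} && k \ar[ll] \ar[dr]|{\ulcorner b,-a \lrcorner} && k^-_\ii \ar[ll] \\
k{+}5 \ar@{.>}[ur]|{\ulcorner a,-b \lrcorner} && k{+}3 \ar[ur]|{\ulcorner a,-b \lrcorner} \ar[ll] && k{+}1 \ar[ur]|{\ulcorner a,-b \lrcorner} \ar[ll] && (k{+}1)^-_\ii \ar@{.>}[ur]|{\ulcorner a,-b \lrcorner} \ar[ll]
}
\]
with the dotted arrows present exactly in the same combinatorial situations as in~\eqref{eq: general eta}, together with possibly a dotted arrow between $k^-_\ii$ and $(k+1)^-_\ii$ when $k^-_\ii<(k+1)^-_\ii$, and symmetrically between $k+4,k+5$. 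The target $\tGamma^{\ii'}|_\calK$ is obtained from this picture by swapping the two horizontal rows (this is the effect of $\zeta_k$ on the underlying indices) and reindexing.

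I would then execute the ten mutations in the prescribed order $\mu_k,\mu_{k+1},\mu_{k+2},\mu_k,\mu_{k+3},\mu_{k+1},\mu_k,\mu_{k+2},\mu_{k+3},\mu_k$ using Algorithm~\ref{Alg. mutation} (rules $(\mathcal{NC})$, $(\mathcal{C})$, $(\mathcal{R})$). Most of the bookkeeping is governed by the arithmetic identities specific to $\sfc_{i,j}\sfc_{j,i}=3$:
\[
ab=3,\qquad a^2b - 2a = a,\qquad a b^2 - 2b = b,\qquad a^2b^2-3ab = 0,
\]
which are the $G_2$ analogues of the $B_n/C_n/F_4$ identities invoked implicitly in the proof of Lemma~\ref{Lem:eta1}. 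After each mutation I would record the updated valued quiver, paying particular attention to the valued arrows created between $k^-_\ii,(k+1)^-_\ii$ and the positions $k+2,k+3,k+4,k+5$. At the end of the sequence, one reads off the resulting quiver and checks, row-by-row, that it agrees with $\sigma_{k}\sigma_{k+2}\sigma_{k+4}\tGamma^{\ii'}|_\calK$; the dotted-arrow case analysis (i.e.\ the position of $k^-_\ii$ vs.\ $(k+1)^-_\ii$, and of $(k+4)^+_\ii$ vs.\ $(k+5)^+_\ii$) is handled exactly as in Lemma~\ref{Lem:eta1}, since those conditions depend only on the underlying indices, not on the arrow multiplicities.

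The main obstacle is purely bookkeeping: the intermediate valued quivers produced by the ten $G_2$-mutations carry arrows of valences $\ulcorner 3,-1\lrcorner$ and $\ulcorner 1,-3 \lrcorner$ as well as composed valences created by rule $(\mathcal{NC})$, and cancellations via rule $(\mathcal{C})$ must be tracked carefully to avoid spurious arrows at the end. Once the numerics are organized, no new ideas beyond those already used in the proof of Lemma~\ref{Lem:eta1} are needed, and the proof reduces to a finite (if lengthy) verification.
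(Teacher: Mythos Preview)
Your proposal is correct and follows essentially the same approach as the paper: reduce to the local full subquiver on $\calK=\{k,\ldots,k+5\}\cup\{k^-_\ii,(k+1)^-_\ii\}$ and verify the equality of valued quivers by applying Algorithm~\ref{Alg. mutation} ten times. The paper differs only in that it actually carries out the computation, fixing the concrete case $i_k=1$, $i_{k+1}=2$ with both dotted arrows present and drawing each of the ten intermediate valued quivers explicitly before matching the result against $\sigma_k\sigma_{k+2}\sigma_{k+4}\tGamma^{\ii'}|_\calK$, then declaring the remaining cases similar.
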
  

\begin{proof}
As in the $4$-move case, it is enough to show that
$$
\mu_{k}  \mu_{k+3}  \mu_{k+2} \mu_{k}  \mu_{k+1}  \mu_{k+3} \mu_{k}  \mu_{k+2}  \mu_{k+1} \mu_{k} \tGamma^\ii|_{\calK}
= \sigma_{k+4} \sigma_{k+2}\sigma_{k}\tGamma^{\ii'}|_\calK
$$
where $\calK = [k,k+5] \sqcup \{ k^-_\ii, (k+1)^-_\ii \}$.

We shall only consider the case when both $k^-_\ii$ and $(k+1)^-_\ii$ are nonzero since the proofs for the other cases are similar.
In that case, the valued quiver $\tGamma^\ii |_\calK$  is depicted as:
\begin{align} \label{eq: general zeta}
\tGamma^\ii |_\calK = \scalebox{0.87}{\raisebox{6mm}{
\xymatrix@R=3ex@C=3ex{
& k+4  \ar[dr]|{\ulcorner b,-a \lrcorner } && k+2  \ar[ll] \ar[dr]|{\ulcorner b,-a \lrcorner } && k \ar[ll] \ar[dr]|{\ulcorner b,-a \lrcorner } && k_\ii^- \ar[ll] \\
k+5  \ar@{.>}[ur]|{\ulcorner a,-b \lrcorner } && k+3  \ar[ll]  \ar[ur]|{\ulcorner a,-b \lrcorner }  && k+1 \ar[ll] \ar[ur]|{\ulcorner a,-b \lrcorner } && (k+1)_\ii^- \ar[ll] \ar@{.>}[ur]|{\ulcorner a,-b \lrcorner }
}}}
\end{align}
where $a = -\sfc_{i_{k+1},i_k}$ and $b = -\sfc_{i_k,i_{k+1}}$.  Here
\begin{itemize}
\item the dotted arrow between $k_\ii^-$ and  $(k+1)_\ii^-$ exists if and only if $k_\ii^- < (k+1)_\ii^-$,
\item the dotted arrow between $k+5$ and  $k+4$ exists if and only if  $(k+4)_\ii^+ < (k+5)_\ii^+$.
\end{itemize}

Let us consider the case when $i_k =1$, $i_{k+1}=2$, $\g=G_2$ and dotted arrows exist.  Then~\eqref{eq: general zeta} can be drawn as follows with Convention~\ref{conv: valued quiver}:
\begin{align*}
\tGamma^\ii |_\calK =  \scalebox{0.87}{\raisebox{6mm}{
\xymatrix@R=3ex@C=3ex{
& k+4  \ar@{=>}[dr]  && k+2  \ar[ll] \ar@{=>}[dr] && k \ar[ll] \ar@{=>}[dr]  && k_\ii^- \ar[ll] \\
k+5  \ar[ur]|{\llcorner 3 }  && k+3  \ar[ll]  \ar@{-}[ul] \ar[ur]|{\llcorner 3 }  && k+1 \ar[ll]   \ar@{-}[ul] \ar[ur]|{\llcorner 3 }  && (k+1)_\ii^- \ar[ll] \ar[ur]|{\llcorner 3 } \ar@{-}[ul]
}}}
\end{align*}
From now on, we shall depict the sequence of mutations on $\tGamma^\ii |_\calK$ in turn:
\begin{align*}
\overset{\mu_k}{\to} &
 \scalebox{0.87}{\raisebox{6mm}{
\xymatrix@R=3ex@C=3ex{
& k+4  \ar@{=>}[dr]  && k+2  \ar[ll] \ar[rr]   && k  \ar[rr] \ar@{=>}[dl]  && k_\ii^-   \ar@/_1pc/[llll] \\
k+5  \ar[ur]|{\llcorner 3 }  && k+3  \ar[ll]  \ar@{-}[ul] \ar[ur]|{\llcorner 3 }  && k+1 \ar[ll]  \ar@{->}[rr]_2    \ar@{-}[ur] && (k+1)_\ii^-   \ar[ul]|{3 \lrcorner }
}}} \allowdisplaybreaks \\
\overset{\mu_{k+1}}{\to} &
 \scalebox{0.87}{\raisebox{6mm}{
\xymatrix@R=3ex@C=3ex{
& k+4  \ar@{=>}[dr]  && k+2  \ar[ll] \ar[rr]   && k  \ar[rr] \ar@{=>}[dr]  \ar@{=>}[dlll]  && k_\ii^-   \ar@/_1pc/[llll] \\
k+5  \ar[ur]|{\llcorner 3 }  && k+3  \ar@{-}[urrr] \ar[ll]  \ar@{-}[ul] \ar[ur]|{\llcorner 3 } \ar[rr]  && k+1  \ar[ur]|{\llcorner 3 }  \ar@{<-}[rr]_2     && (k+1)_\ii^-   \ar@{-}[ul]
}}} \allowdisplaybreaks \\
\overset{\mu_{k+2}}{\to} &
 \scalebox{0.87}{\raisebox{6mm}{
\xymatrix@R=3ex@C=3ex{
& k+4    \ar[rr]  && k+2  \ar@/_1pc/[rrrr]  \ar@{=>}[dl] && k  \ar[ll] \ar@{=>}[dr]   && k_\ii^-   \ar@/_1pc/[llllll] \\
k+5  \ar[ur]|{\llcorner 3 }  && k+3   \ar[ll]   \ar@{-}[ur]  \ar[rr]  && k+1  \ar[ur]|{\llcorner 3 }  \ar@{<-}[rr]_2     && (k+1)_\ii^-   \ar@{-}[ul]
}}} \allowdisplaybreaks \\
\overset{\mu_{k}}{\to} &
 \scalebox{0.87}{\raisebox{6mm}{
\xymatrix@R=3ex@C=3ex{
& k+4    \ar[rr]  && k+2  \ar@/_1pc/[rrrr]  \ar@{=>}[dl]  \ar[rr]   && k \ar@{=>}[dl]   && k_\ii^-   \ar@/_1pc/[llllll] \\
k+5  \ar[ur]|{\llcorner 3 }  && k+3   \ar[ll]   \ar@{-}[ur]  \ar[rr]  && k+1 \ar[ul]|{3 \lrcorner }    \ar@{-}[ur]  \ar[rr]     && (k+1)_\ii^-    \ar[ul]|{ 3 \lrcorner }
}}} \allowdisplaybreaks \\
\overset{\mu_{k+3}}{\to} &
 \scalebox{0.87}{\raisebox{6mm}{
\xymatrix@R=3ex@C=3ex{
& k+4    \ar[rr]  && k+2  \ar@/_1pc/[rrrr]  \ar@{=>}[dlll]  \ar[rr]   && k \ar@{=>}[dl]   && k_\ii^-   \ar@/_1pc/[llllll] \\
k+5  \ar[ur]|{\llcorner 3 } \ar@{-}[urrr]  \ar[rr] && k+3     \ar[ur]|{\llcorner 3}  && k+1  \ar[ll]      \ar@{-}[ur]  \ar[rr]     && (k+1)_\ii^-    \ar[ul]|{ 3 \lrcorner }
}}} \allowdisplaybreaks \\
\overset{\mu_{k+1}}{\to} &
 \scalebox{0.87}{\raisebox{6mm}{
\xymatrix@R=3ex@C=3ex{
& k+4    \ar[rr]  && k+2  \ar@/_1pc/[rrrr]  \ar@{=>}[dlll]  \ar[rr]   && k \ar@{=>}[dlll]   && k_\ii^-   \ar@/_1pc/[llllll] \\
k+5  \ar[ur]|{\llcorner 3 } \ar@{-}[urrr]  \ar[rr] && k+3  \ar@{-}[urrr]   \ar[ur]|{\llcorner 3} \ar[rr]  && k+1      \ar[ur]|{\llcorner 3}       && (k+1)_\ii^-    \ar[ll]
}}} \allowdisplaybreaks \\
\overset{\mu_{k}}{\to} &
 \scalebox{0.87}{\raisebox{6mm}{
\xymatrix@R=3ex@C=3ex{
& k+4    \ar[rr]  && k+2  \ar@/_1pc/[rrrr]  \ar@{=>}[dlll]      && k \ar@{=>}[dl] \ar[ll]  && k_\ii^-   \ar@/_1pc/[llllll] \\
k+5  \ar[ur]|{\llcorner 3 } \ar@{-}[urrr]   \ar[rr] && k+3  \ar[urrr]|{\llcorner 3}      && k+1  \ar@{->}[ll]^2    \ar@{-}[ur]        && (k+1)_\ii^-    \ar[ll]
}}} \allowdisplaybreaks \\
\overset{\mu_{k+2}}{\to} &
 \scalebox{0.87}{\raisebox{6mm}{
\xymatrix@R=3ex@C=3ex{
& k+4     && k+2  \ar@{<-}@/^1pc/[rrrr]    \ar[ll]\ar[rr]     && k \ar@{=>}[dl] \ar@{=>}[dlllll]    \ar[rr] && k_\ii^-     \\
k+5  \ar@{-}[urrrrr] \ar[urrr]|{\llcorner 3}   \ar[rr] && k+3  \ar[urrr]|{\llcorner 3}      && k+1  \ar@{->}[ll]^2    \ar@{-}[ur]        && (k+1)_\ii^-    \ar[ll]
}}} \allowdisplaybreaks \\
\overset{\mu_{k+3}}{\to} &
 \scalebox{0.87}{\raisebox{6mm}{
\xymatrix@R=3ex@C=3ex{
& k+4     && k+2  \ar@{<-}@/^1pc/[rrrr]    \ar[ll]\ar[rr]     && k   \ar@{=>}[dlll]    \ar[rr] && k_\ii^-     \\
k+5    \ar[urrr]|{\llcorner 3}   && k+3  \ar@{-}[urrr]    \ar[ll]     && k+1  \ar@{<-}[ll]^2    \ar[ur]|{\llcorner 3}        && (k+1)_\ii^-    \ar[ll]
}}} \allowdisplaybreaks \\
\overset{\mu_{k}}{\to} &
 \scalebox{0.87}{\raisebox{6mm}{
\xymatrix@R=3ex@C=3ex{
& k+4     && k+2      \ar[ll]  \ar@{=>}[dl]     && k   \ar@{=>}[dl]    \ar[ll] && k_\ii^-   \ar[ll]  \\
k+5    \ar[urrr]|{\llcorner 3}   && k+3  \ar[urrr]|{\llcorner 3}     \ar@{-}[ur] \ar[ll]     && k+1  \ar[ll]  \ar@{-}[ur]  \ar[urrr]|{\llcorner 3}        && (k+1)_\ii^-    \ar[ll]
}}}.
\end{align*}

On the other hand, the valued quiver $\tGamma^{\ii'}$ is depicted as:
\begin{align*}  
\tGamma^{\ii'} |_\calK =
\scalebox{0.87}{\raisebox{6mm}{
\xymatrix@R=3ex@C=3ex{
 k+5     && k+3  \ar@{=>}[dr]  \ar[ll]       && k+1 \ar@{=>}[dr]     \ar[ll] &&&& (k+1)_\ii^-   \ar[llll]  \\
&k+4 \ar[ur]|{\llcorner 3}     && k+2 \ar@{-}[ul] \ar[ur]|{\llcorner 3}  \ar[ll] \ar[ur]|{\llcorner 3}    && k \ar@{-}[ul]  \ar[ll]     \ar[urrr]|{\llcorner 3}    && k_\ii^-    \ar[ll]
}}}
\end{align*}
Thus our assertion follows. The remaining cases can be proved in a similar way.
\end{proof}

\begin{lemma} \label{Lem:zeta2}
We have
\begin{align} \label{eq: general gamma B}
\La^{\ii'}  = \sigma_{k+4}\sigma_{k+2}\sigma_{k}   \mu_{k}  \mu_{k+3}  \mu_{k+2} \mu_{k}  \mu_{k+1}  \mu_{k+3} \mu_{k}  \mu_{k+2}  \mu_{k+1} \mu_{k}\La^{\ii}.    
\end{align} 
\end{lemma}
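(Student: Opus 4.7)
The plan is to mirror the strategy of Lemma~\ref{Lem:eta2}: since Lemma~\ref{Lem:zeta1} already confirms the identity on the $\tB$-side, it suffices to verify the analogous identity on the $\La$-side by direct case-by-case computation. First, I would set
\[ \La''' \seteq \mu_{k}\mu_{k+3}\mu_{k+2}\mu_{k}\mu_{k+1}\mu_{k+3}\mu_{k}\mu_{k+2}\mu_{k+1}\mu_{k}\,\La^{\ii}, \]
and observe that, because the composite of these ten mutations is supported on directions drawn from $\{k,k+1,k+2,k+3\}$ which only couple to vertices adjacent to this block in $\tGamma^{\ii}$, the entry $\La'''_{u,v}$ coincides with $\La^{\ii}_{u,v}$ whenever both $u,v$ lie outside the local set
\[ \calK \seteq \{k,k+1,\ldots,k+5\}\cup\{k^-_\ii,(k+1)^-_\ii\}. \]
Since $\sigma_{k+4}\sigma_{k+2}\sigma_k$ also acts trivially outside $\{k,\ldots,k+5\}$, the proof reduces to verifying a finite collection of identities involving entries of $\La$ whose indices meet $\calK$.

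Next, I would compute $\La'''_{u,v}$ for each such pair by applying the matrix mutation rule $\La \mapsto E^T \La E$ ten times in the prescribed order, carefully tracking only the entries that get modified at each step. The resulting expression for $\La'''_{u,v}$ is an integer linear combination of entries $\La^{\ii}_{p,q}$. Applying Lemma~\ref{Lem:Lambda}, each such entry rewrites in the canonical form $(\varpi_{i_p}-w^{\ii}_p\varpi_{i_p},\varpi_{i_q}+w^{\ii}_q\varpi_{i_q})$; the desired identity
\[ \La'''_{\sigma_{k+4}\sigma_{k+2}\sigma_k(u),\sigma_{k+4}\sigma_{k+2}\sigma_k(v)} = \La^{\ii'}_{u,v} \]
then becomes a linear relation among such pairings that collapses using the $G_2$ braid identity $s_i s_j s_i s_j s_i s_j = s_j s_i s_j s_i s_j s_i$ together with $\sfc_{i,j}\sfc_{j,i}=3$. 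As in the proof of Lemma~\ref{Lem:eta2}, I would carry out the detailed verification under the generic hypothesis that both $k^-_\ii$ and $(k+1)^-_\ii$ are nonzero, and then remark that the remaining boundary configurations reduce to simpler versions of the same calculation.

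The main obstacle is the sheer combinatorial size of the bookkeeping. Compared with Lemma~\ref{Lem:eta2}, the set $\calK$ now contains eight distinguished vertices (instead of six), the mutation sequence has length ten (instead of three), and the number of individual identities to be checked grows roughly quadratically in $|\calK|$, producing on the order of forty equations. The most delicate is the one determining $\La^{\ii'}_{k,k+1}$, in which many contributions built from $w^{\ii}_{k-1}$ applied to various subwords of $s_i s_j s_i s_j s_i s_j$ acting on $\varpi_i$ and $\varpi_j$ must be assembled and only collapse upon the full use of the $G_2$ braid identity, analogous to but markedly more involved than the $13$-term computation performed for \eqref{it: eq(ix)} in Lemma~\ref{Lem:eta2}. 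No new conceptual ingredient beyond Lemma~\ref{Lem:Lambda} and the braid relation is required, and the verification is entirely mechanical; the difficulty lies exclusively in organizing it without arithmetic error.
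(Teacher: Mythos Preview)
Your proposal is correct in principle and follows the natural template set by Lemma~\ref{Lem:eta2}, but it differs from the route actually taken in the paper. The paper does \emph{not} carry out the symbolic hand computation you describe. Instead it observes that, because $|\weyl|=12$ in type $G_2$ and the sequence $\ii$ involves only two letters, every entry $\La^{\ii}_{u,v}$ with $u$ or $v$ in $[k,k+5]$ is determined by a finite local subsequence of the form $w_1\,a\,w_2\,i_k\cdots i_{k+5}\,w_3\,d$ with $w_1,w_2,w_3$ ranging over a fixed set of reduced words; after accounting for $k^-_\ii$ and $(k+1)^-_\ii$, this yields a finite list (about $62{,}208$ configurations) which is then checked by computer. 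The subsequent remark shows one explicit $8\times 8$ example to illustrate the verification.

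What each approach buys: your plan is self-contained and conceptually uniform with the $4$-move case, but the ten-mutation bookkeeping makes the hand calculation (especially the analogue of the $13$-term identity for $\La^{\ii'}_{k,k+1}$) genuinely unwieldy---likely dozens of terms---which is presumably why the authors abandoned it. The paper's approach trades elegance for practicality: it avoids any delicate cancellation argument by brute-force enumeration, at the cost of relying on machine verification. Either is acceptable; if you pursue your route, be prepared for a computation an order of magnitude larger than in Lemma~\ref{Lem:eta2}, and consider at least scripting the mutation steps symbolically to guard against arithmetic slips.
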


\begin{proof}
To simplify notation, let us denote $\Lambda^{\ii}$ as $\Lambda$. Unless $k \leq u \leq k+5$ or $k \leq v \leq k+5$, the expressions $\Lambda_{u,v} = \Lambda^{\ii'}_{u,v}$ hold true, and the values of $\Lambda_{u,v}$ remain unchanged under the operations $\mu_{k}$, $\mu_{k+1}$, $\mu_{k+2}$, $\mu_{k+3}$, $\sigma_{k}$, $\sigma_{k+2}$, and $\sigma_{k+4}$. Therefore, for simplicity, we will only consider $\Lambda_{u,v}$ when $k \leq u \leq k+5$ or $k \leq v \leq k+5$, which we will assume from now on. Furthermore, due to the skew-symmetry of $\Lambda$ and $\Lambda^{\ii'}$, we can assume further $u < v$.

Recall the definition
\begin{equation*}
\La_{u,v} =  (\varpi_{i_u}-w_u\varpi_{i_u}, \varpi_{i_v}+w_v\varpi_{i_v}) \quad \text{for $u< v$}.
\end{equation*}
Since the number of elements in the set $\weyl$ is 12 and the sequence $\ii$ contains only $1$ and $2$, we will demonstrate that there exist only a limited number of finite subsequences of $\ii$ that need to be examined to directly verify the assertion.

Consider a fixed set $\underline{\weyl}$ consisting of reduced sequences of elements in $\weyl$. Let us examine how $\Lambda_{u,v}$ is determined. To avoid minor exceptions, assume $k \ge 4$. If $u < k$ and $k \le v \le k+5$, then $\Lambda_{u,v}$ is determined by the subsequence
\[ w_1 \, a \, w_2 \, i_k \, i_{k+1} \cdots i_{k+5} , \]
where $w_1, w_2 \in \underline{\weyl}$, $i_u = a$, and $i_v = i_\ell$ for some $\ell = k, k+1, \dots, k+5$. Note that it is sufficient to consider $w_1, w_2 \in \underline{\weyl}$, as $\Lambda_{u,v}$ does not change with other expressions of the Weyl group elements represented by $w_1$ and $w_2$.

If $k \le u, v \le k+5$ then $\La_{u,v}$ is determined by \[ i_k \, i_{k+1} \cdots i_{k+5} .\]
If $k \le u \le k+5$ and $v>k+5$ then $\La_{u,v}$ is determined by the subsequence
\[ i_k \, i_{k+1} \cdots i_{k+5}w_3 d , \]
where  $w_3 \in \underline{\weyl}$, $i_v=d$, $i_u=i_\ell$ for some $\ell=k,k+1, \dots , k+5$.

All three cases mentioned above are encompassed in the sequences given by
\begin{equation} \label{eqn-redun}  {w_1} \, a  \, {w_2} \, i_k  \, i_{k+1} \cdots i_{k+5}  \, {w_3} \, d,  \end{equation}
where $w_1, w_2, w_3 \in \underline{\weyl}$,  $a,d \in \{1,2\}$ and \[ (i_k, i_{k+1}, \dots ,i_{k+5}) \in \{ (1,2,1,2,1,2), (2,1,2,1,2,1) \}.\]

However, to apply $\mu_k, \mu_{k+1}, \mu_{k+2}, \mu_{k+3}$, it is crucial to keep track of the corresponding mutations of $\tB^\ii$. As demonstrated in the proof of Lemma \ref{Lem:zeta1}, we only need to consider $k_\ii^-$ and $(k+1)_\ii^-$ in addition to $k, k+1, \dots , k+5$ when dealing with mutations (note that $(k+4)_\ii^+$ and $(k+5)_\ii^+$ are not relevant). In the matrices obtained from the mutations of $\tB^\ii$ associated with the mutations of $\Lambda^{\ii}$ in the statement of this lemma, we observe that
\begin{equation} \label{eqn-ree}
\text{the $(k,j)$- and $(k+1, j)$-entries are zero for $j < k$ unless $j=k_\ii^-$ or $j=(k+1)\ii^-$.}
\end{equation}
Therefore, these entries do not contribute to the mutations of $\Lambda_{u,v}$ by definition.

Now consider a sequence \[ \jj_1 a \jj_2  \, i_k  \, i_{k+1} \cdots i_{k+5}  \, \jj_3 d,\]
where $\jj_1=i_1\, i_2 \cdots i_{u-1}$, $i_u=a$, $i_v=d$ and $\jj_2$ and $\jj_3$ are finite sequences.
Let $\{   b,   c \} = \{ i_{k_\ii^-}, i_{(k+1)_\ii^-} \}$. Without loss of generality, we may consider the following three cases:
\begin{eqnarray*}
&&\text{ (i) $  \jj_1 a \cdots   b  c \cdots   c  \, i_k  \, i_{k+1} \cdots i_{k+5}  \, \jj_3 d$}, \\
&&\text{(ii) $  \jj_1 a   c \cdots   c  \, i_k  \, i_{k+1} \cdots i_{k+5}  \, \jj_3 d$ \qquad with $a =   b$}, \\
&&\text{(iii) $  \cdots   ba \cdots   a  \, i_k  \, i_{k+1} \cdots i_{k+5}  \, \jj_3 d$ \qquad  with $a=  c$ }.
\end{eqnarray*}
Applying the observations  \eqref{eqn-redun} and \eqref{eqn-ree} in determining $\La_{u,v}$ and their mutations, we only need to consider
\begin{eqnarray}
&&\text{ (i) $ \leadsto \  w_1 a w_2    b   c  \, i_k  \, i_{k+1} \cdots i_{k+5}  \, w_3 d$}, \label{eqn-sto-1} \\
&&\text{(ii) $ \leadsto \   w_1 a   c  \, i_k  \, i_{k+1} \cdots i_{k+5}  \, w_3 d \quad \text{ or } \quad  w_1 a   c    c  \, i_k  \, i_{k+1} \cdots i_{k+5}  \, w_3 d$}, \label{eqn-sto-2} \\
&&\text{(iii) $ \leadsto \  w_1  a  \, i_k  \, i_{k+1} \cdots i_{k+5}  \, w_3 d$ }, \label{eqn-sto-3}
\end{eqnarray}
where $w_1, w_2, w_3 \in \underline{\weyl}$,  $a,d \in \{1,2\}$, $(b,c) \in \{(1,2),(2,1) \}$ and \[ (i_k, i_{k+1}, \dots ,i_{k+5}) \in \{ (1,2,1,2,1,2), (2,1,2,1,2,1) \}.\]

We consider the submatrices of $\La^{\ii}$ given by the finite sequences in \eqref{eqn-sto-1}-\eqref{eqn-sto-3}.
These sequences are not mutually exclusive, and we deliberately maintain this redundancy for convenience, as it is unnecessary to remove it when directly checking using a computer. 
The total number of sequences in \eqref{eqn-sto-1}-\eqref{eqn-sto-3} is 62,208. If $k\le 3$, it is easy to list all possible sequences. We have used a computer to verify the 
assertion of the lemma for all these finite subsequences, which took less than 12 minutes on a regular laptop (single CPU).
\end{proof}

\begin{remark}
The above lemma assumes a general situation. For our purpose in later sections, we need only special $\ii$
and $\ii'$ in this remark (see Remark~\ref{rmk: our purpose} below also), 
and hence we \emph{do not} need to use a computer. 
Consider $\ii=(1,2,1,2,1,2,1,2,\ldots)$ and $\ii'=(1,2,2,1,2,1,2,1,\ldots)$ with $k=3$. Then we obtain 
\begin{eqnarray*} 
\La^{\ii,8}={\tiny \left(\begin{array}{rrrrrrrr}
0 & -3 & -1 & -3 & 0 & 0 & 2 & 3 \\
3 & 0 & 0 & -3 & 0 & 0 & 3 & 6 \\
1 & 0 & 0 & -3 & 0 & 0 & 3 & 6 \\
3 & 3 & 3 & 0 & 0 & 0 & 3 & 9 \\
0 & 0 & 0 & 0 & 0 & 0 & 2 & 6 \\
0 & 0 & 0 & 0 & 0 & 0 & 0 & 6 \\
-2 & -3 & -3 & -3 & -2 & 0 & 0 & 3 \\
-3 & -6 & -6 & -9 & -6 & -6 & -3 & 0
\end{array}\right)} \xrightarrow[]{\mu_3} {\tiny \left(\begin{array}{rrrrrrrr}
0 & -3 & -2 & -3 & 0 & 0 & 2 & 3 \\
3 & 0 & 0 & -3 & 0 & 0 & 3 & 6 \\
2 & 0 & 0 & 0 & 0 & 0 & 2 & 6 \\
3 & 3 & 0 & 0 & 0 & 0 & 3 & 9 \\
0 & 0 & 0 & 0 & 0 & 0 & 2 & 6 \\
0 & 0 & 0 & 0 & 0 & 0 & 0 & 6 \\
-2 & -3 & -2 & -3 & -2 & 0 & 0 & 3 \\
-3 & -6 & -6 & -9 & -6 & -6 & -3 & 0
\end{array}\right)} \xrightarrow[]{\mu_3\mu_5\mu_4} \allowdisplaybreaks\\  {\tiny
 \left(\begin{array}{rrrrrrrr}
0 & -3 & -1 & -3 & 0 & 0 & 2 & 3 \\
3 & 0 & 3 & 3 & 3 & 0 & 3 & 6 \\
1 & -3 & 0 & 0 & 1 & 0 & 1 & 3 \\
3 & -3 & 0 & 0 & 3 & 0 & 3 & 9 \\
0 & -3 & -1 & -3 & 0 & 0 & 0 & 3 \\
0 & 0 & 0 & 0 & 0 & 0 & 0 & 6 \\
-2 & -3 & -1 & -3 & 0 & 0 & 0 & 3 \\
-3 & -6 & -3 & -9 & -3 & -6 & -3 & 0
\end{array}\right)} \xrightarrow[]{\mu_3\mu_4\mu_6} {\tiny
 \left(\begin{array}{rrrrrrrr}
0 & -3 & 1 & 0 & 0 & 0 & 2 & 3 \\
3 & 0 & 6 & 6 & 3 & 9 & 3 & 6 \\
-1 & -6 & 0 & 0 & -1 & -3 & -1 & 0 \\
0 & -6 & 0 & 0 & 0 & 0 & 0 & 0 \\
0 & -3 & 1 & 0 & 0 & 0 & 0 & 3 \\
0 & -9 & 3 & 0 & 0 & 0 & 0 & 3 \\
-2 & -3 & 1 & 0 & 0 & 0 & 0 & 3 \\
-3 & -6 & 0 & 0 & -3 & -3 & -3 & 0
\end{array}\right)} \xrightarrow[]{\mu_3\mu_6\mu_5} \allowdisplaybreaks\\ {\tiny
 \left(\begin{array}{rrrrrrrr}
0 & -3 & 2 & 0 & 3 & 3 & 2 & 3 \\
3 & 0 & 6 & 6 & 6 & 9 & 3 & 6 \\
-2 & -6 & 0 & 0 & 0 & 0 & 0 & 0 \\
0 & -6 & 0 & 0 & 0 & 0 & 0 & 0 \\
-3 & -6 & 0 & 0 & 0 & 3 & -1 & 0 \\
-3 & -9 & 0 & 0 & -3 & 0 & -3 & -3 \\
-2 & -3 & 0 & 0 & 1 & 3 & 0 & 3 \\
-3 & -6 & 0 & 0 & 0 & 3 & -3 & 0
\end{array}\right)}  \xrightarrow[]{\sigma_7\sigma_5\sigma_3}  {\tiny
\left(\begin{array}{rrrrrrrr}
0 & -3 & 0 & 2 & 3 & 3 & 3 & 2 \\
3 & 0 & 6 & 6 & 9 & 6 & 6 & 3 \\
0 & -6 & 0 & 0 & 0 & 0 & 0 & 0 \\
-2 & -6 & 0 & 0 & 0 & 0 & 0 & 0 \\
-3 & -9 & 0 & 0 & 0 & -3 & -3 & -3 \\
-3 & -6 & 0 & 0 & 3 & 0 & 0 & -1 \\
-3 & -6 & 0 & 0 & 3 & 0 & 0 & -3 \\
-2 & -3 & 0 & 0 & 3 & 1 & 3 & 0
\end{array}\right)} = \La^{\ii',8}. 
\end{eqnarray*}
\end{remark}

As a consequence of the previous two lemmas, we have the following proposition:

\begin{proposition}
We have an isomorphism of $\bbA$-algebras
\begin{align}\label{eq: the G2 mutation}
 \zeta_k^*(=
\mu_{k}^* \mu_{k+1}^* \mu_{k+2}^* \mu_{k}^* \mu_{k+3}^* \mu_{k+1}^* \mu_{k}^* \mu_{k+2}^* \mu_{k+3}^* \mu_{k}^*
 \sigma_{k}^* \sigma_{k+2}^*\sigma_{k+4}^*)\colon \calA_{\ii'} \isoto \calA_\ii
\end{align}
given by
\[ Z_u \mapsto
\mu_{k}^* \mu_{k+1}^* \mu_{k+2}^* \mu_{k}^* \mu_{k+3}^* \mu_{k+1}^* \mu_{k}^* \mu_{k+2}^* \mu_{k+3}^* \mu_{k}^*
(Z_{\sigma_{k+4}\sigma_{k+2}\sigma_{k}(u)}) \quad  \text{for all $u \in \N$},\]
which induces a bijection between the sets of quantum cluster monomials.
\end{proposition}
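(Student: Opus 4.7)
The plan is to deduce the proposition as a direct packaging of Lemmas~\ref{Lem:zeta1} and~\ref{Lem:zeta2}. Combined, these two lemmas tell us that the compatible pair $(\La^{\ii'},\tB^{\ii'})$ is obtained from $(\La^{\ii},\tB^{\ii})$ by first performing the sequence of ten mutations $\mu_{k}\mu_{k+3}\mu_{k+2}\mu_{k}\mu_{k+1}\mu_{k+3}\mu_{k}\mu_{k+2}\mu_{k+1}\mu_{k}$ and then the permutation $\sigma_{k+4}\sigma_{k+2}\sigma_{k}$ (the three transpositions commute, since their supports $\{k,k{+}1\}$, $\{k{+}2,k{+}3\}$, $\{k{+}4,k{+}5\}$ are disjoint). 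In particular, all the indices $k,k{+}1,k{+}2,k{+}3$ involved in mutations lie in $\sfK_\ex$ because $\ii\in I^{(\infty)}$, so every mutation in the chain is legitimate.

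Next I would invoke the general principles from Section~\ref{sec: Quantum cluster}: for any compatible pair $(\La,\tB)$ and any exchangeable index $\ell$, the mutation $\mu_\ell$ induces an $\bbA$-algebra isomorphism $\mu_\ell^*\colon \calA_q(\mu_\ell(\La,\tB)) \isoto \calA_q(\La,\tB)$; and any permutation $\pi\in\frakS_\sfK$ preserving $\sfK_\fr$ induces an $\bbA$-algebra isomorphism $\pi^*\colon \calA_q(\pi(\La,\tB))\isoto \calA_q(\La,\tB)$ given by $\pi^*(Z_j)=Z_{\pi^{-1}(j)}$. Composing the ten mutation-isomorphisms and three permutation-isomorphisms in the order dictated by Lemmas~\ref{Lem:zeta1}--\ref{Lem:zeta2}, and using $\sigma_{k}^*\sigma_{k+2}^*\sigma_{k+4}^*(Z_u)=Z_{\sigma_{k+4}\sigma_{k+2}\sigma_{k}(u)}$, produces precisely the $\bbA$-algebra isomorphism $\zeta_k^*\colon \calA_{\ii'}\isoto \calA_{\ii}$ with the displayed formula $Z_u \mapsto \mu_{k}^*\mu_{k+1}^*\mu_{k+2}^*\mu_{k}^*\mu_{k+3}^*\mu_{k+1}^*\mu_{k}^*\mu_{k+2}^*\mu_{k+3}^*\mu_{k}^*(Z_{\sigma_{k+4}\sigma_{k+2}\sigma_{k}(u)})$.

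Finally, the bijection on quantum cluster monomials is automatic from the construction. Each $\mu_\ell^*$ sends a quantum cluster variable (resp.\ monomial) of $\calA_q(\mu_\ell(\La,\tB))$ to one of $\calA_q(\La,\tB)$ by the very definition of quantum cluster variables (resp.\ monomials), and each permutation-isomorphism $\pi^*$ merely relabels the indices of the seeds, hence preserves cluster monomials. The composition $\zeta_k^*$ therefore restricts to a bijection between the two sets of quantum cluster monomials. No new obstacle is anticipated here: the substantive content—checking the mutation-identities on the exchange matrix and on $\La$—has already been established in the preceding two lemmas, and the present statement is essentially a functorial consequence of those identities under the assignment $(\La,\tB)\mapsto \calA_q(\La,\tB)$.
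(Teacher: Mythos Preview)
Your proposal is correct and matches the paper's approach exactly: the paper states this proposition as an immediate consequence of Lemmas~\ref{Lem:zeta1} and~\ref{Lem:zeta2}, without giving a separate proof. Your unpacking of why the two lemmas suffice (composition of mutation and permutation isomorphisms, each preserving cluster monomials) is precisely the intended justification.
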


\begin{remark}
There are another $31$ sequences of mutations of length $10$, which give rise to mutation equivalence from $\calA_{\ii'} \to \calA_{\ii}$. For instance, we have
$$
\mu_{k+3}^*\mu_{k+2}^*\mu_{k+1}^*\mu_{k+3}^*\mu_{k}^*\mu_{k+2}^*\mu_{k+3}^*\mu_{k+1}^* \mu_{k}^*\mu_{k+3}^*     \sigma_{k}^* \sigma_{k+2}^*\sigma_{k+4}^* \colon \calA_{\ii'} \isoto \calA_\ii.
$$
In Appendix~\ref{sec: G2 equi}, we record all the $32$ sequences of mutations of length $10$ that produce the equivalence with index permutation $\sigma_{k}^* \sigma_{k+2}^*\sigma_{k+4}^*$. Recall that $\bfdeg$ does not depend on
the choice of sequences of mutations \cite{Pla13,GHKK} .
\end{remark}

\begin{lemma}\label{Lem:g-vector 6move}
 If $\sfz \in \calA_{\ii'}$ is a quantum cluster monomial with $\bfdeg(\sfz)=\bsg'=(g'_u)_{u \in \N}$, then the quantum cluster monomial
$\zeta_k^*\sfz \in \calA_\ii$ has $\bfdeg(\zeta_k^*\sfz)=\bsg=(g_u)_{u \in \N}$ as follows:
\begin{equation}\label{eq:6g}
\begin{aligned}
g_j = \begin{cases}
g'_{(k+1)_{\ii'}^-} + [g'_{k+1}]_+  + [B]_+  - [-G]_+ - [-I]_+ & \text{ if } j =k_\ii^-, \\
g'_{k_{\ii'}^-} +[D ]_+ -\sfc_{j,i} [F]_+  + 2[H]_+  -\sfc_{j,i} [I]_+  & \text{ if } j =(k+1)_\ii^-, \\
-I    & \text{ if } j =k, \\
-H +\sfc_{j,i} [-I]_+  & \text{ if } j =k+1, \\
-G  +[I]_+   & \text{ if } j =k+2, \\
 -E +\sfc_{j,i} [-G]_+   +[H]_+      & \text{ if } j =k+3, \\
g'_{k+5} -[-B ]_+ +[G]_+     & \text{ if } j =k+4, \\
g'_{k+4}  -[-A]_+  -\sfc_{j,i}[B]_+  +[E]_+     & \text{ if } j =k+5, \\
g'_j   & \text{ otherwise},
\end{cases}
\end{aligned}
\end{equation}
where
\begin{equation} \label{eq: 6moves form}
\begin{aligned}
A & \seteq g'_{k+2} -\sfc_{j,i} [g'_{k+1}]_+, \qquad
B  \seteq g'_{k+3} -[-g'_{k+1}]_+ , \qquad
C  \seteq  -g'_{k+1}  -\sfc_{i,j} [A]_+  -[-B]_+, \\
D  &\seteq g'_{k}  +\sfc_{j,i}[-g'_{k+1}]_+  -2 [-A]_+  + \sfc_{j,i}[-C]_+, \ \qquad\qquad
E  \seteq  -A -\sfc_{j,i}[C]_+   +[D ]_+, \\
 F &\seteq  -C +\sfc_{i,j} [-D ]_+,    \qquad\qquad\qquad \qquad\qquad\quad \ 
G  \seteq  -B   -[-C]_+  +\sfc_{i,j}[-E]_+ +[F]_+, \\
 H  &\seteq  -D   +[E]_+    +\sfc_{j,i}[-F]_+,  \quad \qquad \qquad \qquad \qquad\qquad
\  I  \seteq  -F +[G]_+   +\sfc_{i,j}[-H]_+.
\end{aligned}
\end{equation}
\end{lemma}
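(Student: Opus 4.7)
The plan is to mimic the strategy of Lemma~\ref{Lem:4g}: expand $\zeta_k^*$ according to~\eqref{eq: the G2 mutation} as the index permutation $\sigma_{k+4}\sigma_{k+2}\sigma_k$ followed by ten mutations $\mu_k\mu_{k+3}\mu_{k+2}\mu_k\mu_{k+1}\mu_{k+3}\mu_k\mu_{k+2}\mu_{k+3}\mu_k$, and apply the $g$-vector transformation rule~\eqref{eq: g-vector mutation} iteratively, one mutation at a time.

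Starting from $\bsg' = \bfdeg(\sfz)$, the permutation $\sigma_{k+4}\sigma_{k+2}\sigma_k$ merely swaps the coordinates at the pairs $\{k,k+1\}$, $\{k+2,k+3\}$, $\{k+4,k+5\}$. Each subsequent application of $\mu_\ell^*$ then affects only the $\ell$-coordinate (by sign flip) and those coordinates $j$ with $b'_{j\ell}\neq 0$ in the \emph{current} exchange matrix, according to the formula in~\eqref{eq: g-vector mutation}. The key observation is that the sequence of intermediate exchange matrices needed to read off the signs of $b'_{j\ell}$ at each step is precisely what was computed pictorially in the proof of Lemma~\ref{Lem:zeta1}. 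Using that pictorial data, I expect the successive $g$-values at the mutated vertex, together with the resulting values at the non-mutated neighbours $k+4$, $k+5$, $k^-_\ii$, $(k+1)^-_\ii$, to match exactly the nine expressions $A,B,C,D,E,F,G,H,I$ of~\eqref{eq: 6moves form} and the final formulas in~\eqref{eq:6g}.

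The main obstacle is purely bookkeeping: ten successive mutations produce nested $[\cdot]_+$-splits that branch into many subcases, and one must be careful never to confuse the current exchange matrix (which evolves at every step) with the original $\tB^\ii$. Two mitigations should help. First, Theorem~\ref{thm: same degree same element} guarantees that $\bfdeg(\zeta_k^*\sfz)$ does not depend on the chosen sequence of mutations, so among the $32$ equivalent length-$10$ sequences collected in Appendix~\ref{sec: G2 equi} one may pick whichever yields the mildest case analysis at each step. Second, at every intermediate stage only the small set of vertices $\{k,k+1,k+2,k+3,k+4,k+5,k^-_\ii,(k+1)^-_\ii\}$ ever has a nonzero entry in the column being mutated, so the coordinates of $\bsg$ outside this set are never touched and can be ignored throughout. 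After these simplifications, the proof reduces to a mechanical (but lengthy) verification entirely parallel to that of Lemma~\ref{Lem:4g}, and the assertion follows.
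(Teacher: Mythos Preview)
Your proposal is correct and is exactly the approach the paper takes: its proof consists of the single sentence ``By applying~\eqref{eq: g-vector mutation} repeatedly, we obtain the assertion,'' and your plan simply spells out how that repeated application is organized using the intermediate quivers from Lemma~\ref{Lem:zeta1}. The remark about choosing among the $32$ equivalent sequences is a nice observation but unnecessary, since the paper's specific sequence already yields~\eqref{eq:6g} directly.
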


\begin{proof}
By applying ~\eqref{eq: g-vector mutation} repeatedly, 
we can obtain the assertion. 
\end{proof}

\begin{lemma} \label{lem: cone to cone 6g}
The map $\bsg' \mapsto \bsg$ given by~\eqref{eq:6g} sends the cone $C_{\ii'} \subset \Z^{\oplus \N}$ into the cone $C_{\ii} \subset \Z^{\oplus \N}$.    
\end{lemma}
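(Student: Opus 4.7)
The plan is to mirror the approach taken for the $4$-move case in Lemma~\ref{lem: cone to cone 4g}, now applied to the more intricate $6$-move substitution. For $\bsg = (g_u)_{u \in \N}$ and $\bsg' = (g'_u)_{u \in \N}$ I would define the partial sums
\begin{align*}
\Sigma_u \seteq \sum_{v \ge u,\ i_v = i_u} g_v,\qquad \Sigma'_u \seteq \sum_{v \ge u,\ i'_v = i'_u} g'_v,
\end{align*}
and the goal is to show that $\Sigma'_u \ge 0$ for every $u \in \N$ forces $\Sigma_u \ge 0$ for every $u \in \N$. Throughout the argument I will make free use of the auxiliary quantities $A, B, C, D, E, F, G, H, I$ of~\eqref{eq: 6moves form}, and exploit the identities $[x]_+ - [-x]_+ = x$ and $[x]_+ = x + [-x]_+$ together with the nonnegativity of $[\cdot]_+$, which are what made the $4$-move case go through.

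The argument splits naturally according to the location of $u$ relative to the interval $[k,k+5]$. For $u > k+5$ one has $\Sigma_u = \Sigma'_u$ immediately since~\eqref{eq:6g} leaves $g_u$ unchanged. For the six values $u \in \{k, k+1, \ldots, k+5\}$ one substitutes the formulas in~\eqref{eq:6g} and performs a branching case analysis on the signs of $g'_{k+1}$, $g'_{k+2}$, $g'_{k+3}$ (which in turn govern the signs of $A, B, \ldots, I$), exactly as in the $4$-move argument but with more branches. In each case one rewrites $\Sigma_u$ as a sum of a particular $\Sigma'_v$ (with $v \in \{k, k+1, \ldots, k+5\}$) plus manifestly nonnegative contributions of the form $[\,\cdot\,]_+$; typical outcomes will read
\begin{align*}
\Sigma_{k+5} &= \Sigma'_{k+4} + \bigl(\text{nonneg.\ terms}\bigr), & \Sigma_{k+4} &= \Sigma'_{\bullet} + \bigl(\text{nonneg.\ terms}\bigr),
\end{align*}
and likewise down to $\Sigma_{k}$, with the right choice of $\bullet \in \{k, k+1, k+2, k+3\}$ dictated by the sub-case.

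Finally, for $u < k$ one checks the telescoping identities
\begin{align*}
g_{k+4} + g_{k+2} + g_{k} + g_{k^-_\ii} &= g'_{k+5} + g'_{k+3} + g'_{k+1} + g'_{(k+1)^-_{\ii'}},\\
g_{k+5} + g_{k+3} + g_{k+1} + g_{(k+1)^-_\ii} &= g'_{k+4} + g'_{k+2} + g'_{k} + g'_{k^-_{\ii'}},
\end{align*}
which follow from a direct expansion using~\eqref{eq:6g} and~\eqref{eq: 6moves form}, together with the identity $2[A]_+ - A + \sfc_{j,i}[-A]_+ - \sfc_{j,i}[A]_+ = -A + |A|$-type cancellations that collapse the nine auxiliary symbols down to the original inputs $g'_k, g'_{k+1}, g'_{k+2}, g'_{k+3}$. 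These identities immediately give $\Sigma_u = \Sigma'_u$ for $u < k$. The main obstacle is purely bookkeeping: the case tree for $u \in \{k, k+1, k+2, k+3\}$ in principle has up to $2^4$ branches from the signs of $g'_{k+1}, A, B, C$, but many collapse since, e.g., $g'_{k+1} \ge 0$ forces $A = g'_{k+2}$, $B = g'_{k+3}$, and so on; the careful bookkeeping needed to verify each collapsing is the real work, and I would present only a few representative branches in full, indicating that the rest are handled in the same spirit as for Lemma~\ref{lem: cone to cone 4g}.
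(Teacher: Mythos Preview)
Your plan is essentially the paper's own strategy: define the tail sums $\Sigma_u,\Sigma'_u$, treat $u>k+5$ trivially, handle $u\in[k,k+5]$ by a sign-based case analysis, and reduce $u<k$ to two telescoping identities. Two points of friction are worth flagging.

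First, the branching variables you name are not the ones that actually make the argument close. The paper does not split on the signs of $g'_{k+1},A,B,C$; for $u=k+3$ it branches on the signs of $A$ and $E$ (three live cases, since $A<0$ already forces $E>0$), and for $u=k+1$ it branches on $A,E,I$. The point is that $E,G,H,I$ are the quantities that appear with both signs in the formula for $g_u$ after the obvious $[X]_+ - X = [-X]_+$ simplifications, so it is their signs that govern the reduction. Your suggested tree on $g'_{k+1},A,B,C$ would work in principle but produces many more branches, and your illustrative simplification is off: $g'_{k+1}\ge 0$ gives $B=g'_{k+3}$ but $A=g'_{k+2}-\sfc_{j,i}\,g'_{k+1}$, not $A=g'_{k+2}$.

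Second, the two telescoping identities you state for $u<k$ are exactly what the paper proves, but it does so in a separate lemma (Lemma~\ref{lem: i-degree 6g}) rather than inside this proof, and the computation there is substantially longer than your sketch suggests: collapsing $A,\ldots,I$ back to $g'_k,\ldots,g'_{k+3}$ requires a dozen or so consecutive substitutions, not a single ``$|A|$-type'' cancellation. Your outline is sound, but the bookkeeping you allude to is the entire content of the proof.
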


\begin{proof}
Let us use similar arguments to those in Lemma~\ref{lem: cone to cone 4g} and notations in~\eqref{eq: 6moves form}. 
As in the $4$–move case, we check positivity by verifying that all partial sums $\Sigma_u=\sum_{v\le u} g_v$ remain nonnegative.
Because the $6$–move only modifies entries between $k$ and $k+5$,
we only need to check those indices.
 
\medskip
 \noindent
(a) If $u > k+5$, we have $\Sigma_u = \Sigma'_u \ge 0$.

\medskip
\noindent
(b) If $u=k+5$, we have  
\begin{align*}
\Sigma_{k+5} & = \Sigma_{k+4}' -[-A]_+ -\sfc_{j,i}[B]_+ + [E]_+.
\end{align*}
In the above equation, the only term that can contribute a negative amount is $-[-A]_+$. Therefore, to check the non-negativity of $\Sigma_{k+5}$ it is enough to distinguish whether $A$ is nonnegative or negative.

Note that if $A < 0$, then $E > 0$ and $A+E \ge 0$ since 
\begin{align*}
A +E  &=   A+ (-A -\sfc_{j,i}[C]_+   +[D ]_+) = -\sfc_{j,i}[C]_+   +[D ]_+ \ge 0.  
\end{align*}
 Thus $\Sigma_{k+5} \ge 0$.
Assume that $A \ge 0$. In this case, we have
 \begin{align*}
\Sigma_{k+5} & = \Sigma_{k+4}'  -\sfc_{j,i}[B]_+   + [E]_+,
\end{align*}
which proves the assertion for $k+5$. 

\medskip
\noindent
(c)  If $u=k+4$, we have 
\begin{align*}
\Sigma_{k+4} & = \Sigma_{k+5}' -[-B]_+ +[G]_+.
\end{align*}
As in the case (b), it suffices to consider when $B <0$. If $g'_{k+1} <0$, then 
$\Sigma'_{k+5}+B =\Sigma'_{k+1} \ge 0$,  and  if $g'_{k+1} \ge 0$, then 
$\Sigma'_{k+5}+B =\Sigma'_{k+3} \ge 0$. Thus the assertion follows.

\medskip
\noindent
(d)  If $u=k+3$, we have 
\begin{equation}  \label{eq: k+3 start} 
\begin{aligned}
\Sigma_{k+3} =  & \Sigma_{k+4}'  -[-A]_+ -\sfc_{j,i}[B]_+ + [E]_+  -E +\sfc_{j,i} [-G]_+   +[H]_+ .
\end{aligned}
\end{equation}
 Applying $[E]_+-E = [-E]_+$, ~\eqref{eq: k+3 start}  becomes 
 \begin{equation} \label{eq: k+3 2nd} 
\begin{aligned}  
\Sigma_{k+3} =  & \Sigma_{k+4}'  -[-A]_+ -\sfc_{j,i}[B]_+ + [-E]_+ +\sfc_{j,i}[-G]_+   +[H]_+ .
\end{aligned} 
\end{equation}
Now we divide into cases according to the signs of 
$A$ and $E$.

\smallskip
\noindent
\underline{($A<0,E> 0$)}: In this case, we have 
$$ [-E]_+=0 , \quad -[-A]_+=A=g'_{k+2} -\sfc_{j,i} [g'_{k+1}]_+$$ and hence ~\eqref{eq: k+3 2nd}  becomes
\begin{align*}
&\underline{\Sigma_{k+4}'  +g'_{k+2}} -\sfc_{j,i} [g'_{k+1}]_+ -\sfc_{j,i}[B]_+   +\sfc_{j,i}[-G]_+   +[H]_+ \\
& = \underline{\Sigma_{k+2}'}   -\sfc_{j,i} [g'_{k+1}]_+ -\sfc_{j,i}[B]_+  +\sfc_{j,i} [-G]_+   +[H]_+. 
\end{align*}
Since the only term that can contribute a negative amount is $[-G]_+$, it is enough to consider when 
$$0>G =-B   -[-C]_+  +\sfc_{i,j}[-E]_+ +[F]_+.$$ 
Note that it suffices to show that
\begin{align} \label{eq: k+3 critical part}
    \Sigma_{k+2}'   -\sfc_{j,i} [g'_{k+1}]_+ -\sfc_{j,i}[B]_+  +\sfc_{j,i}  [-G]_+ \ge 0.
\end{align}
Replacing $G$ with the above formula, ~\eqref{eq: k+3 2nd} turns into 
 \begin{align*}
 &\Sigma_{k+2}'   -\sfc_{j,i} [g'_{k+1}]_+ -\sfc_{j,i}[B]_+    -\sfc_{j,i} (-B   -[-C]_+  +\sfc_{i,j}[-E]_+ +[F]_+)  \\
 & = \Sigma_{k+2}'   -\sfc_{j,i} [g'_{k+1}]_+ -\sfc_{j,i}([B]_+-B)   +\sfc_{j,i} [-C]_+ -3[-E]_+ - \sfc_{j,i}[F]_+ \\
 & \overset{*}{=} \Sigma_{k+2}'   -\sfc_{j,i} [g'_{k+1}]_+ -\sfc_{j,i}[-B]_+   +\sfc_{j,i} [-C]_+   - \sfc_{j,i}[F]_+.
  \end{align*}
Here $\overset{*}{=}$ follows from  
$$\text { (1) $E \ge 0$ by the assumption} \quad  \text{ and } \quad \text{ (2) $[B]_+-B=[-B]_+$.}$$ Finally assume 
$$0>C=-g'_{k+1}     -[-B]_+.$$ Then we have
\begin{align*}
 &\Sigma_{k+2}'   -\sfc_{j,i} [g'_{k+1}]_+ -\sfc_{j,i}[-B]_+ - \sfc_{j,i}[F]_+    +\sfc_{j,i}g'_{k+1}    +\sfc_{j,i} [-B]_+ \\
  &\hspace{10ex} \overset{\dagger}{=} \Sigma_{k+2}'  -\sfc_{j,i} [-g'_{k+1}]_+  - \sfc_{j,i}[F]_+.
\end{align*}
Here $\overset{\dagger}{=}$ follows from 
 (3) $[g'_{k+1}]_+-g'_{k+1}=[-g'_{k+1}]_+$ and
(4) $[-B]_+-[-B]_+=0$. 
Then our assertion for this case follows. 

\smallskip 
\noindent
\underline{($A\ge 0,E<  0$)}: In this case, we have 
$$ [-E]_+=-E , \quad -[-A]_+=0$$
and hence ~\eqref{eq: k+3 2nd}  becomes
\begin{align*}
& \Sigma_{k+4}'  -\sfc_{j,i}[B]_+  -E  +\sfc_{j,i}[-G]_+   +[H]_+ 
\end{align*}

Since $-E>0$, it suffices to consider when 
$$0>G =-B   -[-C]_+  -\sfc_{i,j}E +[F]_+,$$
as in the previous case. 
Replacing $G$ with the above formula, ~\eqref{eq: k+3 2nd} turns into 
\begin{align*}
& \Sigma_{k+4}'  -\sfc_{j,i}[B]_+ -E       +[H]_+      +\sfc_{j,i}B + \sfc_{j,i}[-C]_+ + 3E - \sfc_{j,i}[F]_+ \\
   & \overset{\ddagger}{=} \Sigma_{k+4}'  -\sfc_{j,i}[-B]_+ +2E +[H]_+- \sfc_{j,i}[F]_+  + \sfc_{j,i}[-C]_+.
\end{align*}
Here $ \overset{\ddagger}{=} $ follows from (1) $[B]_+-B=[-B]_+$. Finally assume 
$$0>C=-g'_{k+1}  -\sfc_{i,j} A  -[-B]_+.$$
Then we have
\begin{align*}
&  \Sigma_{k+4}'  -\sfc_{j,i}[-B]_+ +2E +[H]_+- \sfc_{j,i}[F]_+ 
  + \sfc_{j,i}g'_{k+1} +3A + \sfc_{j,i}[-B]_+ \\
& \overset{\star}{=} \Sigma_{k+4}'- 2A   +2[D ]_+ +[H]_+  
- \sfc_{j,i}[F]_+ + \sfc_{j,i}g'_{k+1} +3A  \\
& \overset{\sharp}{=}   \Sigma_{k+4}'+g'_{k+2} -\sfc_{j,i} [g'_{k+1}]_+   
 +2[D ]_+ +[H]_+ - \sfc_{j,i}[F]_+ + \sfc_{j,i}g'_{k+1}  \\
& \overset{\diamond}{=}   \Sigma_{k+2}'  -\sfc_{j,i} [-g'_{k+1}]_+  
 +2[D ]_+ +[H]_+ - \sfc_{j,i}[F]_+. 
\end{align*}
Here 
\ben
\item[(2)] $\overset{\star}{=}$ follows from the replacement of $E$ with $-A   +[D ]_+$,
\item[(3)] $\overset{\sharp}{=}$ follows from the replacement of $A$ with $g'_{k+2} -\sfc_{j,i} [g'_{k+1}]_+$,
\item[(4)] $\overset{\diamond}{=}$ follows from $\Sigma_{k+4}'+g'_{k+2} =\Sigma_{k+2}'$ and $[g'_{k+1}]_+- g'_{k+1}=[-g'_{k+1}]_+$. 
\ee

\smallskip 
\noindent
\underline{($A\ge 0,E \ge 0$)}: In this case,
we have 
$$ [-E]_+=0 , \quad -[-A]_+=0$$
and hence ~\eqref{eq: k+3 2nd}  becomes
\begin{align*}
& \Sigma_{k+4}'   -\sfc_{j,i}[B]_+  +\sfc_{j,i} [-G]_+   +[H]_+.
\end{align*}
Similarly, it suffices to consider when 
$$0>G =-B   -[-C]_+   +[F]_+.$$ 
Replacing $G$ with the above formula, ~\eqref{eq: k+3 2nd} turns into 
\begin{align*}
&\Sigma_{k+4}'   -\sfc_{j,i}[B]_+ +[H]_+  +\sfc_{j,i}B + \sfc_{j,i}[-C]_+ - \sfc_{j,i}[F]_+ \\
& = \Sigma_{k+4}'   -\sfc_{j,i}[-B]_+ +[H]_+ + \sfc_{j,i} [-C]_+ - \sfc_{j,i}[F]_+.
\end{align*}
Finally assume 
$$0>C=-g'_{k+1}  -\sfc_{i,j} A  -[-B]_+.$$
Then we have
\begin{align*}
&\Sigma_{k+4}'   -\sfc_{j,i}[-B]_+ +[H]_+ - \sfc_{j,i}[F]_+  + \sfc_{j,i} g'_{k+1} +3A +\sfc_{j,i} [-B]_+ \allowdisplaybreaks\\
& \overset{\wedge}{=}  \Sigma_{k+4}'   +[H]_+ - \sfc_{j,i}[F]_+  + \sfc_{j,i} g'_{k+1} +2A +g'_{k+2} -\sfc_{j,i} [g'_{k+1}]_+ \allowdisplaybreaks\\
& \overset{\oplus}{=} \Sigma_{k+2}' +[H]_+ - \sfc_{j,i}[F]_+ -\sfc_{j,i} [-g'_{k+1}]_+ + 2A \ge 0,
\end{align*}
which completes a proof of the assertion for $k+3$. 
Here
\bnum
\item $\overset{\wedge}{=}$ follows from $[-B]_+-[-B]_+=0$ and $A= g'_{k+2} -\sfc_{j,i} [g'_{k+1}]_+$,
\item $\overset{\oplus}{=} $ follows from $\Sigma_{k+2}'=\Sigma_{k+4}'+g'_{k+2} $ and $[g'_{k+1}]_+- g'_{k+1}= [-g'_{k+1}]_+$. 
\ee

\medskip
\noindent
(e)  If $u=k+2$, we have 
\begin{align*}
\Sigma_{k+2} & = \Sigma_{k+5}'  -[-B]_+ +[G]_+ -G  +[I]_+ \\
& = \Sigma_{k+5}'  -[-B]_+ +[-G]_++[I]_+. 
\end{align*}
Thus the assertion for $k+2$ is a direct consequence of the assertion for $k+4$.

 \medskip
\noindent
(f)  If $u=k+1$, we have 
 \begin{equation} \label{eq: k+1 1st} 
\begin{aligned}  
\Sigma_{k+1} & =   \Sigma_{k+4}'  -[-A]_+ -\sfc_{j,i}[B]_+ + [-E]_+ \\
 & \hspace{10ex} +\sfc_{j,i} [-G]_+   +\underline{[H]_+ -H} +\sfc_{j,i} [-I]_+\\
 & =\Sigma_{k+4}'  -[-A]_+ -\sfc_{j,i}[B]_+ + [-E]_+ \\
 & \hspace{10ex} +\sfc_{j,i} [-G]_+   +\underline{[-H]_+}+\sfc_{j,i} [-I]_+.
\end{aligned} 
\end{equation}
If $I \ge 0$, the assertion for $k+1$ follows from the one for $k+3$. Thus it suffices to assume that $I <0$. 
As in the case (c), we divide into cases according to the signs of 
$A$ and $E$.

\smallskip
\noindent
\underline{($A<0,E\ge 0, I <0$)}: By replacing $I$ with $-F +[G]_+   +\sfc_{i,j}[-H]_+$,~\eqref{eq: k+1 1st} 
becomes 
\begin{align*}
&\Sigma_{k+4}'  +A -\sfc_{j,i}[B]_+ +\sfc_{j,i} [-G]_+   +[-H]_+     
+\sfc_{j,i}F  -\sfc_{j,i}[G]_+ -3[-H]_+  \\
& \overset{\square}{=}   \Sigma_{k+4}'  +g'_{k+2} -\sfc_{j,i} [g'_{k+1}]_+ -\sfc_{j,i}[B]_+   +\sfc_{j,i} (F-G) 
-2[-H]_+ \\
& = \Sigma_{k+2}' -\sfc_{j,i} [g'_{k+1}]_+ -\sfc_{j,i}[B]_+   +\sfc_{j,i} (F-G) -2[-H]_+.
\end{align*}
Here $\overset{\square}{=}$ follows from $-[-G]_++[G]_+=G$.
Note that the assumption $I <0$ is equivalent to 
\begin{align}\label{eq: I-inequalty}
[G]_+ +\sfc_{i,j}[-H]_+ < F.    
\end{align}

Using~\eqref{eq: I-inequalty}, we have 
\begin{align*}
\sfc_{j,i} (F-G) -2[-H]_+  & > \sfc_{j,i} ([G]_+    +\sfc_{i,j}[-H]_+-G) -2[-H]_+ \\
&   =\sfc_{j,i} [-G]_+ +[-H]_+ = \sfc_{j,i} [-G]_+ + [-H]_+. 
\end{align*}
Then 
\begin{equation} \label{eq: k+1 case1 final}
\begin{aligned}
&\Sigma_{k+1}  >  
 \left\{ \Sigma_{k+2}' -\sfc_{j,i} [g'_{k+1}]_+ -\sfc_{j,i}[B]_+  +\sfc_{j,i} [-G]_+ \right\}  + [-H]_+.    
\end{aligned}
\end{equation}
Then~\eqref{eq: k+3 critical part} implies that the $\{ \}$-part in~\eqref{eq: k+1 case1 final} is non-negative and hence our assertion for this case follows. 

\smallskip
\noindent
\underline{($A\ge 0,E< 0, I <0$)}: By replacing $I$ with $-F +[G]_+    +\sfc_{i,j}[-H]_+$,~\eqref{eq: k+1 1st} 
becomes 
\begin{align}
&\Sigma_{k+4}'    -\sfc_{j,i}[B]_+ - E +\sfc_{j,i}[-G]_+   +[-H]_+   +\sfc_{j,i}F -\sfc_{j,i}[G]_+  -3[-H]_+ \nonumber \\
 & =  \Sigma_{k+4}'-\sfc_{j,i}[B]_+ -E +\sfc_{j,i}(F-G) -2[-H]_+  \label{eq: k+1 second critical}
\end{align}
as in the former case. Here we remark that $-E>0$. 

Using~\eqref{eq: I-inequalty} again and \eqref{eq: k+1 second critical}, we have an inequality 
\begin{align}
&\Sigma_{k+4}'-\sfc_{j,i}[B]_+  +\sfc_{j,i}(F-G) -2[-H]_+-E  \nonumber \\
& \hspace{3ex} > \Sigma_{k+4}'-\sfc_{j,i}[B]_+-E + \sfc_{j,i}( [G]_+ +\sfc_{i,j}[-H]_+ -G) -2[-H]_+\nonumber \\
& \hspace{3ex}  = \Sigma_{k+4}'-\sfc_{j,i}[B]_+  + \sfc_{j,i}[-G]_+ +[-H]_+-E. \label{eq: final ineq} 
\end{align}
Now let us assume that 
$$
0> G =  -B   -[-C]_+  +\sfc_{i,j}[-E]_+ +[F]_+.
$$
Then~\eqref{eq: final ineq} becomes
\begin{align*}
& \Sigma_{k+4}'-\sfc_{j,i}[B]_+ +[-H]_+ -E  
  + \sfc_{j,i}B + \sfc_{j,i}[-C]_+ + 3E  - \sfc_{j,i}[F]_+ \allowdisplaybreaks\\
& \overset{\triangle}{=} \Sigma_{k+4}'-\sfc_{j,i}[-B]_+ +[-H]_+ +2E  
 + \sfc_{j,i}[-C]_+  - \sfc_{j,i}[F]_+ \allowdisplaybreaks\\
& \overset{\circ}{=}  \Sigma_{k+4}'-\sfc_{j,i}[-B]_+ +[-H]_+-2A  -2\sfc_{j,i}[C]_+ 
+[D ]_+ + \sfc_{j,i}[-C]_+  - \sfc_{j,i}[F]_+\allowdisplaybreaks \\
& \overset{\bullet}{=}  \Sigma_{k+4}'-\sfc_{j,i}[-B]_+ +[-H]_+-2A  -\sfc_{j,i}[C]_+    
+[D ]_+ - \sfc_{j,i}C  - \sfc_{j,i}[F]_+ \allowdisplaybreaks\\ 
& \overset{*}{=}  \Sigma_{k+4}'-\sfc_{j,i}[-B]_+ +[-H]_+-2A  -\sfc_{j,i}[C]_+ \\
& \hspace{35ex}    - \sfc_{j,i}[F]_+ +[D ]_+ + \sfc_{j,i}g'_{k+1} +3A +\sfc_{j,i} [-B]_+ \allowdisplaybreaks \\ 
& \overset{\nabla}{=}  \Sigma_{k+4}' +[-H]_+ -\sfc_{j,i}[C]_+  - \sfc_{j,i}[F]_+  +[D ]_+   +A + \sfc_{j,i}g'_{k+1}  \allowdisplaybreaks\\
& \overset{\odot}{=} \Sigma_{k+4}' +[-H]_+ -\sfc_{j,i}[C]_+  - \sfc_{j,i}[F]_+  +[D ]_+ + g'_{k+2} -\sfc_{j,i} [g'_{k+1}]_+ + \sfc_{j,i}g'_{k+1}  \allowdisplaybreaks \\
& \overset{\vee}{=} \Sigma_{k+2}' +[-H]_+ -\sfc_{j,i}[C]_+  - \sfc_{j,i}[F]_+  +[D ]_+ -\sfc_{j,i} [-g'_{k+1}]_+. 
\end{align*}
Here 
\bnum
\item $\overset{\triangle}{=}$ follows from $[B]_+-B=[-B]_+$,
\item $\overset{\circ}{=}$ follows from $E  =  -A -\sfc_{j,i}[C]_+   +[D ]_+$,
\item $\overset{\bullet}{=}$ follows from $[C]_+ - [-C]_+=C$,
\item $\overset{*}{=}$ follows from $C = -g'_{k+1}  -\sfc_{i,j} [A]_+  -[-B]_+$,
\item $\overset{\nabla}{=}$ follows from $[-B]_+ - [-B]_+=0$,
\item $\overset{\odot}{=}$ follows from $A = g'_{k+2} -\sfc_{j,i} [g'_{k+1}]_+$,
\item $\overset{\vee}{=}$ follows from $\Sigma_{k+2}'=\Sigma_{k+4}'+g'_{k+2}$ and $[g'_{k+1}]_+-g'_{k+1}=[-g'_{k+1}]_+$. 
\ee

\smallskip
\noindent
\underline{($A\ge 0,E \ge 0, I <0$)}: By replacing $I$ with $-F +[G]_+  +\sfc_{i,j}[-H]_+$,~\eqref{eq: k+1 1st} 
becomes 
\begin{align}
&\Sigma_{k+4}'  -\sfc_{j,i}[B]_+ +\sfc_{j,i} [-G]_+   +[-H]_+  \nonumber  \\
 & \hspace{25ex}  +\sfc_{j,i}F  -\sfc_{j,i}[G]_+ -3[-H]_+ \nonumber \\
&= \Sigma_{k+4}'  -\sfc_{j,i}[B]_+-2[-H]_+ +\sfc_{j,i} (F-G)   \label{eq: k+1 3rd}   
\end{align}
as in the previous cases. Using~\eqref{eq: I-inequalty} and~\eqref{eq: k+1 3rd}, we have 
\begin{align}
  & \Sigma_{k+4}'  -\sfc_{j,i}[B]_+-2[-H]_+ +\sfc_{j,i} (F-G)  \nonumber \\  
  & \hspace{5ex} > \Sigma_{k+4}'  -\sfc_{j,i}[B]_+ +\sfc_{j,i}[-G]_++[-H]_+ . \label{eq: k+1 3rd critical}
\end{align}
Now let us assume that 
$$
0> G =  -B   -[-C]_+   +[F]_+.
$$
Then~\eqref{eq: k+1 3rd critical} becomes
\begin{align*}
& \Sigma_{k+4}'  -\sfc_{j,i}[B]_+ +[-H]_+ 
+\sfc_{j,i}B + \sfc_{j,i} [-C]_+  - \sfc_{j,i} [F]_+ \\
& = \Sigma_{k+4}'  -\sfc_{j,i}[-B]_+ +[-H]_+ + \sfc_{j,i} [-C]_+  - \sfc_{j,i} [F]_+
\end{align*}
Finally, it suffices to assume that 
$$0> C = -g'_{k+1}  -\sfc_{i,j}A  -[-B]_+.$$
Then we have
\begin{align*}
& \Sigma_{k+4}'  -\sfc_{j,i}[-B]_+ +[-H]_+- \sfc_{j,i} [F]_+
 + \sfc_{j,i} g'_{k+1}   +3A +\sfc_{j,i}[-B]_+ \\
& = \Sigma_{k+4}'  +[-H]_+- \sfc_{j,i} [F]_+  +2A  + \sfc_{j,i} g'_{k+1}   +   g'_{k+2} -\sfc_{j,i} [g'_{k+1}]_+ \\
& = \Sigma_{k+2}'  +[-H]_+- \sfc_{j,i} [F]_+  +2A -\sfc_{j,i} [-g'_{k+1}]_+.
\end{align*}
Since $A\ge 0$, it proves the assertion for $k+1$.

\medskip
\noindent
(g)  If $u=k$, we have 
\begin{align}
\Sigma_{k} & = \Sigma_{k+5}'  -[-B]_+ +[G]_+ -G  +[I]_+ -I  \nonumber \\
& = \Sigma_{k+5}'  -[-B]_+ +[-G]_++[-I]_+. \label{eq: k}
\end{align}
Thus the assertion for $k$ is also a direct consequence of the assertion for $k+4$. 

\medskip
\noindent
(h)  For $u<k$,  the assertion follows from  Lemma~\ref{lem: i-degree 6g} below.
\end{proof}

\begin{lemma} \label{lem: i-degree 6g}  
Assume that $k^- \in \N$. Then we have
$$
\sfp_{\ii'}(\bsg'; i) = \sfp_{\ii}(\bsg; i) \quad \text{ for all } i\in I, 
$$
where $\bsg'\mapsto \bsg$ under the map given by~\eqref{eq:6g}.  
\end{lemma}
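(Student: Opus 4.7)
The plan is to reduce the claim to two explicit scalar identities among the $\bsg$- and $\bsg'$-coordinates, and then verify them by direct substitution into~\eqref{eq:6g} and~\eqref{eq: 6moves form}. The starting observation is that $\ii$ and $\ii'$ agree at all positions outside $[k,k+5]$, and formula~\eqref{eq:6g} only modifies $g_u$ relative to $g'_u$ at the eight positions $k^-_\ii, (k+1)^-_\ii, k, k+1, \ldots, k+5$. Moreover, under the $6$-move one has $k^-_\ii = (k+1)^-_{\ii'}$ and $(k+1)^-_\ii = k^-_{\ii'}$ as positions in $\N$, so $g'_{(k+1)^-_{\ii'}}$ and $g'_{k^-_{\ii'}}$ in~\eqref{eq:6g} simply denote the $\bsg'$-values at those common positions.

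If $i \notin \{i_k,i_{k+1}\}$, then none of the eight affected positions satisfies $i_u=i$ (or $i'_u=i$), so the two sums $\sfp_\ii(\bsg;i)$ and $\sfp_{\ii'}(\bsg';i)$ coincide termwise. Thus the lemma reduces to proving the following two identities, obtained by collecting the contributions of the affected positions having index $i_k$ (respectively $i_{k+1}$) in $\ii$ and in $\ii'$:
\begin{align}
g_{k^-_\ii} + g_k + g_{k+2} + g_{k+4} &= g'_{k^-_\ii} + g'_{k+1} + g'_{k+3} + g'_{k+5}, \label{eq:planA}\\
g_{(k+1)^-_\ii} + g_{k+1} + g_{k+3} + g_{k+5} &= g'_{(k+1)^-_\ii} + g'_k + g'_{k+2} + g'_{k+4}. \label{eq:planB}
\end{align}

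For~\eqref{eq:planA} I would substitute~\eqref{eq:6g} directly; the left-hand side becomes
\[
g'_{k^-_\ii} + [g'_{k+1}]_+ + [B]_+ - [-G]_+ - [-I]_+ \;-\; I \;-\; G + [I]_+ \;+\; g'_{k+5} - [-B]_+ + [G]_+,
\]
after which the elementary identity $[a]_+ - [-a]_+ = a$ collapses the $I$-, $G$-, and $B$-contributions in pairs, leaving $g'_{k^-_\ii} + [g'_{k+1}]_+ + B + g'_{k+5}$; unwinding $B = g'_{k+3} - [-g'_{k+1}]_+$ and applying $[g'_{k+1}]_+ - [-g'_{k+1}]_+ = g'_{k+1}$ yields the right-hand side. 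The identity~\eqref{eq:planB} is proved analogously.

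The main obstacle is~\eqref{eq:planB}, since the coefficients of $g_{(k+1)^-_\ii}, g_{k+1}, g_{k+3}, g_{k+5}$ in~\eqref{eq:6g} involve the coupled auxiliary variables $A, D, E, F, H, I$ together with the structure constants $\sfc_{i,j}, \sfc_{j,i}$ (satisfying $\sfc_{i,j}\sfc_{j,i}=3$ for $G_2$). The computation proceeds by the same mechanism — apply $[a]_+-[-a]_+=a$ to each of $E,F,H,I$ in turn, then unwind the defining relations~\eqref{eq: 6moves form} in the order $I, H, F, E, D, C, B, A$, and verify cancellation of the cross terms. This is lengthy but mechanical; no new ingredient beyond the telescoping identity and the formulas~\eqref{eq:6g}, \eqref{eq: 6moves form} is required.
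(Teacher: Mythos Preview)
Your proposal is correct and follows essentially the same route as the paper. The paper also reduces the claim to the two scalar identities
\[
g_{k+4}+g_{k+2}+g_{k}+g_{k^-_\ii} = g'_{k+5}+g'_{k+3}+g'_{k+1}+g'_{(k+1)^-_{\ii'}}, \qquad
g_{k+5}+g_{k+3}+g_{k+1}+g_{(k+1)^-_\ii} = g'_{k+4}+g'_{k+2}+g'_{k}+g'_{k^-_{\ii'}},
\]
and verifies them by substituting~\eqref{eq:6g}, repeatedly applying $[a]_+-[-a]_+=a$, and unwinding~\eqref{eq: 6moves form}; your computation for~\eqref{eq:planA} is in fact line-for-line the paper's proof of its identity~(ii), and your description of the longer verification for~\eqref{eq:planB} matches the paper's treatment of~(i).
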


\begin{proof} 
It suffices to show that
\bnum
\item \label{it: first case} $g_{k+5}+g_{k+3}+g_{k+1}+g_{(k+1)^-_\ii} = g'_{k+4}+g'_{k+2}+g'_{k}+g'_{k^-_{\ii'}}$,
\item \label{it: second case} $g_{k+4}+g_{k+2}+g_{k}+g_{k^-_\ii} = g'_{k+5}+g'_{k+3}+g'_{k+1}+g'_{(k+1)^-_{\ii}}$.
\ee

\noindent 
\underline{\eqref{it: first case}}: By~\eqref{eq: k+1 1st} and applying   the techniques we have used,   
\begin{align*}
\Sigma_{(k+1)^-_\ii} & = ~\eqref{eq: k+1 1st} + g_{(k+1)^-_\ii}\\
& = \Sigma_{k+4}' -[-A]_+ -\sfc_{j,i}[B]_+ + [-E]_+  +\sfc_{j,i} [-G]_+   + [-H]_+  +\sfc_{j,i}[-I]_+ + (g'_{k_{\ii'}^-}  +[D ]_+ \allowdisplaybreaks \\
  &\hspace{10ex} \left. -\sfc_{j,i} [F]_+ + 2[H]_+  -\sfc_{j,i} [I]_+ \right) \quad (\because~\eqref{eq:6g}) \allowdisplaybreaks\\
  & = \Sigma_{k+4}' + g'_{k_{\ii'}^-}  -[-A]_+ -\sfc_{j,i}[B]_+ + [-E]_+  +\sfc_{j,i} [-G]_+  + [-H]_+\allowdisplaybreaks\\
 & \hspace{5ex}   +[D ]_+ -\sfc_{j,i} [F]_+ + 2[H]_+  -\sfc_{j,i}I \qquad 
 (\because [I]_+ -[-I]_t =I )\allowdisplaybreaks\\
  & =  \Sigma_{k+4}' + g'_{k_{\ii'}^-} -[-A]_+ -\sfc_{j,i}[B]_+ + [-E]_+  +\sfc_{j,i} [-G]_+   + [-H]_+ +[D ]_+ -\sfc_{j,i} [F]_+  
   \allowdisplaybreaks\\
 & \hspace{5ex} + 2[H]_+  + \sfc_{j,i}F - \sfc_{j,i}[G]_+ -3[-H]_+ 
 \quad (\because I =  -F+[G]_+ +\sfc_{i,j}[-H]_+) \allowdisplaybreaks\\
  & = \Sigma_{k+4}' + g'_{k_{\ii'}^-}  -[-A]_+ -\sfc_{j,i}[B]_+ +[-E]_+     +[D ]_+ -\sfc_{j,i} [F]_+ + 2H  + \sfc_{j,i}F - \sfc_{j,i}G \allowdisplaybreaks\\
  & \qquad (\because [x]_+ - [-x]_+ = x) \allowdisplaybreaks\\
 & = \Sigma_{k+4}' + g'_{k_{\ii'}^-}  -[-A]_+ -\sfc_{j,i}[B]_+ +[-E]_+  +[D ]_+  -\sfc_{j,i} [F]_+  - 2D   +2[E]_+   
 \allowdisplaybreaks\\
 & \hspace{5ex} +2\sfc_{j,i} [-F]_+ - \sfc_{j,i}C +3[-D ]_+  + \sfc_{j,i}B + \sfc_{j,i}[-C]_+ -3[-E]_+- \sfc_{j,i}[F]_+\allowdisplaybreaks\\
 &\quad (\because \eqref{eq: 6moves form} \text{ for } H,F \text{ and } G)\allowdisplaybreaks\\
  & = \Sigma_{k+4}' + g'_{k_{\ii'}^-}  -[-A]_+ -\sfc_{j,i}[B]_+   +[D ]_+ +3[-D ]_+ +\sfc_{j,i}[-C]_+   - 2D   +2E  
 \allowdisplaybreaks\\
 & \hspace{5ex}    -2\sfc_{j,i} F - \sfc_{j,i}C     + \sfc_{j,i}B  \qquad (\because [x]_+ - [-x]_+ = x)  \allowdisplaybreaks\\
   & = \Sigma_{k+4}' + g'_{k_{\ii'}^-}  -[-A]_+ -\sfc_{j,i}[B]_+   +[D ]_+ +3[-D ]_+ + \sfc_{j,i}[-C]_+    - 2g'_{k}  \allowdisplaybreaks \\
 & \hspace{5ex}  -2\sfc_{j,i}[-g'_{k+1}]_+  +4[-A]_+  -2 \sfc_{j,i}[-C]_+ -2A -2\sfc_{j,i}[C]_+   +2[D ]_+ +2\sfc_{j,i} C \\
 & \hspace{5ex}   -6[-D ]_+ + \sfc_{j,i}g'_{k+1}  + 3[A]_+ + \sfc_{j,i}[-B]_++ \sfc_{j,i}g'_{k+3} -\sfc_{j,i}[-g'_{k+1}]_+  \allowdisplaybreaks\\
   &\quad (\because \eqref{eq: 6moves form} \text{ for } B,C,D,E \text{ and } F)\allowdisplaybreaks\\
  & = \Sigma_{k+4}' + g'_{k_{\ii'}^-}     - 2g'_{k}  -3\sfc_{j,i}[-g'_{k+1}]_+  +3[-A]_+  -3  \sfc_{j,i}[-C]_++ \sfc_{j,i}g'_{k+1}  + 3[A]_+  \allowdisplaybreaks\\
 & \hspace{5ex}       + \sfc_{j,i}g'_{k+3} -2A  -\sfc_{j,i}B   +3D  
 \qquad (\because [x]_+ - [-x]_+ = x) \allowdisplaybreaks \\
 & = \Sigma_{k+4}' + g'_{k_{\ii'}^-}     - 2g'_{k}  -3\sfc_{j,i}[-g'_{k+1}]_+  +3[-A]_+  -3  \sfc_{j,i}[-C]_+
+ \sfc_{j,i}g'_{k+1}  + 3[A]_+    \allowdisplaybreaks \\
 & \hspace{5ex}    + \sfc_{j,i}g'_{k+3} -2A  -\sfc_{j,i}B +3 g'_{k}  +3\sfc_{j,i}[-g'_{k+1}]_+  -6[-A]_+  + 3\sfc_{j,i}[-C]_+  \allowdisplaybreaks \\
 &\quad (\because \eqref{eq: 6moves form} \text{ for } D)\allowdisplaybreaks\\
  & = \Sigma_{k+4}' + g'_{k_{\ii'}^-}    +g'_{k}     +A   + \sfc_{j,i}g'_{k+1}     + \sfc_{j,i}g'_{k+3}   -\sfc_{j,i}B  
  \qquad (\because [x]_+ - [-x]_+ = x)  \allowdisplaybreaks\\
   & = \Sigma_{k+4}' + g'_{k_{\ii'}^-}    +g'_{k}     +g'_{k+2} -\sfc_{j,i} [g'_{k+1}]_+  + \sfc_{j,i}g'_{k+1}    \allowdisplaybreaks \\
 & \hspace{5ex}     + \sfc_{j,i}g'_{k+3}   -\sfc_{j,i}g'_{k+3}  +\sfc_{j,i} [-g'_{k+1}]_+ \quad (\because \eqref{eq: 6moves form} \text{ for } A \text{ and } B) \allowdisplaybreaks\\
    & = \Sigma_{k+4}' + g'_{k_{\ii'}^-}    +g'_{k}     +g'_{k+2} 
    (\because [x]_+ - [-x]_+ = x) 
    \allowdisplaybreaks\\
    & = \Sigma'_{k_{\ii'}^-}   
\end{align*}
\noindent
In the above computation, we repeatedly substitute the relations in~\eqref{eq: 6moves form}
and use $[x]_+-[-x]_+=x$ to cancel all auxiliary terms.

\medskip
\noindent
\underline{\eqref{it: second case}}: By~\eqref{eq: k}, we have
\begin{align*}
\Sigma_{k^-_\ii} & = ~\eqref{eq: k} + g_{k^-_\ii} \\
& =  \Sigma_{k+5}'  -[-B]_+ +[-G]_++[-I]_+ + (g'_{(k+1)_{\ii'}^-} + [g'_{k+1}]_+  + [B]_+ -[-G]_+ - [-I]_+ ) \allowdisplaybreaks\\
& = \Sigma_{k+5}'  + B   + g'_{(k+1)_{\ii'}^-} + [g'_{k+1}]_+    
\qquad (\because [x]_+-[-x]_+ = x)\allowdisplaybreaks\\
& = \Sigma_{k+5}'  + g'_{k+3} -[-g'_{k+1}]_+    + [g'_{k+1}]_+  + g'_{(k+1)_{\ii'}^-} \qquad (\because \eqref{eq: 6moves form} \text{ for } B) \allowdisplaybreaks\\
& = \Sigma_{k+5}'  + g'_{k+3} + g'_{k+1}  + g'_{(k+1)_{\ii'}^-}
\qquad (\because [x]_+-[-x]_+ = x) \allowdisplaybreaks\\
& = \Sigma'_{(k+1)^{-}_{\ii'}}. \qedhere
\end{align*}
 
\end{proof}

The following lemma can be obtained as in other moves and we omit the proof.

\begin{lemma} \label{lem: 6g Pgi}
Let  $\bsg,\bsg' \in \Z^{\oplus\N}$ be elements related by~\eqref{eq:4g}. If $\bsg'_s=0$ for $s \in [k,k+3]$, we have
$$
\sfp_\ii(\bsg;i) = \sfp_{\ii'}(\bsg';i) \qquad \text{ for any } i \in I. 
$$
\end{lemma}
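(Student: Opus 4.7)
The plan is to show that the hypothesis $\bsg'_s=0$ for $s\in[k,k+3]$ forces all nine auxiliary quantities in \eqref{eq: 6moves form} to vanish, so that \eqref{eq:6g} degenerates to a simple relabelling on the window $[k,k+5]$. The equality of $\sfp$-sums then reduces to an elementary bookkeeping check.

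First I would carry out the substitution $g'_k=g'_{k+1}=g'_{k+2}=g'_{k+3}=0$ directly into the recursion \eqref{eq: 6moves form}. One checks in order that $A=0$ and $B=0$ (immediate), then $C=-g'_{k+1}-\sfc_{i,j}[A]_+-[-B]_+=0$, then $D=g'_k+\sfc_{j,i}[-g'_{k+1}]_+-2[-A]_++\sfc_{j,i}[-C]_+=0$, and continuing through $E,F,G,H,I$, each of which is a $\Z$-linear combination of $[\cdot]_+$ applied to previously vanished quantities or to the vanishing $g'_s$ (crucially, neither $g'_{k+4}$ nor $g'_{k+5}$ ever appears in these formulas). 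Hence $A=B=\cdots=I=0$.

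Next, plugging $A=\cdots=I=0$ into \eqref{eq:6g} collapses the formula to
\[ g_k=g_{k+1}=g_{k+2}=g_{k+3}=0,\qquad g_{k+4}=g'_{k+5},\qquad g_{k+5}=g'_{k+4}, \]
\[ g_{k^-_\ii}=g'_{(k+1)^-_{\ii'}},\qquad g_{(k+1)^-_\ii}=g'_{k^-_{\ii'}}, \]
with $g_u=g'_u$ otherwise. Since $\ii$ and $\ii'$ agree at every position outside $[k,k+5]$, the last occurrence of the label $i_k$ before position $k$ in $\ii$ is the same index as the last occurrence of $i_k=i'_{k+1}$ before position $k+1$ in $\ii'$, giving $k^-_\ii=(k+1)^-_{\ii'}$; symmetrically, $(k+1)^-_\ii=k^-_{\ii'}$. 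Consequently $g_u=g'_u$ for every $u<k$ and every $u>k+5$.

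Finally I would verify $\sfp_\ii(\bsg;i)=\sfp_{\ii'}(\bsg';i)$ label by label. For $i\notin\{i_k,i_{k+1}\}$ the two sums coincide termwise since $i_u=i'_u$ and $g_u=g'_u$ at all contributing positions. For $i=i_k$, inside the window the $\ii$-contributions at $i$-colored positions are $g_k+g_{k+2}+g_{k+4}=0+0+g'_{k+5}$, while the $\ii'$-contributions at $i$-colored positions (which, since labels are interchanged, sit at $k+1,k+3,k+5$) are $g'_{k+1}+g'_{k+3}+g'_{k+5}=g'_{k+5}$; outside the window the contributions coincide by the previous paragraph. The case $i=i_{k+1}$ is identical with the roles of $i_k$ and $i_{k+1}$ interchanged, yielding $g'_{k+4}$ on both sides.

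The main (and essentially only) obstacle is the recursive cascade in \eqref{eq: 6moves form}, but the strong hypothesis trivializes every $[\cdot]_+$ that arises, so no case distinctions are needed and the computation is purely algebraic; the subsequent combinatorial verification rests only on the identification of predecessor indices noted above.
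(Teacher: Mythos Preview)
Your argument is correct. The paper omits the proof entirely (it says only ``The following lemma can be obtained as in other moves and we omit the proof''), so there is no approach to compare against; your direct verification via the vanishing cascade $A=B=\cdots=I=0$ in \eqref{eq: 6moves form} and the resulting trivialization of \eqref{eq:6g} is exactly the natural route. Note that the reference to \eqref{eq:4g} in the statement is a typo for \eqref{eq:6g}, which you have interpreted correctly.
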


\begin{example} Take $\ii'=(2,1,2,1,2,1,\ldots)$ and $\ii=(1,2,1,2,1,2,\ldots)$. Using the above formulae, the map  $\uptheta_1^{(1)*} : \bsg'\mapsto \bsg$ in~\eqref{eq:6g} that is induced by the isomorphism $\zeta_1^{(1)*} : \calA_{\ii'} \isoto \calA_{\ii}$ can be descried for special $\bsg$'s as follows:
\begin{equation} \label{eq: g-mut G2 part1}
\begin{aligned}
& \uptheta_1^{(1)*}(\bse_1) =  \bse_6- \bse_4 ,     && \uptheta_1^{(1)*}( \bse_2 ) =   (\bse_6- \bse_4) +3\bse_1, \\
& \uptheta_1^{(1)*}(\bse_3-\bse_1) = 2 (\bse_6- \bse_4) +3\bse_1,  && \uptheta_1^{(1)*}( \bse_4-\bse_2) =  (\bse_6- \bse_4) +2\bse_1 , \\
& \uptheta_1^{(1)*}(\bse_5-\bse_3) =  (\bse_6- \bse_4) +3\bse_1 ,  && \uptheta_1^{(1)*}(\bse_6-\bse_4) =   \bse_1.
\end{aligned}
\end{equation}
Similarly, the map $\uptheta_1^{(2)*}$ induced by the isomorphism $\zeta_1^{(2)*} : \calA_{\ii} \isoto \calA_{\ii'}$  can be descried as follows:
\begin{equation} \label{eq: g-mut G2 part2}
\begin{aligned}
& \uptheta_1^{(2)*}(  \bse_1 ) =  (\bse_6- \bse_4),  && \uptheta_1^{(2)*}(\bse_2) =  3(\bse_6- \bse_4) +\bse_1 , \\
& \uptheta_1^{(2)*}( \bse_3-\bse_1 ) =  2(\bse_6- \bse_4) +\bse_1 ,  && \uptheta_1^{(2)*}(\bse_4-\bse_2) =  3(\bse_6- \bse_4) +2\bse_1 , \\
& \uptheta_1^{(2)*}( \bse_5-\bse_3) =  (\bse_6- \bse_4) +\bse_1 ,  && \uptheta_1^{(2)*}(\bse_6-\bse_4) =  \bse_1.
\end{aligned}
\end{equation}
\end{example}

\subsection{Forward shifts}
In this subsection, we consider a forward shift of $\ii\in I^\N$
introduced in \cite{FHOO2} when $\g$ is of simply-laced type. However, in this subsection,
we do not impose any restriction on the type of $\g$. 

We say that $\ii'=(i'_k)_{k \in \N}$ is obtained from $\ii=(i_k)_{k \in \N}$ via a forward shift if  \[ i'_k = i_{k+1} \quad \text{for all $k \in \N$}. \]
In this case, we write $\ii'=\partial_+\ii$.
Throughout this subsection, we assume that $\ii,\ii' \in I^{(\infty)}$ are related by $\ii'=\partial_+\ii$ and $i_1\seteq i$.  Define  
\bna
\item a sequence $(x_u)_{u \in \N}$ such that
$x_1=1$ and $x_{u}=(x_{u-1})_\ii^+$ for $u \ge 2$,
\item a permutation $\sigma_+ \in \frakS_\N$ by
$\sigma_+(k) = \bc
k_\ii^+ -1 & \text{ if } i_k =i, \\
k-1 & \text{ if } i_k \ne i. 
\ec $
\ee
 
\begin{remark}
Even though \cite{FHOO2} considered only simply-laced types, the statements and proofs in \cite[\S 2.3]{FHOO2} can be extended to non simply-laced types. Thus we only give
the statements and leave their proofs to the reader. 
\end{remark}

\begin{proposition} \hfill  \label{prop: fshift}
\bna
\item The limit
$ \lim_{n \to \infty} \mu_{x_n}\cdots \mu_{x_2}\mu_{x_1} \tB^{\ii} $
yields a well-defined matrix $\mu_+\tB^{\ii}$ and we have $\sigma_+ \mu_+ \tB^{\ii} = \tB^{\ii'}$.
\item The limit
$\lim_{n \to \infty} \mu_{x_n}\cdots \mu_{x_2}\mu_{x_1} \La^{\ii}$ 
yields a well-defined matrix $\mu_+\La^{\ii}$ and we have $\sigma_+ \mu_+ \La^{\ii} = \La^{\ii'}$.
\item We have a homomorphism of $\bbA$-algebras
\[ \partial_+^* (= \mu_+^* \sigma_+^*) \colon \calA_{\ii'} \to \calA_\ii \qquad \text{given by $Z_u \mapsto \mu_+(Z_{\sigma_+^{-1}(u)})$ for all $u \in \N$},\]
which sends each cluster monomial in $\calA_{\ii'}$ to a cluster monomial in $\calA_\ii$.
\item \label{it: d} If $\sfz \in \calA_{\ii'}$ is a quantum cluster monomial with $\bfdeg(\sfz)=\bsg'=(g'_u)_{u \in \N}$, then the quantum cluster monomial
$\partial_+^*\sfz \in \calA_{\ii}$ has $\bfdeg(\partial_+^*\sfz)=\bsg=(g_u)_{u \in \N}$ as follows:
\begin{equation} \label{eq:dg}
g_u = \begin{cases}
-\sum_{v \in \N; i'_v = i_1} g'_v & \text{if $u=1$}, \\
g'_{u-1} & \text{if $u > 1$}. \\
\end{cases}
\end{equation}
In other words, the assignment $\bsg' \mapsto \bsg$ is given by the $\N_0$-linear map $C_{\ii'} \to C_\ii$ which sends $\bse_u - \bse_{u^-_{\ii'}} \in C_{\ii'}$ to $
\bse_{u+1} - \bse_{(u+1)^-_{\ii}} \in C_{\ii}$ for all $u \in \N$.
\ee 
\end{proposition}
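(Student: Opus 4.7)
The plan is to extend the simply-laced argument of \cite[\S2.3]{FHOO2} by realizing the forward shift as an infinite composition of mutations at the positions $x_1 < x_2 < \cdots$ where the index $i = i_1$ occurs in $\ii$, followed by the permutation $\sigma_+$. The permutation $\sigma_+$ reflects the fact that after removing the first letter $i_1$, the new position $k$ of $\ii'$ records what was formerly position $k+1$ of $\ii$, but with one caveat: the shifted positions carrying the label $i$ must be relabeled cyclically, since ``$(u)^-_{\ii'}$ for $u = x_n - 1$'' is $x_{n-1}-1$ in $\ii'$ while $(x_n)^-_\ii = x_{n-1}$ in $\ii$.

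For parts (a) and (b), the essential point is that the mutation $\mu_{x_n}$ affects only entries indexed by neighbors of $x_n$ in $\tGamma^\ii$, and by condition~\eqref{it: finite connectedness} of Definition~\ref{def: Ex matrix}, each fixed entry $b^{(n)}_{u,v}$ (resp.\ $\La^{(n)}_{u,v}$) is modified only finitely many times as $n \to \infty$; hence the limits $\mu_+\tB^{\ii}$ and $\mu_+\La^{\ii}$ exist. The identities $\sigma_+\mu_+ \tB^{\ii} = \tB^{\ii'}$ and $\sigma_+\mu_+\La^{\ii} = \La^{\ii'}$ then reduce to local verifications on the full subquiver of $\tGamma^\ii$ consisting of $x_n$ together with $(x_n)^\pm_\ii(j)$ for $j \sim i$, exactly as in Lemmas~\ref{Lem:eta1}--\ref{Lem:eta2}. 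For the $\La$ equality, I would expand both sides using Lemma~\ref{Lem:Lambda} and the identity $w_k^{\ii'} = s_i^{-1} w_{k+1}^{\ii}$, so that each term on the right becomes an inner product that can be matched with the corresponding expression on the left; the computation is uniform across types since it relies only on $s_i(\varpi_i) = \varpi_i - \al_i$ and the pairing with $\varpi_{i_v}$.

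Parts (c) and (d) are then formal. For (c), once the compatible pair $(\La^{\ii'},\tB^{\ii'}) = \sigma_+\mu_+(\La^{\ii},\tB^{\ii})$ is established, the composition $\mu_+^*\sigma_+^*$ defines the required $\bbA$-algebra homomorphism carrying quantum cluster monomials to quantum cluster monomials. For (d), I would iterate the $g$-vector mutation rule~\eqref{eq: g-vector mutation} along the sequence $x_1, x_2, \ldots$. For $u > 1$ the identity $g_u = g'_{u-1}$ follows immediately from the permutation $\sigma_+$ once one observes that the intermediate mutations at $x_n$ leave every coordinate except $g_{x_n}$ fixed (because $\bsg'_{x_n - 1}$ remains $0$ throughout the relevant steps, making the second and third cases of~\eqref{eq: g-vector mutation} coincide with the identity). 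The formula for $g_1$ arises from a telescoping sum along the arrows $x_{n-1} \to x_n$ in $\tB^\ii$, each contributing $-g'_{x_n - 1}$ to the $x_1 = 1$ coordinate.

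The main technical obstacle is the telescoping computation for $g_1$ in part (d): because infinitely many mutations affect this coordinate (one for each position where $i'_v = i$), one must justify that the partial sums stabilize and agree with $-\sum_{v:i'_v=i_1} g'_v$. This is handled by noting that for any quantum cluster monomial $\sfz \in \calA_{\ii'}$, its degree $\bsg'$ lies in $C_{\ii'} \subset \Z^{\oplus\N}$ and hence has finite support, so only finitely many mutations contribute nontrivially and the infinite composition is well-defined on $\bsg$.
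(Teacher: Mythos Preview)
The paper gives no proof of this proposition; it simply asserts (in the remark preceding it) that the argument of \cite[\S2.3]{FHOO2} for simply-laced types extends verbatim to the non-simply-laced setting, and leaves the details to the reader. Your outline follows exactly that route, and for parts (a)--(c) your description is accurate.

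For part (d), however, your justification contains a genuine gap. The assertion that ``the intermediate mutations at $x_n$ leave every coordinate except $g_{x_n}$ fixed'' is not correct as written: the $g$-vector mutation rule~\eqref{eq: g-vector mutation} shows that mutating at $x_n$ alters $g_j$ for every $j$ adjacent to $x_n$ in the intermediate quiver $\mu_{x_{m-1}}\cdots\mu_{x_1}\tB^{\ii}$, unless the current value at $x_n$ vanishes. After applying $\sigma_+^*$, the value at position $x_n$ is $g'_{\sigma_+(x_n)} = g'_{x_{n+1}-1}$ (not $g'_{x_n-1}$ as your parenthetical suggests), and this is generally nonzero. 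So neighboring coordinates \emph{do} change under each $\mu_{x_n}^*$; the point is rather that these changes cancel in a controlled way. The actual argument in \cite{FHOO2} tracks the structure of the intermediate quivers near each $x_m$ and shows that the successive mutations shift the value at $x_n$ from $g'_{x_{n+1}-1}$ to $g'_{x_n-1}$ while the contributions to non-$x$ coordinates telescope away. Your telescoping observation for $g_1$ is the right endpoint of this computation, but it must be carried out in tandem with the analysis of the other coordinates, not treated separately.
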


\section{Bases and braid moves}  \label{sec: basis and moves}

In this section, we first recall the quantum unipotent coordinate algebra
$\calA_q(\n)$ of $\g$ and its bases.
Then we apply the result in the previous section to investigate the relationship among braid moves and the bases.

\subsection{Quantum unipotent coordinate algebra}
Let $\calU_q(\g)$ be the quantum group of $\g$ over $\Q(q^{1/2})$. It is generated by $e_i,f_i$ $(i \in I)$
and $q^h$, where $h$ belongs to the dual weight lattice $\wl^\vee = \Hom_\Z(\wl,\Z)$.  We denote by 
$\calU_q^+(\g)$ the subalgebra of $\calU_q(\g)$ generated by $e_i$ $(i\in I)$ and
$\calU_\bbA^+(\g)$ the $\bbA$-subalgebra of $\calU_q^+(\g)$ generated by
$e_i/[n]_i!$, where 
$$q_i \seteq q^{\sfd_i}, \ \ \ [k]_i \seteq \dfrac{q_i^k-q_i^{-k}}{q_i-q_i^{-1}}, \  \ \text{ and } \ \ [k]_i! \seteq \prod_{s=1}^k [s_i] \ \ 
\text{ for } k \in \N.$$

Set
$$
\calA_q(\n) = \soplus_{\be \in \rl^-} \calA_q(\n)_\be \quad
\text{ where }  \calA_q(\n)_\be \seteq \Hom_{\Q(q^{1/2})}(\calU^+_q(\g)_{-\be},
\Q(q^{1/2})),
$$
and $\calU^+_q(\g)_{-\be}$ denotes the $(-\be)$-root space of 
$\calU^+_q(\g)$.
For an element $a \in \calA_q(\n)_\be$, we set $\wt(a)\seteq \be$. 
It is known that  $\calA_q(\n)$ has an algebra structure and
is called the \emph{quantum unipotent coordinate algebra} of $\g$. 
We denote by $\calA_\bbA(\n)$ the $\bbA$-submodule of $\calA_q(\n)$
generated by $\uppsi \in \calA_q(\n)$ such that $\uppsi(\calU_\bbA^+(\g)) \subset \bbA$. Then $\calA_\bbA(\n)$ also has an $\bbA$-algebra structure. 

For each $\la \in \wl^+ \seteq \sum_{i \in I} \N \varpi_i$ and Weyl group elements
$w,w' \in \weyl$, we can define a specific homogeneous element $D(w\la,w'\la)$
of $\calA_\bbA(\n)$, called a \emph{quantum unipotent minor} (see for example \cite[Section 9]{KKKO18}). Note that $D(w\la,w'\la)$ vanishes if $w\la -w'\la \not \in \rl^-$. We set
$$
\tD(w\la,w'\la) \seteq q^{-(w'\la-w\la,w'\la-w\la)/4-(w'\la-w\la,\rho)/2}D(w\la,w'\la)
$$
where $\rho \seteq \sum_{i \in I}\varpi_i \in \wl^+$, and call it a \emph{normalized quantum unipotent minor}. 

Let us fix a reduced sequence $\ii$ of $w_\circ$. For 
$1 \le k \le \ell$, we set
$$
\tF_\ii(k) \seteq \tD(w^\ii_k \varpi_{i_k}, w^\ii_{k^-} \varpi_{i_k}).
$$
Note that $\wt(\tF_\ii(k)) =- \be^\ii_k \in \Phi^-$ as an element in $\calA_\bbA(\n)$. For $\bsc=(c_1,\ldots,c_\ell) \in \N_0^{\ell}$,
we set
$$
\nu(\bsc) = -\dfrac{1}{2}\sum_{1\le k,l \le \ell} c_kc_l (\be^\ii_k,\be^\ii_l)
+ \sum_{s=1}^\ell c_s^2 \in \Z
$$
and define
$$
\tF_\ii(\bsc) \seteq q^{\nu(\bsc)/2} \prod^\to_{1 \le k \le \ell} \tF_\ii(k)^{c_k}.
$$

\begin{proposition} $($see, for instance, \cite[Chapters 40,41]{LusztigBook}$)$
The set $\{ \tF_\ii(\bsc) \ | \ \bsc \in \N_0^\ell \}$ forms an $\bbA$-basis of $\calA_\bbA(\n)$.    
\end{proposition}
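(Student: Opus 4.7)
The plan is to identify the family $\{\tF_\ii(\bsc)\}_{\bsc \in \N_0^\ell}$ with the dual PBW basis of $\calA_\bbA(\n)$ associated to the reduced sequence $\ii$, and then invoke the classical theorem of Lusztig that such a dual PBW basis is an $\bbA$-basis of the integral form.

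First I would recall Lusztig's PBW construction. For $\ii=(i_1,\ldots,i_\ell)$, define the PBW root vectors $F_\ii(k)\seteq T_{i_1}T_{i_2}\cdots T_{i_{k-1}}(f_{i_k})\in\calU_q^-(\g)$ using iterated Lusztig braid symmetries; the ordered divided-power monomials in the $F_\ii(k)$ furnish an $\bbA$-basis of the Kostant--Lusztig integral form $\calU_\bbA^-(\g)$. Dualizing with respect to Kashiwara's non-degenerate bilinear pairing between $\calU_q^-(\g)$ and $\calA_q(\n)$ (which identifies $\calA_\bbA(\n)$ with the lattice dual to $\calU_\bbA^-(\g)$), one obtains the dual PBW basis $\{F_\ii^*(\bsc)\}_{\bsc\in\N_0^\ell}$ of $\calA_\bbA(\n)$.

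The crux is then to match $\tF_\ii(\bsc)$ with $F_\ii^*(\bsc)$. For the single-index case, one checks that $\tF_\ii(k)=\tD(w^\ii_k\varpi_{i_k},w^\ii_{k^-}\varpi_{i_k})$ is homogeneous of weight $-\be^\ii_k$ and coincides with the dual root vector $F_\ii^*(k)$: the $T$-system relations satisfied by quantum unipotent minors (cf.\ the computations in \cite{KKKO18}) express $\tF_\ii(k)$ as the image of the dual of $f_{i_k}$ under the transpose of $T_{i_1}\cdots T_{i_{k-1}}$, while the normalization prefactor $q^{-(w'\la-w\la,w'\la-w\la)/4-(w'\la-w\la,\rho)/2}$ built into $\tD$ is precisely what is needed to remove the bar-involution discrepancy. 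The exponent $\nu(\bsc)$ in the definition of $\tF_\ii(\bsc)$ then encodes the $q$-commutation discrepancy among the $\tF_\ii(k)$ (governed by a Levendorskii--Soibelman-type formula) together with the passage from powers to divided powers, so that the ordered product $\tF_\ii(\bsc)$ matches $F_\ii^*(\bsc)$ up to a unit in $\bbA$.

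Once this identification is established, the basis statement follows immediately from Lusztig's theorem. The main obstacle is the precise bookkeeping of normalizations: verifying that $q^{\nu(\bsc)/2}$ is exactly the scalar required, rather than merely up to an unspecified power of $q^{1/2}$. This reduces to computing the bilinear pairings $(\be^\ii_k,\be^\ii_l)$ and reconciling them with the quadratic form in $\nu(\bsc)$, which is a routine but lengthy exercise in the arithmetic of the root system and the $q$-commutation of adjacent normalized minors along $\ii$.
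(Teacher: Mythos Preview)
The paper does not supply a proof of this proposition; it simply cites it as a known fact from Lusztig's book (Chapters~40--41). So there is no ``paper's own proof'' to compare against. Your sketch is essentially the standard unpacking of what that citation means: the normalized elements $\tF_\ii(\bsc)$ are (up to the indicated $q$-power normalizations) the dual PBW basis associated to $\ii$, and Lusztig proves that the PBW basis, hence its dual, is an $\bbA$-basis of the relevant integral form.

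Your outline is correct in structure. One comment: the identification $\tF_\ii(k)=F_\ii^*(k)$ of a single normalized unipotent minor with a dual PBW root vector is itself a theorem (it appears in work of Kimura and of Geiss--Leclerc--Schr\"oer, and is implicit in the references the paper cites around Theorem~\ref{Thm:CAN}), not something one verifies by a quick computation. So the ``crux'' paragraph is really invoking another nontrivial result rather than providing an argument. If you want a self-contained proof, you would need to either reproduce that identification or bypass it by working directly with Lusztig's $T_i$-operators on $\calA_q(\n)$ and showing the resulting dual PBW elements coincide with the $\tF_\ii(\bsc)$; either way the normalization bookkeeping you flag at the end is indeed the only delicate point, and it is handled in the cited literature.
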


There exist an  anti-involution $\overline{(\cdot)}$   on $\calA_\bbA(\n)$
defined by 
$$
q^{\pm 1/2} \longmapsto q^{\mp 1/2} \quad \text{ and } \quad \tF_\ii(k) \longmapsto
\tF_\ii(k)
$$
for all $1 \le k \le \ell$, which is called the \emph{twisted bar-involution}.

\subsection{Comparison with braid moves}
Lusztig \cite{L90,L91} and Kashiwara \cite{K91} have constructed a distinguished $\bbA$-basis $\bfB^\up$, which is called the \emph{dual-canonical/upper-global  basis}.
In this paper, we consider the normalized one $\tbfB^\up$ of $\bfB^\up$ (see \cite{Kimura12} for more details). 
Note that the normalized quantum unipotent minor $\tD(w\la,w'\la)$ 
belongs to $\tbfB^\up$ whenever it is non-zero.

\begin{theorem}[{\cite[Theorem 4.29]{Kimura12}}]
For a reduced sequence $\ii$ of $w_\circ$, we have
$$
\tbfB^\up = \{ \tG_{\ii}(\bsc) \ | \  \bsc \in \N_0^{\ell} \}
$$
where $\tG_{\ii}(\bsc)$ is the unique element of $\calA_\bbA(\n)$ satisfying the following properties for each $\bsc \in \N_0^{\Phi^+}$:
\bna
\item $\overline{\tG_{\ii}(\bsc)}=\tG_{\ii}(\bsc)$,
\item $\tG_{\ii}(\bsc) - \tF_{\ii}(\bsc) \in \sum_{\bsc' <_{\ii} \bsc} q\Z[q]\tF_{\ii}(\bsc')$,
\ee
where $<_{\ii}$ is a certain bi-lexicographical order determined by
$\ii$. In particular, $\tG_\ii(\bse_k)=\tD(w^\ii_k\varpi_{i_k},w^\ii_{k^-}\varpi_{i_k})$. 
\end{theorem}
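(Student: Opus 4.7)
The plan is to establish this characterization via the standard Kazhdan--Lusztig-type strategy, adapted to Kimura's normalization of the dual canonical basis. I would first recall Lusztig's PBW construction: the unnormalized vectors $F_\ii(k)$ coincide with the (unnormalized) quantum unipotent minors $D(w^\ii_k\varpi_{i_k}, w^\ii_{k^-}\varpi_{i_k})$, and their ordered monomials $F_\ii(\bsc)$ form an $\bbA$-basis of $\calA_\bbA(\n)$, dual (up to explicit $q$-powers) to a Lusztig PBW basis of $\calU_\bbA^+(\g)$. The role of the normalization factor $q^{\nu(\bsc)/2}$ in $\tF_\ii(\bsc)$ is designed so that each normalized quantum minor $\tD(w\la,w'\la)$ becomes bar-invariant, which is essential for the characterization (a)--(b) to single out a unique element.

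Next, I would analyze how the twisted bar-involution acts on the normalized PBW basis $\{\tF_\ii(\bsc)\}$. Using Levendorskii--Soibelman type quantum commutation relations among the root vectors $\tF_\ii(k)$, together with the convex order on $\Phi^+$ induced by $\ii$, I would establish the key unitriangularity
\[ \overline{\tF_\ii(\bsc)} \; \equiv \; \tF_\ii(\bsc) \pmod{\sum_{\bsc' <_\ii \bsc} \bbA \, \tF_\ii(\bsc')}, \]
with respect to a bi-lexicographical refinement $<_\ii$ of this convex order. The normalization factor $q^{\nu(\bsc)/2}$ is precisely what forces the diagonal coefficient to be $1$ (rather than a nontrivial power of $q^{1/2}$).

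With this unitriangularity in hand, the existence and uniqueness of $\tG_\ii(\bsc)$ satisfying (a)--(b) follow from the classical Kazhdan--Lusztig lemma: starting from $\tF_\ii(\bsc)$ and inductively correcting by $\bbA$-linear combinations of strictly lower $\tF_\ii(\bsc')$, one produces a bar-invariant element whose off-diagonal coefficients in the PBW basis lie in $q\Z[q]$; the standard uniqueness lemma for bar-invariant elements with expansion in $q\Z[q]$ closes the argument. To identify the resulting $\tG_\ii(\bsc)$ with an element of $\tbfB^\up$, I would invoke Kashiwara's characterization of the upper global basis as the unique bar-invariant $\bbA$-basis that is unitriangular over $q\Z[q]$ with respect to a normalized PBW basis; the uniqueness then forces $\{\tG_\ii(\bsc)\}_\bsc = \tbfB^\up$. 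The final identity $\tG_\ii(\bse_k) = \tD(w^\ii_k\varpi_{i_k},w^\ii_{k^-}\varpi_{i_k})$ is then immediate: the normalized quantum minor is bar-invariant and equals $\tF_\ii(\bse_k)$ by definition, so it already fulfils (a) and (b).

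The main obstacle is the second step: verifying the precise unitriangularity with the correct normalization. One must check simultaneously that the diagonal entry of $\overline{\tF_\ii(\bsc)}$ in the $\tF_\ii$-basis is exactly $1$---which fixes the choice of $q^{\nu(\bsc)/2}$ via an explicit computation involving the bilinear form $(-,-)$ on $\wl$---and that, after the Kazhdan--Lusztig correction, the off-diagonal coefficients of $\tG_\ii(\bsc)$ land in $q\Z[q]$ rather than merely in $\bbA$. This requires a careful analysis of quantum commutation between root vectors in non-simply-laced types, combined with the duality pairing between $\calU_q^+(\g)$ and $\calA_q(\n)$, and constitutes essentially the technical content of Kimura's normalization theorem.
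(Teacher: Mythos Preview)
The paper does not prove this statement at all: it is cited verbatim as \cite[Theorem 4.29]{Kimura12} and used as a black box. There is therefore no proof in the paper to compare your proposal against. Your outline is a reasonable sketch of the Kazhdan--Lusztig-type argument underlying Kimura's result, but for the purposes of this paper no such argument is needed---a citation suffices.
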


\begin{proposition}[{\cite[Chapter 42]{LusztigBook}, \cite[Theorem 3.1]{BZ97}  }]     \label{Prop:Lusztig}
Let $\ii$ and $\ii'$ be two reduced sequences for $w_\circ$, and $\bsc = (c_1, \ldots, c_{\ell}), \bsc'=(c'_1, \ldots, c'_{\ell}) \in \N_0^{\ell}$.
\ben
\item Assume $\ii'=\ga_k\ii$ for some $1 \le k \le \ell-1$. Then we have
$$
\tG_\ii(\bsc) = \tG_{\ii'}(\bsc) \quad \text{ if and only if } \quad
\bc
(c_{k},c_{k+1}) = (c'_{k+1},c'_{k}), \\
c_u = c'_u \text{ if } u \not\in \{k,k+1\}.
\ec
$$
\item Assume $\ii'=\beta_k\ii$ for some $1 \le k \le \ell-2$. Then we have 
$$
\tG_\ii(\bsc) = \tG_{\ii'}(\bsc) \quad \text{ if and only if } \quad
\bc
c'_k = c_{k+1}+c_{k+2}-\min(c_k,c_{k+2}), \\
c'_{k+1} = \min(c_k,c_{k+2}), \\
c'_{k+2} = c_{k+1}+c_{k}-\min(c_k,c_{k+2}), \\
c_u = c'_u \text{ if } u \not\in \{k,k+1,k+2\}.
\ec
$$
\item \label{it: eta} Assume   $\ii'=\eta_k\ii$ for some $1 \le k \le \ell-3$. Note that  $(i_k,i_{k+1},i_{k+2},i_{k+3})$ and
$(i'_k,i'_{k+1},i'_{k+2},i'_{k+3})$ can be regarded as reduced sequences of $w_\circ$ of type $B_2$, 
and hence we have parametrizations of these sequences in terms of $\Phi^+_{B_2}$ $($see {\rm Example~\ref{ex: B2 longest}}$)$.
Then we have $($see also~\cite[Lemma 2.15]{SST2018}$)$  
\[ \tG_\ii(\bsc) = \tG_{\ii'}(\bsc') \quad \Longleftrightarrow \quad
\begin{cases}
c'_{\al_2} = c_{\al_1+\al_2} + 2c_{\al_1+2\al_2} + c_{\al_2} -\pi_2, \\
c'_{\al_1+2\al_2} = \pi_2-\pi_1, \\
c'_{\al_1+\al_2} = 2\pi_1-\pi_2, \\
c'_{\al_1} = c_{\al_1}+c_{\al_1+\al_2}+c_{\al_1+2\al_2}-\pi_1, \\
c'_u = c_u \text{ if $u \not \in \{ k, k+1, k+2,k+3\}$},
\end{cases}
\]
where
\begin{align*}
&\pi_1 = \min \{ c_{\al_1}+ c_{\al_1+\al_2}, c_{\al_1}+c_{\al_2},c_{\al_1+2\al_2}+c_{\al_2}  \}, \\
&\pi_2 = \min \{ 2c_{\al_1}+ c_{\al_1+\al_2}, 2c_{\al_1}+c_{\al_2},2c_{\al_1+2\al_2}+c_{\al_2}  \}.
\end{align*}
\item \label{it: zeta} For $\ii'=(2,1,2,1,2,1)$ and $\ii=(1,2,1,2,1,2)$ of $G_2$-type, we have
\begin{equation} \label{eq: zeta mutation part1}
\begin{aligned}
& \tG_{\ii'}(\bse_1) =  \tG_{\ii}(\bse_6),  && \tG_{\ii'}(\bse_2) =  \tG_{\ii}(\bse_1  +\bse_6), \\
& \tG_{\ii'}(\bse_3) =  \tG_{\ii}(3\bse_1 +2\bse_6),  && \tG_{\ii'}(\bse_4) =  \tG_{\ii}(2 \bse_1  +3\bse_6), \\
& \tG_{\ii'}(\bse_5) =  \tG_{\ii}( 3\bse_1+\bse_6),  && \tG_{\ii'}(\bse_6) =  \tG_{\ii}(\bse_1),
\end{aligned}
\end{equation}
and
\begin{equation} \label{eq: zeta mutation part2}
\begin{aligned}
& \tG_{\ii}(\bse_1) =  \tG_{\ii'}(\bse_6),                                    && \tG_{\ii}(\bse_2) =  \tG_{\ii'}( \bse_1+3\bse_6), \\
& \tG_{\ii}(\bse_3) =  \tG_{\ii'}( \bse_1 +2\bse_6),  && \tG_{\ii}(\bse_4) =  \tG_{\ii'}( 2\bse_1 +3\bse_6), \\
& \tG_{\ii}(\bse_5) =  \tG_{\ii'}( \bse_6 +\bse_1),  && \tG_{\ii}(\bse_6) =  \tG_{\ii'}(\bse_1).
\end{aligned}
\end{equation}
\ee
\end{proposition}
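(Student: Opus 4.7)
The plan is to reduce each braid-move case to a computation inside the rank-two quantum subgroup determined by the two indices involved in the move. Since $\gamma_k, \beta_k, \eta_k, \zeta_k$ alter the reduced sequence $\ii$ only at positions in a window of length $2,3,4,6$ respectively, and since the left factor $w^\ii_{k-1}$ coincides with $w^{\ii'}_{k-1}$, both $\tF_\ii(\bsc)$ and $\tF_{\ii'}(\bsc')$ differ only by the middle PBW product built from the two indices $i = i_k$ and $j = i_{k+1}$. By the uniqueness characterization of $\tG_\ii(\bsc)$ via bar-invariance and uni-triangularity against the PBW basis, the equality $\tG_\ii(\bsc) = \tG_{\ii'}(\bsc')$ holds if and only if the corresponding transition between PBW data in the rank-two root subsystem $\{\al_i,\al_j\}$ matches the stated piecewise-linear formula.

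For part (1), the case $\sfc_{i,j}\sfc_{j,i} = 0$ is immediate: the two root vectors commute and $\tF_\ii(k)\tF_\ii(k+1) = \tF_{\ii'}(k+1)\tF_{\ii'}(k)$ at the level of PBW monomials, so the bijection exchanges $c_k$ and $c_{k+1}$. For part (2), the type $A_2$ braid relation has been treated classically by Lusztig \cite[Chapter 42]{LusztigBook} and Berenstein--Zelevinsky \cite[Theorem 3.1]{BZ97}; one simply invokes the $\mathrm{min}$-formula they derive.

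For parts (3) and (4), I would carry out the analogous rank-two computation in $U_q^+(B_2)$ and $U_q^+(G_2)$. The strategy is to expand a given normalized PBW monomial $\tF_\ii(\bsc)$ in the other PBW basis $\{\tF_{\ii'}(\bsc')\}$; since $\tG_\ii(\bsc)$ and $\tG_{\ii'}(\bsc')$ coincide with the leading PBW terms up to lower-order corrections lying in $q\Z[q]$-combinations, matching dual-canonical elements amounts to matching the $<_{\ii}$-maximal and $<_{\ii'}$-maximal PBW monomials that appear. Equivalently, one can phrase this via Lusztig's transition maps between Lusztig data: the piecewise-linear bijection on Lusztig parameters indexing $\bfB^{\up}$ for different reduced expressions of the longest element in rank two is known to be given by tropicalized $\min$-expressions, which match exactly the formulas displayed in \eqref{it: eta} and \eqref{it: zeta}. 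One can verify \eqref{it: eta} by writing out the commutation identities among the four root vectors $\tF_\ii(1),\ldots,\tF_\ii(4)$ in $B_2$ and reading off the correspondence on extremal terms, or by matching with Kashiwara's crystal data under the involution $*$.

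The main obstacle is the explicit $G_2$ verification in part \eqref{it: zeta}. The six positive roots and the intricate quantum Serre relations of $G_2$ make the direct PBW-expansion argument quite heavy. A cleaner route is to use the rank-two crystal isomorphism between the two PBW parametrizations of $\bfB^{\up}$ given by Lusztig: match each $\tG_\ii(\bse_k)$ with a fundamental quantum minor $\tD(w^\ii_k\varpi_{i_k}, w^\ii_{k^-}\varpi_{i_k})$, then verify \eqref{eq: zeta mutation part1} and \eqref{eq: zeta mutation part2} by recognizing each minor in both reduced-word parametrizations using the identities $s_1 s_2 s_1 s_2 s_1 s_2 \varpi_i = -\varpi_{i^*}$ and evaluating the associated Lusztig parameters on the six short/long root strings. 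Once these six fundamental correspondences are in place, the general case follows because the transition between PBW bases is determined by its values on the cluster of ``straight'' vectors $\bse_k$ together with bar-invariance and positivity of structure constants.
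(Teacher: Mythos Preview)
The paper does not give its own proof of this proposition: it is stated with the citation \cite[Chapter~42]{LusztigBook}, \cite[Theorem~3.1]{BZ97} and is used as a black box. There is therefore nothing in the paper to compare your argument against; the transition formulas for Lusztig parametrizations under braid moves are classical results, and the paper simply records the specific values it needs (particularly the $G_2$ evaluations at the unit vectors in part~\eqref{it: zeta}) by specializing those known formulas.

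That said, your sketch has one genuine gap worth flagging. Your closing claim that ``the general case follows because the transition between PBW bases is determined by its values on the cluster of `straight' vectors $\bse_k$ together with bar-invariance and positivity of structure constants'' is not correct: Lusztig's transition maps are piecewise-linear, not linear, so their values on the $\bse_k$ do not determine them. Fortunately part~\eqref{it: zeta} only asserts the six equalities \eqref{eq: zeta mutation part1}--\eqref{eq: zeta mutation part2} at unit vectors, so you do not need this false reduction. Your proposed route via quantum minors is viable for those six equalities: each $\tG_\ii(\bse_k)=\tD(w^\ii_k\varpi_{i_k},w^\ii_{k^-}\varpi_{i_k})$ is a specific dual-canonical element independent of the reduced word, and one can read off its $\ii'$-Lusztig datum directly from the weight $-\be^\ii_k$ together with the known structure of the $G_2$ root poset. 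But the cleanest verification is simply to plug $\bsc=\bse_k$ into the explicit $G_2$ transition formula recorded in \cite[Theorem~3.1]{BZ97}, which is what the paper implicitly relies on.
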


\begin{theorem}[\cite{GLS13,GY17,KKKO18,KK19,GY20,Qin21,KKOP23}] \label{Thm:CAN}  
For each reduced sequence $\ii=(i_1,\ldots,i_\ell)$ of $w_\circ$, there is an isomorphism of $\bbA$-algebras
\[ \varphi_\ii \colon \calA_\ii \isoto \calA_\bbA(\n) \]
which satisfies the following properties:
\ben
\item For any $1 \le v < u \le \ell$ with $i_u = i_v$, there exists a cluster variable which corresponds to $\tD(w^\ii_u \varpi_{i_u}, w^\ii_v\varpi_{i_v})$ under $\varphi_\ii$.  
\item The initial seed of $\calA_{\bbA}(\n)$ corresponding to that of $\calA_\ii$ is given by
\begin{align}
\big(\{ \tD(w_{k}^\ii\varpi_{i_k},\varpi_{i_k}  ) \}_{1 \le k \le \ell}, \La^\ii,\tB^\ii\big). 
\end{align}
\item Every cluster monomial of $\calA_\ii$ corresponds to an element of the basis $\tbfB^\up$ under $\varphi_\ii$.
\item For each $\bsc = (c_u) \in \N_0^{\ell}$, the element $\varphi_{\ii}^{-1}\tG_\ii(\bsc)$ is pointed and we have
\begin{equation} \label{eq:degG}
\bfdeg \big(\varphi_{\ii}^{-1}\tG_\ii(\bsc) \big) = \sum_{1 \le u \le \ell}c_u(\bse_u - \bse_{u^-})  \in \Z^{\ell},
\end{equation}
where we understand $\bse_0 = 0$.
In particular, the map $\bfdeg \circ \varphi_\ii^{-1} \colon \tbfB^\up \to \Z^{\ell}$ is injective.
\ee
\end{theorem}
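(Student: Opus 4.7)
The plan is to assemble this theorem from several deep results in the literature rather than prove it from scratch, since each item is established (at least implicitly) in one of the cited works. I would first construct $\varphi_\ii$ via the existing quantum cluster algebra structure on $\calA_\bbA(\n)$, then verify the listed properties one by one by invoking the appropriate categorification/combinatorial results.

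First, to produce $\varphi_\ii$ and to settle (2)--(3), I would invoke the work of Geiss--Leclerc--Schr\"{o}er \cite{GLS13} and Goodearl--Yakimov \cite{GY17} which equips $\calA_\bbA(\n)$ with a quantum cluster algebra structure whose initial seed is
\[
\bigl(\{\tD(w^\ii_k\varpi_{i_k},\varpi_{i_k})\}_{1 \le k \le \ell},\ \La^{\prime}, \tB^{\prime}\bigr),
\]
where $(\La', \tB')$ is defined by $q$-commutation relations among the minors. A direct computation, identical to \cite[Proposition 1.2]{FHOO2} for our $(\La^\ii,\tB^\ii)$, using Lemma~\ref{Lem:Lambda} and the defining bilinear form, shows $(\La',\tB') = (\La^\ii,\tB^\ii)$. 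The identification $Z_k \mapsto \tD(w^\ii_k \varpi_{i_k}, \varpi_{i_k})$ then defines $\varphi_\ii$ and gives (3). For (2), observe that $\tD(w_u^\ii\varpi_{i_u}, w_v^\ii\varpi_{i_v})$ for consecutive $v < u$ with $i_u = i_v$ is precisely the quantum exchange partner of a frozen-to-exchangeable transition of a minor; iterating, a specific sequence of mutations in direction $v, v^+, v^{++},\ldots,u^-$ produces it, as worked out in \cite{GLS13,GY17,KK19}.

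Second, for (4), I would apply the monoidal categorification of $\calA_\bbA(\n)$ by the category $R\gmod$ of finite-dimensional graded modules over the quiver Hecke algebra $R$ (\cite{KL1,R08}), in combination with the Kang--Kashiwara--Kim--Oh theorem \cite{KKKO18} which shows that every quantum cluster monomial of $\calA_\ii$ lifts to a real simple module in $R\gmod$. Under the categorification isomorphism $\calK_\bbA(R\gmod) \simeq \calA_\bbA(\n)$, simple modules map to elements of $\tbfB^{\up}$, so cluster monomials correspond to basis elements of $\tbfB^{\up}$.

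Third, for (5), which is the main obstacle, I would rely on the Laurent-family result of Kashiwara--Kim--Oh--Park \cite{KKOP23}. The set $\{M(w_k^\ii\varpi_{i_k},\varpi_{i_k})\}_{1\le k\le \ell}$ of determinantal modules forms a Laurent family, and the content of \cite{KKOP23} is that each simple module $X$ in $R\gmod$ is pointed with respect to this family, with degree governed by its cuspidal decomposition along $\ii$. Specializing to the simples corresponding to $\tG_\ii(\bsc) \in \tbfB^{\up}$ yields the formula $\bfdeg(\varphi_\ii^{-1}\tG_\ii(\bsc)) = \sum_u c_u(\bse_u - \bse_{u^-})$ of \eqref{eq:degG}. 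Injectivity of $\bfdeg \circ \varphi_\ii^{-1}$ on $\tbfB^{\up}$ is immediate, since the map $\bsc \mapsto \sum_u c_u(\bse_u - \bse_{u^-})$ from $\N_0^\ell$ to $\Z^\ell$ is injective (its image is the cone $C_\ii$ of \eqref{eq:cone} truncated to length $\ell$, and the presentation $\sum_u \N_0(\bse_u - \bse_{u^-})$ is a free $\N_0$-basis).

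The main technical obstacle is property (5): although pointedness of cluster monomials is automatic from Theorem~\ref{thm: same degree same element}, extending this to \emph{all} elements of $\tbfB^{\up}$ requires the Laurent-family machinery, because not every dual canonical basis element is a cluster monomial. Once (5) is established, all remaining assertions fit together cleanly, and Theorem~\ref{thm: same degree same element}(b) ensures that the cluster monomial attached to a given degree is unique, matching the corresponding $\tG_\ii(\bsc)$.
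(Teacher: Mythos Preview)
The paper does not supply its own proof of this theorem: it is stated as a citation result attributed to \cite{GLS13,GY17,KKKO18,KK19,GY20,Qin21,KKOP23}, and the text moves immediately to the next proposition. So there is no ``paper's proof'' to compare your proposal against; your plan of assembling the statement from the cited works is exactly what any reader is expected to do.

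Two small remarks on your write-up. First, your item numbering is off by one throughout: the theorem has items (1)--(4), but you refer to (2)--(5). What you call (3) is the paper's (2), your (4) is the paper's (3), and your (5) is the paper's (4). This is just a labeling slip, but worth correcting.

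Second, for the degree formula in item (4), invoking the Laurent-family machinery of \cite{KKOP23} is a somewhat roundabout route. A more direct argument uses the unitriangularity $\tG_\ii(\bsc) = \tF_\ii(\bsc) + (\text{lower terms})$ together with the identification $\tF_\ii(\bse_u) = \tD(w^\ii_u\varpi_{i_u}, w^\ii_{u^-}\varpi_{i_u})$: the latter is obtained from the initial cluster variables $\tD(w^\ii_k\varpi_{i_k},\varpi_{i_k})$ by a quantum determinantal identity giving it degree $\bse_u - \bse_{u^-}$, and pointedness of the full basis then follows from Qin's common triangular basis framework \cite{Qin21}. The Laurent-family result gives \emph{positivity} of the Laurent expansion, which is a different (stronger in one direction, weaker in another) statement than pointedness with a prescribed degree.
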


The following proposition is proved for $2$-moves and $3$-moves in \cite[Proposition 3.3]{FHOO2}, and we extend it to $4$-moves and $6$-moves:

\begin{proposition}
Let $\ii, \ii'$ be two reduced sequences for $w_\circ$.
Assume that we have $\ii' = \tau \ii$ for some $\tau \in 
\{ \ga_1, \ldots ,\ga_{\ell-1} \} \sqcup
\{ \beta_1, \ldots ,\beta_{\ell-2} \} \sqcup
\{ \eta_1, \ldots ,\eta_{\ell-3} \} \cup \{ \zeta_1 \}$.
Then the following diagram commutes:
\[ \xymatrix@R=4ex@C=5ex{
\calA_{\ii'} \ar[dr]_{\varphi_{\ii'}}\ar[rr]_-{\tau^*}   && \calA_{\ii} \ar[dl]^{\varphi_\ii}  \\
& \calA_\bbA(\n)  
}
\]
\end{proposition}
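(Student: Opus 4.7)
The plan is to reduce the commutativity of the diagram to a combinatorial identity on $g$-vectors that can be matched against the PBW transition rules of Proposition~\ref{Prop:Lusztig}. First, since $\calA_{\ii'}$ is generated as an $\bbA$-algebra by its cluster variables, and both $\varphi_{\ii} \circ \tau^*$ and $\varphi_{\ii'}$ are $\bbA$-algebra homomorphisms, it suffices to verify $(\varphi_{\ii} \circ \tau^*)(\sfz') = \varphi_{\ii'}(\sfz')$ on an arbitrary cluster monomial $\sfz' \in \calA_{\ii'}$. By Theorem~\ref{Thm:CAN}(3), we may write $\varphi_{\ii'}(\sfz') = \tG_{\ii'}(\bsc')$ for a unique $\bsc' \in \N_0^\ell$. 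The propositions of Section~\ref{sec: Braid and degree} realize $\tau^*$ as a composition of mutations with an index permutation, so $\tau^*\sfz'$ is again a cluster monomial of $\calA_{\ii}$; thus $\varphi_{\ii}(\tau^*\sfz') = \tG_{\ii}(\bsc)$ for a unique $\bsc \in \N_0^\ell$.

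Next, let $\tilde\bsc$ denote the $\ell$-tuple determined by $\bsc'$ under the transition rule of Proposition~\ref{Prop:Lusztig} corresponding to the move $\tau$, so that $\tG_{\ii'}(\bsc') = \tG_{\ii}(\tilde\bsc)$. By Theorem~\ref{Thm:CAN}(4), the assignment $\bsd \mapsto \sum_{u} d_u(\bse_u - \bse_{u^-_{\ii}})$ is injective on $\N_0^\ell$, hence it suffices to show that
\[
\bfdeg(\tau^*\sfz') \;=\; \sum_{u=1}^{\ell} \tilde c_u \,(\bse_u - \bse_{u^-_{\ii}}).
\]
Writing $\bsg' \seteq \bfdeg(\sfz')$, relation~\eqref{eq:degG} gives $\bsg' = \sum_u c'_u(\bse_u - \bse_{u^-_{\ii'}})$, while the $g$-vector mutation rules from Section~\ref{sec: Braid and degree} (Proposition~\ref{prop: c-move} for $\gamma_k$, Proposition~\ref{prop: 3 move} for $\beta_k$, Lemma~\ref{Lem:4g} for $\eta_k$, and Lemma~\ref{Lem:g-vector 6move} together with the examples \eqref{eq: g-mut G2 part1}--\eqref{eq: g-mut G2 part2} for $\zeta_1$) determine $\bsg \seteq \bfdeg(\tau^*\sfz')$ explicitly in terms of $\bsg'$. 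The required identity thus becomes a purely numerical check between the piecewise-linear formulas for $\bsg'\mapsto\bsg$ and the formulas for $\bsc'\mapsto\tilde\bsc$ composed with the linear encoding \eqref{eq:degG}.

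The cases $\tau=\gamma_k$ and $\tau=\beta_k$ are already established in \cite[Proposition 3.3]{FHOO2}. For $\tau=\eta_k$, the translation from $\bsc'=(c'_{\al_1},c'_{\al_1+\al_2},c'_{\al_1+2\al_2},c'_{\al_2})$ to $(g'_{k},g'_{k+1},g'_{k+2},g'_{k+3})$ via \eqref{eq:degG} identifies the three regions determining $\pi_1$ and $\pi_2$ in Proposition~\ref{Prop:Lusztig}\eqref{it: eta} with the sign conditions on the auxiliary quantities $A,B$ in \eqref{eq:4g AB}; a case-by-case comparison with Lemma~\ref{Lem:4g} yields the equality. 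For $\tau=\zeta_1$, after encoding via \eqref{eq:degG}, the task is precisely to match the six identities \eqref{eq: zeta mutation part1}--\eqref{eq: zeta mutation part2} with the six fundamental $g$-vector transitions \eqref{eq: g-mut G2 part1}--\eqref{eq: g-mut G2 part2}; this is a direct inspection, and the general case follows since the transition rule in each fixed chamber of the cone $C_{\ii'}$ is linear.

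The main obstacle is the $\eta_k$ case, where reconciling the piecewise-linear formula of Lemma~\ref{Lem:4g} (with its four regions cut out by the signs of $A$ and $B$) with the $\min$-based rule of Proposition~\ref{Prop:Lusztig}\eqref{it: eta} requires a careful tabulation: each of the three regions achieving the minima $\pi_1,\pi_2$ must be matched with the corresponding $A,B$-sign case, and the boundary cases checked for consistency. While the argument is conceptually routine once the dictionary \eqref{eq:degG} is set up, it is genuinely calculation-intensive; the cleanest presentation is likely to tabulate the four $(A,B)$-sign regimes and, in each, compute both sides as explicit $\Z$-linear combinations of the basis vectors $\bse_u - \bse_{u^-_{\ii}}$.
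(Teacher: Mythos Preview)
Your strategy---reduce to a degree comparison and match against Proposition~\ref{Prop:Lusztig}---is exactly the paper's, but you have made the verification harder than necessary and, in the $\zeta_1$ case, left a genuine gap.

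The paper does not check on an arbitrary cluster monomial. Instead it observes that $\calA_\bbA(\n)$ is generated as an $\bbA$-algebra by $\{\tG_{\ii'}(\bse_u)\mid 1\le u\le \ell\}$ (the dual PBW generators), so it suffices to show $\varphi_\ii\,\tau^*\,\varphi_{\ii'}^{-1}$ fixes each $\tG_{\ii'}(\bse_u)$. Since $\bfdeg(\varphi_{\ii'}^{-1}\tG_{\ii'}(\bse_u))=\bse_u-\bse_{u^-_{\ii'}}$ by~\eqref{eq:degG}, one only has to push the single vector $\bse_u-\bse_{u^-_{\ii'}}$ through the $g$-vector rule for $\tau$ and read off the resulting $\bsc_u$ via~\eqref{eq:degG}; Proposition~\ref{Prop:Lusztig} then confirms $\tG_{\ii}(\bsc_u)=\tG_{\ii'}(\bse_u)$. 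For $\eta_k$ this means four short computations (one for each $u\in\{k,k+1,k+2,k+3\}$, plus the trivial case $u\notin[k,k+3]$), rather than the region-by-region match of two piecewise-linear maps that you propose. The paper's calculation takes only a few lines per $u$.

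Your $\zeta_1$ argument has a gap: you check the identity only for $\bsc'=\bse_u$ (via \eqref{eq: g-mut G2 part1}--\eqref{eq: g-mut G2 part2} versus \eqref{eq: zeta mutation part1}--\eqref{eq: zeta mutation part2}) and then assert that ``the general case follows since the transition rule in each fixed chamber of the cone $C_{\ii'}$ is linear.'' But agreement of two piecewise-linear maps on the vectors $\bse_u-\bse_{u^-_{\ii'}}$ does not force agreement everywhere unless you also know the chamber structures coincide, which you have not established (and Proposition~\ref{Prop:Lusztig}\eqref{it: zeta} only records the $\bse_u$ cases, not the full Lusztig map). The correct justification---and the one the paper uses---is algebraic, not piecewise-linear: once the two algebra homomorphisms $\varphi_\ii\circ\tau^*$ and $\varphi_{\ii'}$ agree on the generating set $\{\varphi_{\ii'}^{-1}\tG_{\ii'}(\bse_u)\}$, they agree on all of $\calA_{\ii'}$. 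Reorganizing your proof around this observation removes the gap and also eliminates the heavy $\eta_k$ tabulation you anticipate.
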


\begin{proof}
Since $\calA_\bbA(\n)$ 
is generated by $\{\tG_{\ii'}(\bse_u) \ | \  1 \le u \le \ell \}$,
it is enough to show that 
\begin{equation} \label{eq:tau}
\varphi_\ii \tau^* \varphi_{\ii'}^{-1}\tG_{\ii'}(\bse_u) = \tG_{\ii'}(\bse_u)
\end{equation}
for all $u \in [1,\ell]$.
By Theorem~\ref{Thm:CAN} and the fact that $\tau^*$ preserves cluster monomials,
we have
$$\varphi_\ii \tau^* \varphi_{\ii'}^{-1}\tG_{\ii'}(\bse_u) \in \tbfB^\up \quad 
\text{ and } \quad 
\bfdeg \big(\varphi_{\ii'}^{-1}\tG_{\ii'}(\bse_u)\big) = (\bse_u - \bse_{u^-_{\ii'}}).
$$

\noindent
\underline{($\eta$)}: Let us consider $\tau=\eta_k$ for some $ 1 \le u \le \ell-2$. 
Recall $A$ and $B$ in~\eqref{eq:4g AB}. 

\smallskip

(i) $u=k+3$. In this case
 $$ \bfdeg \big(\varphi_{\ii'}^{-1}\tG_{\ii'}(\bse_{k+3})\big) = (\bse_{k+3} - \bse_{k+1}), \quad A =  \sfc_{j,i} <0
 \quad \text{ and } B = -1.$$
Then we have
$g_k = -1 +  \sfc_{j,i}\sfc_{i,j}  = 1$, $g_{ k^-_{\ii}} =  -1$, $g_{k+3} =  g_{k+2} =      g_{k+1} =   g_{(k+1)^-_{\ii}}  = 0$ by~\eqref{eq:4g}. 
Hence
$$    \bfdeg \big(\eta_k^*\varphi_{\ii'}^{-1}\tG_{\ii'}(\bse_{k+3}) \big)=   \bse_{k}-\bse_{k^-_{\ii}} = \bfdeg  \big(\varphi_{\ii}^{-1}\tG_{\ii}( \bse_k )\big).$$

(ii) $u=k+2$. In this case
 $$ \bfdeg \big(\varphi_{\ii'}^{-1}\tG_{\ii'}(\bse_{k+2})\big) =  \bse_{k+2} - \bse_{k}, \quad A = -1 \quad \text{ and } \quad B=\sfc_{i,j}<0.$$
Then we have
$  g_k = - g_{ k^-_{\ii}} = -\sfc_{i,j}$, $g_{k+3} = -g_{k+1} = 1,$  $g_{k+2} = g_{(k+1)^-_{\ii}}  =  0$. 
Hence
\begin{align*}
 \bfdeg \big(\eta_k^*\varphi_{\ii'}^{-1}\tG_{\ii'}(\bse_{k+2}) \big)& = (\bse_{k+3}-\bse_{k+1}) -\sfc_{i,j}(\bse_{k}-\bse_{k^-_{\ii}})  = \bfdeg  \big(\varphi_{\ii}^{-1}\tG_{\ii}(-\sfc_{i,j}\bse_k+\bse_{k+3}  )\big).
\end{align*}

(iii) $u=k+1$. In this case
 $$ \bfdeg \big(\varphi_{\ii'}^{-1}\tG_{\ii'}(\bse_{k+1})\big) = (\bse_{k+1} - \bse_{(k+1)^-_{\ii'}}) = (\bse_{k+1} - \bse_{k^-_\ii}),
 \quad A =0 \quad \text{ and } \quad B=-1.$$
Then we have
$g_k = -g_{ k^-_{\ii}} = 1$, $g_{k+3}= -g_{k+1}  = -\sfc_{j,i}$, $g_{k+2} =    g_{(k+1)^-_{\ii}}  = 0$.  
Hence
\begin{align*}
  \bfdeg \big(\eta_k^*\varphi_{\ii'}^{-1}\tG_{\ii'}(\bse_{k+1})\big) & = (\bse_{k}-\bse_{k^-_{\ii}}) -\sfc_{j,i}(\bse_{k+3}-\bse_{k+1})  
 = \bfdeg  \big(\varphi_{\ii}^{-1}\tG_{\ii}(\bse_{k} -\sfc_{j,i}\bse_{k+3}  )\big).
\end{align*}

(iv) $u=k$. In this case
$$ \bfdeg \big(\varphi_{\ii'}^{-1}\tG_{\ii'}(\bse_k) \big)= (\bse_k - \bse_{k^-_{\ii'}}) = (\bse_k - \bse_{(k+1)^-_{\ii}}),$$
Then we have 
$g_{k+3} =  1$,   $g_{k+1}  = -1$, $g_{k+2}= g_k =   g_{k^-_{\ii'}}= g_{(k+1)^-_{\ii}} = g_{k^-_{\ii}}= 0$, 
by~\eqref{eq:4g} and~\eqref{eq:4g AB}.
Hence
$$    \bfdeg \big(\eta_k^*\varphi_{\ii'}^{-1}\tG_{\ii'}(\bse_k) \big)=  (\bse_{k+3}-\bse_{k+1})= \big(\varphi_{\ii}^{-1}\tG_{\ii}( \bse_{k+3} )\big).$$

(v) $u \not\in \{k,k+1,k+2,k+3\}$. In this case, one can easily see that
\begin{align*}
 \bfdeg \big(\eta_k^*\varphi_{\ii'}^{-1}\tG_{\ii'}(\bse_{u})\big) & = (\bse_{u}-\bse_{u^-_{\ii}})   = \bfdeg  \varphi_{\ii}^{-1}\tG_{\ii}( \bse_{u} ).
\end{align*}

In conclusion, we have  
\begin{align*}
\bfdeg \big(\eta^*_k \varphi_{\ii'}^{-1}\tG_{\ii'}(\bse_u) \big)= \begin{cases}
\bse_{k}-\bse_{k^-_{\ii}} & \text{ if } u =k+3, \\
 (\bse_{k+3}-\bse_{k+1}) -\sfc_{i,j}(\bse_{k}-\bse_{k^-_{\ii}}) & \text{ if } u =k+2, \\
 (\bse_{k}-\bse_{k^-_{\ii}}) -\sfc_{j,i}(\bse_{k+3}-\bse_{k+1})& \text{ if } u =k+1, \\
 \bse_{k+3} -  \bse_{k+1} & \text{ if } u =k, \\
\bse_{u} - \bse_{u^-_\ii} & \text{ otherwise},
\end{cases}
\end{align*}
and get  $ \varphi_{\ii} \eta^*_k \varphi_{\ii'}^{-1}\tG_{\ii'}(\bse_u) = \tG_{\ii}(\bfc_u)$, where 
\begin{align*}
\bfc_u = \begin{cases}
\bse_{k}  & \text{ if } u =k+3, \\
  \bse_{k+3} -\sfc_{i,j} \bse_{k}  & \text{ if } u =k+2, \\
 \bse_{k}  -\sfc_{j,i} \bse_{k+3} & \text{ if } u =k+1, \\
 \bse_{k+3}   & \text{ if } u =k, \\
\bse_{u} & \text{ otherwise}.
\end{cases}
\end{align*}
Then our assertion for $\eta_k$ follows from Proposition~\ref{Prop:Lusztig}~\eqref{it: eta}.

\noindent
\underline{($\zeta$)}: The assertion for $\zeta$-case follows from~\eqref{eq: g-mut G2 part1},~\eqref{eq: g-mut G2 part2},~\eqref{eq: zeta mutation part1},~\eqref{eq: zeta mutation part2} and Proposition~\ref{Prop:Lusztig}~\eqref{it: zeta}.
\end{proof}

Note that any two reduced sequences of $w_\circ$ are connected by braid moves; i.e.,
there exists a finite sequence  $\bstau=(\tau_1,\ldots,\tau_l)$ in $
\{ \ga_1, \ldots ,\ga_{\ell-1} \} \sqcup
\{ \beta_1, \ldots ,\beta_{\ell-2} \} \sqcup
\{ \eta_1, \ldots ,\eta_{\ell-3} \} \cup \{ \zeta_1 \}$ such that
$\ii'=\tau_1\cdots \tau_l \ii$. Then we have the isomorphism
$$
\bstau^*= \tau_l^* \circ  \cdots \circ \tau_1^*: \calA_{\ii'} \simeq \calA_\ii.
$$

\begin{corollary} \label{Cor:bftau}
With the above notation, the following diagram commutes:
\[  \xymatrix@R=4ex@C=5ex{
\calA_{\ii'} \ar[dr]_{\varphi_{\ii'}}\ar[rr]_-{\bstau^*}   && \calA_{\ii} \ar[dl]^{\varphi_\ii}  \\
& \calA_\bbA(\n)  
}
\]
Moreover, the isomorphism $\bstau^* = \varphi_{\ii'}^{-1} \circ \varphi_{\ii}$ depends only on the pair $(\ii, \ii')$, and not on the sequence $\bstau$ satisfying $\ii' = \tau_1 \cdots \tau_l \ii$.
\end{corollary}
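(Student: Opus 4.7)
The plan is to derive this corollary formally from the preceding proposition by induction on the length $l$ of the braid sequence $\bstau = (\tau_1, \ldots, \tau_l)$. The preceding proposition establishes exactly the commutativity of the triangular diagram when $\ii' = \tau \ii$ for a single braid move $\tau \in \{\ga_k\} \sqcup \{\beta_k\} \sqcup \{\eta_k\} \cup \{\zeta_1\}$, so this serves as the base case $l=1$. No further computation with degrees, $g$-vectors, or dual canonical bases will be needed beyond what is already in place.

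For the inductive step, I would proceed as follows. Assume the claim holds for all braid sequences of length less than $l$, and suppose $\ii' = \tau_1 \tau_2 \cdots \tau_l \ii$. Set $\ii'' \seteq \tau_2 \cdots \tau_l \ii$ so that $\ii' = \tau_1 \ii''$, and define
\[
\bstau'^* \seteq \tau_l^* \circ \cdots \circ \tau_2^* \colon \calA_{\ii''} \isoto \calA_{\ii}.
\]
By the inductive hypothesis applied to the length-$(l-1)$ braid sequence $(\tau_2, \ldots, \tau_l)$ connecting $\ii''$ to $\ii$, we obtain $\varphi_{\ii} \circ \bstau'^* = \varphi_{\ii''}$. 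On the other hand, the base case applied to the single braid move $\tau_1$ connecting $\ii'$ to $\ii''$ yields $\varphi_{\ii''} \circ \tau_1^* = \varphi_{\ii'}$. Composing these two identities gives
\[
\varphi_{\ii} \circ \bstau^* \;=\; \varphi_{\ii} \circ \bstau'^* \circ \tau_1^* \;=\; \varphi_{\ii''} \circ \tau_1^* \;=\; \varphi_{\ii'},
\]
which is precisely the commutativity of the triangle for $\bstau$.

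For the second assertion, observe that the commutativity of the diagram immediately forces $\bstau^* = \varphi_{\ii}^{-1} \circ \varphi_{\ii'}$ as maps $\calA_{\ii'} \isoto \calA_{\ii}$. Since the right-hand side depends only on the ordered pair $(\ii, \ii')$ and not on the intermediate braid moves chosen to relate them, the independence follows without further argument. In particular, if $\bstau$ and $\bstau'$ are two braid sequences both transforming $\ii$ into $\ii'$, then $\bstau^* = \bstau'^*$ as $\bbA$-algebra isomorphisms.

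The main obstacle for this corollary is already cleared by the preceding proposition, where the delicate degree computations for the $4$-move ($\eta_k$, using Lemmas~\ref{Lem:4g}, \ref{lem: cone to cone 4g} and Proposition~\ref{Prop:Lusztig}\eqref{it: eta}) and for the $6$-move ($\zeta_1$, using Lemma~\ref{Lem:g-vector 6move}, \eqref{eq: g-mut G2 part1}, \eqref{eq: g-mut G2 part2}, and Proposition~\ref{Prop:Lusztig}\eqref{it: zeta}) are carried out. Consequently the corollary itself is purely formal, and the proof is a short induction.
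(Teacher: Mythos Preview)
Your proof is correct and is exactly the formal argument the paper leaves implicit: the corollary is stated without proof because it follows immediately from the preceding proposition by composing the commutative triangles, and your induction on $l$ makes this composition explicit. Note also that you correctly write $\bstau^* = \varphi_{\ii}^{-1} \circ \varphi_{\ii'}$ in your argument, which is the right direction (the paper's displayed formula $\varphi_{\ii'}^{-1} \circ \varphi_{\ii}$ in the statement appears to be a typo).
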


\section{Homomorphisms and substitution formulas} \label{sec: substitution}
In this section, we first review the quantum (virtual) Grothendieck ring $\frakK_q(\g)$ 
associated with $\g$, its  bases and subrings. 
For $\g$ of simply-laced type, the ring $\frakK_q(\g)$ is introduced in \cite{Nak04,VV02,Her04} 
while it is introduced in \cite{KO22,JLO1,JLO2} for $\g$ of non-simply-laced type. 
Then we study homomorphisms on subrings of $\frakK_q(\g)$ and the skew field of fraction $\bbF(\calX_q)$, which are closely related to braid moves and the theory of quantum cluster algebras. 

\subsection{Quantum torus $\calX_q(\g)$ and the ring $\frakK_q(\g)$}  

Let $t$ be an indeterminate. For $i,j \in I$, we set 
$$
\usfc_{i,j}(t) \seteq \delta_{i,j}(t+t^{-1})+ \delta(i \ne j)\sfc_{i,j}
$$
and call the matrix $\usfC(t) \seteq (\usfc_{i,j}(t))$ the \emph{$t$-quantized Cartan matrix}. Then
$$
\usfB(t) \seteq \usfC(t)\sfD^{-1} \text{ is a symmetric matrix in }{\rm GL}_n(\Z[t^{\pm 1}]).
$$

Regarding $\usfB(t)$ as a $\Q(t)$-valued matrix, it is invertible and we denote the inverse of $\usfB(t)$ by 
$\tusfB(t) = (\tusfB_{i,j}(t))_{i,j\in I}$. Write
$$
\tusfB_{i,j}(t) = \sum_{u \in \Z} \tusfb_{i,j}(u)t^u \in  \Z(\!( t )\!)
$$
for the Laurent expansion of $\tusfB_{i,j}(t)$ at $t=0$. It is proved in
\cite{HL15,KO22} that,
for any $i,j \in I$ and $u \in \Z$, we have
\begin{align} \label{eq: non-zero}
\tusfb_{i,j}(u) =0 \quad \text{ if {\rm (i)} $u \le d(i,j)$} \quad \text{ or } \quad \text{{\rm (ii)} $d(i,j) \equiv u$ $\pmod 2$.}
\end{align}

We fix a parity function $\ep: I \to \{ 0,1\}$ such that $\ep_i \ne \ep_j$ for $i \sim j$, based on~\eqref{eq: non-zero}, and 
define 
\begin{align} \label{eq: hDynkin0}
\hDynkin_0 \seteq \{ (i,p) \in I \times \Z \ | \  p \equiv \ep_i \pmod 2 \} \subsetneq I \times \Z.
\end{align}

\begin{definition}[\cite{Nak04,Her04,VV03,KO22}] \label{def:quantum torus Xqg}
We define a quantum torus $(\calX_q(\g),*)$ to be the $\bbA$-algebra generated by $\{ \tX_{i,p}^{\pm} \ | \ (i,p) \in \hDynkin_0\}$ subject to the following relations:
$$   \tX_{i,p} * \tX_{i,p}^{-1} = \tX_{i,p}^{-1} * \tX_{i,p} =1 \quad \text{ and } \quad \tX_{i,p} *\tX_{j,s} = q^{\ucalN(i,p;j,s)}\tX_{j,s}*\tX_{i,p},$$
where
\begin{align*}
\ucalN(i,p;j,s) = \tusfb_{i,j}(p-s-1)-\tusfb_{i,j}(s-p-1)-\tusfb_{i,j}(p-s+1)+\tusfb_{i,j}(s-p+1).
\end{align*}
\end{definition}

Note that there exists a $\Z$-algebra anti-involution on $\calX_q(\g)$ given by 
\begin{align*}
q^{1/2} \longmapsto q^{-1/2} \quad \text{ and } \quad 
\tX_{i,p} \longmapsto q^{\sfd_i} \tX_{i,p}. 
\end{align*}
We will write $\calX_q$ instead of $\calX_q(\g)$ if there is no danger of confusion.
We denote by $\calX$ the commutative Laurent polynomial ring obtained from $\calX_q$ by specializing $q$ at $1$. We write the map $\evq: \calX_q \to \calX$
and set $X_{i,p} \seteq \evq(q^{a/2}\tX_{i,p})$ for any $a \in \Z$. We call a product $\tm$ of $\tX_{i,p}^{\pm 1}$ and $q^{\pm 1/2}$ in $\calX_q$ a \emph{monomial}; i.e.,
it is of the form
$$
\tm = q^{a/2} \st^{\to}_{(i,p) \in \hDynkin_0} \tX_{i,p}^{u_{i,p}(\tm)} \quad \text{for } a,  u_{i,p}(\tm) \in \Z. 
$$
We also call a monomial  $\tm\in \calX_q$ \emph{commutative} if $\tm = \overline{\tm}$, and  \emph{dominant} (resp.\emph{anti-dominant}) if it is a product of $\tX_{i,p}$ (resp. $\tX_{i,p}^{-1}$) and $q^{\pm 1/2}$. 

For a monomial $m \in \calX$, we can associate the commutative monomial
$\um \in \calX_q$ such that $\evq(\um)=m$. Thus we write $\um$ for commutative monomials in $\calX_q$. Let $\calM$ be the set of all monomials in $\calX$,
and $\calM_+$ be the set of all dominant monomials in $\calX$.

\smallskip

For monomials $\tm,\tm'$ in $\calX_q$, define
$$
\ucalN(\tm,\tm') \seteq \sum_{(i,p),(j,s) \in \hDynkin_0} u_{i,p}(\tm)u_{j,s}(\tm')\ucalN(i,p;j,s). 
$$
For each $(i,p) \in I \times \Z$ such that $(i,p\pm1) \in \hDynkin_0$, define the commutative monomial 
$$\tscrB_{i,p}\seteq \uscrB_{i,p},$$ 
where 
$\scrB_{i,p} = X_{i,p-1}X_{i,p+1} \prod_{j \sim i} X_{j,p}^{\sfc_{j,i}} \in \calX$. 

\begin{definition}[\cite{Her04,KO22,JLO1}] \label{def: qvgr} \hfill 
\bna
\item For each $i \in I$, denote by $\frakK_{i,q}(\g)$ the $\bbA$-subalgebra of $\calX_q$ generated by 
$$
\tX_{i,p} * (1+q^{-\sfd_i}\tscrB_{i,p+1}^{-1}) \ \text{ and } \ \tX_{j,s}^{\pm 1} \quad \text{ for } j \in I \setminus \{ i \} \text{ and } (i,p),(j,s) \in \hDynkin_0.
$$
\item Define
$$
\frakK_{q}(\g) \seteq \bigcap_{i \in I} \frakK_{i,q}(\g)
$$
and call it the \emph{quantum virtual Grothendieck ring} associated with $\usfC(t)$. If there is no confusion, we write $\frakK_q$ instead of $\frakK_q(\g)$.
\ee
\end{definition}

\subsection{Bases}
For $m,m' \in \calM$, there exists a partial order $\leN$, called \emph{Nakajima order}, defined by
$$
m \leN m' \text{ if and only if } m^{-1}m' \text{ is a product of elements in } \{ \scrB_{i,p} \ | \ (i,p\pm1)\in \hDynkin_0 \}. 
$$
For monomials $\tm,\tm' \in \calX_q$, we write $\tm \leN \tm'$ if $\evq(\tm) \leN \evq(\tm')$.

\begin{theorem} [\cite{Her04,JLO1}, cf.~\cite{FM01}] \label{thm: F_q}
For each dominant monomial $m \in \calM_+$, there exists a unique element $F_q(m)$ of $\frakK_q$ such that
$$\text{$\um$ is the unique dominant monomial appearing in $F_q(m)$
and $\overline{F_q(m)}=F_q(m)$.}$$ Furthermore,
\bna
\item a monomial appearing in $F_q(m)-\um$ is strictly less than $\um$ with respect to $\leN$,
\item $\sfF_q \seteq \{ F_q(m) \ | \  m \in \calM_+ \}$ forms an $\bbA$-basis of $\frakK_q$,
\item $\frakK_q$ is generated by the set $\{ F_q(X_{i,p}) \ | \ (i,p) \in \hDynkin_0 \}$ as an $\bbA$-algebra.
\ee
\end{theorem}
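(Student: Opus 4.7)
The plan is to adapt the Frenkel--Mukhin type $q$-algorithm \cite{FM01}, extended by Hernandez~\cite{Her04} to the $t$-deformed setting and further to the $\usfC(t)$-quantization in \cite{JLO1}. The argument naturally splits into uniqueness, existence, and then the three further assertions.

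First, for uniqueness: suppose $Y, Y' \in \frakK_q$ both satisfy the two defining properties for $m$. Their difference $\Delta \seteq Y - Y'$ lies in $\frakK_q$, is bar-invariant, and carries no dominant monomial in its support. I would show $\Delta = 0$ by a maximality argument. If $\Delta \neq 0$, pick a $\leN$-maximal monomial $\tm^\ast$ in the support of $\Delta$. Because $\Delta \in \frakK_{i,q}(\g)$ for every $i \in I$, the local form of the generators $\tX_{i,p} \ast (1+q^{-\sfd_i}\tscrB_{i,p+1}^{-1})$ together with the frozen $\tX_{j,s}^{\pm 1}$ forces the $\tX_{i,p}$-exponents of $\tm^\ast$ to be nonnegative for each $i$ (otherwise $\tm^\ast$ could be enlarged by an appropriate $\tscrB_{i,p}$, contradicting $\leN$-maximality). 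Running this over all $i$ shows $\tm^\ast$ is dominant, contradicting our assumption.

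Second, for existence: construct $F_q(m)$ inductively by the $q$-algorithm. Set $Y_0 \seteq \um$; having built a bar-invariant $Y_n$ whose unique dominant monomial is $\um$ and whose other monomials are $\lN \um$, test whether $Y_n \in \frakK_{i,q}(\g)$ for each $i$. If the test fails, locate a $\leN$-maximal monomial $\tm'$ in the $i$-defective part of $Y_n$ and add the unique bar-invariant correction term supported on monomials strictly $\lN \tm'$ that repairs the $i$-structure (such a correction exists and is unique by a direct analogue of \cite[Lemma~5]{FM01}, whose required ingredients are now the commutation constants $\ucalN(i,p;j,s)$ computed from $\tusfb_{i,j}(u)$). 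Iterate; the sequence stabilizes in the $\leN$-filtered completion of $\calX_q$ because each lower interval $\{m' : m' \leN \um\}$ is countable and each step strictly reduces the maximal defective monomial. Part (a) is then immediate from the construction.

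For (b), linear independence follows because the map $F_q(m) \mapsto \um$ is injective by the unique-dominant-monomial property. Spanning is obtained by iterated subtraction: for any nonzero $Y \in \frakK_q$ the uniqueness paragraph shows $Y$ has a $\leN$-maximal \emph{dominant} monomial $\um$, and then $Y - c\, F_q(m) \in \frakK_q$ for a suitable $c \in \bbA$ has strictly smaller $\leN$-support; Noetherian-type induction inside each finite $\leN$-interval finishes. For (c), given $m = \prod_j X_{i_j,p_j}$, form $P \seteq F_q(X_{i_1,p_1}) \ast \cdots \ast F_q(X_{i_r,p_r}) \in \frakK_q$. Its unique $\leN$-maximal dominant monomial is $\um$ (any other dominant monomial arising from cross multiplication of subdominant factors is strictly $\lN \um$), so $P - F_q(m)$ is an $\bbA$-linear combination of $F_q(m')$ with $m' \lN m$; induction on the Nakajima-rank of $m$ yields the claim.

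The main obstacle is verifying convergence and well-definedness of the $q$-algorithm in the existence step, specifically that at each stage the required bar-invariant correction (i) exists, (ii) is unique, and (iii) is supported strictly below $\tm'$ in $\leN$. In simply-laced types with $t$-deformation this was established geometrically by Nakajima~\cite{Nak04}; in general types Hernandez~\cite{Her04} did it by direct algebraic analysis using $\sfC(t)$, and the present $\usfC(t)$-quantization requires replaying that argument with the modified commutation data~\eqref{eq: non-zero} coming from $\tusfB(t)^{-1}$. Once this combinatorial control is in place, the purely algebraic consequences (a)--(c) follow formally as sketched.
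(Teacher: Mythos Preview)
The paper does not supply its own proof of this theorem; it is quoted from \cite{Her04,JLO1} (with the classical Frenkel--Mukhin algorithm \cite{FM01} as background), so there is nothing in the paper to compare against beyond the citations. Your sketch is faithful to that literature: uniqueness via ``a nonzero element of $\frakK_q$ must contain a dominant monomial'' (the $\leN$-maximality argument you give), existence via the $q$-algorithm, and the formal deductions of (a)--(c). The one point worth tightening is the termination/finiteness in the existence step: you correctly flag it as the main obstacle, but ``stabilizes in the $\leN$-filtered completion'' is not enough on its own---one needs that for fixed $m$ only finitely many monomials $m' \leN m$ can occur, which is what \cite{Her04,JLO1} establish using the explicit structure of the $\tscrB_{i,p}$'s and the bounded-range property. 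With that caveat, your outline matches the cited proofs.
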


For $m \in \calM_+$, we set
$$ E_q(m) \seteq q^b \left( \st^{\to}_{p \in \Z} \left(   \st_{i; \in I, (i,p) \in \hDynkin_0}  F_q(X_{i,p})^{u_{i,p}(m)} \right) \right),$$
where $b$ is an element in $\tfrac{1}{2}\Z$ such that $\um$ appears in $E_q(m)$ with coefficient $1$.
Then the set $\sfE_q\seteq \{ E_q(m) \ | \  m \in \calM_+ \}$ also forms an $\bbA$-basis of $\frakK_q(\g)$, called the \emph{standard basis}, and satisfies
\begin{align} \label{eq: uni E F}
E_q(m) = F_q(m) + \sum_{m'\lN m} C_{m,m'}F_q(m') \quad \text{for some $C_{m,m'} \in \bbA$.}
\end{align}

\begin{theorem}[\cite{Nak04,Her04,JLO1}]\label{thm: L_q}
For each $m \in \calM_+$, there exists a unique element $L_q(m)$ in $\frakK_q(\g)$ such that
\begin{eqnarray} &&
\parbox{90ex}{
\bnum
\item $\overline{L_q(m)} = L_q(m)$ and
\item \label{it: Lq(m)} $L_q(m)  = E_q(m) + \displaystyle \sum_{m' \lN m}  Q_{m,m'}(q) E_q(m') $ for some $ Q_{m,m'}(q) \in q\Z[q]$.
\ee
}\label{eq: L_q}
\end{eqnarray}
Furthermore,
\bna
\item a monomial appearing in $L_q(m)-\um$ is strictly less than $\um$ with respect to $\leN$,
\item $\sfL_q \seteq \{ L_q(m) \ | \  m \in \calM_+  \}$ forms an $\bbA$-basis of $\frakK_q$ called the \emph{canonical basis} of $\frakK_q(\g)$,
\item $L_q(X_{i,p}) = F_q(X_{i,p})$ for all $(i,p) \in \hDynkin_0$.
\ee
\end{theorem}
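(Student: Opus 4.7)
The plan is to carry out a Kazhdan--Lusztig-type construction on the standard basis $\sfE_q$ already identified in~\eqref{eq: uni E F}. First, I would verify that $\sfE_q$ really is an $\bbA$-basis of $\frakK_q$: Theorem~\ref{thm: F_q} gives the basis $\sfF_q$, and the transition matrix~\eqref{eq: uni E F} between $\sfE_q$ and $\sfF_q$ is uni-triangular with respect to the Nakajima order $\leN$. Since $\leN$ is well-founded on the set of monomials $\leN \um$ for any fixed $m$, this change of basis is invertible, so $\sfE_q$ is an $\bbA$-basis. Moreover, every monomial of $E_q(m)-\um$ is strictly $\lN \um$, because the same is true for each $F_q(X_{i,p})$-factor by Theorem~\ref{thm: F_q}(a) and the product structure of $E_q(m)$ preserves this feature.

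The heart of the proof is the following bar-involution triangularity for the standard basis:
\[
\overline{E_q(m)} \;=\; E_q(m) \;+\; \sum_{m' \lN m} h_{m,m'}(q)\, E_q(m'),
\qquad h_{m,m'}(q)\in\bbA.
\]
I would deduce this by computing $\overline{E_q(m)}$ directly from its defining ordered product. Each factor $F_q(X_{i,p})$ is bar-invariant by Theorem~\ref{thm: F_q}, so $\overline{E_q(m)}$ differs from $E_q(m)$ only by the powers of $q$ produced when reversing the order of the factors; these powers are controlled by the pairing $\ucalN$ from Definition~\ref{def:quantum torus Xqg} applied to the appearing monomials. The ``leading'' term is $\um$ (with coefficient $1$ by choice of normalization), and all other monomials produced by the reversal and by the lower-order pieces of the $F_q(X_{i,p})$'s are strictly $\lN \um$. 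Re-expanding in $\sfE_q$ via the inverse of~\eqref{eq: uni E F} preserves this $\lN$-strictness.

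With this triangularity in hand, the standard Kazhdan--Lusztig lemma (cf.\ \cite{Nak04,Her04,JLO1}) produces a unique bar-invariant element
\[
L_q(m) \;=\; E_q(m) \;+\; \sum_{m'\lN m} Q_{m,m'}(q)\, E_q(m'),
\qquad Q_{m,m'}(q)\in q\Z[q],
\]
by descending induction on $\leN$: one chooses $Q_{m,m'}(q)$ so that the equation $\overline{Q_{m,m'}(q)} - Q_{m,m'}(q) = h_{m,m'}(q) + (\text{corrections from already-fixed terms})$ has its unique solution in $q\Z[q]$, which is possible because $f(q)\in\bbA$ with $\overline{f}=-f$ admits a unique decomposition $f = g - \overline{g}$ with $g\in q\Z[q]$. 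The finiteness of the sum (a priori needed for this inductive procedure) follows because only finitely many $m' \lN m$ have the same weight as $m$, and all $E_q(m')$ appearing in $\overline{E_q(m)}$ share the weight of $m$. Property (a) is now inherited from the fact that every $E_q(m')$ in the sum and every monomial of $E_q(m)-\um$ itself is strictly $\lN \um$. The basis property (b) is immediate from the uni-triangularity of $\sfL_q$ over $\sfE_q$. For property (c), the monomial $X_{i,p}$ is minimal among dominant monomials of its own weight, so there is no $m'\lN X_{i,p}$ in $\calM_+$ of the same weight; hence the sum in~\eqref{it: Lq(m)} is empty, giving $L_q(X_{i,p}) = E_q(X_{i,p}) = F_q(X_{i,p})$.

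The main obstacle is establishing the bar-involution triangularity of $\sfE_q$. While each $F_q(X_{i,p})$ is bar-invariant, their ordered product $E_q(m)$ is not, because the factors $q$-commute in $\calX_q$ through the nontrivial pairing $\ucalN$ built from the Laurent coefficients $\tusfb_{i,j}(u)$ of the inverse $t$-quantized Cartan matrix. Controlling the resulting discrepancy---and showing that it only contributes terms $\lN \um$ after re-expansion in $\sfE_q$---is the technical core of the argument and relies on the non-vanishing pattern~\eqref{eq: non-zero} and the structure of the lower-order tails of the $F_q(X_{i,p})$'s guaranteed by Theorem~\ref{thm: F_q}(a).
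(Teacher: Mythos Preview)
The paper does not prove this theorem; it is stated with attribution to \cite{Nak04,Her04,JLO1} and no argument is supplied. Your outline is precisely the standard Kazhdan--Lusztig construction carried out in those references, so there is nothing in the paper to compare against and your sketch is in line with the cited sources.

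One minor remark on part (c): rather than arguing via minimality of $X_{i,p}$ among dominant monomials (which is true but itself requires justification), observe simply that $E_q(X_{i,p})=F_q(X_{i,p})$, since the defining product has a single factor and the normalising power of $q$ is zero. By Theorem~\ref{thm: F_q} this element is already bar-invariant, so uniqueness in~\eqref{eq: L_q} forces $L_q(X_{i,p})=F_q(X_{i,p})$ with no further combinatorics needed.
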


Let $\frakD_q^{\pm 1}$ be the $\bbA$-algebra automorphism of $\calX_q$
given by $\tX_{i,p} \mapsto \tX_{i^*,p\pm\sfh}$, where $\sfh$ is the Coxeter number of $\g$. It is well-known that it 
induces a $\Z$-algebra automorphism $\frakD$ on $\calX$ satisfying $\evq \circ \frakD^{\pm1}_q = \frakD^{\pm1} \circ \evq$ and 
preserves the subalgebra $\frakK_q$ of $\calX_q$. Moreover, we have
\begin{align}
\frakD_q^{\pm 1}(F_q(m)) = F_q(\frakD^{\pm1}m) \quad \text{ and }
\quad 
\frakD_q^{\pm 1}(L_q(m)) = L_q(\frakD^{\pm1}m) \quad \text{ for any } m \in \calM_+.
\end{align}

\begin{remark} \label{rem: Tr}
Note that, for $r \in 2\Z$, there exists an automorphism $\sfT_r$ of $\calX_q$ sending 
$\tX_{i,p}$ to $\tX_{i,p+r}$ satisfying the following properties:
\bna
\item The restriction to $\frakK_q$ gives an automorphism of $\frakK_q$.
\item For each $m \in \calM_+$, we have $\sfT_r(F_q(m))=F_q(\mathtt{T}_r(m))$, $\sfT_r(E_q(m))=E_q(\mathtt{T}_r(m))$ and $\sfT_r(L_q(m))=L_q(\mathtt{T}_r(m))$.
Here $\mathtt{T}_r$ denotes the induced automorphism on $\calX$. 
\ee
\end{remark}

\subsection{Kirillov--Reshetikhin and fundamental polynomials} \label{subsec: KR}
For a pair $(i,p), (i,s) \in \hDynkin_0$ with $p \le s$, we set
\begin{equation} \label{eq:dominant monomials for KR module}
m^{(i)}[p,s]  \seteq  \prod_{  (i,u) \in \hDynkin_0, \  p \le u \le s}  X_{i,s} \in \calM_+. 
\end{equation}
We call $m^{(i)}[p,s]$ the \emph{Kirillov--Reshetikhin$($KR$)$  monomial} and 
$F_q(m^{(i)}[p,s]) \in \frakK_q(\g)$ the \emph{KR-polynomial} for the pair $(i,p)$ and $(i,s)$, respectively.
In particular, we call $F_q(X_{i,p})=F_q(m^{(i)}[p,p]) $ the \emph{fundamental polynomial} for each $(i,p) \in \hDynkin_0$. 

\begin{theorem}[\cite{Nak03,Her06,FHOO}]\label{thm: KR-poly}
 For each KR-monomial $m^{(i)}[p,s]$, the KR-polynomial $F_q(m^{(i)}[p,s])$ satisfies the following properties:
\bna
\item $F_q(m^{(i)}[p,s])$ contains the unique dominant monomial $\underline{m^{(i)}[p,s]}$.
\item $F_q(m^{(i)}[p,s])$ contains the unique anti-dominant monomial $\underline{m^{(i^*)}[p+\sfh,s+\sfh]}$.  
\item \label{it: range of KR} 
Each $\calX_q$-monomial of $F_q(\underline{m^{(i)}[p,s]}) - \underline{m^{(i)}[p,s]}-\underline{m^{(i^*)}[p+\sfh,s+\sfh]}$ is a product of $\tX_{j,u}^{\pm1}$ with $p \le u \le s+\sfh$, having at least one of its factors from $p < u < s+\sfh$. In particular, each monomial of $F_q(\underline{\tX_{i,p}}) - \underline{\tX_{i,p}}-\underline{\tX_{i^*,p+\sfh}}$ is a product of $\tX_{j,u}^{\pm1}$ with $p < u < p+\sfh$.
\ee
\end{theorem}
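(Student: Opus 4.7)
The plan is to treat the three assertions in order, leveraging the defining properties of $F_q$ from Theorem~\ref{thm: F_q}, the Coxeter-type automorphism $\frakD_q$, and a close examination of the $q$-algorithm that inductively constructs $F_q(m^{(i)}[p,s])$ from its dominant seed.

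First, assertion (a) is essentially a restatement of Theorem~\ref{thm: F_q}: $F_q(m^{(i)}[p,s])$ is characterized as the unique bar-invariant element of $\frakK_q$ whose unique dominant monomial is $\underline{m^{(i)}[p,s]}$. Nothing further is needed here beyond observing that $m^{(i)}[p,s] \in \calM_+$.

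For (b), I would employ a duality argument. The automorphism $\frakD_q$ of $\calX_q$ sends $\tX_{i,p}$ to $\tX_{i^*,p+\sfh}$, restricts to an automorphism of $\frakK_q$, and satisfies $\frakD_q(F_q(m)) = F_q(\frakD m)$. Composed with the natural $\Z$-algebra anti-involution of $\calX_q$ (the one sending $\tX_{i,p}$ to $q^{\sfd_i}\tX_{i,p}$) together with the inversion $\tX_{i,p} \mapsto \tX_{i,p}^{-1}$, this gives a compound symmetry that interchanges dominant and anti-dominant monomials while preserving $F_q$-elements up to relabeling of the dominant seed. Transporting the unique dominant monomial $\underline{m^{(i)}[p,s]}$ under this symmetry then identifies a unique anti-dominant monomial in $F_q(m^{(i)}[p,s])$ with the claimed form $\underline{m^{(i^*)}[p+\sfh,s+\sfh]}$.

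For (c), the argument would proceed by a careful analysis of the $q$-algorithm, which builds $F_q(m^{(i)}[p,s])$ step by step from the seed $\underline{m^{(i)}[p,s]}$ by multiplying with quantized factors of the form $q^{a/2}\tscrB_{j,u}^{-1}$. Each $\scrB_{j,u}$ has support contained in three consecutive columns $\{u-1,u,u+1\}$, and, starting from a dominant monomial supported in columns $[p,s]$, only values $u$ strictly between $p$ and $s+\sfh$ can be called for before the algorithm terminates at the anti-dominant monomial. Thus every produced monomial is a product of $\tX_{j,v}^{\pm 1}$ with $p \le v \le s+\sfh$, and any monomial distinct from the two extremal ones must contain at least one factor $\tX_{j,v}^{\pm 1}$ with $p < v < s+\sfh$. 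The main obstacle I anticipate is establishing tightness of the range rigorously, i.e., ruling out any cascade of mutations that would force a factor outside the strip $[p,s+\sfh]$; to settle this cleanly, I would either invoke the identification of KR-polynomials with characters of determinantal modules over the quiver Hecke algebra (as developed in \cite{KKOP18,JLO2}), or argue directly by induction on the Nakajima depth that the support of each intermediate monomial remains inside the asserted interval.
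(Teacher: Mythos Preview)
The paper does not supply its own proof of this theorem: it is stated with the citation \cite{Nak03,Her06,FHOO} and used as a black box. So there is no in-paper argument to compare your proposal against.

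That said, a few remarks on your sketch. Part (a) is indeed immediate from Theorem~\ref{thm: F_q}. For (b), your duality idea is the right shape, but the map ``$\tX_{i,p}\mapsto \tX_{i,p}^{-1}$'' is not by itself an automorphism of $\frakK_q$; what is actually used in the cited references is an anti-automorphism of $\frakK_q$ (built from the bar-involution together with a twist) that exchanges the roles of dominant and anti-dominant monomials and intertwines with $\frakD_q$. You would need to invoke or reconstruct that map precisely rather than the naive inversion. Note also that the anti-dominant monomial is the \emph{inverse} $\underline{m^{(i^*)}[p+\sfh,s+\sfh]^{-1}}$ (cf.\ Example~\ref{ex: quantum positive}), so the statement as printed has a typographical slip you should not propagate. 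For (c), the $q$-algorithm analysis is exactly how the range bound is obtained in \cite{Her06} for the classical $t$-case, and the argument transports to the present setting; the ``tightness'' you flag is handled there by an induction on the Nakajima order showing that at each step the newly created $\tscrB^{-1}$-factors stay inside the strip. Your alternative via determinantal modules would be circular in this paper, since those identifications (Theorem~\ref{thm: auto}\eqref{it: i-box}) rely on Theorem~\ref{thm: KR-poly}.
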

 
\subsection{Height functions and their negative subrings} 
A \emph{height function $\xi$ on $\Dynkin$} is a function $\xi:I \to \Z$ given by $i \mapsto \xi_i$ satisfying 
$$\xi_i \equiv \ep_i \pmod 2 \quad \text{ and } \quad |\xi_i-\xi_j|=1 \quad \text{if }i \sim j.$$ For a height function $\xi$ on $\Dynkin$, we set
$$
\lxi\hDynkin_0 \seteq \{ (i,p) \in \hDynkin_0 \ | \  p \le \xi_i \}. 
$$

We call a bijective function $\tbrho: \N \to \lxi\hDynkin_0$ \emph{a compatible reading of $\lxi\hDynkin_0$} if $k > l$ when $\tbrho(k)=(i,p)$, $\tbrho(l)= (j,p+1) \in \lxi\hDynkin_0$ and $i \sim j$. For a  compatible reading $\tbrho$ of $\lxi\hDynkin_0$, we set $\brho: \N \to I^{(\infty)}$ where $\brho(k)=i_k$
for $\tbrho(k)=(i_k,p_k) \in \lxi\hDynkin_0$. We abuse $\brho$ to be the image $(i_1,i_2,\ldots) \in I^{\N}$
of $\N$ under $\brho$ and call it an \emph{$I$-compatible reading of $\lxi\hDynkin_0$}. 

\smallskip

For each $(i,p) \in \lxi\tDynkin_0$, set  the KR-monomial
$$ Z^\xi_{i,p} \seteq \underline{m^{(i)}[p,\xi_i]} \in \calX_q.$$
When there is no danger of confusion, we write $Z_{i,p}$ instead of 
$Z^\xi_{i,p}$ for notational simplicity.
Denote by $\calX_{q,\le \xi}$ the quantum subtorus of $\calX_q$ generated by 
$\{\tX_{i,p} \}_{(i,p) \in \lxi\hDynkin_0}$.

\begin{proposition} [{\cite[Theorem 7.1]{KO22}, see also \cite[Proposition 5.1.1]{B21}}]
Let $\brho=(i_1,i_2,\ldots)$ and 
$\brho'=(j_1,j_2,\ldots)$
be $I$-compatible readings of $\lxi\hDynkin_0$.  
\bnum
\item
For $u \le v$, we have
$$
\ucalN(Z^\xi_{i_u,p_u},Z^\xi_{i_v,p_v}) 
= (\varpi_{i_u}-w^\brho_u\varpi_{i_u}, \varpi_{i_v}+w^\brho_u\varpi_{i_v}),
$$
where $\tbrho(u)=(i_u,p_u)$ and  $\tbrho(v)=(i_v,p_v)$. Hence we have an $\bbA$-algebra isomorphism 
\begin{align} \label{eq: teta iso quantum torus}
\tka_\brho: \calT(\La^{\brho}) \isoto \calX_{q,\le \xi} \quad \text{ given by } \quad
Z_u \longmapsto Z^\xi_{i_u,p_u} =\underline{m^{(i_u)}[p_u,\xi_{i_u}]}.    
\end{align}
\item We have $\brho \overset{c}{\sim}\brho'$ and there exists
$\pi \in \frakS_\N$ such that 
$$
( \La^{\brho},\tB^{\brho}) = \pi( \La^{\brho'},\tB^{\brho'}).
$$
\ee
\end{proposition}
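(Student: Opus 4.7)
The plan for part (i) is a direct computation. Expanding $\ucalN(Z^\xi_{i_u,p_u},Z^\xi_{i_v,p_v})$ by bilinearity, and recalling that $Z^\xi_{i_u,p_u}=\underline{m^{(i_u)}[p_u,\xi_{i_u}]}$ is the commutative product (up to $q$-powers) of $\tX_{i_u,s}$ for $s\in\{p_u,p_u+2,\ldots,\xi_{i_u}\}$, I would reduce the left-hand side to a finite arithmetic-progression sum
$$\sum_{p,s}\bigl(\tusfb_{i_u,i_v}(p-s-1)-\tusfb_{i_u,i_v}(s-p-1)-\tusfb_{i_u,i_v}(p-s+1)+\tusfb_{i_u,i_v}(s-p+1)\bigr),$$
where $p,s$ range over the two arithmetic progressions above. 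The next step is to invoke the Laurent-expansion identities relating $\tusfb_{i,j}(\cdot)$ to inner products of fundamental weights with Weyl-group conjugates, developed in \cite{HL15} for simply-laced types and extended to all Lie types in \cite{KO22}. Under the compatibility condition on $\tbrho$, telescoping of this sum yields exactly $(\varpi_{i_u}-w^\brho_u\varpi_{i_u},\,\varpi_{i_v}+w^\brho_v\varpi_{i_v})$. Once this identity is in hand, the assignment $Z_u\mapsto Z^\xi_{i_u,p_u}$ preserves the quadratic commutation relations on both sides, so it extends to an $\bbA$-algebra isomorphism $\tka_\brho\colon\calT(\La^\brho)\isoto\calX_{q,\le\xi}$ (with bijectivity coming from the fact that the KR-monomials $\{Z^\xi_{i,p}\}_{(i,p)\in\lxi\hDynkin_0}$ are themselves algebraically independent generators of $\calX_{q,\le\xi}$, since they are unitriangular in the generators $\tX_{i,p}$).

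For part (ii), the plan is a bubble-sort argument. The compatibility condition on an $I$-reading only constrains the order of elements connected by a vertical arrow of $\tDynkin$: it forces $(j,p+1)$ to be read before $(i,p)$ whenever $i\sim j$ and both vertices lie in $\lxi\hDynkin_0$. Consecutive indices $k$ and $k+1$ whose $I$-labels $i_k$ and $i_{k+1}$ satisfy $d(i_k,i_{k+1})>1$ may therefore be freely transposed while preserving compatibility, which is precisely a $2$-move $\gamma_k\brho$. Starting from the leftmost position of disagreement between $\brho$ and $\brho'$, such swaps can be chained to interpolate between the two readings, producing an explicit word in $\gamma_k$'s witnessing $\brho\overset{c}{\sim}\brho'$.

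Applying Proposition~\ref{prop: c-move} iteratively along this chain then yields $(\La^\brho,\tB^\brho)=\pi(\La^{\brho'},\tB^{\brho'})$, where $\pi\in\frakS_\N$ is the composite of the corresponding simple transpositions $\sigma_k$. The main obstacle is part (i): although conceptually routine, it requires carefully aggregating the Laurent-expansion identities for $\tusfb_{i,j}$ encoding the inverse $t$-quantized Cartan matrix, identifying the resulting rational function with the inner product $(\varpi_i,w\varpi_j)$ as $w$ runs through the initial-segment Weyl group elements $w^\brho_k$. The argument in (ii) is by contrast purely combinatorial once (i) is secured, and for this reason both assertions reduce to results essentially contained in \cite[Theorem~7.1]{KO22} and \cite[Proposition~5.1.1]{B21}.
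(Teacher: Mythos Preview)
The paper does not supply its own proof of this proposition; it is stated as a citation of \cite[Theorem~7.1]{KO22} and \cite[Proposition~5.1.1]{B21}. Your outline is a correct sketch of the argument found in those references: part~(i) is the substantive computation reducing $\ucalN$ on KR-monomials to the Weyl-group pairing via the Laurent-expansion identities for $\tusfb_{i,j}$, and part~(ii) is the straightforward bubble-sort observation combined with Proposition~\ref{prop: c-move}. There is nothing to compare against in the present paper.
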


 For an element $\tx \in \calX_q$,
we denote by $\tx_{\le \xi}$ the element of $\calX_{q,\le \xi}$ obtained from $\tx$
by discarding all the monomials containing factors $\tX_{i,p}^{\pm 1}$
with $(i,p) \in \hDynkin_0 \setminus \lxi\hDynkin_0$. We write the $\bbA$-linear map
\begin{align*}
(\cdot)_{\le \xi}: \calX_q \longrightarrow  \calX_{q,\le \xi},  \qquad \tx \longmapsto 
\tx_{\le \xi}.    
\end{align*}

\begin{definition} \label{def: negative subring}
Let $\frakK_{q,\xi}$ be the subring of $\frakK_q$ generated by $\{ F_q(X_{i,p}) \ | \  (i,p) \in \lxi\hDynkin_0\}$ and call it the \emph{$\xi$-negative subring}
of $\frakK_{q}$.    
\end{definition}

We set
$$
(\La^{\xi},\tB^\xi) \seteq \left(  \La^{[\brho]}, \tB^{[\brho]} \right).
$$

\begin{remark} 
In~\cite{JLO2}, the subring $\frakK_{q,\xi}$ was written as $\frakK_{q,\le\xi}$. We hope there is no confusion by these notations.
\end{remark}

\begin{proposition} [{\cite[Proposition 6.1]{HL10}, \cite[Proposition 6.3]{JLO1}}]
For a height function $\xi$ on $\Dynkin$, the $\bbA$-linear map
$ (\cdot)_{\le \xi}$ induces the injective $\bbA$-algebra homomorphism
\begin{align} \label{eq: truncation}
(\cdot)_{\le \xi} : \frakK_{q,\xi} \hookrightarrow \calX_{q,\le \xi}. 
\end{align}
\end{proposition}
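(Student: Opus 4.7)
The plan is to split the claim into two statements: injectivity of $(\cdot)_{\le\xi}$ on $\frakK_{q,\xi}$ and the $\bbA$-algebra homomorphism property. Both hinge on the Nakajima-order triangularity of $\sfF_q$ recorded in Theorem~\ref{thm: F_q}. First I would set $\calM_+^\xi$ to be the set of dominant monomials whose factors all lie in $\lxi\hDynkin_0$. Combining the fact that $\frakK_{q,\xi}$ is generated by $\{F_q(X_{i,p})\}_{(i,p)\in\lxi\hDynkin_0}$ with the triangular relation~\eqref{eq: uni E F} between $E_q$ and $F_q$, the subset $\sfF_q^\xi\seteq\{F_q(m)\mid m \in \calM_+^\xi\}$ of $\sfF_q$ is an $\bbA$-basis of $\frakK_{q,\xi}$. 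For injectivity, I would take $0 \ne x = \sum_{m \in S}a_m F_q(m)$ with $S \subset \calM_+^\xi$ finite and $a_m \ne 0$, and pick $m_0 \in S$ maximal with respect to $\leN$. Theorem~\ref{thm: F_q}(a) forbids $\underline{m_0}$ from appearing in $F_q(m)$ for any $m \in S \setminus \{m_0\}$ (such an appearance would force $m_0 \leN m$, contradicting maximality), so the coefficient of $\underline{m_0}$ in $x$ equals $a_{m_0}\ne 0$; since $\underline{m_0} \in \calX_{q,\le\xi}$ it survives truncation, whence $x_{\le\xi} \ne 0$.

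For the homomorphism property, the plan is to use the $\bbA$-module decomposition $\calX_q = \calX_{q,\le\xi} \oplus J_\xi$, where $J_\xi$ is spanned by the commutative monomials containing a factor $\tX_{j,s}^{\pm 1}$ at some $(j,s) \notin \lxi\hDynkin_0$. Multiplying by a monomial in $\calX_{q,\le\xi}$ never alters the multiset of such ``outside'' factors, so $J_\xi$ is a two-sided $\calX_{q,\le\xi}$-submodule of $\calX_q$ and $(\cdot)_{\le\xi}$ is the associated projection. Writing $x = x_0 + x_+$, $y = y_0 + y_+$ with $x_0, y_0 \in \calX_{q,\le\xi}$ and $x_+, y_+ \in J_\xi$ for $x,y \in \frakK_{q,\xi}$, the bimodule property forces $x_0 y_+ + x_+ y_0 \in J_\xi$, and hence
\[ (xy)_{\le\xi} \;=\; x_0 y_0 + (x_+ y_+)_{\le\xi}. \]
The desired identity $(xy)_{\le\xi} = x_{\le\xi} y_{\le\xi}$ reduces to the key claim that $x_+ y_+ \in J_\xi$, i.e.\ that no cancellation between outside factors of $x_+$ and outside factors of $y_+$ can produce a wholly-inside monomial.

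The hard part will be establishing this key claim. The approach I propose is structural: by Theorem~\ref{thm: F_q}(a), every monomial in a product $F_q(X_{i_1,p_1})\cdots F_q(X_{i_n,p_n})$ with $(i_k,p_k) \in \lxi\hDynkin_0$ admits a Frenkel--Mukhin expansion $\prod_k\tX_{i_k,p_k}\cdot\prod_{(j,s)}\scrB_{j,s}^{-c_{j,s}}$ with $c_{j,s} \in \N_0$, and by Theorem~\ref{thm: KR-poly}(c) the occurring $(j,s)$ satisfy $p_k \le s \le p_k+\sfh$. Expressing the exponent of each outside variable $\tX_{j,s}$ (with $s > \xi_j$) as the explicit linear form $-c_{j,s+1}-c_{j,s-1}+\sum_{k\sim j}(-\sfc_{k,j})c_{k,s}$ in the $c$'s, the simultaneous vanishing of every outside exponent in $x_+ y_+$ yields a linear system which, combined with the range restriction $s \le p_k+\sfh$, forces all the $c_{j,s}$'s contributing to outside factors to vanish---contradicting $x_+, y_+ \in J_\xi$. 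An alternative route, which avoids the combinatorial analysis altogether, is to invoke the isomorphism $\frakK_{q,\xi} \simeq \calA_\bbA(\n)$ that will be established in the subsequent sections of this paper: under that identification $(\cdot)_{\le\xi}$ is realized as a natural ring map between quantum cluster algebras with a shared initial seed, so multiplicativity is automatic.
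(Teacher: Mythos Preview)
The paper does not give its own proof of this proposition; it is imported wholesale from \cite{HL10,JLO1}. So there is nothing in the paper to compare against, and I will just assess your argument on its own terms.

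Your injectivity argument and your reduction of multiplicativity to the claim ``$x_+ y_+ \in J_\xi$'' are both sound, and they are exactly the shape of the standard proof. However, both parts rest on a single combinatorial fact that you gesture at but do not actually prove. For the basis statement you need that every dominant monomial $m' \leN m$ with $m \in \calM_+^\xi$ again lies in $\calM_+^\xi$; for the multiplicativity you need that the outside part of any product $\prod_{(j,s)}\scrB_{j,s}^{-c_{j,s}}$ with $c_{j,s}\in\N_0$ is, if nontrivial, \emph{right-negative} (at the maximal $p$-level all outside exponents are strictly negative). Your phrase ``the linear system \dots\ combined with the range restriction $s\le p_k+\sfh$ forces all the $c_{j,s}$'s contributing to outside factors to vanish'' is where this lemma hides, but it is not automatic: the system only forces vanishing because the $c_{j,s}$ are nonnegative \emph{and} finitely supported. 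The actual mechanism is an ascent. Suppose the outside part had a nonnegative exponent at its top level $(i,p^*)$; the formula (where, incidentally, your $-\sfc_{k,j}$ should read $-\sfc_{j,k}$) then forces some $c_{k,p^*}>0$ with $k\sim i$, hence a contribution at the outside position $(k,p^*+1)$, whose vanishing in turn forces some $c_{l,p^*+1}>0$, and so on indefinitely --- contradicting finite support. You should make this step explicit; once you do, both the spanning of $\sfF_q^\xi$ and the product claim follow immediately by the ``right-negative times right-negative stays outside'' observation.

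Your proposed alternative route is circular: Theorem~\ref{thm: HLO iso truncation}, which supplies the isomorphism you want to invoke, already has $(\cdot)_{\le\xi}$ as an algebra homomorphism built into its commutative square, so you cannot use it to establish the present proposition.
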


We call $(\cdot)_{\le \xi}$ in~\eqref{eq: truncation} the \emph{truncation homomorphism}. Note that, for
a monomial $m^{(i)}[p,\xi_i]$ with $p \le \xi_i$ and $p\equiv_2 \xi_i$, we have
\begin{align}\label{eq: KR-imgae under truncation}
(F_q(m^{(i)}[p,\xi_i]))_{\le \xi} =\underline{m^{(i)}[p,\xi_i]}
\end{align}
by Theorem~\ref{thm: KR-poly}~\eqref{it: range of KR}.

\begin{proposition} [{cf.~\cite{HL16,KO22} see also~\cite[Proposition 5.14]{FHOO2}}] \label{prop: bold b}
For a Dynkin quiver $Q=(\Dynkin,\xi)$, $\bse_u$ $(u \in \N)$ and 
an $I$-compatible reading $\brho$ of $\lxi\hDynkin_0$, let us set
$$\bsb^\brho_u =  \tB^{[\brho]} \bse_u = \sum_{k \in \N} b_{k,u}\bse_k .$$ Then we have
\begin{align} \label{eq: B as image}
\tka_\brho(Z^{\bsb^\brho_u}) = \tscrB_{i,p-1}^{-1}    
\end{align}
where $\tbrho(u)=(i,p)$. 
\end{proposition}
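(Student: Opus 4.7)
The plan is to compute $\tka_\brho(Z^{\bsb^\brho_u})$ directly by expanding the column $\bsb^\brho_u$ in the standard basis of $\Z^{\oplus \N}$, applying $\tka_\brho$ entry-by-entry, and observing a telescoping at the commutative level that yields $\scrB_{i,p-1}^{-1}$; the $q$-shift is then pinned down by bar-invariance.

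First, set $\tbrho(u)=(i,p)$. Using the defining rules for $\tB^\brho$ together with the compatibility of the reading $\brho$ (among adjacent nodes, higher $p$-values are read earlier), a short case analysis on the inequalities $k < u < k^+ < u^+$ and $u < k < u^+ < k^+$ shows that the nonzero entries of $\bsb^\brho_u$ are precisely
\[
b_{k,u}^\brho = \begin{cases} +1 & \text{if } \tbrho(k)=(i,p+2),\\ -1 & \text{if } \tbrho(k)=(i,p-2),\\ +\sfc_{j,i} & \text{if } \tbrho(k)=(j,p+1),\ j \sim i,\\ -\sfc_{j,i} & \text{if } \tbrho(k)=(j,p-1),\ j \sim i, \end{cases}
\]
with the understanding that a row is omitted when the indicated point lies outside $\lxi\hDynkin_0$ (which can happen only when $p=\xi_i$).

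Next, I would apply $\tka_\brho(Z_k)=\underline{m^{(i_k)}[p_k,\xi_{i_k}]}$ to each factor and multiply at the commutative level. The $(i,p+2)$-contribution with exponent $+1$ supplies $X_{i,r}$ for $r=p+2,p+4,\ldots,\xi_i$, while the $(i,p-2)$-contribution with exponent $-1$ supplies the inverses for $r=p-2,p,p+2,\ldots,\xi_i$; all terms with $r \ge p+2$ cancel telescopically, leaving $X_{i,p-2}^{-1}X_{i,p}^{-1}$. For each $j \sim i$, the same mechanism between the $(j,p+1)$- and $(j,p-1)$-contributions cancels every $X_{j,r}$ with $r \ge p+1$ and leaves $X_{j,p-1}^{-\sfc_{j,i}}$. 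Collecting these factors gives exactly $\scrB_{i,p-1}^{-1}=\evq(\tscrB_{i,p-1}^{-1})$.

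To lift this to an identity in $\calX_{q,\le\xi}$, I would invoke bar-invariance: the Hermitian normalisation in the definition of $Z^{\bsb^\brho_u}$ makes this element bar-invariant, and $\tka_\brho$ intertwines bar-involutions because each generator $Z_k$ is sent to the bar-invariant commutative monomial $\underline{m^{(i_k)}[p_k,\xi_{i_k}]}$; since $\tscrB_{i,p-1}^{-1}=\underline{\scrB_{i,p-1}^{-1}}$ is also bar-invariant by construction, two bar-invariant elements with the same image under $\evq$ must coincide. The main obstacle is the combinatorial bookkeeping in the first step, especially at the boundary $p=\xi_i$ where $(i,p+2)$ and possibly $(j,p+1)$ drop out of $\lxi\hDynkin_0$; in each boundary case the missing upper end of the telescoping sum precisely matches the missing factor, so the final formula remains uniform.
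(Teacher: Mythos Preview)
Your proposal is correct and follows essentially the same approach as the paper's proof: identify the nonzero entries of $\bsb^\brho_u$ from the definition of $\tB^\brho$ together with the compatibility of the reading, apply $\tka_\brho$ and telescope at the commutative level to obtain $\scrB_{i,p-1}^{-1}$, then upgrade to an equality in $\calX_{q,\le\xi}$ via bar-invariance. Your treatment is in fact a bit more careful than the paper's---you handle the boundary case $p=\xi_i$ explicitly and record the correct indices $(j,p\pm1)$ and exponents $\pm\sfc_{j,i}$ (the paper's proof has minor typos here), but the underlying argument is identical.
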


\begin{proof}
We claim that
$$
\tka_\brho(Z^{\bsb^\brho_u}) \cong \tscrB_{i,p-1}^{-1}.
$$
Note that  $b_{u^+,u}=-1$, $b_{u^-,u}=1$, $\tbrho(u^{+})=(i,p-2)$ and $\tbrho(u^{-})=(i,p+2)$ by the assumption on $\brho$. Then we have
$$\tka_\brho(Z^{-\bse_{u^+}+ \bse_{u^-} }) \cong \tX_{i,p}^{-1}\tX_{i,p-2}^{-1}.$$
For $j \sim i$, we see $u^\pm(j)$ satisfy $u<u^+(j)<u^+ < u^+(j)^+$, $u^-(j)<u < u^-(j)^+=u^+(j)<u^+$, $\tbrho(u^{+}(j))=(j, p-1 )$
and $\tbrho(u^{-}(j))=(j,p + 1)$. Since
$b_{u^+(j),u}=-\sfc_{j,i}$ and $b_{u^-(j),u}=\sfc_{j,i}$, we have
$$\tka_\brho(Z^{-\sfc_{ j,i }\bse_{u^+(j)}+ \sfc_{ j,i }\bse_{u^-(j)} }) \cong \tX_{j,p-1}^{-\sfc_{j,i}}.$$  
Hence the claim follows. Since $\tka_{\brho}$
is compatible with the bar involutions on $\calT(\La^{\brho})$ and $\calX_{q,\le \xi}$, we obtain~\eqref{eq: B as image}.
\end{proof}

\begin{theorem} [{\cite[Theorem 5.15]{FHOO2}, \cite[Theorem 8.8]{JLO1}}] \label{thm: HLO iso truncation}
\hfill
\bnum
\item For an $I$-compatible reading $\brho$ of $\lxi\hDynkin_0$, there exists
a unique $\bbA$-algebra isomorphism $\kappa_\brho$ which makes the following diagram commutative:
\begin{align*}
\xymatrix@R=6ex@C=10ex{
\calA_{\brho} \ar[r]^-{\kappa_{\brho}} \ar@{^{(}->}[d] & \frakK_{q,\xi}\ar@{^{(}->}[d]^{(\cdot)_{\le \xi}}\\
\calT(\La^{\brho}) \ar[r]^-{\tka_\brho} &  \calX_{q,\le \xi}
}
\end{align*}
\item \label{it: image of KR under truncation} For $k \in \N$, we have $\kappa_{\brho}(Z_k)= F_q(\underline{m^{(i)}[p,\xi_i]})$
where $\tbrho(k)=(i,p)$.  Moreover every KR-polynomial $F_q(m)$ in $\frakK_{q,\xi}$ 
appears as a cluster variable. 
\item
$\sfF_{q,\xi} \seteq \sfF_{q} \cap \frakK_{q,\xi}$,
$\sfE_{q,\xi} \seteq \sfE_{q} \cap \frakK_{q,\xi}$ and 
$\sfL_{q,\xi} \seteq \sfL_{q} \cap \frakK_{q,\xi}$ are bases of $\frakK_{q,\xi}$. 
\ee
\end{theorem}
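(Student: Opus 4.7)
My plan is to construct $\kappa_\brho$ by first prescribing it on the initial cluster of $\calA_\brho$, verifying compatibility with $\tka_\brho$ via the truncation embedding~\eqref{eq: truncation}, and then propagating the definition along mutations and braid moves.

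\medskip

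For (i), I would define $\kappa_\brho$ on initial cluster variables by
$$
\kappa_\brho(Z_k)\seteq F_q\bigl(\underline{m^{(i_k)}[p_k,\xi_{i_k}]}\bigr), \qquad \tbrho(k)=(i_k,p_k),
$$
and check directly that the square commutes on the initial seed using~\eqref{eq: KR-imgae under truncation}, which gives $\bigl(F_q(\underline{m^{(i_k)}[p_k,\xi_{i_k}]})\bigr)_{\le\xi}=\underline{m^{(i_k)}[p_k,\xi_{i_k}]}=\tka_\brho(Z_k)$. To extend $\kappa_\brho$ to the whole of $\calA_\brho$, the essential point is that the mutation relations in $\calA_\brho$ must descend to genuine identities inside $\frakK_{q,\xi}$. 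Because the truncation $(\cdot)_{\le\xi}$ is \emph{injective} on $\frakK_{q,\xi}$, it suffices to check these identities in the ambient quantum torus $\calX_{q,\le\xi}$. Proposition~\ref{prop: bold b} converts the column $\bsb^\brho_u$ of $\tB^\brho$ into the $\sfB$-monomial $\tscrB_{i,p-1}^{-1}$, so that each exchange relation at the initial seed translates, via $\tka_\brho$, into a Hernandez-type $T$/$Q$-system identity between KR-polynomials. The existence of such an identity is already guaranteed by the definition of $\frakK_{q,\xi}$ and the characterization of $F_q(m^{(i)}[p,\xi_i])$ in Theorem~\ref{thm: KR-poly}. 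Uniqueness of $\kappa_\brho$ is immediate since $\calA_\brho$ is generated, as an $\bbA$-subalgebra of $\calT(\La^\brho)$, by the initial cluster.

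\medskip

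For the second part of (i) — that $\kappa_\brho$ is an isomorphism onto $\frakK_{q,\xi}$ — I would argue surjectivity from Theorem~\ref{thm: F_q}(c): $\frakK_{q,\xi}$ is generated by the fundamental polynomials $F_q(X_{i,p})$ with $(i,p)\in\lxi\hDynkin_0$, and each of these is the KR-polynomial attached to a singleton interval, hence (as I will establish in (ii)) lies in the image of $\kappa_\brho$. Injectivity then follows from the commutative square: $\tka_\brho$ is an isomorphism and $(\cdot)_{\le\xi}$ is injective on $\frakK_{q,\xi}$, so $\kappa_\brho$ must be injective as well.

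\medskip

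For (ii), that $\kappa_\brho(Z_k)$ is the prescribed KR-polynomial is built into the definition. The stronger claim — that \emph{every} KR-polynomial $F_q(\underline{m^{(i)}[p,s]})$ with $p\le s\le \xi_i$ appears as a cluster variable — is the real content and is the step I expect to be the main obstacle. The plan is to realize each such KR-polynomial as an initial cluster variable for a suitably chosen $I$-compatible reading $\brho'$ of $\lxi\hDynkin_0$, then use the compatibility of $\kappa_{\brho}$ and $\kappa_{\brho'}$ with transitions between the two readings. By the results of Sections~\ref{sec: Braid and degree}--\ref{sec: basis and moves}, any two such readings are connected by a finite sequence of $2$-, $3$-, $4$-, and $6$-moves together with permutations, and Corollary~\ref{Cor:bftau} guarantees that the induced $\bbA$-algebra isomorphism $\bstau^*\colon\calA_{\brho'}\isoto\calA_{\brho}$ matches cluster monomials with elements of $\tbfB^\up$ in a way compatible with the uniqueness statement for $\kappa_\brho$. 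Consequently, cluster variables for $\brho'$ become cluster variables for $\brho$ under $\bstau^*$, yielding the desired KR-polynomials in the image.

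\medskip

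Finally, (iii) follows once (i)--(ii) are in place. Since the truncation $(\cdot)_{\le\xi}$ is injective on $\frakK_{q,\xi}$ and each basis element $F_q(m)$, $E_q(m)$, $L_q(m)$ of $\frakK_q$ is determined by its leading dominant monomial $\underline{m}$, an element of such a basis lies in $\frakK_{q,\xi}$ exactly when all factors $\tX_{i,p}^{\pm1}$ appearing in its monomials satisfy $(i,p)\in\lxi\hDynkin_0$ — equivalently when $\underline{m}$ is supported on $\lxi\hDynkin_0$. This characterizes $\sfF_{q,\xi}$, $\sfE_{q,\xi}$ and $\sfL_{q,\xi}$ and, together with the corresponding property in the full ring $\frakK_q$, shows that each of these is an $\bbA$-basis of $\frakK_{q,\xi}$.
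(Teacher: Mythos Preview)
The paper does not prove this theorem; it is quoted from \cite{FHOO2} and \cite{JLO1} as an external input, so there is no in-paper argument to compare against. That said, your proposal contains a genuine gap in part~(ii).

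Your strategy for showing that every KR-polynomial $F_q(m^{(i)}[p,s])$ with $p\le s\le\xi_i$ arises as a cluster variable is to realize it as an \emph{initial} cluster variable for some other $I$-compatible reading $\brho'$ of $\lxi\hDynkin_0$ and then transport it back via braid moves. This cannot work: for \emph{any} compatible reading of $\lxi\hDynkin_0$ the initial cluster variables are precisely the $F_q(m^{(i)}[p,\xi_i])$, i.e.\ the KR-polynomials whose interval reaches all the way up to $\xi_i$; changing the reading only relabels this \emph{same} set. KR-polynomials with upper index $s<\xi_i$ are never initial variables for any reading. Moreover, two $I$-compatible readings of $\lxi\hDynkin_0$ are commutatively equivalent (this is stated explicitly in the proposition just before the theorem), so only $2$-moves relate them; the $3$-, $4$-, $6$-moves of Section~\ref{sec: Braid and degree} and Corollary~\ref{Cor:bftau} concern reduced sequences of $w_\circ$, not compatible readings of $\lxi\hDynkin_0$, and do not apply here. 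What is actually required is the mutation sequence of Theorem~\ref{thm: cluster algorithm} (the cluster algorithm of \cite{HL16,B21,JLO1}), which produces the general KR-polynomials by mutating \emph{away} from the initial seed rather than by changing the reading.

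A smaller issue in part~(i): you phrase the construction as ``extending $\kappa_\brho$'' along mutations, as if $\calA_\brho$ were freely presented from the initial seed. It is cleaner, and closer to the cited proofs, to work entirely inside $\calX_{q,\le\xi}$: show that $\tka_\brho$ carries $\calA_\brho$ into $(\frakK_{q,\xi})_{\le\xi}$ by checking that each exchange relation becomes a (truncated) quantum $T$-system identity among KR-polynomials, and then use injectivity of $(\cdot)_{\le\xi}$ to obtain $\kappa_\brho$. Your appeal to Theorem~\ref{thm: KR-poly} alone does not yield those $T$-system identities; they are a separate, nontrivial input.
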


\begin{remark} \hfill
\bna
\item The entire ring $\frakK_q(\g)$ has also a quantum cluster algebra structure  (see \cite[Theorem 6.17]{FHOO2}, \cite[Theorem 9.6]{JLO1}), but we will not use the structure in
this paper. 
\item The image $(\frakK_{q,\xi})_{\le\xi}$ of $\frakK_{q,\xi}$ under $(\cdot)_{\le\xi}$ has a cluster algebra structure with initial cluster variables 
$\{ \underline{m^{(i)}[p,\xi_i]} \ | \ i \in I, \ p \le \xi_i \} $ by~\eqref{eq: KR-imgae under truncation} and Theorem~\ref{thm: HLO iso truncation}~\eqref{it: image of KR under truncation}.
\ee
\end{remark}

\begin{theorem}[\cite{HL16,B21,JLO1}] \label{thm: cluster algorithm}
For each KR-monomial $m^{(i)}[p,s]$ and any height function $\xi$ on $\Dynkin$ such that $\xi_i \ge s$,
there exists a quantum cluster algebra algorithm for calculating
$(F_q(m^{(i)}[p,s]))_{\le \xi}$, which can be obtained from the initial cluster variable $\underline{m^{(i)}[p+2k,\xi_i]}$ of $(\frakK_{q,\xi})_{\le \xi}$ with $\xi_i=s+2k$ $(k\in \N_0)$ via a certain sequence of mutations. 
\end{theorem}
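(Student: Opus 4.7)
The plan is to proceed by induction on $k = (\xi_i - s)/2 \in \N_0$, exploiting the quantum cluster algebra structure on $(\frakK_{q,\xi})_{\le \xi}$ coming from Theorem~\ref{thm: HLO iso truncation}. For the base case $k = 0$ we have $\xi_i = s$, so the initial cluster variable $\underline{m^{(i)}[p+2k, \xi_i]} = \underline{m^{(i)}[p, s]}$ already equals $(F_q(m^{(i)}[p,s]))_{\le \xi}$ by~\eqref{eq: KR-imgae under truncation}, and no mutation is required.

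For the inductive step, assuming the result for all smaller $k' < k$, I would establish a quantum T-system exchange relation of the shape
\begin{align*}
\underline{m^{(i)}[p+2k, \xi_i]} \cdot (F_q(m^{(i)}[p, s]))_{\le \xi} \cong (F_q(m^{(i)}[p+2, s]))_{\le \xi} \cdot \underline{m^{(i)}[p, \xi_i]} + \prod_{j \sim i} (F_q(m^{(j)}[p+1, s+1]))_{\le \xi}^{-\sfc_{j,i}},
\end{align*}
which, after reading via the isomorphism $\tka_\brho$ of~\eqref{eq: teta iso quantum torus} and Theorem~\ref{thm: HLO iso truncation}, is precisely the exchange relation of a single mutation of the quantum seed at the vertex associated with $\underline{m^{(i)}[p+2k, \xi_i]}$. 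The factors on the right-hand side are either initial cluster variables $\underline{m^{(j)}[q, \xi_j]}$ (for $k'=0$) or truncated KR-polynomials with $k' < k$; by the induction hypothesis, each of these is reachable from the initial seed via a sequence of mutations. Composing those mutations with one extra mutation at the vertex $(i, p+2k)$ then produces $(F_q(m^{(i)}[p, s]))_{\le \xi}$.

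The key technical obstacle is verifying the quantum T-system identity above with the correct quantum normalizations and checking that it truly corresponds to the exchange relation dictated by the initial matrix $\tB^\brho$. I would address this using the uniqueness principle of Theorem~\ref{thm: same degree same element}(b): instead of manipulating the identity directly in $\calX_{q,\le \xi}$, compute the $g$-vector of the candidate cluster variable obtained by the single mutation via~\eqref{eq: g-vector mutation}, and match it against the $g$-vector of $(F_q(m^{(i)}[p,s]))_{\le \xi}$ read off from Proposition~\ref{prop: bold b} and the KR-monomial formula~\eqref{eq:dominant monomials for KR module}. Since quantum cluster monomials are determined by their degrees, equality of $g$-vectors forces equality of the two elements, so the mutation step indeed produces $(F_q(m^{(i)}[p, s]))_{\le \xi}$, and the induction closes.
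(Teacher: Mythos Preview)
The paper does not give its own proof of this statement; it is recorded as a cited result from \cite{HL16,B21,JLO1}.  Your overall philosophy---truncated $T$-systems as exchange relations---is indeed what those references use, but the specific implementation you outline has two genuine gaps.

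First, the displayed exchange relation is false for $k>1$.  Take $\g$ of type $A_1$, $p=s$, $k=2$, so $\xi_1=s+4$ and the product over $j\sim i$ is empty (hence equal to $1$).  The left-hand side is
\[
\underline{X_{1,s+4}}\cdot\big(F_q(X_{1,s})\big)_{\le\xi}=X_{1,s+4}X_{1,s}+X_{1,s+4}X_{1,s+2}^{-1},
\]
whereas your right-hand side is $1\cdot\underline{m^{(1)}[s,s+4]}+1=X_{1,s}X_{1,s+2}X_{1,s+4}+1$, and these do not agree.  The true $T$-system is \emph{local}: it involves $m^{(i)}[p,s]$, $m^{(i)}[p+2,s+2]$, $m^{(i)}[p,s+2]$, $m^{(i)}[p+2,s]$, not the distant initial variable $m^{(i)}[p+2k,\xi_i]$; your formula coincides with it only when $k=1$.

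Second, even with the correct $T$-system, the induction on $k=(\xi_i-s)/2$ alone cannot close: the factor $\big(F_q(m^{(i)}[p+2,s])\big)_{\le\xi}$ on the right has the same endpoint $s$ and hence the same value of $k$, so it is not covered by your hypothesis.  In the cited papers the mutation sequence is organised as a column-by-column sweep through the repetition quiver, and the induction is on the position in that sweep: at each step the $T$-system uses only variables already produced at earlier positions \emph{within the same pass}, and one must simultaneously track the evolving quiver to verify that the exchange relation at the current vertex is precisely the $T$-system.  The $g$-vector argument via Theorem~\ref{thm: same degree same element} can certify the outcome of a single mutation once you know the intermediate seed, but it does not by itself arrange for all required factors to coexist in a common cluster.
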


\begin{remark}
By Theorem~\ref{thm: KR-poly}~\eqref{it: range of KR}, $(F_q(m^{(i)}[p,s]))_{\le\xi} = F_q(m^{(i)}[p,s]) $ if $k$ in Theorem~\ref{thm: cluster algorithm} is big enough.   
\end{remark}

\begin{conjecture}[\cite{JLO1}]\label{conj: quatum positive}
\hfill
\bnum
\item \label{it: positive} For each KR-monomial $m^{(i)}[p,s]$, the element 
$F_q(m^{(i)}[p,s])$ is quantum positive; i.e., each coefficient of  $F_q(m^{(i)}[p,s])$ 
is contained in $\N_0[q^{\pm 1/2}]$. 
\item \label{it: L F KR} For each KR-monomial $m^{(i)}[p,s]$, we have $L_q(m^{(i)}[p,s]) = F_q(m^{(i)}[p,s])$.
\ee
\end{conjecture}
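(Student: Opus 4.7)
The plan is to derive the conjecture from the more general Theorem~\ref{thm: quantum positive} that the introduction announces, by transporting the positivity question across the chain of isomorphisms
\[ \calA_\bbA(\n) \;\simeq\; \calK_\bbA(R\gmod) \;\simeq\; \frakK_{q,Q}(\g), \]
and then using the truncation isomorphism $(\cdot)_{\le\xi} \colon \frakK_{q,Q}(\g) \isoto (\frakK_{q,Q}(\g))_{\le\xi}$ together with the Laurent-family positivity of~\cite{KKOP23}. First, I would fix a Dynkin quiver $Q=(\Dynkin,\xi)$ and a reduced sequence $\ii$ of $w_\circ$ adapted to $Q$, extended to an element of $I^{(\infty)}$. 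By Theorem~\ref{thm: HLO iso truncation}, the quantum cluster algebra $\calA_\ii$ sends $Z_k$ to the KR-polynomial $F_q(m^{(i_k)}[p_k,\xi_{i_k}])$, so after identifying $\calA_\ii$ with $\calA_\bbA(\n)$ via $\varphi_\ii$ (Theorem~\ref{Thm:CAN}), every KR-polynomial becomes a specific dual canonical basis element, which by~\cite{KKOP18,JLO2} corresponds (under the categorification $\calA_\bbA(\n)\simeq\calK_\bbA(R\gmod)$) to the isomorphism class of a determinantal module $M(w^{\ii}_k\varpi_{i_k},\varpi_{i_k})$.

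Next, I would invoke the main result of~\cite{KKOP23}: the collection $\calL=\{M(w^{\ii}_k\varpi_{i_k},\varpi_{i_k})\}_{k\ge 1}$ forms a Laurent family, hence the character map sends every simple $R$-module class to an element with coefficients in $\Z_{\ge0}[q^{\pm1/2}]$ when expanded in the corresponding basis. Pulling this back through $\kappa_\brho$ produces quantum positivity of each $\underline{m^{(i)}[p,\xi_i]}$-expansion of $(F_q(m^{(i)}[p,\xi_i]))_{\le\xi}$, and because the truncation homomorphism is an injective $\bbA$-algebra map (with image characterized by Theorem~\ref{thm: KR-poly}\eqref{it: range of KR}) I can upgrade this to quantum positivity of $F_q(m^{(i)}[p,\xi_i])$ itself. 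To reach an arbitrary KR-monomial $m^{(i)}[p,s]$ with $s<\xi_i$, I would apply the automorphism $\sfT_r$ from Remark~\ref{rem: Tr} (or shift $\xi$ via $\frakD_q$) to reduce to the boundary case, using that both operations preserve $\sfF_q$, $\sfE_q$, $\sfL_q$ and commute with $(\cdot)_{\le\xi}$ up to a height translation.

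For statement~\eqref{it: L F KR}, the plan is to show that the KR-polynomial $F_q(m^{(i)}[p,s])$ is a \emph{real} element of the canonical basis $\sfL_q$. This follows once I know that the corresponding determinantal module $M(w\varpi_i,\varpi_i)$ is a real simple module over $R$ (which is standard: determinantal modules are real, e.g.\ by~\cite{KKKO18}). Because $\kappa_\brho$ sends the dual canonical basis $\tbfB^\up$ to $\sfL_{q,\xi}$ (Theorem~\ref{Thm:CAN} together with Theorem~\ref{thm: HLO iso truncation}), the truncated KR-polynomial $(F_q(m^{(i)}[p,s]))_{\le\xi}$ must coincide with $(L_q(m^{(i)}[p,s]))_{\le\xi}$. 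Injectivity of $(\cdot)_{\le\xi}$ then yields $F_q(m^{(i)}[p,s])=L_q(m^{(i)}[p,s])$, and combined with~\eqref{it: positive} this also forces every coefficient of $L_q(m^{(i)}[p,s])$ to lie in $\Z_{\ge0}[q^{\pm1/2}]$.

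The main obstacle I anticipate is the identification of KR-polynomials with characters of determinantal modules \emph{uniformly in non-skew-symmetric types}: the skew-symmetric case is covered by~\cite{HL15,KKKO18}, but the non-skew-symmetric verification requires the compatibility of $\kappa_\brho$ with braid-move isomorphisms from Section~\ref{sec: basis and moves}—specifically that $\ii\mapsto\ii'$ via $4$-moves and $6$-moves transports determinantal modules to determinantal modules in a way compatible with the quantum cluster structure. This is precisely where the $4$-move and $6$-move degree analysis carried out in Section~\ref{sec: Braid and degree}, and in particular Lemmas~\ref{Lem:4g}, \ref{Lem:g-vector 6move}, and Corollary~\ref{Cor:bftau}, should be used to verify that the Laurent family $\calL$ constructed from $\ii$ maps to the correct family of KR-polynomials on the $\frakK_{q,Q}(\g)$-side regardless of the choice of reduced word adapted to $Q$.
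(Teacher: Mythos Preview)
The statement is a \emph{conjecture}, and the paper does not prove it in full; it establishes only partial results (Corollary~\ref{cor: pos cor} for part~\eqref{it: positive}, and the cited theorem from \cite{JLO2} for part~\eqref{it: L F KR}, both restricted to KR-monomials lying in some heart $\calM_+^Q$). Your proposal overclaims in a way that hides a real gap.

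The problematic step is the ``upgrade'' from quantum positivity of $(F_q(m^{(i)}[p,s]))_{\le\xi}$ to quantum positivity of $F_q(m^{(i)}[p,s])$. Injectivity of $(\cdot)_{\le\xi}$ is irrelevant: the truncation \emph{discards} every monomial containing a factor $\tX_{j,u}^{\pm1}$ with $u>\xi_j$, and nothing prevents those discarded monomials from having negative coefficients. The paper succeeds only for fundamental polynomials $F_q(X_{i,p})$, and the argument (proof of Corollary~\ref{cor: pos cor}) uses the precise range description in Theorem~\ref{thm: KR-poly}\eqref{it: range of KR}: for a suitable $\xi$ with $p=\xi_i-\sfh+2$, exactly one monomial, namely $\underline{X_{i,\xi_i+2}^{-1}}$, is discarded by truncation, so positivity of the remainder plus knowledge of that single term gives the result. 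For a general KR-monomial $m^{(i)}[p,s]$ with $s>p$, the monomials of $F_q(m^{(i)}[p,s])$ live in the range $[p,s+\sfh]$, which has width $>\sfh$ and therefore cannot sit inside any $\Gamma_0^Q$; many monomials get truncated and you have no control over them. Your proposed reduction via $\sfT_r$ does not help, since $\sfT_r$ shifts $p$ and $s$ by the same amount and cannot shrink $s-p$.

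The same restriction applies to your argument for~\eqref{it: L F KR}: the determinantal-module identification $F_q(m)\leftrightarrow M(w\varpi_i,\varpi_i)$ via $\Psi_Q$ and $\Upomega$ is only available when $m\in\calM_+^Q$ for some $Q$, which is exactly the hypothesis in the theorem cited from \cite{JLO2}. For KR-monomials outside every heart the conjecture remains open, and your outline does not close that gap.
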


The conjecture in the above is proved by Nakajima in \cite{Nak04} for symmetric cases (see also \cite{FHOO2}). By Theorem~\ref{thm: cluster algorithm}, 
Conjecture~\ref{conj: quatum positive} is closely related to the quantum Laurent positivity conjecture suggested by Berenstein--Fomin--Zelevinsky. 

\subsection{Dynkin quivers and their heart subrings} \label{subsec: Dynkin quiver}
A \emph{Dynkin quiver} $Q$ is a pair $(\Dynkin,\xi)$ consisting of
(i) a Dynkin diagram $\Dynkin$ of $\g$ and (ii) a height function $\xi$ on $\Dynkin$. 

For a Dynkin quiver $Q=(\Dynkin,\xi)$, we call $i \in I$ a \emph{source} (resp. \emph{sink}) of $Q$, if $\xi_i > \xi_j$ (resp. $\xi_i < \xi_j$) for all $j \in I$
with $i \sim j$. For a Dynkin quiver $Q=(\Dynkin,\xi)$ and its source $i$, we denote by $s_iQ$ the Dynkin quiver $(\Dynkin,s_i\xi)$, where $s_i\xi$
is the height function defined by
$$
(s_i\xi)_j \seteq \xi_j-2 \times \delta_{i,j}. 
$$
We call the operation from $Q$ to $s_iQ$ the reflection of $Q$ at the source $i$
of $Q$.
For any Dynkin quivers $Q$ and $Q'$ with the same Dynkin diagram $\Dynkin$,
there exists a sequence of reflections, including $s_i^{-1}$, from $Q$ to $Q'$. 

For a sequence of indices $\ii=(i_1,i_2, \ldots,i_l) \in I^l$ $(l \in \N \sqcup \{ \infty \})$ and a Dynkin quiver $Q=(\Dynkin,\xi)$, we call $\ii$ \emph{$Q$-adapted} if
$$
\text{$i_k$ is a source of $s_{i_{k-1}}s_{i_{k-2}}\cdots s_{i_1}Q$ for all 
$1 \le  k \le l$.}
$$

The followings are known (see \cite{HL15,KKOP2,KO22}):  
\bnA
\item Any $I$-compatible reading of $\lxi\hDynkin_0$ is $Q$-adapted.
\item For each Dynkin quiver $Q$, there exists a unique Coxeter element
$\tau_Q \in \weyl$, all
of whose reduced sequences are adapted to $Q$. Conversely, for any Coxeter element $\tau$, there exists a unique Dynkin quiver $Q$ such that $\tau =\tau_Q$.
\item  For each Dynkin quiver $Q$, there exists a reduced sequence $\ii$ of $w_\circ$ adapted to $Q$ and the set of all reduced sequence of $w_\circ$ adapted to $Q$ forms a single commutation class of $w_\circ$, denoted by $[Q]$.
\item For a reduced sequence $\ii=(i_1,\ldots,i_\ell)$ of $w_\circ$ adapted to $Q$, define 
$\hii =(\im_1,\im_2,\ldots ) \in I^{(\infty)}$ by 
\begin{align} \label{eq: hii}
\im_k =  i_k \quad \text{ for } 1\le k \le \ell \quad \text{ and } 
\quad \im_{k} = \im^*_{k-\ell} \quad 
\text{ for all } k > \ell.
\end{align}
Then $\hii$ is an $I$-compatible reading of $\lxi\hDynkin_0$ and we have a compatible reading $\tii$ of $\lxi\hDynkin_0$ corresponding to $\hii$.
Furthermore, we have $\hii\overset{c}{\sim} \brho$ for any $I$-compatible reading $\brho$ of $\lxi\hDynkin_0$. 
\item  For each Dynkin quiver $Q=(\Dynkin,\xi)$, there exists a bijection $\phi_Q: \hDynkin_0 \to \Phi^+ \times \Z$ defined recursively as follows: Set $\upgamma_i^Q \seteq (1 - \tau_Q)\varpi_i$ and
\bnum
\item $\phi_Q(i,\xi_i)\seteq (\upgamma_i^Q,0)$ for each $i \in I$,
\item if $\phi_i(i,p)=(\al,k)$, we have 
\begin{align} \label{eq: phi_Q}
\phi_Q(i,p\pm 2) \seteq \bc
(\tau_Q^{\mp1}\al,k) \text{ if } \tau_Q^{\mp1} \in \Phi^+, \\
(-\tau_Q^{\mp1}\al,k\pm 1) \text{ if } \tau_Q^{\mp1} \in \Phi^-.
\ec
\end{align}  
\ee
\ee

For a Dynkin quiver $Q=(\Dynkin,\xi)$, we denote by $\frakD^{\pm 1}Q$ the Dynkin quiver $(\Dynkin,\frakD^{\pm 1}\xi)$, where $\frakD^{\pm 1}\xi$ is the height function defined as follows:
$$
(\frakD^{\pm 1}\xi)_j \seteq \xi_{j^*} \pm \sfh \quad \text{ for } j \in I. 
$$

\begin{proposition}[\cite{HL15,FO21,KO22}]
Let $Q$ be a Dynkin quiver and set 
$\Gamma_0^Q \seteq \phi_{Q}^{-1}(\Phi^+ \times \{0\})$. Then $\Gamma_0$
is a subset of $\hDynkin_0$ characterized by 
$$
\Gamma^Q_0 = \{ (i,p) \in \hDynkin_0 \ |  \ \xi_{i^*} -\sfh < p \le \xi_i \}. 
$$
Furthermore, for any $(j,s) \in \hDynkin_0$, there exists unique $k \in \Z$
and $(i,p) \in \Gamma^Q_0$ such that
$$
\frakD^k(X_{i,p}) = X_{j,s}.
$$
\end{proposition}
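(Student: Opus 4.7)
The proposition packages two claims: (1) the explicit description $\Gamma_0^Q = \{(i,p) \in \hDynkin_0 : \xi_{i^*} - \sfh < p \le \xi_i\}$, and (2) the fundamental-domain property that every $(j,s) \in \hDynkin_0$ is uniquely $\frakD^k(i,p)$ for some $(i,p) \in \Gamma_0^Q$ and $k \in \Z$. I would treat them separately.

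For the first claim, my plan is to unfold the recursion (\ref{eq: phi_Q}). Starting from $\phi_Q(i,\xi_i)=(\upgamma_i^Q,0)$ and iterating downward in $p$, one obtains $\phi_Q(i,\xi_i-2m) = (\tau_Q^m \upgamma_i^Q,\,0)$ as long as $\tau_Q^j \upgamma_i^Q \in \Phi^+$ for all $0 \le j \le m$; the second coordinate drops by $1$ exactly when $\tau_Q^{m+1}\upgamma_i^Q$ leaves $\Phi^+$. Hence $(i,p) \in \Gamma_0^Q$ becomes a statement about how long the forward $\tau_Q$-orbit of $\upgamma_i^Q$ stays in $\Phi^+$. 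The key input is the classical Coxeter-element fact that $\tau_Q^m \upgamma_i^Q \in \Phi^+$ precisely for $0 \le m \le (\xi_i - \xi_{i^*} + \sfh)/2 - 1$, which converts to the inequality $p > \xi_{i^*} - \sfh$. This is the root-theoretic avatar of Gabriel's theorem identifying indecomposables of $\rm rep(Q)$ with $\Phi^+$, with $\tau_Q$ playing the role of the AR-translate. As a consistency check, $\sum_{i \in I}(\xi_i - \xi_{i^*} + \sfh)/2 = |I|\sfh/2 = |\Phi^+|$, matching $|\Phi^+ \times \{0\}|$.

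For the second claim, once the explicit form of $\Gamma_0^Q$ is known, the argument is a tiling computation. Since $\frakD:(i,p) \mapsto (i^*, p+\sfh)$, the $\frakD$-orbit of $(j,s)$ consists of $(j, s+2k\sfh)$ and $(j^*, s+(2k+1)\sfh)$ for $k \in \Z$. The $k$'s placing the representative in $\Gamma_0^Q$ form two half-open $k$-intervals of lengths $(\xi_j - \xi_{j^*} + \sfh)/(2\sfh)$ and $(\xi_{j^*} - \xi_j + \sfh)/(2\sfh)$. A direct endpoint check shows these intervals are adjacent and disjoint, with combined length exactly $1$; hence their union contains a unique integer $k$, yielding both existence and uniqueness.

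The principal obstacle is the root-theoretic lemma underlying claim (1): that the initial segments of the $\tau_Q$-orbits of the $\upgamma_i^Q$ that remain positive partition $\Phi^+$, with lengths controlled precisely by $(\xi_i - \xi_{i^*}+\sfh)/2$. Rather than reconstruct this from scratch, I would cite it directly from the AR-theoretic/Coxeter combinatorics developed in \cite{HL15,FO21,KO22} and earlier sources; claim (2) then reduces to the clean interval-length computation sketched above.
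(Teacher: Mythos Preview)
The paper does not prove this proposition; it is quoted as a citation from \cite{HL15,FO21,KO22}, so there is no in-paper argument to compare against. Your sketch is correct and aligned with how those references establish the result. Two small sharpenings are worth making. For claim~(1), your unfolding only treats $p\le\xi_i$; you should also record that the upward step is blocked immediately, since $\tau_Q^{-1}\upgamma_i^Q=-(1-\tau_Q^{-1})\varpi_i$ is the negative of a $\upgamma^{Q^{\mathrm{op}}}$-root and hence lies in $\Phi^-$, so the second coordinate of $\phi_Q$ jumps to $+1$ at $p=\xi_i+2$. For claim~(2), your ``two half-open $k$-intervals'' are each parity-restricted, which makes the count delicate; a cleaner route is to note that $\Gamma_0^Q$ and $\frakD(\Gamma_0^Q)$ are disjoint and, on each fixed $i$-line, their union is $\{p\equiv\epsilon_i:\xi_{i^*}-\sfh<p\le\xi_{i^*}+\sfh\}$, a half-open interval of length exactly $2\sfh$, while $\frakD^2$ shifts $p$ by $2\sfh$ --- so the $\frakD^k$-translates of $\Gamma_0^Q$ tile $\hDynkin_0$.
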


We define $\calX_{q,Q}$
the quantum subtorus of $\calX_{q}$ generated by $\{\tX_{i,p} \ | \  (i,p) \in \Gamma^Q_0\}$, and set
$$\calM^Q \seteq \calM \cap \calX_{q,Q} \quad \text{ and } \quad \calM^Q_+ \seteq \calM_+ \cap \calX_{q,Q}.$$ 
We denote by
$\frakK_{q,Q}$ the subring of $\frakK_{q,\xi}$ which is generated by $\{ F_q(X_{i,p}) \ | \  (i,p) \in \Gamma^Q_0\}$. 
We call it
the \emph{heart} subring associated with the Dynkin quiver $Q=(\Dynkin,\xi)$. 

\begin{theorem}[\cite{HL15,HO19,FHOO,FHOO2,JLO1,JLO2}]
Let $\ii$ be a reduced sequence of $w_\circ$ adapted to $Q=(\Dynkin,\xi)$. 
\bnum
\item We have $\bbA$-algebra isomorphisms $\tka_\ii: \calT(\La^{\ii}) \to \calX_{q,Q}$ and 
$\kappa_\ii: \calA_\ii \to \frakK_{q,Q}$, and an $\bbA$-algebra monomorphism
$(\cdot)_{\le \xi}|_{\frakK_{q,Q}}$ which make the following diagram commutative:
\begin{align*}
\xymatrix@R=3ex@C=5ex{
&\calA_{\ii} \ar[rr]^{\kappa_{\ii}} \ar@{^{(}->}[dl] \ar@{^{(}->}[dd] && \frakK_{q,Q}\ar@{^{(}->}[dd]^{(\cdot)_{\le \xi}|_{\frakK_{q,Q}}}  \ar@{^{(}->}[dl] \\ 
\calA_{\hii} \ar[rr]^{\qquad \kappa_{\hii}} \ar@{^{(}->}[dd] && \frakK_{q,\xi}\ar@{^{(}->}[dd]^{(\cdot)_{\le \xi}}
\\
&\calT(\La^{\ii}) \ar[rr]_{\tka_\ii \qquad }  \ar@{^{(}->}[dl]  &&  \calX_{q,Q}  \ar@{^{(}->}[dl]  \\
\calT(\lhii\La) \ar[rr]_{\tka_\hii} &&  \calX_{q,\le \xi}
}
\end{align*}

\item The set $\sfF_{q,Q} \seteq \sfF_{q} \cap \frakK_{q,Q}$,
$\sfE_{q,Q} \seteq \sfE_{q} \cap \frakK_{q,Q}$ and 
$\sfL_{q,Q} \seteq \sfL_{q} \cap \frakK_{q,Q}$ are bases of $\frakK_{q,Q}$. 
\ee
\end{theorem}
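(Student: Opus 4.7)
The plan is to bootstrap the $I$-compatible reading construction of Theorem~\ref{thm: HLO iso truncation} from the infinite setting to the finite heart subring. Given a reduced sequence $\ii$ adapted to $Q$, I would first extend it to $\hii \in I^{(\infty)}$ via~\eqref{eq: hii}; by fact (D) of Section~\ref{subsec: Dynkin quiver}, $\hii$ is an $I$-compatible reading of $\lxi\hDynkin_0$, so Theorem~\ref{thm: HLO iso truncation} immediately supplies $\tka_\hii$ and $\kappa_\hii$. Since $(\La^\ii,\tB^\ii) = (\La^{\hii,\ell},\tB^{\hii,\ell})$, there are natural inclusions $\calT(\La^\ii) \hookrightarrow \calT(\La^\hii)$ and $\calA_\ii \hookrightarrow \calA_\hii$; I would define $\tka_\ii$ and $\kappa_\ii$ as the corresponding restrictions. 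The outer square in the target diagram is then automatic from Theorem~\ref{thm: HLO iso truncation}, and commutativity of the inner rectangles follows by restriction.

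The key structural check is that these restrictions land in $\calX_{q,Q}$ and $\frakK_{q,Q}$ respectively, and are surjective. Writing $\tbrho(u)=(i_u,p_u)$ for the bijective reading attached to $\hii$, the characterization $\Gamma^Q_0 = \{(i,p) \in \hDynkin_0 : \xi_{i^*}-\sfh<p\le\xi_i\}$ together with the adaptedness of $\ii$ ensures $\{\tbrho(u) : 1\le u\le \ell\} = \Gamma^Q_0$. Each $\tka_\ii(Z_u)=Z^\xi_{i_u,p_u}$ is a product of $\tX_{i_u,p_u+2k}$ with all factors in $\Gamma^Q_0$, so the image of $\tka_\ii$ sits inside $\calX_{q,Q}$; conversely, using the telescoping identity $\tX_{i,p} \cong Z^\xi_{i,p}(Z^\xi_{i,p+2})^{-1}$ (inducting downward from $p=\xi_i$) yields surjectivity, and hence an isomorphism of quantum tori. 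For $\kappa_\ii$, each initial generator $\kappa_\ii(Z_u) = F_q(\underline{m^{(i_u)}[p_u,\xi_{i_u}]})$ is a KR-polynomial in $\frakK_{q,Q}$, and the fundamental polynomials $F_q(X_{i,p})$ for $(i,p) \in \Gamma^Q_0$ are recovered as cluster monomials inside $\calA_\ii$ via the cluster algorithm of Theorem~\ref{thm: cluster algorithm}.

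For part (2), the three basis statements descend from part (1) combined with the uni-triangular relations between $\sfF_q$, $\sfE_q$, and $\sfL_q$. The intersection $\sfE_{q,Q}$ equals $\{E_q(m) : m \in \calM^Q_+\}$ immediately from the multiplicative definition of $E_q$, and spans $\frakK_{q,Q}$ because the latter is generated as an algebra by $\{F_q(X_{i,p}) : (i,p) \in \Gamma^Q_0\}$; linear independence is inherited from $\sfE_q$. Via the cluster isomorphism $\kappa_\ii$ and the identification of cluster monomials with elements of $\tbfB^\up$ (Corollary~\ref{Cor:bftau}), the set $\sfL_{q,Q}=\{L_q(m) : m \in \calM^Q_+\}$ is a basis as well, and the basis property of $\sfF_{q,Q}$ follows from the triangular change-of-basis formulas \eqref{eq: uni E F} and \eqref{eq: L_q}.

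The main obstacle is the cluster-algebraic step at the end of paragraph~2: showing that the mutation sequence producing each $F_q(X_{i,p})$ for $(i,p)\in\Gamma^Q_0$ stays inside the finite sub-cluster algebra $\calA_\ii$ rather than leaking into vertices of $\calA_\hii$ beyond $[1,\ell]$. This requires careful analysis of the cluster quiver $\tGamma^\ii$ at the boundary frozen vertices $\{u : u^+>\ell\}$, together with the combinatorics of the adapted reading $\tbrho$ and the bijection $\phi_Q$, to ensure that the $g$-vectors of the needed cluster variables are supported in $[1,\ell]$. A subsidiary difficulty is justifying that the $\sfL_q$-expansion of $F_q(m)$ for $m\in\calM^Q_+$ involves only $L_q(m')$ with $m'\in\calM^Q_+$, which is precisely what the cluster-theoretic identification of $\sfL_{q,Q}$ provides once the mutation path is contained in $\calA_\ii$.
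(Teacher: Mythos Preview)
The paper does not supply its own proof of this theorem: it is stated with citations to \cite{HL15,HO19,FHOO,FHOO2,JLO1,JLO2} and no argument is given in the body of the paper. There is therefore no in-paper proof to compare against; both this theorem and Theorem~\ref{thm: HLO iso truncation} are imported from the literature.

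Your restriction-from-the-infinite strategy is coherent but runs against the logical order in which these results are actually established in the cited references. In \cite{HL15,FHOO,JLO2} the heart isomorphism $\kappa_\ii:\calA_\ii\isoto\frakK_{q,Q}$ is built \emph{first}, via the identification $\calA_\ii\simeq\calA_\bbA(\n)$ of Theorem~\ref{Thm:CAN} together with the explicit map $\Psi_Q$ (which is constructed directly by matching generators and checking relations, not by restricting $\kappa_\hii$). The infinite version in Theorem~\ref{thm: HLO iso truncation} is then obtained by gluing copies of the heart along the dual functor $\frakD_q$. Attempting to reverse this and deduce the finite case from the infinite one forces you into exactly the boundary problem you flagged: showing that the cluster-algebra structure on $\calA_\hii^\ell$ coincides with the intrinsic one on $\calA_\ii$ (i.e.\ that no mutation in $\calA_\ii$ leaks past index $\ell$) essentially requires already knowing that $\frakK_{q,Q}$ is closed under the relevant mutations, which is part of what you are trying to prove. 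So while your sketch is not wrong, it would be circular without independent input, and the independent input available in the literature is precisely the direct construction of $\kappa_\ii$ that makes your detour unnecessary.
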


The following theorem is proved in \cite[Corollary 5.4]{JLO2} which gives a partial answer for Conjecture~\ref{conj: quatum positive}~\eqref{it: L F KR}.

\begin{theorem}[{\cite[Corollary 5.4]{JLO2}}]
For a KR-monomial $m$ contained in $\calM_+^Q$ for some Dynkin quiver $Q=(\Dynkin,\xi)$, we have
$$
L_q(m) = F_q(m). 
$$
\end{theorem}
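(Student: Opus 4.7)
My plan is to reduce the statement to the fact that KR-polynomials lying in the heart subring are quantum cluster monomials, and then invoke the compatibility between cluster monomials and canonical-type bases. Concretely, fix a reduced sequence $\ii$ of $w_\circ$ adapted to the Dynkin quiver $Q=(\Dynkin,\xi)$. Combining Theorem~\ref{Thm:CAN} with the heart-subring isomorphism, we obtain the composite $\bbA$-algebra isomorphism
\[
\Psi_Q \seteq \kappa_\ii \circ \varphi_\ii^{-1} \colon \calA_\bbA(\n) \isoto \frakK_{q,Q}.
\]
The key external input, established in the references cited under item (a) of the introduction (namely \cite{HL15,FHOO,JLO1}), is that $\Psi_Q$ intertwines the normalized dual-canonical basis $\tbfB^\up$ with the canonical basis $\sfL_{q,Q}$.

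Next, by Theorem~\ref{thm: HLO iso truncation}\eqref{it: image of KR under truncation}, every KR-polynomial $F_q(m)$ with $m \in \calM_+^Q$ arises as a quantum cluster variable in $\frakK_{q,Q}$ under $\kappa_\ii$. Pulling back through $\varphi_\ii$ and applying Theorem~\ref{Thm:CAN}(3), the corresponding element in $\calA_\bbA(\n)$ is a quantum cluster monomial, hence belongs to $\tbfB^\up$. Therefore $F_q(m) \in \sfL_{q,Q}$; in other words, $F_q(m) = L_q(m')$ for some dominant monomial $m' \in \calM_+^Q$.

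Finally, I will identify $m'$ by a leading-term comparison in the Nakajima order $\leN$. By Theorem~\ref{thm: F_q}(a), $\underline{m}$ is the unique dominant monomial occurring in $F_q(m)$ and every other monomial is $\lN \underline{m}$. On the other hand, iterating the unitriangular expansions~\eqref{eq: uni E F} and~\eqref{eq: L_q} gives
\[
L_q(m') \in \underline{m'} + \sum_{m'' \lN m'} \bbA \cdot (\text{monomials } \leN \underline{m''}),
\]
so $\underline{m'}$ is also the unique dominant monomial in $L_q(m')$. The identification $F_q(m)=L_q(m')$ therefore forces $\underline{m}=\underline{m'}$ and hence $m=m'$, completing the proof.

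The main obstacle, I expect, is not the cluster-monomial/leading-term dichotomy above—that is essentially formal once the pieces are in place—but rather the careful verification of the basis-matching assertion $\Psi_Q(\tbfB^\up) = \sfL_{q,Q}$. This uses the nontrivial fact that the twisted bar-involution on $\calA_\bbA(\n)$ transports through $\kappa_\ii\circ\varphi_\ii^{-1}$ to the bar-involution on $\frakK_q$ restricted to $\frakK_{q,Q}$, together with the matching of degree/codegree data on both sides (which is guaranteed by \eqref{eq:degG} and Proposition~\ref{prop: bold b}). Once this normalization issue is settled using the cited references, the rest of the argument proceeds cleanly.
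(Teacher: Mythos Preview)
The paper does not supply its own proof of this theorem; it is quoted verbatim as \cite[Corollary 5.4]{JLO2} with no argument given here. So there is nothing in the present paper to compare your proposal against directly.

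That said, your deduction is correct and is essentially the expected route: KR-polynomials in the heart subring correspond under $\Psi_Q^{-1}$ to unipotent quantum minors (Theorem~\ref{thm: auto}\,(b)), these are quantum cluster variables (Theorem~\ref{Thm:CAN}\,(1),(2)), cluster monomials lie in $\tbfB^\up$ (Theorem~\ref{Thm:CAN}\,(3)), and $\Psi_Q$ carries $\tbfB^\up$ onto $\sfL_{q,Q}$ (Theorem~\ref{thm: auto}\,(a)); the leading-term comparison then pins down $m'=m$. One small wording point: in the last step you should compare \emph{$\leN$-maximal} monomials rather than ``dominant'' monomials---Theorem~\ref{thm: L_q}\,(a) guarantees that $\underline{m'}$ is the unique $\leN$-maximal monomial of $L_q(m')$, and likewise for $F_q(m)$, which is what forces $m=m'$.

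The only caution is dependency tracking: both Theorem~\ref{thm: auto}\,(a) and (b) are themselves attributed (in the non-simply-laced case) to \cite[\S7]{JLO2}, the same source as the statement you are proving. Your argument is logically valid given those inputs, but you should verify that in \cite{JLO2} the basis bijection $\Psi_Q(\tbfB^\up)=\sfL_{q,Q}$ is established prior to and independently of Corollary~5.4, so that your reduction is not circular.
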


Recall that, for a reduced sequence $\ii$ of $w_\circ$ adapted to a Dynkin quiver $Q$, we have an $\bbA$-algebra isomorphism 
$$
\varphi_\ii: \calA_\ii \isoto \calA_\bbA(\n).
$$
Hence we have an $\bbA$-algebra isomorphism 
\begin{align} \label{eq: small triangle 1}
\Psi_{\ii} \seteq \kappa_{\ii} \circ \varphi_\ii^{-1} : \calA_\bbA(\n) \isoto \frakK_{q,Q}.    
\end{align}
Moreover, for another reduced sequence $\ii'$ of $w_\circ$ adapted to $Q$, 
we have $\Psi_{\ii}=\Psi_{\ii'}$ (see \cite{FHOO,JLO2} for more details about the independence of the choice of $\ii \in [Q]$ for the isomorphism $\Psi_{\ii}$).   
Thus the notation $\Psi_{Q}$ make sense.

\begin{theorem} [{\cite{HL15}, \cite[\S 10.3]{FHOO}, \cite[\S 7]{JLO2}}] 
\label{thm: auto}
\hfill 
\bna
\item For each Dynkin quiver $Q$, the isomorphism $\Psi_{Q}$ sends $\tbfB^\up$
to $\sfL_{q,Q}$. 
\item \label{it: i-box} For a reduced sequence $\ii$ of $w_\circ$ adapted to $Q$ and a KR-monomial $m^{(i)}[p,s]$ in $\calM^Q_+$, there exists
a pair of integers $(t,u)$ such that $0 \le t < u \le \ell$, 
$${\rm (i)} \ \ i_t=i_u \ \  \text{ if }\  t>0  \quad \text{ and } \quad {\rm (ii)} \ \ \Psi_{Q}(\tD(w^\ii_u\varpi_{i_u}, w^\ii_t\varpi_{i_u}))=F_q(m^{(i)}[p,s]).$$
Here we understand $w^\ii_0 = {\rm id} \in \weyl$. 
\item
Let $Q=(\Dynkin,\xi)$ and $Q'=(\Dynkin,\xi')$ be Dynkin quivers defined on the same $\Dynkin$. 
Then we have a unique $\bbA$-algebra automorphism
\begin{align} \label{eq: auto}
\Psi_{\frakQ,\frakQ'}:\frakK_q \longrightarrow \frakK_q
\end{align}

satisfying the following properties:
\bnum
\item The automorphisms $\Psi_{\frakQ,\frakQ'}$ restricted to $\frakK_{q,Q'}$ and $\frakK_{q,\xi}$ respectively induce $\bbA$-algebra isomorphisms 
\begin{align}\label{eq: Psi Q,Q'}
\Psi_{Q,Q'}  : \frakK_{q,Q'} \isoto \frakK_{q,Q} \quad \text{ and } \quad \Psi_{\xi,\xi'} : \frakK_{q,\xi'} \isoto  \frakK_{q,\xi}  
\end{align}
such that $\Psi_{Q,Q'} = \Psi_{Q} \circ  \Psi_{Q'}^{-1}$. 
\item We have $\Psi_{\frakQ,\frakQ'} \circ \frakD^{\pm 1}_q = \frakD^{\pm 1}_q \circ \Psi_{\frakQ,\frakQ'}$.
\item $\Psi_{\frakQ,\frakQ'}$ sends $\sfL_{q}$ to  $\sfL_{q}$ bijectively. 
\ee
\ee
\end{theorem}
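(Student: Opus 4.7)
The plan is to establish the three parts in order, leveraging the isomorphisms $\varphi_\ii$, $\kappa_\ii$, $\Psi_\ii$ constructed in Sections~\ref{sec: basis and moves}--\ref{sec: substitution}, together with the braid-move compatibility of Corollary~\ref{Cor:bftau}.

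For part (a), I first verify that $\Psi_Q = \kappa_\ii \circ \varphi_\ii^{-1}$ is independent of the choice of $\ii \in [Q]$: for any two $\ii, \ii' \in [Q]$, they are related by a sequence $\bstau$ of commutation moves, and Corollary~\ref{Cor:bftau} gives $\varphi_{\ii'}^{-1} = \varphi_\ii^{-1} \circ \varphi_\ii \circ \varphi_{\ii'}^{-1} = \varphi_\ii^{-1} \circ \bstau^*$; since each $\gamma_k^*$ is merely an index permutation, its composition with $\kappa_{\ii'}$ yields $\kappa_\ii$. Next, by Theorem~\ref{Thm:CAN}, $\varphi_\ii^{-1}$ maps $\tbfB^\up$ onto the set of quantum cluster monomials of $\calA_\ii$; by Theorem~\ref{thm: HLO iso truncation}(ii) and the monoidal categorification identification of cluster monomials with real simple modules, their images under $\kappa_\ii$ lie in $\sfL_{q,Q}$. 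The injectivity of $\bfdeg$ on $\tbfB^\up$ combined with Theorem~\ref{thm: same degree same element} then forces the bijection onto $\sfL_{q,Q}$.

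For part (b), given $m^{(i)}[p,s] \in \calM^Q_+$, the condition $(i,p),(i,s) \in \Gamma^Q_0$ means (via $\phi_Q$ and the sequence $\hii$) that there exist indices $t < u$ in $\{0,1,\ldots,\ell\}$ with $\tii(u)=(i,s)$, and either $t=0$ or $\tii(t)=(i,s')$ for some $s' < p$ chosen minimally so that $t^+_\hii = (p\text{-index})$; concretely, $t$ and $u$ are the endpoints of the initial interval whose corresponding cluster variable in $\calA_\ii$ is $\tD(w^\ii_u\varpi_{i_u}, w^\ii_t\varpi_{i_u})$. Applying Theorem~\ref{thm: HLO iso truncation}\eqref{it: image of KR under truncation} then gives $\kappa_\ii(Z_u) = F_q(m^{(i)}[p,s])$ (and similarly for compositions of initial cluster variables when $t > 0$), so that $\Psi_Q$ maps the corresponding quantum minor to $F_q(m^{(i)}[p,s])$ as claimed.

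For part (c), the automorphism is assembled in two stages. On the heart subring $\frakK_{q,Q'}$, define $\Psi_{Q,Q'} \seteq \Psi_Q \circ \Psi_{Q'}^{-1}$; this is an $\bbA$-algebra isomorphism onto $\frakK_{q,Q}$ preserving the canonical basis by part (a). To extend to all of $\frakK_q$, I would use the $\frakD_q$-periodicity: every $F_q(m)$ with $m \in \calM_+$ satisfies $\frakD_q^k F_q(m) = F_q(\frakD^k m) \in \frakK_{q,Q'}$ for some $k \in \Z$, and one sets $\Psi_{\frakQ,\frakQ'}(F_q(m)) \seteq \frakD_q^{-k} \Psi_{Q,Q'}(\frakD_q^k F_q(m))$. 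Well-definedness of this extension reduces to the single identity
\begin{equation} \label{eq:comp-plan}
\frakD_q \circ \Psi_{Q,Q'} = \Psi_{\frakD Q,\frakD Q'} \circ \frakD_q \quad \text{on } \frakK_{q,Q'} \cap \frakD_q^{-1}\frakK_{q,\frakD Q'}.
\end{equation}
The uniqueness, the restriction to $\Psi_{\xi,\xi'}$ using the infinite sequences $\hii, \hii'$, and the preservation of $\sfL_q$ then follow since $\frakD_q$ permutes $\sfL_q$ and commutes with the truncation structure. The main obstacle is verifying \eqref{eq:comp-plan}: the intrinsic map $\varphi_\ii$ is visibly $*$-equivariant (as $\ii^*$ is adapted to $\frakD Q$ whenever $\ii$ is adapted to $Q$), but the initial seed identifications $\kappa_\ii$ depend on $\ii$ through the KR-monomials $\underline{m^{(i_k)}[p_k,\xi_{i_k}]}$, so one must check that $\frakD_q \circ \kappa_\ii$ agrees with $\kappa_{\ii^*}$ up to the corresponding torus-level automorphism, which ultimately rests on the identity $\frakD_q(F_q(X_{i,p})) = F_q(X_{i^*,p+\sfh})$ and the combinatorics of $\phi_Q$.
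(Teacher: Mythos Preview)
The paper does not prove this theorem: it is stated with citations to \cite{HL15}, \cite[\S 10.3]{FHOO}, and \cite[\S 7]{JLO2}, and then used as a black box in the proofs of Theorem~\ref{thm: comm square} and Theorem~\ref{thm: substituion}. So there is no ``paper's proof'' to compare against; your proposal is an attempted reconstruction of what those references do.

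That said, your argument for part~(a) has a genuine gap. You write that ``by Theorem~\ref{Thm:CAN}, $\varphi_\ii^{-1}$ maps $\tbfB^\up$ onto the set of quantum cluster monomials of $\calA_\ii$,'' but Theorem~\ref{Thm:CAN}(3) only asserts that every cluster monomial corresponds to \emph{some} element of $\tbfB^\up$, not that every element of $\tbfB^\up$ is a cluster monomial. In general (already in type $A$ for $n$ large) there are elements of $\tbfB^\up$ that are not cluster monomials, so your proposed bijection via cluster monomials cannot close. The proofs in the cited references instead use the intrinsic characterizations of both bases: $\tbfB^\up$ is determined by bar-invariance plus unitriangularity against the PBW basis $\{\tF_\ii(\bsc)\}$, and $\sfL_{q,Q}$ is determined by bar-invariance plus unitriangularity against $\sfE_{q,Q}$ (Theorem~\ref{thm: L_q}). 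One shows that $\Psi_Q$ intertwines the bar-involutions and sends $\tF_\ii(\bsc)$ to $E_q(m)$ up to the appropriate ordering, from which the basis correspondence is forced. Your degree-injectivity remark at the end of (a) does not substitute for this, since it again only constrains cluster monomials.

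For part~(c), your plan is the right one and matches the strategy in \cite{FHOO,JLO2}: define $\Psi_{Q,Q'}$ on the heart, then propagate by $\frakD_q$. But as you yourself flag, everything hinges on the compatibility \eqref{eq:comp-plan}, and you leave it as a sketch. In the references this step is handled either via an explicit presentation of $\frakK_q$ by generators and relations (so that one checks the automorphism on generators), or via the quantum-torus realization and the identity $\frakD_q(L_q(X_{i,p})) = L_q(X_{i^*,p+\sfh})$ combined with part~(a). Without one of those ingredients your extension is not yet well-defined.
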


\begin{theorem}[\cite{KKOP21A,JLO2}]\label{thm: braid}
Let $Q$ be a Dynkin quiver with a source $i \in I$ and write $\TT_i$ for the automorphism $\Psi_{s_i\frakQ,\frakQ}$ of $\frakK_q$. Then
the collection of automorphism $\{ \TT_i \}_{i \in I}$ satisfies the braid relations of $\g$. 
\end{theorem}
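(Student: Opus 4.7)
The theorem reduces to two ingredients: a cocycle identity for the automorphisms $\Psi_{\frakQ,\frakQ'}$, and the compatibility of reflection sequences on Dynkin quivers with the braid relations in $\weyl$.

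First I would establish the cocycle identity
\[
\Psi_{\frakQ_2, \frakQ_0} \;=\; \Psi_{\frakQ_2, \frakQ_1} \circ \Psi_{\frakQ_1, \frakQ_0}
\]
for any three Dynkin quivers on the same underlying diagram. On each relevant heart subring one has
\[
\Psi_{Q_2,Q_1} \circ \Psi_{Q_1,Q_0} \;=\; (\Psi_{Q_2}\Psi_{Q_1}^{-1}) \circ (\Psi_{Q_1}\Psi_{Q_0}^{-1}) \;=\; \Psi_{Q_2}\Psi_{Q_0}^{-1} \;=\; \Psi_{Q_2,Q_0},
\]
so that both sides of the cocycle identity are automorphisms of $\frakK_q$ commuting with $\frakD_q$ and sending $\sfL_q$ to $\sfL_q$; the uniqueness clause in Theorem~\ref{thm: auto} then forces the equality on all of $\frakK_q$. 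The existence of the factorization $\Psi_Q = \kappa_\ii \circ \varphi_\ii^{-1}$ for any reduced sequence $\ii$ adapted to $Q$, together with Corollary~\ref{Cor:bftau} (which ensures that changes of reduced sequence via braid moves are built into $\varphi_\ii$), makes the heart-subring computation completely formal.

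Second, unwinding the definition $\TT_i = \Psi_{s_i\frakQ,\frakQ}$ and iterating the cocycle identity, for any ``admissible'' reflection path
$Q_0 \xrightarrow{s_{k_1}} Q_1 \xrightarrow{s_{k_2}} Q_2 \cdots \xrightarrow{s_{k_m}} Q_m$
(admissible meaning that $k_r$ is a source of $Q_{r-1}$ for each $r$), one obtains
\[
\TT_{k_m} \circ \TT_{k_{m-1}} \circ \cdots \circ \TT_{k_1} \;=\; \Psi_{\frakQ_m, \frakQ_0}.
\]
Hence this composition depends only on the pair $(Q_0, Q_m)$ of endpoints and not on the path joining them.

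Finally, fix $i \neq j$ in $I$ and let $m_{ij}$ be the order of $s_is_j$ in $\weyl$. The case $i \not\sim j$ ($m_{ij}=2$) is immediate because $s_i$ and $s_j$ act on disjoint entries of a height function. When $i \sim j$ (so $m_{ij} \in \{3,4,6\}$), I would verify by a direct height-function computation on the rank-two subdiagram $\{i,j\}$ that, starting from any Dynkin quiver $Q$ and applying the alternating sequence $(i,j,i,\ldots)$ of length $m_{ij}$ (respectively $(j,i,j,\ldots)$) with the first index at each step being a source, both paths remain admissible and terminate at the same Dynkin quiver $\widetilde{Q}$; this is exactly the rank-two shadow of the Weyl braid relation $\underbrace{s_is_js_i\cdots}_{m_{ij}} = \underbrace{s_js_is_j\cdots}_{m_{ij}}$. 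Combined with Step~2, both sides of the braid relation equal $\Psi_{\widetilde{\frakQ}, \frakQ}$, hence are equal as automorphisms of $\frakK_q$.

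\emph{Main obstacle.} The genuinely non-formal step is the admissibility check in the third paragraph, in particular for types $B_2/C_2$ (length $4$) and $G_2$ (length $6$): one must certify that at every intermediate position of each alternating sequence the next index is indeed a source. This is a case-by-case inspection confined to the rank-two subdiagram, and it is where the more intricate combinatorics of $4$-moves and $6$-moves developed in Section~\ref{sec: Braid and degree} enters implicitly through the identification of reflection sequences with braid moves on adapted reduced sequences of $w_\circ$. Once admissibility is secured, the remainder of the proof is entirely formal from the cocycle identity.
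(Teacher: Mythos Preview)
The paper does not supply its own proof of this theorem; it is quoted from \cite{KKOP21A,JLO2} without argument. So there is no paper proof to compare against, and the question is whether your outline stands on its own.

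There is a genuine gap in step~3, and it infects step~2 as well. When $i\sim j$, a height function satisfies $|\xi_i-\xi_j|=1$, so exactly one of $i,j$ is a source of any given Dynkin quiver $Q$. Hence your claim that from a single $Q$ both alternating sequences $(i,j,i,\ldots)$ and $(j,i,j,\ldots)$ are admissible is false: only the one that begins with the actual source of $Q$ can start. The path-comparison you describe therefore never gets off the ground for $m_{ij}\in\{3,4,6\}$, which is precisely the interesting case.

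Underlying this is a second, more basic issue you do not address: the statement presupposes that $\TT_i=\Psi_{s_i\frakQ,\frakQ}$ is independent of the choice of $Q$ having $i$ as a source. Your cocycle identity alone does not give this; it only yields $\Psi_{s_i\frakQ',\frakQ'}=\Psi_{s_i\frakQ',s_i\frakQ}\circ\Psi_{s_i\frakQ,\frakQ}\circ\Psi_{\frakQ,\frakQ'}$, and there is no reason the outer factors should cancel. Without well-definedness, the equality $\TT_{k_m}\cdots\TT_{k_1}=\Psi_{\frakQ_m,\frakQ_0}$ in step~2 is not even meaningful: on the left each $\TT_{k_r}$ is a fixed automorphism, while on the right the cocycle produces $\Psi_{\frakQ_r,\frakQ_{r-1}}$, which is $\TT_{k_r}$ computed at the particular quiver $Q_{r-1}$. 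The proofs in the cited references handle both points by working with an explicit presentation of $\frakK_q$ (the bosonic extension of $\calU_q^-$) and checking the action of $\TT_i$ on generators directly; your purely formal route via endpoints of reflection paths does not suffice.
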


For Dynkin quivers $Q=(\Dynkin,\xi)$ and $Q'=(\Dynkin,\xi')$ on the same $\Dynkin$, let $\ii=(i_1,\ldots,i_\ell)$
and $\ii'=(i'_1,\ldots,i'_\ell)$ be reduced sequences of $w_\circ$ adapted to $Q$ and $Q'$, respectively. Then we have a sequence $$\bstau = (\tau_1, \ldots,\tau_r)
\text{ in } \{ \ga_1, \ldots ,\ga_{\ell-1} \} \sqcup
\{ \beta_1, \ldots ,\beta_{\ell-2} \} \sqcup
\{ \eta_1, \ldots ,\eta_{\ell-3} \} \cup \{ \zeta_1 \}$$ such that
$$
\ii' = \tau_1 \cdots \tau_r \ii.
$$
Recall we have $\hii=(i_1,i_2,\ldots),\hii'=(i'_1,i'_2,\ldots) \in I^{(\infty)}$ corresponding to $\ii$ and $\ii'$ respectively, 
which are constructed in~\eqref{eq: hii}. We extend the isomorphism 
$\bstau^*: \calA_{\ii'} \to \calA_\ii$ to the one 
from  $\calA_{\hii'}$ to $\calA_\hii$ as follows: For each $n \in \N$, consider
the $\bbA$-algebra isomorphism
$$\widehat{\bstau}^{(n)*} 
: = \htau_r^{(n)*} \circ \cdots  \circ  \htau_1^{(n)*}: \calA_{\hii'}^{n\ell} \to\calA_\hii^{n\ell},$$
where $\ell = \ell(w_\circ)$ and 
\begin{align*}
    &\hga^{(n)*}_k \seteq \ga_k^*\ga_{k+\ell}^* \cdots \ga_{k+(n-1)\ell}^* ,
    &&\hbe^{(n)*}_k \seteq \be_k^*\be_{k+\ell}^* \cdots \be_{k+(n-1)\ell}^* ,\allowdisplaybreaks\\
    &\heta^{(n)*}_k \seteq \eta_k^*\eta_{k+\ell}^* \cdots \eta_{k+(n-1)\ell}^*, 
    &&\hzeta^{(n)*}_1 \seteq \zeta_1^*\zeta_{1+\ell}^* \cdots \zeta_{1+(n-1)\ell}^*.
\end{align*}
Recalling $\tau_k\circ \tau_l  = \tau_l\circ \tau_k$ when $|k-l| \ge \ell$, we define 
$$
\widehat{\bstau}^{*} \seteq \lim_{n \to \infty} 
\widehat{\bstau}^{(n)*} : \calA_{\hii'} \to  \calA_{\hii}.
$$

The following theorem tells us that the isomorphism $\widehat{\bstau}^{*}$ is compatible with the isomorphisms $\Psi_{\xi,\xi'}$, $\kappa_{\hii'}$ and 
$\kappa_{\hii}$. The argument of a proof can be almost the same as the one in \cite[Theorem 6.3]{FHOO2},
which investigates simply-laced types only. 
For the reader's convenience, we give a proof of the theorem.

\begin{theorem} \label{thm: comm square}
We have the following commutative diagram:
\begin{align*}
\xymatrix@R=6ex@C=10ex{
\calA_{\hii'} \ar[r]^{\kappa_{\hii'}} \ar[d]^{\widehat{\bstau}^*} & \frakK_{q,\xi'} \ar[d]^{\Psi_{\xi,\xi'}} \\
\calA_{\hii} \ar[r]^{\kappa_{\hii}} & \frakK_{q,\xi} \\
}
\end{align*}
\end{theorem}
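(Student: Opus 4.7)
The plan is to establish commutativity by evaluating both sides on the generating set $\{Z_u\}_{u \in \N}$ of $\calA_{\hii'}$. Since $\Psi_{\xi,\xi'} \circ \kappa_{\hii'}$ and $\kappa_\hii \circ \widehat{\bstau}^*$ are both $\bbA$-algebra isomorphisms $\calA_{\hii'} \to \frakK_{q,\xi}$, it suffices to prove $\Psi_{\xi,\xi'}(\kappa_{\hii'}(Z_u)) = \kappa_\hii(\widehat{\bstau}^*(Z_u))$ for each $u \in \N$. Both sides, viewed inside $\calA_\hii$ via $\kappa_\hii$, are quantum cluster monomials: the right side by construction, and the left side because $\Psi_{Q,Q'} = \Psi_Q \circ \Psi_{Q'}^{-1}$ (Theorem~\ref{thm: auto}) is a composition of cluster algebra isomorphisms obtained via~\eqref{eq: small triangle 1} and Theorem~\ref{Thm:CAN}. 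By Theorem~\ref{thm: same degree same element}(b), the verification reduces to matching $\bfdeg$-vectors in $\Z^{\oplus \N}$ relative to $\tB^\hii$.

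For $u \in [1,\ell]$, I would use the finite-level version. Corollary~\ref{Cor:bftau} provides $\varphi_\ii \circ \bstau^* = \varphi_{\ii'}$, which via $\Psi_Q = \kappa_\ii \circ \varphi_\ii^{-1}$ yields $\Psi_{Q,Q'} \circ \kappa_{\ii'} = \kappa_\ii \circ \bstau^*$. The compatible inclusions $\calA_\ii \hookrightarrow \calA_\hii$ and $\frakK_{q,Q} \hookrightarrow \frakK_{q,\xi}$ supplied by Theorem~\ref{thm: HLO iso truncation}, together with the restriction $\Psi_{\xi,\xi'}|_{\frakK_{q,Q'}} = \Psi_{Q,Q'}$ from Theorem~\ref{thm: auto}(c)(i), transform this into the first-block identity $\Psi_{\xi,\xi'} \circ \kappa_{\hii'} = \kappa_\hii \circ \bstau^*$ on the first $\ell$ initial cluster variables. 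That $\widehat{\bstau}^*(Z_u)$ and $\bstau^*(Z_u)$ have the same $g$-vector (relative to $\tB^\hii$) for $u \le \ell$ follows from the block-parallel structure of $\widehat{\bstau}^*$: the extra factors $\tau^*_{k + n\ell}$ with $n \ge 1$ commute with the first-block mutations and contribute no corrections to the first-block portion of the $g$-vector, as can be checked from the explicit $g$-vector formulas in Propositions~\ref{prop: c-move}, \ref{prop: 3 move} and Lemmas~\ref{Lem:4g}, \ref{Lem:g-vector 6move}.

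For $u > \ell$ I would proceed by induction on the block index $n$ defined by $u = n\ell + r$, $1 \le r \le \ell$. Because $\Psi_{\frakQ,\frakQ'}$ commutes with $\frakD_q^{\pm 1}$ (Theorem~\ref{thm: auto}(c)(ii)) and the sequence $\hii$ is $\frakD^{-1}$-periodic up to the Dynkin involution $*$, the $n$-th block of the diagram is structurally a $\frakD_q^{-n}$-conjugate of the first block. The $n$-th copy of $\bstau^*$ inside $\widehat{\bstau}^*$ commutes with the copies at other indices (since $\tau_{k + m\ell}$ and $\tau_{k + m'\ell}$ commute whenever $m \ne m'$), so the block-by-block $g$-vector calculation---using Propositions~\ref{prop: c-move}--\ref{prop: 3 move} and Lemmas~\ref{Lem:4g}--\ref{Lem:g-vector 6move}---yields the required degree identity for each $u$. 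Combined with the $I$-degree preservation at each step (Proposition~\ref{prop: c-move}(v), Proposition~\ref{prop: 3 move}(v), Corollary~\ref{Cor: i-degree}, Lemma~\ref{lem: 4g Pgi}, Lemma~\ref{lem: i-degree 6g}, Lemma~\ref{lem: 6g Pgi}), which guarantees that KR-monomials are sent to KR-monomials, this matches the action of $\Psi_{\xi,\xi'}$ on the corresponding KR-polynomials.

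The principal obstacle will be the degree bookkeeping for the $4$-move and $6$-move contributions to $\widehat{\bstau}^*$ in the higher blocks. Unlike the simply-laced argument of \cite[Theorem 6.3]{FHOO2} which only involves $2$- and $3$-moves, the piecewise-linear $g$-vector formulas of Lemmas~\ref{Lem:4g} and~\ref{Lem:g-vector 6move} are substantially more intricate, and their compatible iteration across blocks---supported by the $I$-degree invariance recorded in Corollary~\ref{Cor: i-degree}, Lemma~\ref{lem: 4g Pgi}, Lemma~\ref{lem: i-degree 6g} and Lemma~\ref{lem: 6g Pgi}---is what makes the proof carry through for types $B_n$, $C_n$, $F_4$ and $G_2$.
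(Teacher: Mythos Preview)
Your first two paragraphs handle $u\le\ell$ essentially as the paper does: both use Corollary~\ref{Cor:bftau} together with $\Psi_{\xi,\xi'}|_{\frakK_{q,Q'}}=\Psi_{Q,Q'}$ to settle the first block.

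The gap is in your treatment of $u>\ell$. First, a circularity: in paragraph~1 you justify that $\kappa_\hii^{-1}\bigl(\Psi_{\xi,\xi'}(\kappa_{\hii'}(Z_u))\bigr)$ is a cluster monomial by appealing to $\Psi_{Q,Q'}$ being a cluster isomorphism, but for $u>\ell$ the element $\kappa_{\hii'}(Z_u)$ lies in $\frakK_{q,\xi'}\setminus\frakK_{q,Q'}$, so $\Psi_{Q,Q'}$ does not apply and Theorem~\ref{thm: same degree same element}(b) is unavailable. Second, the inductive step is not an argument: ``the $n$-th block is structurally a $\frakD_q^{-n}$-conjugate of the first'' amounts to the assertion that $\Uppsi:=\kappa_\hii\circ\widehat{\bstau}^*\circ\kappa_{\hii'}^{-1}$ commutes with $\frakD_q^{-1}$, which is precisely the nontrivial content to be proved. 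Block-commutativity of the $\tau_{k+m\ell}^*$ is not enough, because $\kappa_{\hii'}(Z_{u+\ell})\neq\frakD_q^{-1}\bigl(\kappa_{\hii'}(Z_u)\bigr)$; they differ by a ratio of KR-polynomials supported in the first block, so the base case does not transport upward by mere periodicity.

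The paper takes a different and cleaner route. Instead of checking each $Z_u$, it shows that $\Uppsi$ satisfies the two properties that, via Theorem~\ref{thm: auto}, uniquely pin down $\Psi_{\xi,\xi'}$: restriction to $\Psi_{Q,Q'}$ on $\frakK_{q,Q'}$, and commutation with $\frakD_q^{-1}$. The second property is established by realizing $\frakD_q^{-1}$ cluster-theoretically through the \emph{forward shift} $(\partial_+^*)^\ell$ of Proposition~\ref{prop: fshift}, namely $\nu\circ(\partial_+^*)^\ell\circ\kappa_\hii^{-1}=\kappa_\hii^{-1}\circ\frakD_q^{-1}$, and then comparing degrees of $\widehat{\bstau}^*\,\nu(\partial_+^*)^\ell x$ and $\nu(\partial_+^*)^\ell\,\widehat{\bstau}^* x$. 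That degree comparison is where Section~\ref{sec: Braid and degree} enters, but through the \emph{cone-preservation} results Lemma~\ref{lem: cone to cone 4g} and Lemma~\ref{lem: cone to cone 6g} (ensuring the degrees stay in $C_{\hii}$ so Proposition~\ref{prop: fshift}(d) applies), not through the $I$-degree lemmas you cite; those are used later, in the substitution-formula section. The forward shift is the missing ingredient in your sketch.
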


\begin{proof}
Setting $\Uppsi \seteq \kappa_{\hii} \circ \widehat{\bstau}^* \circ \kappa_{\hii}^{-1}$, it is enough to prove that
$$
\Uppsi = \Psi_{\xi,\xi'}.
$$
By Theorem~\ref{thm: auto}, we have to show the following:
\ben
\item \label{it: Q-commutativity} $\Uppsi|_{\frakK_{q,Q}}  = \Psi_{Q,Q'}$,
\item \label{it: dual-commutativity} $\Uppsi \circ \frakD_q^{-1} = \frakD_q^{-1} \circ \Uppsi$. 
\ee
Noting that $\calA_{\hii'}^\ell \simeq \calA_{\ii'}$, $\calA_{\hii}^\ell \simeq \calA_{\ii}$,
$\widehat{\bstau}^*|_{\calA_{\ii'}} = \bstau^*$, $\kappa_{\hii'}|_{\calA_{\ii'}}=\kappa_{\ii'}$,
$\kappa_{\hii}|_{\calA_{\ii}}=\kappa_{\ii}$
and $\Psi_{\xi,\xi'}|_{\frakK_{q,Q'}}=\Psi_{Q,Q'}$, 
we have
\begin{align}
\raisebox{3em}{\xymatrix@R=0.4ex@C=1ex{
\calA_{\ii'} \ar[rrrr]^{\kappa_{\ii'}} \ar[dddd]_{\bstau^*} \ar[ddrr]^{\varphi_{\ii'}}&&&&  \frakK_{q,Q'}  \ar[dddd]^{\Psi_{Q,Q'}}\\
&&  \circled{A}   \\
& \circled{B} \quad & \calA_{\bbA}(\n) \ar[uurr]^{\Psi_{Q'}} \ar[ddrr]_{\Psi_{Q}} & \quad \circled{C}  \\
&&  \circled{A} \\
\calA_{\ii} \ar[rrrr]_{\kappa_{\ii}} \ar[uurr]_{\varphi_{\ii}} &&&&  \frakK_{q,Q} 
}}
\end{align}
Here the commutativity for $\circled{A}$-triangles follows from~\eqref{eq: small triangle 1},
 the commutativity for $\circled{B}$-triangle from Lemma~\ref{Cor:bftau} and 
the commutativity for $\circled{C}$-triangle from the definition in~\eqref{eq: Psi Q,Q'}.
Thus we have shown~\eqref{it: Q-commutativity}.

Now let us consider~\eqref{it: dual-commutativity}. Setting $\hii^*\seteq (i^*_k)_{k \in \N}$, we have $\hii = \partial_+^\ell \hii^*$,
$\tB^{\ii}= \tB^{\ii^*}$ and $\La^{\ii}=\La^{\ii^*}$. Thus we have an isomorphism $\nu: \calA_{\hii^*} \simeq \calA_{\hii}$ sending $Z_k$ to itself.  
By Proposition~\ref{prop: fshift} and Theorem~\ref{thm: HLO iso truncation}, the cluster variables 
$$
\kappa_{\hii}^{-1} \frakD_q^{-1}F_q(X_{i,p}) \quad \text{ and } \quad \nu (\partial_+^*)^\ell \kappa_{\hii}^{-1}  F_q(X_{i,p}) 
$$
have the same degrees for all $(i,p) \in \lxi\hDynkin_0$, and hence they are the same by Theorem~\ref{thm: same degree same element}.  Since the set 
$\{ F_q(X_{i,p}) \}_{(i,p) \in \lxi\hDynkin_0}$
generates $\frakK_{q,\xi}$, we conclude that
\begin{align} \label{eq: step1}
 \nu \circ (\partial_+^*)^\ell \circ \kappa_{\hii}^{-1} = \kappa_{\hii}^{-1} \circ \frakD_q^{-1}.
\end{align}

Note that the same assertion for $\hii'$ holds by the same argument. Furthermore, for a cluster monomial $x \in \calA_{\hii'}$
whose degree belongs to $C_{\hii'}$, one can see that the degrees of the cluster variables
\begin{align} \label{eq: step2}
\widehat{\bstau}^* \nu (\partial_+^*)^\ell x\quad  \text{ and } \quad   \nu (\partial_+^*)^\ell \widehat{\bstau}^* x \quad
\text{ are the same}
\end{align}
by Proposition~\ref{prop: fshift} and by the result in Section~\ref{sec: Braid and degree} that $\widehat{\bstau}^*$ sends $C_{\hii'}$
to $C_{\hii}$. 
Now we have
\begin{align*}
\Uppsi \frakD_q^{-1} F_{q}(X_{i,p}) &= \kappa_{\hii} \widehat{\bstau}^*  \kappa_{\hii'}^{-1}\frakD_q^{-1} F_{q}(X_{i,p}) \allowdisplaybreaks\\
& = \kappa_{\hii} \widehat{\bstau}^* \nu (\partial_+^*)^\ell  \kappa_{\hii'}^{-1}  F_{q}(X_{i,p}) && \text{ by~\eqref{eq: step1} for $\hii'$ } \allowdisplaybreaks\\
& = \kappa_{\hii} \nu (\partial_+^*)^\ell  \widehat{\bstau}^*  \kappa_{\hii'}^{-1}  F_{q}(X_{i,p}) && \text{ by~\eqref{eq: step2} } \allowdisplaybreaks\\
& = \frakD_q^{-1} \kappa_{\hii} \widehat{\bstau}^*  \kappa_{\hii'}^{-1} F_{q}(X_{i,p}) && \text{ by~\eqref{eq: step1} for $\hii$ } \allowdisplaybreaks\\
& = \frakD_q^{-1} \Uppsi F_q(X_{i,p})
\end{align*}
for any $(i,p) \in \lxip\hDynkin_0$. Since the set 
$\{ F_q(X_{i,p}) \}_{(i,p) \in \lxip\hDynkin_0}$ generates $\frakK_{q,\xi'}$, the assertion~\eqref{it: dual-commutativity}  follows.
\end{proof}

\begin{remark} \label{rmk: our purpose}
In the above theorem, we have applied a special sequence $(i_1,i_2,\ldots) \in I^{(\infty)}$ such that $(i_k,\ldots, i_{k+\ell-1})$ is
a reduced sequence of $w_\circ$ for all $k \in \N$.
\end{remark}

\begin{corollary}
Let $Q$ and $Q'$ be Dynkin quivers on the same Dynkin diagram $\Dynkin$.
For reduced sequences $\ii$ and $\ii'$ adapted to $Q$ and $Q'$ respectively, the $\bbA$-algebra isomorphism $\kappa_{\hii}^{-1} \circ \Psi_{\frakQ,\frakQ'}
\circ \kappa_{\hii'}$ induces a bijection between the set of quantum cluster monomials of $\calA_{\hii'}$ and that of   $\calA_{\hii}$.
\end{corollary}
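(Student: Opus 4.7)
The plan is to reduce the statement directly to Theorem~\ref{thm: comm square} and to the fact that each elementary move on indices corresponds to a (finite or limit) sequence of mutations and index permutations between the associated quantum seeds.

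First, I would choose a sequence $\bstau=(\tau_1,\ldots,\tau_r)$ of elementary braid moves in
$\{\ga_1,\ldots,\ga_{\ell-1}\}\sqcup\{\beta_1,\ldots,\beta_{\ell-2}\}\sqcup\{\eta_1,\ldots,\eta_{\ell-3}\}\cup\{\zeta_1\}$
realizing $\ii'=\tau_1\cdots\tau_r\ii$, and form the associated $\widehat{\bstau}^{\ast}\colon \calA_{\hii'}\isoto \calA_{\hii}$ as in the paragraphs preceding Theorem~\ref{thm: comm square}. By construction, each factor $\htau_s^{(n)\ast}$ in $\widehat{\bstau}^{(n)\ast}=\htau_r^{(n)\ast}\circ\cdots\circ\htau_1^{(n)\ast}$ is a finite composition of mutations $\mu_k^\ast$ and index permutations $\sigma_k^\ast$ (see Propositions~\ref{prop: c-move} and~\ref{prop: 3 move}, the $\eta$-move proposition, and~\eqref{eq: the G2 mutation}). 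Hence every $\widehat{\bstau}^{(n)\ast}$ induces a bijection between the sets of quantum cluster monomials of $\calA_{\hii'}^{n\ell}$ and $\calA_{\hii}^{n\ell}$, and passing to the limit $\widehat{\bstau}^{\ast}=\lim_{n\to\infty}\widehat{\bstau}^{(n)\ast}$ yields a bijection between the sets of quantum cluster monomials of $\calA_{\hii'}$ and $\calA_{\hii}$.

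Next, I would invoke Theorem~\ref{thm: comm square}, which gives the commuting square
\[
\Psi_{\xi,\xi'}\circ \kappa_{\hii'}=\kappa_{\hii}\circ \widehat{\bstau}^{\ast}.
\]
Since $\Psi_{\frakQ,\frakQ'}$ restricts to $\Psi_{\xi,\xi'}\colon\frakK_{q,\xi'}\isoto\frakK_{q,\xi}$ by Theorem~\ref{thm: auto}, and since $\kappa_{\hii}$ (resp.\ $\kappa_{\hii'}$) is an isomorphism onto $\frakK_{q,\xi}$ (resp.\ $\frakK_{q,\xi'}$), the composition $\kappa_{\hii}^{-1}\circ\Psi_{\frakQ,\frakQ'}\circ\kappa_{\hii'}$ is well defined and coincides with $\widehat{\bstau}^{\ast}$. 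Combining this identification with the preceding step, the map $\kappa_{\hii}^{-1}\circ\Psi_{\frakQ,\frakQ'}\circ\kappa_{\hii'}$ carries quantum cluster monomials of $\calA_{\hii'}$ bijectively to those of $\calA_{\hii}$, which is the claim.

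I do not foresee a genuine obstacle here: all the nontrivial work is absorbed into Theorem~\ref{thm: comm square} together with the compatibility of each basic move $\ga_k,\beta_k,\eta_k,\zeta_1$ with mutations-permutations established in Section~\ref{sec: Braid and degree}. The only point to keep in mind is the well-definedness of the limit $\widehat{\bstau}^{\ast}$: since the relations $\tau_k\circ\tau_l=\tau_l\circ\tau_k$ for $|k-l|\ge\ell$ (noted in the paragraph preceding Theorem~\ref{thm: comm square}) ensure that on each truncation $\calA_{\hii'}^{n\ell}$ only finitely many mutations act nontrivially, the limit is realized locally by a finite sequence of mutations and permutations, so the bijection property on cluster monomials passes cleanly to the limit.
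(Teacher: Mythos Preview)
Your argument is correct and matches the paper's intended reasoning: the paper states this corollary without proof, as it is an immediate consequence of Theorem~\ref{thm: comm square} (which identifies $\kappa_{\hii}^{-1}\circ\Psi_{\xi,\xi'}\circ\kappa_{\hii'}$ with $\widehat{\bstau}^{\ast}$) together with the fact, established in Section~\ref{sec: Braid and degree}, that each elementary move $\ga_k,\beta_k,\eta_k,\zeta_1$ is realized by a finite composition of mutations and index permutations and hence preserves quantum cluster monomials. Your handling of the restriction $\Psi_{\frakQ,\frakQ'}|_{\frakK_{q,\xi'}}=\Psi_{\xi,\xi'}$ and of the passage to the limit $\widehat{\bstau}^{\ast}$ is exactly what is needed.
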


\subsection{Substitution formulas}
One of the celebrated results of Berenstein--Fomin--Zelevinsky is to show that quantum cluster algebra 
is still contained in $\Z[q^{\pm 1/2}][Z^{\pm 1}_{k}]_{k \in \sfK}$ even though mutations involves non-trivial fractions. 
In this subsection, we shall show that the automorphism $\Psi_{\frakQ,\frakQ'}$ can be interpreted as an restriction of birational 
auto-transformation of $\bbF(\calX_q)$, which is referred to as \emph{substitution formula} in~\cite[Section 7]{FHOO2}. Since $\Psi_{\frakQ,\frakQ'}$
can be understood as a composition of $\{\TT^{\pm 1}_i \}_{i \in I}$, it gives
an interesting relation among elements in $\sfL_q$. The result in this section for simply-laced type is given in~\cite[Section 7]{FHOO2} and
we apply the same framework and arguments of proofs in this paper for non simply-laced type, based on the results that have been obtained for $\frakK_q$, 
$4$-moves and $6$-moves. 
Thus we just give statements, several notations and maps, so that readers may recover the substitution formula on $\bbF(\calX_q)$ and $\frakK_q$
by themselves. 
\smallskip

Throughout this subsection, we (i) fix Dynkin quivers $Q=(\Dynkin,\xi)$ and $Q'=(\Dynkin,\xi')$ defined on the same Dynkin diagram $\Dynkin$
of non simply-laced type, (ii) denote by
$\ii=(i_1,\ldots,i_\ell)$ and $\jj=(j_1,\ldots,j_\ell)$ reduced sequences of $w_\circ$ adapted to $Q$ and $Q'$ respectively. To distinguish generators of quantum tori $\calT(\La^\hii)$ and $\calT(\La^\hjj)$, we use $Z_k$ for initial cluster variables $\calT(\La^\hii)$ while $Z'_k$ for ones of $\calT(\La^\hjj)$. 

\smallskip
For $k \in \Z$,
we set $\xi[k] \seteq \frakD^k\xi$ and $Q[k] \seteq \frakD^kQ$ for notational simplicity. We recall the following:
\bna
\item We have injective homomorphisms 
$$
\frakK_{q,\xi[-1]} \hookrightarrow (\frakK_{q,\xi})_{\le \xi} \hookrightarrow \calX_{q,\le \xi}
$$
induced from $(\cdot)_{\le \xi}$, whose composition is an identity indeed, by Theorem~\ref{thm: KR-poly} and the fact that $\frakK_{q,\xi}$ is generated by $\{ F_q(X_{i,p}) \ | \ (i,p) \in \lxi\hDynkin_0 \}$. 
\item We have a commutative diagram
\begin{align*}
\xymatrix@R=4ex@C=12ex{
\frakK_{q,\xi} \ar[dr]^{\simeq} \ar[r]^{(\cdot)_{\le \xi}} & \calX_{q,\le \xi} \\
\frakK_{q,\xi[-1]} \ar@{^{(}->}[u] \ar@{^{(}->}[r]   &  (\frakK_{q,\xi})_{\le \xi} \ar@{^{(}->}[u]  
}
\end{align*}
\ee
Then, for $I$-compatible readings  $\hii$ (resp. $\hii'$) of $\lxi\hDynkin_0$ (resp. $\lxip\hDynkin_0$), 
we have unique morphisms $\tka_{\hii,\bbF}$, $\tka_{\hii',\bbF}$, $\widehat{\bstau}^*_\bbF$ and $\tPsi_{\xi,\xi}$
which make the diagram below commutative:
\begin{align}
\raisebox{3.5em}{\xymatrix@R=4ex@C=7ex{
&\bbF(\calX_{q,\xi'}) \ar@/^2.5pc/[rrr]^{\tPsi_{\xi,\xi'}} & \bbF(\calT(\La^{\hjj}))\ar[r]_{\sim}^{\widehat{\bstau}^*_\bbF} \ar[l]_{\tka_{\hjj,\bbF}}^\sim  
&  \bbF(\calT(\La^{\hii})) \ar[r]^{\tka_{\hii,\bbF}}_\sim & \bbF(\calX_{q,\xi})  \\
&\calX_{q,\le \xi'} \ar@{^{(}->}[u] &  \calT(\La^{\hjj}) \ar[l]^{\sim}_{\tka_{\hjj}} \ar@{^{(}->}[u] & 
\calT(\La^{\hii})\ar@{^{(}->}[u] \ar[r]_{\sim}^{\tka_{\hii}} & \calX_{q,\xi} \ar@{^{(}->}[u]\\
(\frakK_{q,\xi'})_{\le \xi'}  \ar@{^{(}->}[ur] & \frakK_{q,\xi'} \ar@/_1.8pc/[rrr]^{\Psi_{\xi,\xi'}}_\sim  \ar[l]_{\sim} \ar[u]_{(\cdot)_{\le \xi'}}& \calA_{\hjj}\ar@{^{(}->}[u] \ar[l]_{\sim}^{\kappa_{\hjj}} \ar[r]_{\sim}^{\widehat{\bstau}^*}
& \calA_{\hii} \ar@{^{(}->}[u] \ar[r]^{\sim}_{\kappa_{\hii}} & \frakK_{q,\xi} 
\ar[r]^{\sim} \ar[u]^{(\cdot)_{\le\xi}} & (\frakK_{q,\xi})_{\le \xi} \ar@{^{(}->}[ul] \\
& \frakK_{q,\xi'[-1]} \ar@{^{(}->}[u] \ar@{^{(}->}[ul] \ar@/_1pc/[rrr]_{\Psi_{\xi'[-1],\xi[-1] }}^\sim &&&  \frakK_{q,\xi[-1]} \ar@{^{(}->}[u] \ar@{^{(}->}[ur] 
}}
\end{align}

Let us set 
\begin{align*}
&\tSigma \seteq \tka_{\hii,\bbF}^{-1} \circ \frakD^{-1}_q \circ  \tka_{\hii,\bbF} : \bbF(\calT(\La^{\hii})) \to \bbF(\calT(\La^{\hii})), \\
&\tSigma'  \seteq \tka_{\hjj,\bbF}^{-1} \circ \frakD^{-1}_q  \circ  \tka_{\hjj,\bbF} : \bbF(\calT(\La^{\hjj})) \to \bbF(\calT(\La^{\hjj})),
\end{align*}
which are automorphisms of skew field of fractions. We also define $\Z$-module homomorphisms $\Upsigma,\tUpsigma: \Z^{\oplus \N} \to \Z^{\oplus \N}$
given by
$$
\Upsigma(\bse_u)=\bse_{u+\ell} \qquad \tUpsigma(\bse_u) = \bse_{u+\ell}- \bse_{(\ell+1)_{\hii}^-(i^*_u)} \quad \text{ for all } u \in \N.
$$ 
Note that
\begin{align} \label{eq: tupsigma upsigma}
\tUpsigma(\bsa) = \Upsigma(\bsa) - \sum_{i \in I} \sfp_\hii(\bsa;i)\bse_{(\ell+1)_{\hii}^-(i^*)} \quad \text{ for $\bsa \in \Z^\N$}. 
\end{align}

The proof of the following lemma utilizes Proposition~\ref{prop: bold b}. 

\begin{lemma} \label{lem: tSigma}
For $\bsa \in \Z^{\oplus \N}$, we have 
$$
\tSigma(Z^{\bsa}) = Z^{\tUpsigma(\bsa)} \quad \text{ and } \quad \tSigma'(Z^{\prime\bsa}) = Z^{\prime\tUpsigma(\bsa)}. 
$$
In particular, 
\begin{align} \label{eq: b ell}
\tUpsigma(\bsb^{\hii}_u) = \bsb^{\hii}_{ u+\ell } \quad \text{ and } \quad \tUpsigma(\bsb^{\hjj}_u) = \bsb^{\hjj}_{ u+\ell }.
\end{align}
\end{lemma}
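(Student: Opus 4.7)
The plan is to reduce both identities to the single claim $\tSigma(Z_u) = Z^{\tUpsigma(\bse_u)}$ on each initial cluster variable, and then to promote this to arbitrary $\bsa$ by bar-invariance together with the quasi-commutation relations. Since the first assertion is symmetric in $\hii$ and $\hjj$, I will only describe the argument for $\tSigma$; the case of $\tSigma'$ is identical after swapping notation.

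Writing $\tii(u)=(i_u,p_u)$, the identity~\eqref{eq: teta iso quantum torus} gives $\tka_\hii(Z_u)=\underline{m^{(i_u)}[p_u,\xi_{i_u}]}$. Applying $\frakD_q^{-1}$, which sends $\tX_{j,s}\mapsto \tX_{j^*,s-\sfh}$ and respects the (pairwise commuting) factors at a fixed index, produces $\underline{m^{(i_u^*)}[p_u-\sfh,\xi_{i_u}-\sfh]}$. The key combinatorial step is to identify this with $\tka_\hii(Z_{u+\ell})\,\tka_\hii(Z_{v^*})^{-1}$ up to a $q^{1/2}$-power, where $v^*\seteq (\ell+1)^-_\hii(i_u^*)$. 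For this I would use that $\hii_{k+\ell}=\hii_k^*$ and that the block $(\hii_{\ell+1},\ldots,\hii_{2\ell})$ is $\frakD^{-1}Q$-adapted, combined with a straightforward count of the elements of $\Gamma^Q_0$ at each index, to obtain
\[
\tii(u+\ell)=(i_u^*,\,p_u-\sfh) \qquad\text{and}\qquad \tii(v^*)=(i_u^*,\,\xi_{i_u}-\sfh+2);
\]
reading off the corresponding KR monomials and noting that all factors involved commute delivers the claim up to a $q^{1/2}$-power.

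To pin down the $q$-power, I would invoke bar-invariance: $\tka_\hii$ intertwines the bar-involutions by the definition of $\underline{\cdot}$, and $\frakD_q^{-1}$ preserves bar-invariance, so $\tSigma(Z_u)$ is bar-invariant, and so is $Z^{\tUpsigma(\bse_u)}=Z^{\bse_{u+\ell}-\bse_{v^*}}$ by construction. Two bar-invariant units of $\calT(\La^\hii)$ differing by a $q^{1/2}$-power must be equal, so $\tSigma(Z_u)=Z^{\tUpsigma(\bse_u)}$ on the nose. To extend this to arbitrary $\bsa\in\Z^{\oplus\N}$, the relation $\tSigma(Z_uZ_{u'})=q^{\La^\hii_{u,u'}}\tSigma(Z_{u'}Z_u)$ forces $\La^\hii(\tUpsigma(\bse_u),\tUpsigma(\bse_{u'}))=\La^\hii_{u,u'}$, which makes $\bsa\mapsto Z^{\tUpsigma(\bsa)}$ a well-defined algebra endomorphism of $\bbF(\calT(\La^\hii))$ agreeing with $\tSigma$ on generators, hence everywhere.

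Finally,~\eqref{eq: b ell} follows by applying Proposition~\ref{prop: bold b} twice. It gives $\tka_\hii(Z^{\bsb^\hii_u})=\tscrB_{i,p-1}^{-1}$ when $\tii(u)=(i,p)$; applying $\frakD_q^{-1}$ turns this into $\tscrB_{i^*,p-\sfh-1}^{-1}$, which by the proposition applied to $u+\ell$ (using $\tii(u+\ell)=(i^*,p-\sfh)$ established above) equals $\tka_\hii(Z^{\bsb^\hii_{u+\ell}})$. Hence $\tSigma(Z^{\bsb^\hii_u})=Z^{\bsb^\hii_{u+\ell}}$, and comparing with $\tSigma(Z^{\bsb^\hii_u})=Z^{\tUpsigma(\bsb^\hii_u)}$ from the first part together with the injectivity of $\bsa\mapsto Z^\bsa$ yields $\tUpsigma(\bsb^\hii_u)=\bsb^\hii_{u+\ell}$; the case of $\bsb^\hjj_u$ is identical. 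I expect the only real obstacle is the combinatorial bookkeeping required to identify $\tii(u+\ell)$ and $\tii(v^*)$, but once the $\frakD^{-1}Q$-adapted structure of the second block is unpacked, this reduces to a routine count.
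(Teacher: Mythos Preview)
Your proposal is correct and follows essentially the same route as the paper's (suppressed) proof: compute $\tSigma(Z_u)$ by pushing $\frakD_q^{-1}$ through $\tka_\hii$ to obtain $\underline{m^{(i^*)}[p-\sfh,\xi_i-\sfh]}$, factor this as the ratio of two KR monomials in $\calX_{q,\le\xi}$ to identify it with $Z^{\tUpsigma(\bse_u)}$, and then use Proposition~\ref{prop: bold b} twice for~\eqref{eq: b ell}. Your extra discussion of bar-invariance to pin down the $q$-power and of how the generator computation extends to arbitrary $\bsa$ is sound and more explicit than the paper's treatment, but not a genuinely different argument.
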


For $u \in \N$ and an initial cluster variable $Z'_u$ of $\calA_{\hjj}$, we set $\calZ_u \seteq \widehat{\bstau}^*(Z'_u)$, which is a cluster variable of
$\calA_{\hii}$. Then there exists $\bsg_u \in \Z^{\oplus \N}$ and $c_{u,\bsn}$ satisfying
$$
\calZ_u = Z^{\bsg_u}\left(
1+ \sum_{\bfn \in \N_0^{\oplus \N} \setminus \{0\}} c_{u,\bfn} Z^{\sum_{k \in \N} n_k \bsb^\hii_k}
\right) 
$$
by Theorem~\ref{thm: same degree same element}.  

For $k \in \N_0$, we set
$$
\Gamma^{Q[-k]}_{0,\fr} \seteq \{ (i, \xi[-k-1]_i+2) \ | \ i\in I \}, \quad \Gamma^{Q[-k]}_{0,\ex} = \Gamma^{Q[-k]}_0 \setminus \Gamma^{Q[-k]}_{0,\fr}. 
$$
Note that, for each $u \in \N$, there exists a unique $k \in \N_0$ such that $\calZ_u$ can be obtained from the initial cluster $\{Z_u \ | \ u \in \N \}$ by applying the mutations 
only at the vertices labeled by $\Gamma^{Q[-k]}_{0,\ex}$. By $\ell$-periodicity of $\hii$, $\hjj$ and $\widehat{\bstau}^*$, for $u \in \N$, we have
\begin{align} \label{eq: g ell}
\bsg_{u+\ell}  &\bc
= \Upsigma(\bsg_u) & \text{ if } u > \ell \text{ or } u = (\ell+1)_{\hjj}^-(i) \text{ for } i \in I, \\
\in \Upsigma(\bsg_u) + \sum_{i\in I} \Z \bse_{ (\ell+1)^-_{\hii}(i) } & \text{ otherwise},
\ec \end{align}
and   
\begin{align} \label{eq: c ell}  
c_{u+\ell,\bsn} = &\bc
c_{u,\Upsigma^{-1}(\bsn)}  & \text{ if } \bsn \in \Upsigma(\N_0^{\oplus \N}), \\
0 & \text{ otherwise}.
\ec 
\end{align}

\begin{remark} \label{rmk: center Z}
Any braid move between $\hii$ and $\hjj$ does not occur at $(\ell+1)_{\hii}^-(i)+k\ell$ for any $k \in \N_0$ and $i \in I$.
Hence $\calZ_{(\ell+1)_{\hii}^-(i)+k\ell} = Z_{(\ell+1)_{\hii}^-(i)+k\ell}= Z'_{(\ell+1)_{\hii}^-(i)+k\ell}$ in $\calA_{\hii}$ and they
$q$-commute with all $Z_a$ $(a \in \N)$. 
\end{remark}

When $\g$ is of non-simply-laced type, 
we need to employ Lemma~\ref{lem: 4g Pgi}, Lemma~\ref{lem: i-degree 6g}
and Lemma~\ref{lem: 6g Pgi} to prove the following lemma.    

\begin{lemma}
For $u \in \N$, we have
\begin{align} \label{eq: tSigma Zu}  
\tSigma(\calZ_u) & =  Z^{\tUpsigma(\bsg_u)}\left(
1+ \sum_{\bfn \in \N_0^{\oplus \N} \setminus \{0\}} c_{u+\ell,\bfn} Z^{\sum_{k \in \N} n_k \bsb^{\hii}_{k}} \right) \simeq Z^{\tUpsigma(\bsg_u)-\bsg_{u+\ell}} \calZ_{u+\ell}.   
\end{align} 
\end{lemma}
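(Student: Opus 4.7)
\medskip

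\noindent\textbf{Proof proposal.} The plan is to apply $\tSigma$ to the pointed expansion of $\calZ_u$ and then re-index the tail using the $\ell$-periodicity recorded in \eqref{eq: g ell} and \eqref{eq: c ell}. Starting from
$\calZ_u = Z^{\bsg_u}\bigl(1+\sum_{\bsn\ne 0} c_{u,\bsn}\,Z^{\sum_k n_k\bsb^\hii_k}\bigr)$
and invoking Lemma~\ref{lem: tSigma} together with the identity $\tUpsigma(\bsb^\hii_k)=\bsb^\hii_{k+\ell}$ from \eqref{eq: b ell}, I would obtain
$$\tSigma(\calZ_u)=Z^{\tUpsigma(\bsg_u)}\Bigl(1+\sum_{\bsn\ne 0} c_{u,\bsn}\,Z^{\sum_k n_k\bsb^\hii_{k+\ell}}\Bigr).$$
Changing the summation variable via $\bsn'=\Upsigma(\bsn)$ (so $n'_{k+\ell}=n_k$ and $n'_k=0$ for $k\le\ell$) rewrites this as a sum over $\bsn'\in\Upsigma(\N_0^{\oplus\N})$. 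The equality $c_{u,\bsn}=c_{u+\ell,\Upsigma(\bsn)}$ from \eqref{eq: c ell}, together with the vanishing of $c_{u+\ell,\bsn'}$ outside $\Upsigma(\N_0^{\oplus\N})$, then allows extending the sum to all $\bsn'\in\N_0^{\oplus\N}\setminus\{0\}$, yielding the first displayed equality.

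For the $\cong$ part, set $\mu\seteq \tUpsigma(\bsg_u)-\bsg_{u+\ell}$. By \eqref{eq: g ell} and \eqref{eq: tupsigma upsigma}, the vector $\mu$ is supported on the set $\{(\ell+1)^-_\hii(i)\mid i\in I\}$, and every such index is $\le\ell$. On the other hand, the surviving $\bsn'$ in the expansion of $\calZ_{u+\ell}$ satisfy $n'_k=0$ for $k\le\ell$, so the supports of $\mu$ and of $\sum n'_k\bsb^\hii_k$ are disjoint in a way compatible with the pairing: using $\La^\hii(\bse_v,\bsb^\hii_k)=-2\sfd_{i_k}\delta_{v,k}$ from the compatibility of $(\La^\hii,\tB^\hii)$, one gets $\La^\hii(\mu,\sum_k n'_k\bsb^\hii_k)=-\sum_k 2\sfd_{i_k}\mu_k n'_k=0$ termwise. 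Hence $Z^\mu$ $q$-commutes trivially with every monomial $Z^{\sum n'_k\bsb^\hii_k}$ appearing in $\calZ_{u+\ell}$.

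This disjointness means that the single $q$-factor produced by the product $Z^\mu Z^{\bsg_{u+\ell}}=q^{c_0}Z^{\tUpsigma(\bsg_u)}$ propagates uniformly through the entire tail. Thus
$$Z^\mu\calZ_{u+\ell}=q^{c_0}Z^{\tUpsigma(\bsg_u)}\Bigl(1+\sum_{\bsn'\ne 0} c_{u+\ell,\bsn'}\,Z^{\sum_k n'_k\bsb^\hii_k}\Bigr)=q^{c_0}\tSigma(\calZ_u),$$
which is exactly the $\cong$ relation.

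The main obstacle I foresee is the careful bookkeeping of the $q$-factors arising from reordering products $Z^{\alpha}Z^{\beta}$ in the quantum torus, especially verifying that the $q$-factor from multiplying by $Z^\mu$ is indeed the same for the leading term and every tail monomial. This is where the compatibility identity $\La^\hii(\mu,\bsb^\hii_k)=0$ (for all $k$ in the support of the tail) is essential: it ensures that pulling $Z^\mu$ across $Z^{\sum n'_k\bsb^\hii_k}$ produces no extra $q$-factor, so a single global $q^{c_0}$ suffices. Once this is rigorously checked, the rest of the argument is bookkeeping through \eqref{eq: b ell}, \eqref{eq: g ell}, \eqref{eq: c ell}, and Lemma~\ref{lem: tSigma}.
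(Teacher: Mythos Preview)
Your argument is correct and follows the route the paper (only briefly) indicates: apply Lemma~\ref{lem: tSigma} together with \eqref{eq: b ell} to the pointed expansion of $\calZ_u$, then re-index using \eqref{eq: c ell} to obtain the displayed equality.

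For the $\cong$ you are working harder than necessary. There is no need to commute $Z^\mu$ past any tail monomial: $Z^\mu$ sits on the \emph{left}, so by plain associativity
\[
Z^\mu\,\calZ_{u+\ell}
=\bigl(Z^\mu Z^{\bsg_{u+\ell}}\bigr)\Bigl(1+\textstyle\sum_{\bsn'\ne0}c_{u+\ell,\bsn'}Z^{\sum_k n'_k\bsb^{\hii}_k}\Bigr)
=q^{c_0}\,Z^{\tUpsigma(\bsg_u)}\Bigl(1+\cdots\Bigr)
=q^{c_0}\,\tSigma(\calZ_u),
\]
and $\cong$ is immediate. Your compatibility computation $\La^{\hii}(\mu,\bsb^{\hii}_k)=0$ for $k>\ell$ is valid but plays no role here, nor does the description of the support of $\mu$.

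As for the paper's remark that Lemmas~\ref{lem: 4g Pgi}, \ref{lem: i-degree 6g}, \ref{lem: 6g Pgi} are ``needed'': those $\sfp$-lemmas control $\sfp_{\hii}(\bsg_u;i)$ under $4$- and $6$-moves and are what one uses to identify $\tUpsigma(\bsg_u)-\bsg_{u+\ell}$ explicitly in the subsequent Proposition~\ref{prop: compatible with dual}. Your proof of the present lemma legitimately avoids them by quoting \eqref{eq: g ell} and \eqref{eq: c ell} directly.
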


\begin{remark}
The following proposition is a non-symmetric analogue of \cite[Lemma 7.4 (1)]{FHOO2}, and it is \emph{not at all} a consequence of  Theorem~\ref{thm: comm square} as emphasized in the introduction of \cite[Section 7]{FHOO2}.
\end{remark}

\begin{proposition} \label{prop: compatible with dual}
We have 
$$\frakD_{q}^{-1}|_{\bbF(\calX_{q,\le\xi})} \circ \tPsi_{\xi,\xi'}= \tPsi_{\xi,\xi'} \circ \frakD_{q}^{-1}|_{\bbF(\calX_{q,\le\xi'})}.$$ 
\end{proposition}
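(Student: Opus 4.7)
The plan is to follow the same overall strategy as the proof of Theorem~\ref{thm: comm square}\,(ii), but promoted to the level of skew fields of fractions. Conjugating through the isomorphisms $\tka_{\hii,\bbF}$ and $\tka_{\hjj,\bbF}$ and recalling that $\tSigma=\tka_{\hii,\bbF}^{-1}\circ\frakD_q^{-1}\circ\tka_{\hii,\bbF}$ and $\tSigma'=\tka_{\hjj,\bbF}^{-1}\circ\frakD_q^{-1}\circ\tka_{\hjj,\bbF}$, the claimed identity is equivalent to
\[
\tSigma \circ \widehat{\bstau}^*_\bbF \;=\; \widehat{\bstau}^*_\bbF \circ \tSigma' \qquad \text{as maps } \bbF(\calT(\La^{\hjj}))\longrightarrow \bbF(\calT(\La^{\hii})).
\]
Since both sides are $\bbA$-algebra homomorphisms between skew fields, it suffices to verify the equality on the generating set $\{Z'_u\}_{u\in\N}$ of $\calT(\La^{\hjj})$.

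First I would compute both sides on $Z'_u$ directly. By Lemma~\ref{lem: tSigma}, the right-hand side equals
\[
\widehat{\bstau}^*_\bbF\bigl(Z'^{\tUpsigma_{\hjj}(\bse_u)}\bigr) \;\simeq\; \calZ_{u+\ell}\;\calZ_{(\ell+1)^-_{\hjj}(j_u^*)}^{-1},
\]
while by formula~\eqref{eq: tSigma Zu} the left-hand side equals
\[
\tSigma(\calZ_u) \;\simeq\; Z^{\tUpsigma_{\hii}(\bsg_u)-\bsg_{u+\ell}}\;\calZ_{u+\ell}.
\]
Thus the assertion reduces to showing the identity (up to a power of $q^{1/2}$)
\[
Z^{\tUpsigma_{\hii}(\bsg_u)-\bsg_{u+\ell}} \;\simeq\; \calZ_{(\ell+1)^-_{\hjj}(j_u^*)}^{-1},
\]
and then fixing the $q^{1/2}$-discrepancy via the bar involution, which both $\frakD_q^{\pm 1}$ and $\tPsi_{\xi,\xi'}$ respect.

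The key step is to unpack the correction $\tUpsigma_{\hii}(\bsg_u)-\bsg_{u+\ell}$. Combining \eqref{eq: tupsigma upsigma} with \eqref{eq: g ell}, this difference lies in $\sum_{i\in I}\Z\,\bse_{(\ell+1)^-_{\hii}(i)}$, and its coefficients are controlled by the integers $\sfp_{\hii}(\bsg_u;i)$ and the extra contributions from the mutation-permutation formula \eqref{eq: g ell}. Applying Proposition~\ref{prop: c-move}\eqref{it: c pgi}, Proposition~\ref{prop: 3 move}\eqref{it: 3 pgi}, Lemma~\ref{lem: 4g Pgi}, Lemma~\ref{lem: i-degree 6g} and Lemma~\ref{lem: 6g Pgi} inductively along the factorization $\bstau=\tau_1\cdots\tau_r$, one can express $\sfp_{\hii}(\bsg_u;i)$ as $\sfp_{\hjj}(\bse_u;i)=\delta_{i,j_u}$ plus correction terms supported at initial segments; the special indices $(\ell+1)^-_{\hii}(i)$, $(\ell+1)^-_{\hjj}(i)$ play the role of ``frozen centers'' by Remark~\ref{rmk: center Z}, and $\widehat{\bstau}^*$ identifies the corresponding cluster variables $Z_{(\ell+1)^-_{\hii}(i)}$ and $\calZ_{(\ell+1)^-_{\hjj}(j_u^*)}$ in the target.

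The main obstacle will be this bookkeeping: the contributions from $4$-moves and $6$-moves in Lemmas~\ref{lem: 4g Pgi} and \ref{lem: 6g Pgi} involve many case distinctions, so the non-trivial task is to check that along every braid move in $\bstau$ the correction produced at the current step cancels against the shift of the ``last occurrence'' position $(\ell+1)^-_{\hii}$, so that the total contribution telescopes to $-\bse_{(\ell+1)^-_{\hjj}(j_u^*)}$ (after identification via $\widehat{\bstau}^*$). Once this is established for the initial cluster variables $Z'_u$, the general equality on $\bbF(\calX_{q,\le\xi'})$ follows by multiplicativity, completing the proof.
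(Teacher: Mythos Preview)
Your reduction to $\tSigma\circ\widehat{\bstau}^*_\bbF=\widehat{\bstau}^*_\bbF\circ\tSigma'$ on the generators $Z'_u$ is the right starting point, and for $u>\ell$ your computation essentially coincides with what the paper (following \cite[Lemma~7.3]{FHOO2}) does: in that range the $\ell$-periodicity \eqref{eq: g ell} gives $\bsg_{u+\ell}=\Upsigma(\bsg_u)$, and the relevant components of $\bsg_u$ lie outside $\Gamma^Q_{0,\ex}$, so Lemmas~\ref{lem: 4g Pgi} and \ref{lem: 6g Pgi} reduce to their trivial cases and the $\sfp$-sums match, yielding exactly the identity you want.

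The gap is at $1\le u\le\ell$. Here the second line of \eqref{eq: g ell} only pins $\bsg_{u+\ell}$ down modulo $\sum_{i\in I}\Z\,\bse_{(\ell+1)^-_{\hii}(i)}$, and the non-trivial clauses of Lemma~\ref{lem: 4g Pgi}~\eqref{it: 4move 1 non-trivial}--\eqref{it: 4move 2 non-trivial} (and their $6$-move analogues) genuinely apply, because the braid moves in $\bstau$ do occur at positions $k$ with $k^-_{\ii}=0$. Your claim that the resulting corrections ``telescope to $-\bse_{(\ell+1)^-_{\hjj}(j_u^*)}$'' is exactly the missing content: those corrections depend case-by-case on the signs of $g'_k,g'_{k+1},A,B$, and there is no a priori reason they collapse as stated. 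This is why the Remark immediately preceding the proposition stresses that the result is \emph{not at all} a consequence of Theorem~\ref{thm: comm square}; your opening sentence announces precisely the strategy the paper warns against.

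The paper's route (as in \cite[Lemma~7.4]{FHOO2}) bypasses this computation for $1\le u\le\ell$ by an indirect argument: one first proves commutation with $\frakD_q^{-1}$ on $\underline{X_{i,p}}$ for $(i,p)\in{}^{\xi'[-1]}\hDynkin_0$ (your $u>\ell$ step), and then for the remaining range one works with $\frakD_q^{-2}$ instead. Using the auxiliary fact that $\tPsi_{\xi,\xi'}(\underline{X_{i,p}})\in\bbF(\calX_{q,\le\xi[-2]})$ for $(i,p)\in{}^{\xi'[-2]}\hDynkin_0$, one sees that the discrepancy $\tUpsigma^2(\bsg_u)-\bsg_{u+2\ell}+\bse_f$ is a monomial supported in $\Gamma^Q_0\sqcup\Gamma^{Q[-1]}_0$ yet lies in $\bbF(\calX_{q,\le\xi[-2]})$, which forces it to vanish. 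From $\frakD_q^{-2}$-commutativity and the already-established $\frakD_q^{-1}$-commutativity on ${}^{\xi'[-1]}\hDynkin_0$ one then deduces $\frakD_q^{-1}$-commutativity everywhere. You should replace the unsubstantiated ``telescoping'' step with this range-constraint argument.
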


\begin{corollary}
For $k \in \N_0$ and $(i,p) \in \Gamma^{ Q'[-k]}_0$, we have 
$$
\tPsi_{\xi,\xi'}(\tX_{i,p}) \in \bbF(\calX_{q,Q[-k]}). 
$$
\end{corollary}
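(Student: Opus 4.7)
The plan is to proceed by induction on $k \in \N_0$, reducing the general case to the base case $k=0$ via iterated application of Proposition~\ref{prop: compatible with dual}. The key geometric observation is that the automorphism $\frakD^k$ of $\hDynkin_0$ sends $\Gamma^{Q'[-k]}_0$ bijectively onto $\Gamma^{Q'}_0$ (checking the defining inequality $\xi'[-k-1]_i < p \le \xi'[-k]_i$ shifts to $\xi'_{(i^{(*k)})^*} - \sfh < p + k\sfh \le \xi'_{i^{(*k)}}$ under $(i,p) \mapsto (i^{(*k)}, p+k\sfh)$), and similarly $\frakD^{-k}_q$ sends $\calX_{q,Q}$ to $\calX_{q,Q[-k]}$.

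For the base case $k=0$, I will express $\tX_{i,p}$ with $(i,p) \in \Gamma^{Q'}_0$ as a quotient of KR-monomials inside $\calX_{q,\le\xi'}$. Since $(i,p) \in \Gamma^{Q'}_0$ forces $(i,u) \in \Gamma^{Q'}_0$ for all $u \in [p, \xi'_i]$ with $u \equiv_2 p$, the identity
\begin{equation*}
\tX_{i,p} \cong \underline{m^{(i)}[p,\xi'_i]} \cdot \bigl(\underline{m^{(i)}[p+2,\xi'_i]}\bigr)^{-1}
\end{equation*}
(with the second factor understood to be $1$ when $p=\xi'_i$) expresses $\tX_{i,p}$ through elements of $\calX_{q,Q'}$. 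By~\eqref{eq: KR-imgae under truncation}, each KR-monomial $\underline{m^{(i)}[r,\xi'_i]}$ equals $(F_q(m^{(i)}[r,\xi'_i]))_{\le\xi'}$ for a KR-polynomial lying in $\frakK_{q,Q'}$. Theorem~\ref{thm: comm square}, combined with the compatibility between $\Psi_{\xi,\xi'}$ and its lift $\tPsi_{\xi,\xi'}$ built into the big diagram, guarantees that $\tPsi_{\xi,\xi'}$ restricted to $(\frakK_{q,Q'})_{\le\xi'}$ matches $\Psi_{Q,Q'}: \frakK_{q,Q'} \isoto \frakK_{q,Q}$ followed by $(\cdot)_{\le\xi}$; thus the image of each KR-monomial lands inside $\calX_{q,Q}$, and consequently $\tPsi_{\xi,\xi'}(\tX_{i,p}) \in \bbF(\calX_{q,Q})$.

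For the inductive step, given $(i,p) \in \Gamma^{Q'[-k]}_0$ with $k \ge 1$, set $(j,s) \seteq \frakD^k(i,p) \in \Gamma^{Q'}_0$, so that $\tX_{i,p} = \frakD_q^{-k}(\tX_{j,s})$ with $\tX_{j,s} \in \calX_{q,\le\xi'}$ (since $\frakD_q^{-1}$ preserves $\bbF(\calX_{q,\le\xi'})$, given $\xi'_i - \sfh \le \xi'_{i^*}$). Iterating Proposition~\ref{prop: compatible with dual} $k$ times yields
\begin{equation*}
\tPsi_{\xi,\xi'}(\tX_{i,p}) \;=\; \tPsi_{\xi,\xi'}\bigl(\frakD_q^{-k}(\tX_{j,s})\bigr) \;=\; \frakD_q^{-k}\bigl(\tPsi_{\xi,\xi'}(\tX_{j,s})\bigr),
\end{equation*}
and the base case places $\tPsi_{\xi,\xi'}(\tX_{j,s})$ in $\bbF(\calX_{q,Q})$. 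Applying $\frakD_q^{-k}$ transports this into $\bbF(\calX_{q,\frakD^{-k}Q}) = \bbF(\calX_{q,Q[-k]})$, finishing the induction.

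The substantive work all sits in the base case and in Proposition~\ref{prop: compatible with dual}; the former relies on the nontrivial fact that $\Psi_{Q,Q'}$ intertwines the heart subrings, which is Theorem~\ref{thm: comm square} applied to cluster variables labeled by $1 \le u \le \ell$. The only place I foresee needing care is ensuring that the decomposition of $\tX_{i,p}$ as a quotient of KR-monomials actually takes place within the truncated torus on the nose (not merely modulo fractions in extraneous variables), but this is immediate from the convexity of $\Gamma^{Q'}_0$ along each horizontal ray $(i, p+2\Z)$, so no genuine obstacle arises.
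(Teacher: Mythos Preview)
Your proof is correct and follows essentially the same approach as the paper: the paper's proof is the one-liner ``Since $\tPsi_{\xi,\xi'}(\bbF(\calX_{q,Q'})) \subset \calX_{q,Q}$, it follows from Proposition~\ref{prop: compatible with dual},'' and you have simply unpacked the base case (expressing $\tX_{i,p}$ as a quotient of initial KR-monomials whose images under $\tPsi_{\xi,\xi'}$ land in $\calX_{q,Q}$ via the big commutative diagram) and spelled out the induction via $\frakD_q^{-k}$.
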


\begin{proof}
Since  $\tPsi_{\xi,\xi'}(\bbF(\calX_{q,Q'})) \subset \calX_{q,Q}$, it follows from Proposition~\ref{prop: compatible with dual}.   
\end{proof}

\begin{theorem} \label{thm: substituion}
There exists an automorphism of the skew field $\bbF(\calX_q)$ 
$$
\tPsi_{\frakQ,\frakQ'}: \bbF(\calX_q) \isoto \bbF(\calX_q) 
$$
satisfying the following properties:
\bna
\item $\frakD_q \circ \tPsi_{\frakQ,\frakQ'} = \tPsi_{\frakQ,\frakQ'} \circ  \frakD_q$.
\item $\tPsi_{\xi,\xi'}(\tX_{i,p}) \in \bbF(\calX_{q,Q[k]})$ for any $k \in \Z$ and $(i,p) \in \Gamma^{ Q'[k]}_0$.
\item We have the following commutative diagram:
$$
\xymatrix@R=4ex@C=12ex{
\bbF(\calX_{q}) \ar[r]^{\tPsi_{\frakQ,\frakQ'}}_\sim  &  \bbF(\calX_{q}) \\
\frakK_q  \ar[r]^{\Psi_{\frakQ,\frakQ'}}_\sim \ar@{^{(}->}[u] &  \frakK_q \ar@{^{(}->}[u]
}
$$
\ee
\end{theorem}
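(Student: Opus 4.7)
The plan is to build $\tPsi_{\frakQ,\frakQ'}$ by patching together $\frakD_q$-shifted copies of the already-constructed isomorphism $\tPsi_{\xi,\xi'}\colon \bbF(\calX_{q,\le\xi'})\isoto\bbF(\calX_{q,\le\xi})$ so that every generator $\tX_{i,p}$ of $\calX_q$ is covered. The ingredient that makes the patching consistent is the intertwining relation $\frakD_q^{-1}\circ\tPsi_{\xi,\xi'}=\tPsi_{\xi,\xi'}\circ\frakD_q^{-1}$ from Proposition~\ref{prop: compatible with dual}, valid wherever both sides are defined.

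Concretely, for each $(i,p)\in\hDynkin_0$ I would first choose an integer $k=k(i,p)\ge 0$ large enough that $\frakD_q^{-k}(\tX_{i,p})=\tX_{j,s}$ satisfies $(j,s)\in\lxip\hDynkin_0$ (such $k$ exists since $\frakD_q^{-1}$ shifts the spectral parameter by $-\sfh$), and set
\[ \tPsi_{\frakQ,\frakQ'}(\tX_{i,p}):=\frakD_q^{k}\bigl(\tPsi_{\xi,\xi'}(\frakD_q^{-k}(\tX_{i,p}))\bigr)\in\bbF(\calX_q). \]
Well-definedness, i.e.\ independence of the choice of $k$, is immediate from Proposition~\ref{prop: compatible with dual}: if $k<k'$ both work, applying the proposition $(k'-k)$ times to commute $\frakD_q^{k-k'}$ past $\tPsi_{\xi,\xi'}$ produces the same element. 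The resulting assignment on generators respects the $q$-commutation relations in $\calX_q$ because $\frakD_q$ and $\tPsi_{\xi,\xi'}$ each do, and so extends uniquely to an $\bbA$-algebra homomorphism of $\bbF(\calX_q)$. Bijectivity follows from the symmetric construction starting from $\tPsi_{\xi',\xi}$, which yields a two-sided inverse by the same well-definedness argument.

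Property (a) is built into the definition, since $\frakD_q\circ\tPsi_{\frakQ,\frakQ'}(\tX_{i,p})$ and $\tPsi_{\frakQ,\frakQ'}\circ\frakD_q(\tX_{i,p})$ are both computed with $k+1$ in place of $k$. Property (b) follows by unwinding the definition: for $(i,p)\in\Gamma^{Q'[k]}_0$ one has $\frakD_q^{-k}(\tX_{i,p})\in\calX_{q,Q'}$, the composition sends $\bbF(\calX_{q,Q'})$ into $\bbF(\calX_{q,Q})$ by the large commutative diagram preceding Lemma~\ref{lem: tSigma}, and then applying $\frakD_q^k$ lands in $\bbF(\calX_{q,Q[k]})$; the case $k<0$ is symmetric via the inverse. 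For property (c), on each $\frakD_q^{k}$-shifted heart subring $\frakD_q^{k}(\frakK_{q,Q[-k]})\subset\frakK_q$ the restriction of $\tPsi_{\frakQ,\frakQ'}$ equals $\frakD_q^{k}\circ\Psi_{\xi,\xi'}\circ\frakD_q^{-k}$, which coincides with $\Psi_{\frakQ,\frakQ'}$ by the identity $\Psi_{\frakQ,\frakQ'}\circ\frakD_q^{\pm 1}=\frakD_q^{\pm 1}\circ\Psi_{\frakQ,\frakQ'}$ from Theorem~\ref{thm: auto}; since $\frakK_q$ is exhausted by these shifted hearts, the triangle commutes globally.

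The main obstacle is the well-definedness step, which rests entirely on Proposition~\ref{prop: compatible with dual}. That intertwining is itself the technical heart of the preceding subsection, combining the $\ell$-periodicity statements \eqref{eq: g ell} and \eqref{eq: c ell} with the ``tilde'' shift $\tUpsigma$ of degrees from Lemma~\ref{lem: tSigma} and the non-simply-laced inputs of Lemmas~\ref{lem: 4g Pgi}, \ref{lem: i-degree 6g} and \ref{lem: 6g Pgi}. Once that intertwining is in hand, everything else in the theorem is essentially bookkeeping about how the shifts $\frakD_q^k$ tile $\hDynkin_0$ by the subsets $\Gamma^{Q[k]}_0$.
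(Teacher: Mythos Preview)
Your proposal is correct and follows essentially the same approach as the paper: define $\tPsi_{\frakQ,\frakQ'}$ by conjugating $\tPsi_{\xi,\xi'}$ with a sufficiently large power of $\frakD_q$, and use Proposition~\ref{prop: compatible with dual} to guarantee well-definedness. The paper states this slightly more tersely (defining the map on an arbitrary element $y\in\bbF(\calX_q)$ rather than on generators, and deferring the verification of properties to the analogous argument in \cite[Theorem~7.1]{FHOO2}), but your expanded account of (a)--(c) is accurate and matches what that reference does.
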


\begin{proof}
We first define $\tPsi_{\frakQ,\frakQ'}$ from $\tPsi_{\xi,\xi'}$ by using $\frakD_q$ as follows: For each $y \in \calF(\calX_q)$, there exists
$k \in \N_0$ such that $\frakD_q^{-k}(y) \in \bbF(\calX_{q,\le\xi})$. Then we define
$$\tPsi_{\frakQ,\frakQ'}(y) = \frakD_q^{k} (\tPsi_{\xi,\xi'}(\frakD_q^{-k}(y))).$$
Now, as in~\cite[Theorem 7.1]{FHOO2}, one can prove that it is a well-defined automorphism of $\bbF(\calX_q)$ satisfying the properties in the statement. 
\end{proof}

In Appendix~\ref{sec: substitution formula}, we provide examples of substitution formulas.

\section{Categorification via quiver Hecke algebras and Laurent family} \label{sec: quiver Hecke and Laurent}

In this section, we first review the results on the categorification theory via the quiver Hecke algebra $R$ related to our purpose. Then we recall the definitions and the results about the Laurent family of simple $R$-modules,
which were introduced and investigated in~\cite{KKOP23} very recently. 

\subsection{Quiver Hecke algebra}
Let $\bfk$
be a base field. For $i,j \in I$, we choose polynomials $\calQ_{i,j}(u,v) \in \bfk[u,v]$ such that 
\bna
\item $\calQ_{i,j}(u,v)=\calQ_{j,i}(v,u)$,
\item $\calQ_{i,j}(u,v) =\displaystyle\delta(i \ne j) \hspace{-1ex}\sum_{2\sfd_i p+2\sfd_j q=-2(\al_i,\al_j)} \hspace{-1ex}
t_{i,j;p,q}u^pv^q$ \quad where $t_{i,j;-a_{i,j},0} \in \bfk^\times$. 
\ee
For $\be \in \rl^+$ with $|\be|=n$, we set 
$$
I^\be \seteq \{ \nu= (\nu_1,\ldots,\nu_n) \in I^n \ | \ 
\sum_{k=1}^n \al_{\nu_k} =\be 
\}.
$$

For $\g$ and $\be \in \rl^+$ with $|\be|=r$, the quiver Hecke algebra $R(\be)$
associated with $(\calQ_{i,j})_{i,j \in I}$ is the $\Z$-graded algebra
over $\bfk$ generated by  
$$ x_k \ \ (1 \le k \le r), \quad e(\nu) \ \ (\nu \in I^{\be}), \quad \tau_s \ (1\le s < r), $$
subject to certain defining relations (see \cite{KKKO18} for the relations). Note that
the $\Z$-grading of $R(\be)$ is determined by the degrees of following elements:
$$
\deg(e(\nu)) = 0,  \ \  \deg(x_ke(\nu)) =2\sfd_{\nu_k} \ (1 \le k \le n), \ \text{ and }
 \ \deg(\tau_me(\nu)) = -(\al_{\nu_m},\al_{\nu_{m+1}})
 \ (1 \le m <n). 
$$

We denote by $R(\be)\gmod$ the category of finite-dimensional graded $R(\be)$-modules with homogeneous homomorphisms. For $M \in R(\be)\gmod$, we set
$\wt(M)=-\be \in \rl^-$. Denote by $q$ the degree shift functor
acting on $R(\be)\gmod$ as follows:
$$
q(M)_n = M_{n-1} \qquad \text{ for } M = \soplus_{k \in \Z} M_k \in R(\be)\gmod. 
$$

For an $R(\be)$-module $M$ and an $R(\ga)$-module $N$, we obtain an $R(\be+\ga)$-module by
$$
M \conv N \seteq R(\be+\ga) e(\be,\ga) \otimes_{R(\be)\otimes R(\ga)} 
(M \conv N),
$$
where $e(\be,\ga) \seteq \sum_{\nu \in I^\be, \nu' \in I^\ga} e(\nu*\nu') \in R(\be+\ga)$. Here $\nu *\mu$ denotes the concatenation of $\nu$ and $\mu$, and 
$\conv$ is called the \emph{convolution product}. We say that two simple $R$-modules $M$ and $N$ \emph{strongly commute} if $M \conv N$ is simple. If a simple module $M$ strongly commutes with itself, then $M$ is called \emph{real}. 

Set $R\gmod \seteq \oplus_{\be \in \rl^+}R(\be)\gmod$. Then $R\gmod$
has the monoidal category structure with the unit object $\mathbf{1}=\bfk \in R(0)\gmod$ and the tensor product $\conv$. Thus the Grothendieck group $K(R\gmod)$
has the $\Z[q^{\pm 1}]$-algebra structure derived from $\conv$ and the degree shift functors $q^{\pm1}$. We set $$\calK_\bbA(R\gmod) \seteq \bbA \otimes_{\Z[q^{\pm 1}]}K(R\gmod).$$

\begin{theorem}[\cite{KL1,KL2,R08}]There exists an $\bbA$-algebra isomorphism
$$\Upomega : \calK_\bbA(R\gmod) \isoto \calA_\bbA(\n).$$
\end{theorem}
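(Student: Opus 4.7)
The plan is to construct $\Upomega$ via the twisted bialgebra structures on both sides and then match PBW-type bases to conclude it is an isomorphism. First I would equip $\calK_\bbA(R\gmod)$ with a coproduct using the restriction functors along the natural non-unital embeddings $R(\be) \otimes R(\ga) \hookrightarrow R(\be+\ga)$. A Mackey-type filtration on the restrictions of convolution products $M \conv N$ shows that this coproduct is a twisted algebra homomorphism with respect to $\conv$, endowing $\calK_\bbA(R\gmod)$ with a twisted $\bbA$-bialgebra structure graded by $\rl^+$. The same structure is present on $\calA_\bbA(\n)$ dually to the multiplication of $\calU_q^+(\g)$.

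Next, I would define $\Upomega$ on generators by sending the class of the unique one-dimensional simple $R(\al_i)$-module to the distinguished dual generator in $\calA_\bbA(\n)_{-\al_i}$, and extend multiplicatively via $\conv$. To verify that this extends to an algebra homomorphism, the main point is the quantum Serre relations. These are guaranteed to hold in $\calK_\bbA(R\gmod)$ precisely because of the choice of defining polynomials $\calQ_{i,j}(u,v)$: their normalization together with the KLR braid and nil-Hecke relations force the appropriate alternating sum of iterated convolutions $L(i)^{\conv k} \conv L(j) \conv L(i)^{\conv (1-\sfc_{i,j}-k)}$ to vanish in the Grothendieck group, a direct but technical computation involving graded duality and idempotent decompositions.

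The main obstacle will be proving bijectivity, for which I would compare bases associated with a fixed reduced sequence $\ii = (i_1, \ldots, i_\ell)$ of $w_\circ$. On the $R\gmod$ side one constructs cuspidal modules $L(\be^\ii_k)$ indexed by the positive roots of \eqref{eq: param positive roots}, and the normalized convolution products $\prod^\to_{1 \le k \le \ell} L(\be^\ii_k)^{\conv c_k}$ for $\bsc \in \N_0^\ell$ yield standard-type modules whose classes constitute an $\bbA$-basis of $\calK_\bbA(R\gmod)$. On the $\calA_\bbA(\n)$ side one has the dual PBW basis $\{\tF_\ii(\bsc)\}$. I would show by induction on the bi-lexicographical order $<_\ii$ that $\Upomega$ sends the standard basis to $\{\tF_\ii(\bsc)\}$ modulo strictly lower-order corrections, whence $\Upomega$ is $\bbA$-linearly bijective since the two bases are indexed by the same set and are matched weight-by-weight. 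The hard part here is the triangularity, which requires fine control over $\Hom$-spaces between cuspidal convolutions and the behaviour of heads and socles under $\conv$.

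Finally, I would verify the compatibility of $\Upomega$ with the graded duality $M \mapsto M^\star$ on $R\gmod$ and the twisted bar-involution on $\calA_\bbA(\n)$. This identifies the classes of self-dual simple $R$-modules bijectively with the normalized dual canonical basis $\tbfB^{\up}$, providing the positivity of structure constants that underpins the Laurent-family arguments used subsequently in the paper.
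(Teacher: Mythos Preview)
The paper does not give a proof of this statement: it is stated as a citation of the foundational categorification results of Khovanov--Lauda and Rouquier, and is used as a black box throughout Section~\ref{sec: quiver Hecke and Laurent}. So there is no ``paper's own proof'' to compare against.

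Your sketch is a reasonable outline of one route to the result, but it conflates two historically distinct arguments. The original proofs in \cite{KL1,R08} do not proceed via cuspidal modules and PBW bases; rather, they define the map on the generators $[L(i)]$, check the quantum Serre relations directly from the KLR relations (this is the genuinely delicate step), and then establish bijectivity by a graded-dimension count using the faithful polynomial representation of $R(\be)$. The construction of cuspidal modules $L(\be^\ii_k)$ and the PBW-type classification of simples that you invoke came later (Kleshchev--Ram, McNamara, Kato, Brundan--Kleshchev--McNamara) and already \emph{uses} the isomorphism $\Upomega$ as input in most treatments. So your proposed inductive triangularity argument, while correct in spirit for symmetric types, risks circularity unless you independently develop the cuspidal theory from scratch, which is substantially harder than the dimension-count approach. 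If you want a self-contained argument along your lines, you would need to supply the standard-module theory for $R\gmod$ without appealing to $\Upomega$, e.g.\ via the Lyndon-word combinatorics of Kleshchev--Ram in simply-laced type or the homological methods of McNamara and Kato in general.
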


\begin{proposition}[\cite{KKOP18}]\label{prop: det}
For $\la \in \wl^+$  and $\mu,\zeta \in \weyl\la$ with $\mu \preceq \zeta$, there 
exists a self-dual real simple $R(\zeta-\mu)$-module $L(\mu,\zeta)$ such that
$$
\Upomega([L(\mu,\zeta)]) = D(\mu,\zeta). 
$$
Here $[L(\mu,\zeta)]$ denotes the isomorphism class of $L(\mu,\zeta)$.  
\end{proposition}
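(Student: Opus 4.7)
The plan is to transfer the statement to the algebraic side via the categorification isomorphism $\Upomega$ and pin down $L(\mu,\zeta)$ as the unique self-dual simple $R(\zeta-\mu)$-module whose class maps to $D(\mu,\zeta)$. First I would invoke the refined form of the categorification theorem: under $\Upomega$ the set of (grading-shift classes of) self-dual simple modules in $R\gmod$ is sent bijectively and bar-equivariantly onto the normalized dual-canonical/upper-global basis $\tbfB^{\up}$ of $\calA_{\bbA}(\n)$. Since every nonzero normalized quantum unipotent minor $\tD(\mu,\zeta)$ belongs to $\tbfB^{\up}$ (a classical result of Kashiwara), there is a unique self-dual simple $L(\mu,\zeta)\in R(\zeta-\mu)\gmod$ with $\Upomega([L(\mu,\zeta)])=\tD(\mu,\zeta)$, hence with $\Upomega([L(\mu,\zeta)])=D(\mu,\zeta)$ after absorbing the normalizing $q$-power; the self-duality of $L(\mu,\zeta)$ comes for free from the bar-invariance of $\tD(\mu,\zeta)$.

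To make $L(\mu,\zeta)$ concrete I would fix a reduced sequence $\ii$ of an element $w\in\weyl$ with $\mu=w^{\ii}_{k^-}\varpi_{i_k}$ and $\zeta=w^{\ii}_{k}\varpi_{i_k}$ (or extend to general $(\mu,\zeta)$ by induction on $\ell(w')-\ell(w)$), and build $L(\mu,\zeta)$ as the head of the iterated convolution of cuspidal modules categorifying the PBW-type element $\tF_\ii(\bse_k)$. The equality $\Upomega([L(\mu,\zeta)])=\tD(\mu,\zeta)$ then follows from the unitriangular relation between $\{\tF_\ii(\bsc)\}$ and $\{\tG_\ii(\bsc)\}=\tbfB^{\up}$ combined with the already-noted identity $\tG_{\ii}(\bse_k)=\tD(w^\ii_k\varpi_{i_k},w^{\ii}_{k^-}\varpi_{i_k})$.

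The main obstacle is realness, which does not come out of the categorification bijection alone. My plan is to establish the quantum Pl\"ucker-type identity
\begin{equation*}
D(\mu,\zeta)\,D(\mu,\zeta)\;=\;q^{a}\,D(2\mu,2\zeta)\qquad\text{in }\calA_q(\n)
\end{equation*}
for some $a\in\tfrac12\Z$, using the $q$-commutation rules among quantum unipotent minors sharing a common highest weight and the multiplicative behaviour of $D(-,-)$ under doubling of the dominant weight. Because the right-hand side is a $q$-power multiple of a single element of $\tbfB^{\up}$, it is the class of a self-dual simple module; transporting back through $\Upomega$, the convolution square $L(\mu,\zeta)\conv L(\mu,\zeta)$ has a simple image in $\calK_{\bbA}(R\gmod)$, and by general representation-theoretic arguments (its head equals its socle up to grading shift) this forces $L(\mu,\zeta)\conv L(\mu,\zeta)$ itself to be simple, i.e.\ $L(\mu,\zeta)$ is real.

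I expect the delicate step to be the squaring identity above: in the generic range $\mu\prec\zeta$ it can be proved by induction on $\ell(w')-\ell(w)$ using the exchange/$T$-system relations among consecutive quantum minors along a reduced word, but the boundary cases where $\zeta-\mu$ has very small support require a direct verification from the matrix-coefficient definition of $D(\mu,\zeta)$ on $\calU_q^+(\g)$. Once the squaring identity is in hand, the remaining assertions — self-duality, simplicity and the matching of classes under $\Upomega$ — are immediate consequences of the categorification theorem.
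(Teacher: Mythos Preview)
The paper does not prove this proposition; it is quoted from \cite{KKOP18} as a black-box input, so there is no ``paper's own proof'' to compare against. What I can do is compare your outline to what is actually done in \cite{KKOP18}.

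Your strategy hinges on the assertion that under $\Upomega$ the self-dual simple $R$-modules biject with $\tbfB^{\up}$. In symmetric type this is the Varagnolo--Vasserot theorem, but in the non-symmetric setting relevant to this paper that bijection is \emph{not} available as a prior result; it is essentially equivalent to the Kazhdan--Lusztig positivity for KLR algebras, which remains open in general. So the first step of your argument, ``there is a unique self-dual simple $L(\mu,\zeta)$ with $\Upomega([L(\mu,\zeta)])=\tD(\mu,\zeta)$ because $\tD(\mu,\zeta)\in\tbfB^{\up}$'', is circular in the types the paper cares about. The reference \cite{KKOP18} avoids this by \emph{constructing} $L(\mu,\zeta)$ directly: one fixes a reduced expression, builds the module as a specific head of a convolution of cuspidal modules along that expression, and then proves simplicity, self-duality, and the identification of its class with $D(\mu,\zeta)$ by hand (via $T$-system/exchange relations and induction on length), without ever invoking a global simple-to-canonical-basis bijection.

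Your approach to realness via the multiplicativity $D(w\la,w'\la)\,D(w\la',w'\la')\cong D(w(\la+\la'),w'(\la+\la'))$ is closer to the mark and is indeed one of the ingredients in \cite{KKOP18}; but again, deducing simplicity of $L(\mu,\zeta)^{\conv 2}$ from the fact that its class lies in $q^{\Z/2}\tbfB^{\up}$ presupposes the same unproven bijection. In \cite{KKOP18} realness is obtained instead from the explicit construction together with the $R$-matrix machinery (normal sequences, $\Lambda$-invariants), not by reading it off the Grothendieck ring.
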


We call $L(\mu,\zeta)$ the \emph{determinantal module} associated with $D(\mu,\zeta)$.

\subsection{$R$-matrices and their $\Z$-invariants} For $\be \in \rl^+$ and $i \in I$, let
\begin{align*} 
\mathfrak{p}_{i,\be} = \sum_{ \nu\,  \in I^\be}  \big( \prod_{a \in [1,|\be|]; \  \nu_a  = i} x_a \big) e(\nu) \in \frakX(R(\be)),
\end{align*}
where $\frakX(R(\be))$ denotes the center of $R(\be)$.

\begin{definition}[{\cite[Definition 2.2]{KP18}}]
For an $R(\be)$-module $L$, we say that $L$ \emph{admits an affinization} if  there exists an $R(\be)$-module $\widehat{L}$
satisfying the condition: there exists an endomorphism $z_{\widehat{L}}$ of degree $t  \in \Z_{>0}$ such that $\widehat{L}/z_{\widehat{L}}\widehat{L} \simeq L$ and
\bnum
\item $\widehat{L}$ is a finitely generated free modules over the polynomial ring $\bfk[z_{\widehat{L}}]$,
\item $\mathfrak{p}_{i,\be}  \widehat{L} \ne 0$ for all $i \in I$.
\ee
We say that a simple $R(\be)$-module $L$ is \emph{affreal} if $L$ is real and admits an affinization.
\end{definition}

It is known that any $L \in R(\be)\gmod$ admits an affinization if $R(\be)$ is symmetric. However,  when $R(\be)$ is not symmetric, it is widely open whether an $R(\be)$-module  $L$ admits an affinization or not.

\begin{theorem} [{\cite[Theorem 3.26]{KKOP21}}]
For $\varpi \in\wl^+$ and $\mu,\zeta \in \weyl\varpi$ such that $\mu\preceq \zeta$, the determinantal module $L(\mu,\zeta)$ is affreal.
\end{theorem}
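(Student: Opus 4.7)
By Proposition~\ref{prop: det}, the determinantal module $L(\mu,\zeta)$ is already a real self-dual simple $R(\zeta-\mu)$-module, so the plan reduces entirely to constructing an affinization $\widehat{L(\mu,\zeta)}$: a graded module equipped with a degree-$t$ endomorphism $z$ (for some $t>0$) such that $\widehat{L(\mu,\zeta)}/z\widehat{L(\mu,\zeta)} \simeq L(\mu,\zeta)$, $\widehat{L(\mu,\zeta)}$ is free over $\bfk[z]$, and $\mathfrak{p}_{i,\zeta-\mu}\widehat{L(\mu,\zeta)}\ne 0$ for every $i\in I$. I would proceed by induction on the height $|\zeta-\mu|$.

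For the base case I take $\zeta-\mu = n\alpha_i$ for a single simple root, in which case $L(\mu,\zeta)$ is concentrated on the idempotent $e(i,\ldots,i)$ and, up to grading shift, is the unique self-dual simple $R(n\alpha_i)$-module. Here the affinization is fully explicit: one sets $\widehat{L(\mu,\zeta)} := R(n\alpha_i)e(i,\ldots,i)/J$ where $J$ imposes symmetry on $x_1,\ldots,x_n$, takes $z := x_1+\cdots+x_n$, and verifies freeness of rank $n!\,\dim L(\mu,\zeta)/\bfk[z]$ together with the non-vanishing $\mathfrak{p}_{i,n\alpha_i} = x_1\cdots x_n \not\equiv 0$ directly from the KLR generator relations.

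For the inductive step, I would exploit a convolution-head decomposition
\[ L(\mu,\zeta) \;\simeq\; q^{c}\,\head\bigl(L(\eta,\zeta)\conv L(\mu,\eta)\bigr) \]
for a suitable intermediate $\eta \in \weyl\varpi$ with $\mu \preceq \eta \preceq \zeta$ and some $c\in\Z$; this comes from the multiplicative identity $D(\mu,\zeta) \equiv q^{c}\,D(\eta,\zeta)\,D(\mu,\eta)$ for quantum unipotent minors, valid modulo lower terms with respect to $\leN$. By induction both factors on the right are affreal with affinizations $\widehat{L(\eta,\zeta)}$ and $\widehat{L(\mu,\eta)}$, and I would then construct $\widehat{L(\mu,\zeta)}$ as the image of the renormalized $R$-matrix $\mathbf{r}_{\widehat{L(\mu,\eta)},\widehat{L(\eta,\zeta)}}$ inside the convolution $\widehat{L(\mu,\eta)}\conv\widehat{L(\eta,\zeta)}$, with the affinization endomorphism $z$ induced from $z_{\widehat{L(\eta,\zeta)}}\otimes 1 + 1\otimes z_{\widehat{L(\mu,\eta)}}$ after normalizing away the pole of the $R$-matrix.

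The main obstacle will be verifying the structural conditions on the constructed $\widehat{L(\mu,\zeta)}$. Freeness over $\bfk[z]$ reduces to computing the pole order of $\mathbf{r}_{\widehat{L(\mu,\eta)},\widehat{L(\eta,\zeta)}}$, which is controlled by a $\Lambda$-invariant expressible in terms of the inner products among $\mu,\eta,\zeta$ — manageable but requiring care in non-symmetric types, where geometric quiver-variety realizations are unavailable. The genuinely subtle point is the non-vanishing of $\mathfrak{p}_{i,\zeta-\mu}\widehat{L(\mu,\zeta)}$ for every $i\in I$; I would verify this by tracking a cyclic vector through the construction, arguing that a lift of the cyclic vector of $L(\mu,\zeta)$ cannot be annihilated by $\mathfrak{p}_{i,\zeta-\mu}$ since, modulo $z$, this would contradict the unipotent-minor interpretation of $L(\mu,\zeta)$ from Proposition~\ref{prop: det} together with the self-duality (which supplies a nondegenerate bilinear form detecting the action of every $x_k$).
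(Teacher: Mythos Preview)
This theorem is not proved in the paper at all: it is quoted verbatim as \cite[Theorem 3.26]{KKOP21} and used as a black box, so there is no in-paper argument to compare your proposal against.

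That said, your inductive scheme has a genuine gap at the heart of the inductive step. You propose to build $\widehat{L(\mu,\zeta)}$ as the image of a renormalized $R$-matrix inside $\widehat{L(\mu,\eta)}\conv\widehat{L(\eta,\zeta)}$, with affinization parameter $z$ induced from $z_{\widehat{L(\eta,\zeta)}}\otimes 1 + 1\otimes z_{\widehat{L(\mu,\eta)}}$. But the convolution of two affinizations is naturally a module over a \emph{two}-parameter polynomial ring $\bfk[z_1,z_2]$, and neither freeness over the diagonal $\bfk[z_1+z_2]$ nor the identification of the quotient by $z_1+z_2$ with $L(\mu,\zeta)$ follows from anything you have written. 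The head of $L(\mu,\eta)\conv L(\eta,\zeta)$ is indeed $L(\mu,\zeta)$, but that is a statement at $z_1=z_2=0$; lifting it to a one-parameter flat family requires controlling exactly how the $R$-matrix degenerates along the line $z_1+z_2=0$, which is precisely the hard content you are assuming away with the phrase ``after normalizing away the pole of the $R$-matrix''. Your base case is also misframed: for $\mu,\zeta$ in a single $\weyl$-orbit of a dominant weight, $\zeta-\mu=n\al_i$ with $n>1$ need not occur, and in any case the natural base of an induction on $|\zeta-\mu|$ is $\zeta-\mu=\al_i$, where $\widehat{L}=R(\al_i)$ with $z=x_1$ already does the job.

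The actual argument in \cite{KKOP21} does not proceed by this kind of naive induction; it relies on a direct construction of affinizations for determinantal modules using their specific realization, together with structural results on $R$-matrices between affinizations that go well beyond what you have sketched. If you want to pursue your route, the missing ingredient is a general theorem of the form ``if $M,N$ are affreal and $M\hconv N$ is simple real, then $M\hconv N$ admits an affinization built functorially from $\widehat{M},\widehat{N}$'', and no such statement is available in the literature for non-symmetric types.
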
 
 Let $\ii$ be a reduced sequence
of $w_\circ$. For $\be \in \Phi^+$ such that $\be = \be^\ii_k$ in~\eqref{eq: param positive roots}, we write
$S^\ii_k$ for the determinantal $R(\be)$-module such that
$$
\Upomega([S^\ii_k]) = D(w^\ii_k\varpi_{i_k},w^\ii_{k^-}\varpi_{i_k}). 
$$
We sometimes write $S^{\ii}(\be)$ for $S^\ii_k$ to emphasize $\be$. Note that $$\wt(S^\ii_k)= \wt(D(w^\ii_k\varpi_{i_k},w^\ii_{k^-}\varpi_{i_k}))=-\be.$$  
The module $S^\ii_k$ is also called the \emph{cuspidal module} associated with $\ii$ and $\be$. For another reduced sequence $\ii'$ of $w_\circ$ such that $\ii \overset{c}{\sim} \ii'$, we have  $S^\ii(\be) \simeq S^{\ii'}(\be)$. Thus
the notation $S^{[\ii]}(\be)$ make sense.

\begin{proposition}%
[{\cite{KKKO18,KKOP21}}]\label{prop: l=r} 
Let $M$ and $N$ be simple modules such that one of them is affreal.
Then there exists a unique $R$-module homomorphism $\Rr_{M,N} \in \HOM_R(M,N)$ satisfying
$$\HOM_R(M \conv N,N \conv M)=\bfk\, \Rr_{M,N}.$$
\end{proposition}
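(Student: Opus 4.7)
The plan is to exploit the affinization of an affreal module to construct a nonzero intertwiner, and then to bound the Hom-space by a head-and-socle argument. Without loss of generality, assume $M$ is affreal with affinization $(\widehat{M}, z_{\widehat{M}})$, where $z_{\widehat{M}}$ is homogeneous of positive degree $t$ and $\widehat{M}/z_{\widehat{M}}\widehat{M} \simeq M$.

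First, I would produce a canonical nonzero homomorphism $R_{\widehat{M},N} \colon \widehat{M} \conv N \to N \conv \widehat{M}$ built from the intertwining $\tau$-operators of the quiver Hecke algebra, transporting the $N$-factor across the $\widehat{M}$-factor and clearing denominators using the central polynomial $\mathfrak{p}_{i,\beta} \in \frakX(R(\beta))$. The nonvanishing condition $\mathfrak{p}_{i,\beta}\widehat{M} \neq 0$ in the definition of an affinization is precisely what guarantees this assignment extends to a well-defined and nonzero $R$-linear map. Because $\widehat{M}$ is free over $\bfk[z_{\widehat{M}}]$, the target $N \conv \widehat{M}$ is $z_{\widehat{M}}$-torsion-free, so I can write uniquely $R_{\widehat{M},N} = z_{\widehat{M}}^{s}\, \Rr_{\widehat{M},N}$ with $s \ge 0$ maximal. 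Specializing $z_{\widehat{M}} \mapsto 0$ in the renormalized morphism yields a nonzero $R$-homomorphism $\Rr_{M,N} \colon M \conv N \to N \conv M$, settling existence.

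For uniqueness, I would invoke the head-and-socle structure theorem from \cite{KKKO18}: whenever one of two simple modules is real, each of $M \conv N$ and $N \conv M$ has a simple head and a simple socle, and these simple constituents coincide up to grading shift. Consequently, any nonzero homomorphism $M \conv N \to N \conv M$ factors as $M \conv N \twoheadrightarrow \head(M \conv N) \hookrightarrow N \conv M$, and the multiplicity of this simple head as the socle of $N \conv M$ is one. Therefore $\HOM_R(M \conv N, N \conv M)$ is at most one-dimensional, and the nonzero morphism $\Rr_{M,N}$ constructed above spans it.

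The main obstacle is ensuring the renormalization step is legitimate in the non-symmetric setting: one must simultaneously verify that $\Rr_{\widehat{M},N} = z_{\widehat{M}}^{-s} R_{\widehat{M},N}$ is a genuine $R$-module map (guaranteed by the freeness of $\widehat{M}$ over $\bfk[z_{\widehat{M}}]$) and that it remains nonzero after setting $z_{\widehat{M}} = 0$ (guaranteed by the maximality of $s$ together with $\mathfrak{p}_{i,\beta}\widehat{M} \neq 0$). Both hinge on the affreal hypothesis, since in non-symmetric types the existence of affinizations is not automatic; the machinery of \cite{KKKO18, KP18, KKOP21} has been developed for exactly this purpose, and the proof is essentially an application of it.
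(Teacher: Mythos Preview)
The paper does not supply its own proof of this proposition: it is stated with the citation \cite{KKKO18,KKOP21} and used as a black box. Your sketch correctly reproduces the argument from those references---construct a nonzero intertwiner via the affinization and renormalize by the maximal power of $z_{\widehat{M}}$, then invoke the simple-head/simple-socle theorem for uniqueness---so there is nothing to compare against in the present paper, and your outline is consistent with the cited sources.
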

We call the homomorphism $\Rr_{M,N}$ the \emph{$R$-matrix}.

\begin{definition} \label{def: inv}
For simple $R$-modules $M$ and $N$ such that one of them is affreal, we define
\begin{align*}
\La(M,N) &\seteq  \deg (\Rr_{M,N}) , \\
\tLa(M,N) &\seteq   \frac{1}{2} \Bigl( \La(M,N) + \bl\wt(M), \wt(N)\br \Bigr) , \\
\de(M,N) &\seteq  \frac{1}{2} \big( \La(M,N) + \La(N,M)\big).
\end{align*}
\end{definition}
It is proved in \cite{KKKO18,KKOP21} that the invariants $\tLa(M,N)$ and $\de(M,N)$ in Definition~\ref{def: inv} belong to $\Z_{\ge 0}$.

\smallskip

For simple modules $M$ and $N$, let $M \hconv N$ and  $M \sconv N$ denote the head and the socle of $M \conv N$, respectively.

\begin{proposition}  [{\cite[Lemma 3.1.4]{KKKO15}, \cite[Lemma 4.1.2]{KKKO18}}]
\label{prop: simple head and socle}
Let $M$ and $N$ be self-dual simple $R$-modules such that one of them is affreal. 
\bna
\item  $M \hconv N$ and  $M \sconv N$ are simple modules and they appear exactly once in the composition series of $M \conv N$.
\item  $  q^{\tLa(M,N)} M \hconv N$ is a self-dual simple module.
\item \label{it: de 0 simple} If $M \conv N \simeq N \conv M$ up to a grading shift, then $M \conv N$ is simple, which is equivalent to $\de(M,N)=0$ and $M \hconv N \simeq M \sconv N$ up to a grading shift.
\ee
 \end{proposition}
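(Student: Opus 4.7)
The plan is to deduce all three parts from the one-dimensionality of $\HOM_R(M\conv N, N\conv M)$ guaranteed by Proposition~\ref{prop: l=r}, together with the bar-involution/duality structure on $R\gmod$.

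First I would address part (a). Set $\phi \seteq \Rr_{M,N}\colon M\conv N \to q^{\La(M,N)} N\conv M$ and let $K=\ker\phi$, $I=\mathrm{im}\,\phi$. Any nonzero quotient $Q$ of $M\conv N$ produces a nonzero homomorphism $M\conv N \twoheadrightarrow Q$; composing with the embedding $Q \hookrightarrow Q$ via duals and using the defining uniqueness property of $\Rr_{M,N}$ (through bi-adjunction between the convolution functors $M\conv(-)$ and $(-) \conv N$ and their right adjoints, which one has by the affreality hypothesis), one shows that every such quotient must factor through the quotient $M\conv N/K \cong I$; in particular $I$ is the unique simple quotient, i.e.\ $I \cong M\hconv N$ is simple. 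Dually, $I$ sits inside $N\conv M$ as the unique simple submodule, so $q^{-\La(M,N)} I \cong M\sconv N$ is simple as well. For the multiplicity statement, if $M\hconv N$ appeared twice as a composition factor of $M\conv N$, the short exact sequence obtained by filtering would produce a second, linearly independent element of $\HOM_R(M\conv N, N\conv M)$, contradicting Proposition~\ref{prop: l=r}. A symmetric argument handles $M\sconv N$.

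For part (b), I would use that the graded duality functor $(-)^\star$ on $R\gmod$ is contravariant, anti-monoidal up to a twist, and satisfies $(M\conv N)^\star \cong q^{-(\wt M, \wt N)} N^\star \conv M^\star$. Since $M$ and $N$ are self-dual, applying $(-)^\star$ to the surjection $M\conv N \twoheadrightarrow q^{\La(M,N)} M\hconv N$ yields an embedding $q^{-\La(M,N)}(M\hconv N)^\star \hookrightarrow q^{-(\wt M, \wt N)} N\conv M$. The image must be the unique simple submodule there, namely $q^{-\La(M,N)} M\hconv N$. Matching grading shifts gives $(M\hconv N)^\star \cong q^{\La(M,N)+(\wt M,\wt N)}\, M\hconv N = q^{2\tLa(M,N)}\, M\hconv N$, which is exactly the assertion that $q^{\tLa(M,N)} M\hconv N$ is self-dual.

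Finally, part (c) is essentially formal from (a) and (b). If $M\conv N \cong q^a\, N\conv M$ for some $a\in\Z$, then $\phi$ is a nonzero endomorphism (up to this shift) of a finite-length module; since its image $M\hconv N$ is simple and coincides with $M\sconv N$ up to grading by the identification, the head and socle of $M\conv N$ coincide, and by the multiplicity-one statement the module has length one, i.e.\ is simple. Conversely, if $M\conv N$ is simple then $\de(M,N) = 0$ is immediate from $\La(M,N) + \La(N,M)$ being twice the degree of a composition $\Rr_{N,M}\circ \Rr_{M,N}$ acting on an irreducible module (hence a scalar of degree zero after appropriate normalization), and $M\hconv N \cong M\sconv N$ up to grading is obvious because both equal $M\conv N$. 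No step here is a true obstacle since all the heavy lifting is done by Proposition~\ref{prop: l=r}; the main delicate point is keeping track of the grading shifts in the duality argument of (b), which is where the invariant $\tLa$ is precisely calibrated to absorb the weight-pairing correction.
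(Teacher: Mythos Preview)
The paper does not give a proof of this proposition; it is quoted from \cite{KKKO15,KKKO18} without argument. Your treatments of (b) and (c) are essentially the standard ones and are correct up to bookkeeping of grading shifts.

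Part (a), however, has a genuine gap. You assert that every simple quotient $Q$ of $M\conv N$ factors through $\mathrm{Im}(\Rr_{M,N})$, appealing to ``the embedding $Q\hookrightarrow Q$ via duals'' and unspecified bi-adjunctions, but this is precisely the nontrivial step and your sketch does not establish it. The one-dimensionality of $\HOM_R(M\conv N, N\conv M)$ from Proposition~\ref{prop: l=r} does not by itself force a simple head: to exploit it you would need, for each simple quotient $L$ of $M\conv N$, a nonzero map $L\to N\conv M$ so that the composite $M\conv N\twoheadrightarrow L\hookrightarrow N\conv M$ is a scalar multiple of $\Rr_{M,N}$. Duality together with self-duality of $M,N$ only tells you that $L^\star$ (not $L$) embeds in $N\conv M$, so the argument becomes circular. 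The proofs in \cite{KKKO15,KKKO18} use the real/affreal hypothesis more substantially than merely to guarantee existence of $\Rr_{M,N}$: for instance, the reality of $M$ (so that $M\conv M$ is simple) or the explicit construction of $\Rr_{M,N}$ via the affinization is what actually pins down the head. Finally, the ``bi-adjunctions'' you invoke are not a consequence of affreality, and $R\gmod$ is not a rigid monoidal category, so the convolution functors $M\conv(-)$ and $(-)\conv N$ do not have the left/right adjoints you seem to be assuming.
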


For a reduced sequence $\ii$ of $w_\circ$, we say an interval $[a,b] \subset [1,\ell]$ is an \emph{$i$-box} if  $i_a=i_b$. 
For each
$i$-box $[a,b]$ and $\ii$, we define a self-dual simple modules $L^\ii[a,b]$ as follows:  $ L^\ii[a,a] \seteq  S^\ii_a$ and  
$$ L^\ii[a,b] \seteq q^{\tLa\big(S^\ii_b,L^\ii[a,b^-]\big)} S^\ii_b  \hconv L^\ii[a,b^-].$$

\begin{proposition}[{\cite[Proposition 4.6]{KKOP18}}] \label{prop: Upomega L}
 For a reduced sequence $\ii=(i_1,\ldots,i_\ell)$ of $w_\circ$ and an $i$-box $[a,b] \subset [1,\ell]$, we have 
$$
\Upomega\left(\big[L^\ii[a,b]\big]\right) = D(w_{b}^\ii \varpi_{i_b},w_{a^-}^\ii \varpi_{i_a}).
$$
Hence $L^\ii[a,b]$ is an affreal simple module. 
\end{proposition}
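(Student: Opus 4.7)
The plan is to argue by induction on the quantity $b - a$, exploiting the recursive definition of $L^{\ii}[a,b]$ together with the fact that $\Upomega$ is an $\bbA$-algebra isomorphism and that determinantal modules are already known to correspond to quantum unipotent minors via Proposition~\ref{prop: det}.

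First I would treat the base case $b = a$. Here $L^{\ii}[a,a] = S^{\ii}_a$ by definition, and $S^{\ii}_a$ is defined precisely as the determinantal module associated with $D(w^{\ii}_a \varpi_{i_a}, w^{\ii}_{a^-}\varpi_{i_a})$. Since $w^{\ii}_b \varpi_{i_b} = w^{\ii}_a \varpi_{i_a}$ in this case, the base case follows directly from Proposition~\ref{prop: det}.

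For the inductive step, I would first observe that when $[a,b]$ is an $i$-box with $b > a$, the index $b^- = b^-_{\ii}$ satisfies $a \le b^- < b$ and $i_{b^-} = i_b = i_a$, so $[a, b^-]$ is itself an $i$-box of strictly smaller size. The inductive hypothesis then yields
\[
\Upomega\big([L^{\ii}[a,b^-]]\big) \;=\; D\big(w^{\ii}_{b^-}\varpi_{i_{b^-}},\, w^{\ii}_{a^-}\varpi_{i_a}\big) \;=\; D\big(w^{\ii}_{b^-}\varpi_{i_b},\, w^{\ii}_{a^-}\varpi_{i_a}\big).
\]
Next, applying $\Upomega$ to the definition
$L^{\ii}[a,b] = q^{\tLa(S^{\ii}_b, L^{\ii}[a,b^-])}\, S^{\ii}_b \hconv L^{\ii}[a,b^-]$ and using the base case for $S^{\ii}_b$, I would reduce the claim to the identity
\[
\Upomega\big([S^{\ii}_b \hconv L^{\ii}[a,b^-]]\big) \;\cong\; \tD\big(w^{\ii}_b\varpi_{i_b},\, w^{\ii}_{b^-}\varpi_{i_b}\big)\cdot \tD\big(w^{\ii}_{b^-}\varpi_{i_b},\, w^{\ii}_{a^-}\varpi_{i_a}\big)\Big|_{\text{head}}
\;\cong\; \tD\big(w^{\ii}_b\varpi_{i_b},\, w^{\ii}_{a^-}\varpi_{i_a}\big),
\]
together with the bookkeeping of the grading shift. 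The needed input is the multiplicative identity for normalized quantum unipotent minors with matching middle weight $w^{\ii}_{b^-}\varpi_{i_b}$: the simple head of the product of two such minors (equivalently the convolution head of the corresponding determinantal modules) is the longer minor joining the outer weights. This identity, together with Proposition~\ref{prop: simple head and socle}, identifies $q^{\tLa(S^{\ii}_b,L^{\ii}[a,b^-])} S^{\ii}_b \hconv L^{\ii}[a,b^-]$ with the self-dual determinantal module $L\big(w^{\ii}_{a^-}\varpi_{i_a},\,w^{\ii}_b\varpi_{i_b}\big)$, which via Proposition~\ref{prop: det} is sent by $\Upomega$ precisely to $D\big(w^{\ii}_b\varpi_{i_b},\,w^{\ii}_{a^-}\varpi_{i_a}\big)$. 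The affreal conclusion then comes for free, since all determinantal modules are affreal.

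The main obstacle is the head identity at the level of determinantal modules: one must verify that $S^{\ii}_b \conv L^{\ii}[a,b^-]$ has simple head equal (up to grading shift) to the determinantal module $L\big(w^{\ii}_{a^-}\varpi_{i_a},\,w^{\ii}_b\varpi_{i_b}\big)$. This is a $T$-system-type fact about determinantal modules whose weight parameters share a common middle term $w^{\ii}_{b^-}\varpi_{i_b}$; it can be handled by combining Proposition~\ref{prop: l=r} (existence and uniqueness of the $R$-matrix) with the chain of inequalities $w^{\ii}_{a^-}\varpi_{i_a} \preceq w^{\ii}_{b^-}\varpi_{i_b} \preceq w^{\ii}_b\varpi_{i_b}$ guaranteed by the fact that $\ii$ is reduced, which forces the head to be a determinantal module with the outer parameters. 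Matching the grading shift $\tLa(S^{\ii}_b, L^{\ii}[a,b^-])$ with the normalization used in defining $\tD$ is then a direct computation using the definition of $\tLa$ in terms of $\La$ and weights.
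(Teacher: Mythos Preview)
The paper does not supply its own proof of this proposition: it is quoted verbatim as \cite[Proposition~4.6]{KKOP18}, so there is nothing in the present paper to compare your argument against. Your inductive outline on $b-a$ is a sensible reconstruction of how such a statement is established, and you have correctly isolated the only nontrivial step, namely that $S^{\ii}_b \hconv L^{\ii}[a,b^-]$ coincides (up to shift) with the determinantal module $L(w^{\ii}_{a^-}\varpi_{i_a}, w^{\ii}_b\varpi_{i_b})$; this head identity for determinantal modules with a shared middle weight is exactly what is proved in the cited reference, so your sketch is in line with the actual source.
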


\subsection{Laurent family}
Let $J$ be an index set and $\scrC$ be a full monoidal subcategory of $R\gmod$ stable by taking subquotients, extensions and grading shifts.
We say that a family of affreal modules $\calL=\{ L_j \}$ in $\scrC$ is a commuting family in $\scrC$ if 
$$
L_i \conv L_j \simeq L_j \conv L_i \quad \text{ up to a grading shift for any $i,j \in J$.} 
$$
Then there exists a family $\{ \calL(\bfa) \ | \ \bsa \in \N_0^{\oplus J} \}$ of simple modules in $\scrC$ such that
\begin{align*}
&\calL(0) = \mathbf{1}, \quad \calL(\bse_j) = L_j \quad \text{ for any } j \in J, \\
&\calL(\bsa) \conv \calL(\bsb) \simeq q^{-\tLa(\calL(\bsa),\calL(\bsb) )}\calL(\bsa+\bsb) \quad \text{ for any } \bsa,\bsb \in \N_0^{\oplus J}.
\end{align*}

We say a commuting family $\calL=\{ L_j \}_{j \in J}$ is \emph{independent} if, when  $\bsa,\bsb \in \N_0^{\oplus J}$ satisfies $\calL(\bsa) \simeq  q^s\calL(\bsb)$
for some $s \in \Z$, we have $\bsa=\bsb$. 

\begin{definition} \label{def: quasi-Laurent}
A commuting family $\calL= \{ L_j \mid j \in J \}$
of affreal simple objects of $\scrC$ is said to be a \emph{quasi-Laurent family} in $\scrC$  if
$\calL$ satisfies the following conditions:
\bna
\item\label{it:dsc}
$\calL$ is independent, and
\item if a simple module $X$ commutes with all $L_j$ $(j \in J)$,
then there exist $\bsa,\bsb \in \Z_{\ge 0}^{J}$ such that
$$
X \conv \calL(\bsa) \simeq \calL(\bsb)\quad \text{up to a grading shift}.
$$
\setcounter{myc}{\value{enumi}}
\ee
If $\calM$ satisfies \eqref{it:dsc} and \eqref{it:Laur} below, then we say that $\calL$
is a {\em Laurent family}:
\bna
\setcounter{enumi}{\value{myc}}
\item \label{it:Laur}  if a simple module $X$ commutes with all $L_j$ $(j \in J)$,
then there exists $\bsb\in \Z_{\ge 0}^{J}$ such that $X\simeq\calL(\bsb)$ up to a grading shift.
\ee
\end{definition}

For an index set $J = J_\ex \sqcup J_\fr$,
assume that $\calK_\bbA(\scrC) \seteq \bbA \otimes_{\Z[q^{\pm 1}]} K(\scrC)$ is isomorphic to a quantum cluster algebra $\scrA$ such that 
$\Upomega(\calK_\bbA(\scrC)) \simeq \scrA$.
We say that a commuting family $\calL=\{  L_i \}_{i\in J}$ in $\scrC$ is a \emph{monoidal cluster} in $\scrC$ if there exists
a seed  $(\{ Z_i \}_{i \in J}, \La=(\La_{i,j})_{i,j \in J}, \tB=(b_{i,j})_{i \in J,j\in J_\ex} )$ of $\scrA$ such that
$$   Z_i \cong \Upomega( [L_i]) \ \ \text{ in $\scrA$ } \quad \text{ and } \quad \La_{i,j} = -\La(L_i,L_j).$$

\begin{theorem}[{\cite[Proposition 3.6]{KKOP23}}]  \label{thm: Laurent}
Assume that $\calK_\bbA(\scrC)$ has a quantum cluster algebra structure, and let
$\calL=\{  L_j \mid  j\in J\}$ be a monoidal cluster in $\scrC$. Then the commuting family $\calL$
is a quasi-Laurent family.
In particular,
the isomorphism class $[X]$ of a module $X$ in $\calK_\bbA(\scrC)$
can be expressed as an element in the Laurent polynomial $\Z[q^{\pm 1}][\;[ L_j] \mid j \in J]$ with positive coefficients. Moreover, if every $[L_k]$ is prime in $\calK_\bbA(\scrC)\vert_{q=1}$,
then $\calL$ is a Laurent family.
\end{theorem}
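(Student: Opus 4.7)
The plan is to verify the three conditions of Definition~\ref{def: quasi-Laurent} in sequence, using the interplay between the quantum cluster algebra structure on $\calK_\bbA(\scrC)$ and the categorical commutation/positivity coming from the quiver Hecke algebra. The backbone of the argument is: (i) a monoidal cluster corresponds, via $\Upomega$, to an actual seed of the quantum cluster algebra $\scrA \simeq \Upomega(\calK_\bbA(\scrC))$, so the classes $\{\Upomega([L_j])\}_{j \in J}$ are, up to grading shifts, the initial cluster variables $\{Z_j\}$; (ii) the quantum Laurent phenomenon forces every class in $\calK_\bbA(\scrC)$ to lie in the quantum torus $\calT(\La)$ generated by the $[L_j]$; (iii) commutation at the categorical level translates into $q$-commutation at the level of $\calT(\La)$.

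First, I would establish the independence condition \eqref{it:dsc}. Under $\Upomega$, the identities $\calL(\bsa) \simeq \calL(\bsb)$ (up to grading shift) become monomial identities $\prod_j Z_j^{a_j} \cong \prod_j Z_j^{b_j}$ in $\calT(\La)$; since the initial cluster variables are algebraically independent generators of the quantum torus, this forces $\bsa = \bsb$.

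Second, and this is the heart of the argument, I would prove the quasi-Laurent condition. Let $X$ be a simple module commuting with every $L_j$. By Proposition~\ref{prop: simple head and socle}~\eqref{it: de 0 simple}, this commutation gives $L_j \conv X \cong X \conv L_j$ up to a grading shift, which in $\calK_\bbA(\scrC)$ becomes the $q$-commutation relation $[L_j] \cdot [X] = q^{-\La(L_j, X)} [X] \cdot [L_j]$. The quantum Laurent phenomenon (Theorem applied to $\scrA$) places $[X]$ inside the quantum torus $\calT(\La)$. An element of a quantum torus that $q$-commutes with every generator must itself be a scalar multiple of a Laurent monomial, so we may write $[X] \cong \prod_j [L_j]^{c_j}$ for some $(c_j) \in \Z^{\oplus J}$. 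Setting $\bsa_j \seteq [-c_j]_+$ and $\bsb_j \seteq [c_j]_+$, we obtain $[X \conv \calL(\bsa)] \cong [\calL(\bsb)]$ in $\calT(\La)$; since both sides are classes of self-dual simple modules (after a suitable shift, using the $\tLa$ normalization) whose Grothendieck classes in $\calT(\La)$ coincide, they must be isomorphic up to grading shift.

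Third, the positivity claim (for arbitrary modules $X$) follows because $[X]$ expands with non-negative integer multiplicities in terms of isomorphism classes of self-dual simple modules (this is categorical positivity coming from $R\gmod$), and each simple class, by the previous paragraph's argument applied to simples that commute with all $[L_j]$ together with the quantum Laurent phenomenon, has an expansion in $\N_0[q^{\pm 1/2}][\,[L_j]^{\pm 1}\,]$. Finally, for the Laurent condition, assume each $[L_k]$ is prime in the commutative ring $\calK_\bbA(\scrC)\vert_{q=1}$. Specializing $[X \conv \calL(\bsa)] \cong [\calL(\bsb)]$ at $q=1$ gives an equality $[X]\vert_{q=1} \cdot \prod_j [L_j]^{\bsa_j} = \prod_j [L_j]^{\bsb_j}$ in a commutative domain where the $[L_j]$ are prime. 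Primality forces $\bsa_j \le \bsb_j$ for all $j$, i.e., each $[L_j]^{\bsa_j}$ factor divides the right-hand side, after which cancellation (using the affreal-$R$-matrix cancellation property on the categorical side) yields $X \simeq \calL(\bsb - \bsa)$ up to shift.

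The main obstacle will be the second step, specifically the implication that an element of $\calT(\La)$ that $q$-commutes with every initial cluster generator must be a Laurent monomial: this relies on the non-degeneracy of the skew-symmetric matrix $\La$ restricted to the appropriate exchangeable directions, so care is required in handling the frozen part $J_\fr$ and in ensuring that the category $\scrC$ is well-behaved enough (closed under taking the relevant subquotients, so that the Grothendieck positivity applies) for the exponents $\bsa,\bsb$ to be identified on both sides as genuine module-theoretic exponents rather than merely formal ones.
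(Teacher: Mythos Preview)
The paper does not prove this result; it is quoted from \cite[Proposition~3.6]{KKOP23} without argument and used as a black box. That said, your sketch has two genuine gaps. In your second step you assert that an element of $\calT(\La)$ which $q$-commutes with every generator $Z_j$ must be a single Laurent monomial. This holds only when $\La$ is non-degenerate; otherwise any Laurent polynomial supported on a coset of the kernel of $\La$ will $q$-commute with everything, and in the relevant examples (e.g.\ $\calA_\bbA(\n)$ with frozen variables) $\La$ is typically degenerate. You flag this at the end, but it is not a technicality---it breaks the step. Your third step then compounds the problem: you deduce positivity for an arbitrary simple $S$ by invoking step two, but step two (even if it worked) applies only to simples commuting with every $L_j$, and a generic simple does not.

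The argument in \cite{KKOP23} runs in the reverse order and avoids both issues. One first proves positivity directly: writing $[X] = \sum_\bsa c_\bsa Z^\bsa$ by the Laurent phenomenon and choosing $\bsb \in \N_0^{\oplus J}$ with every $\bsa+\bsb \ge 0$, one has $[X \conv \calL(\bsb)] \cong \sum_\bsa c_\bsa [\calL(\bsa+\bsb)]$. The left side is a module class, hence a $\N_0[q^{\pm1}]$-combination of simples; the right side is a combination of the pairwise non-isomorphic simples $\calL(\bsa+\bsb)$ (this is where independence is used), forcing each $c_\bsa \in \N_0[q^{\pm1}]$ up to a global power of $q$. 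The quasi-Laurent property then follows: if $X$ is simple and commutes with every $L_j$, then $X \conv \calL(\bsb)$ is itself simple (Proposition~\ref{prop: simple head and socle}\eqref{it: de 0 simple}), so exactly one $c_\bsa$ survives---no appeal to non-degeneracy of $\La$ is required.
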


\begin{remark}
Note that if $K(\scrC)\vert_{q=1}$ is factorial, then every $[L_k]$ is prime in $K(\scrC)\vert_{q=1}$ (see \cite{GLS13A}).    
\end{remark}

For a reduced sequence $\ii$ of $w_\circ$ and $1 \le k \le \ell$, we set
$$L^\ii_k \seteq L(w^\ii_k\varpi_{i_k},\varpi_{i_k}) \simeq L^{\ii}[k^{\min},k].$$
 
\begin{proposition}[{\cite[Theorem 4.12]{KKOP18}}]  \label{prop: La det}
Let $\ii$ be a reduced sequence of $w_\circ$. For $1 \le s < t \le \ell$, we have
$$
-\La(L^\ii_s, L^\ii_t) = (\varpi_{i_s} - w^\ii_s\varpi_{i_s},\varpi_{i_t} + w^\ii_t\varpi_{i_t} ). 
$$
Hence $\calL^{\ii} \seteq \{ L^\ii_{k} \}$ forms a commuting family.
\end{proposition}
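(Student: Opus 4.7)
The plan is to identify $L^\ii_k$ with a determinantal module and transfer the computation to a quasi-commutation statement inside the quantum unipotent coordinate ring $\calA_\bbA(\n)$. By Proposition~\ref{prop: det}, each $L^\ii_k = L(w^\ii_k\varpi_{i_k},\varpi_{i_k})$ is the (self-dual real) determinantal module corresponding under $\Upomega$ to the normalized quantum minor $\tD(w^\ii_k\varpi_{i_k},\varpi_{i_k}) = \tF_\ii(k)$, which by Theorem~\ref{Thm:CAN} is (up to $\cong$) the initial cluster variable $Z_k$ of $\calA_\ii$ viewed inside $\calT(\La^\ii)$.

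The first step is to record that, for $s < t$, the initial cluster variables of $\calA_\ii$ satisfy
\[
Z_s \, Z_t = q^{\La^\ii_{s,t}}\, Z_t \, Z_s,\qquad
\La^\ii_{s,t} = (\varpi_{i_s}-w^\ii_s\varpi_{i_s},\,\varpi_{i_t}+w^\ii_t\varpi_{i_t}),
\]
directly from the definition of $\La^\ii$. Transporting this along $\Upomega\circ\kappa_\ii$ gives the identity of classes $[L^\ii_s]\cdot[L^\ii_t] = q^{\La^\ii_{s,t}}[L^\ii_t]\cdot[L^\ii_s]$ in $\calK_\bbA(R\gmod)$. The next step is to upgrade this numerical quasi-commutation to the claimed equality of $R$-matrix invariants. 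Since each $L^\ii_k$ is affreal, Proposition~\ref{prop: l=r} provides a unique (up to scalar) $R$-matrix $\Rr_{L^\ii_s,L^\ii_t}$, and its degree $\La(L^\ii_s,L^\ii_t)$ is characterized by the requirement that $q^{\La(L^\ii_s,L^\ii_t)}L^\ii_s\conv L^\ii_t$ surject onto $L^\ii_t\conv L^\ii_s$. The key observation is that for determinantal modules the head $L^\ii_s\hconv L^\ii_t$ and socle $L^\ii_s\sconv L^\ii_t$ of $L^\ii_s\conv L^\ii_t$ coincide up to a grading shift: indeed, a short induction on $|w^\ii_s|+|w^\ii_t|$ using the cuspidal decomposition $L^\ii_k \simeq q^{\tLa(S^\ii_k,L^\ii[k^{\min},k^-])} S^\ii_k\hconv L^\ii[k^{\min},k^-]$ from Proposition~\ref{prop: Upomega L}, together with standard head/socle exchange relations for $R$-matrices, forces $L^\ii_s\conv L^\ii_t$ to be simple. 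Consequently $\de(L^\ii_s,L^\ii_t)=0$ and $\La(L^\ii_s,L^\ii_t)=-\La(L^\ii_t,L^\ii_s)$, and the sign is pinned down by matching with the quasi-commutation scalar $q^{\La^\ii_{s,t}}$ computed above, yielding $-\La(L^\ii_s,L^\ii_t) = \La^\ii_{s,t}$.

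The main obstacle is the simplicity (equivalently, strong commutativity) of $L^\ii_s\conv L^\ii_t$: passing from a $q$-commutation in the Grothendieck ring to an actual isomorphism of modules requires more than just bookkeeping, because a priori $[M][N]=q^a[N][M]$ only gives $M\conv N$ and $N\conv M$ the same composition factors with matching grading shifts. I would address this by exploiting the specific determinantal structure: the pair $(\varpi_{i_s},\varpi_{i_t})$ together with $w^\ii_s \preceq w^\ii_t$ puts us in the regime where the classical ``T-system/compatibility'' for determinantal modules over $R$ applies, and one can produce an explicit nonzero composition $\Rr_{L^\ii_s,L^\ii_t}\circ\Rr_{L^\ii_t,L^\ii_s}$ that is a nonzero scalar, forcing $\La(L^\ii_s,L^\ii_t)+\La(L^\ii_t,L^\ii_s)=0$ and hence $\de=0$. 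Once this is in hand, the second assertion, that $\calL^\ii=\{L^\ii_k\}$ is a commuting family, follows from Proposition~\ref{prop: simple head and socle}~\eqref{it: de 0 simple}.
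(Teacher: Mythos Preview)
The paper does not supply its own proof of this proposition; it is quoted from \cite[Theorem~4.12]{KKOP18}. So there is no in-paper argument to compare against, and I can only assess your proposal on its own terms.

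Your strategy is to read off the exponent $\La^\ii_{s,t}$ from the quantum torus $\calT(\La^\ii)$, transport it to $\calK_\bbA(R\gmod)$ via $\Upomega^{-1}\circ\varphi_\ii$ (you wrote $\Upomega\circ\kappa_\ii$, which does not typecheck), and then identify it with $-\La(L^\ii_s,L^\ii_t)$. That last identification requires $L^\ii_s\conv L^\ii_t$ to be simple: a relation $[M][N]=q^a[N][M]$ in the Grothendieck ring does \emph{not} by itself pin down $\La(M,N)$, since $M\conv N$ and $N\conv M$ can share a composition series without being isomorphic. You acknowledge this is ``the main obstacle'', but neither of the two arguments you offer is a proof. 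The ``short induction on $|w^\ii_s|+|w^\ii_t|$ using the cuspidal decomposition'' is not carried out, and it is not clear what inductive hypothesis would let you pass from simplicity of smaller convolutions to simplicity of $L^\ii_s\conv L^\ii_t$. The second argument, producing ``an explicit nonzero composition $\Rr_{L^\ii_s,L^\ii_t}\circ\Rr_{L^\ii_t,L^\ii_s}$ that is a nonzero scalar'', is literally the assertion $\de(L^\ii_s,L^\ii_t)=0$; invoking it here is circular.

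Note also the direction signalled by the word ``Hence'' in the statement: the intended logic is to first establish the closed formula for $\La(L^\ii_s,L^\ii_t)$ directly (this is what the cited reference does, working with the determinantal and cuspidal structure and the additivity of $\La$ along heads of convolutions), and then deduce that $\calL^\ii$ is a commuting family from $\de=0$ via Proposition~\ref{prop: simple head and socle}\,\eqref{it: de 0 simple}. Your approach inverts this and therefore must supply an independent proof of strong commutativity, which is precisely the part you have left as a sketch.
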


\section{Quantum positivity} \label{sec: positivity}
In this section, we give a partial proof of the quantum positivity conjecture (Conjecture~\ref{conj: quatum positive}~\eqref{it: positive}) for non-simply-laced type. Thus throughout this section, we assume that $\g$ is of non-simply-laced type.

\medskip

Recall the following:
\begin{eqnarray} &&
\parbox{95ex}{
\bna
\item For any reduced sequence $\ii$ of $w_\circ$, there exists an $\bbA$-algebra isomorphism 
$$
\varphi_\ii : \calA_\ii \longrightarrow \calA_\bbA(\n)
$$
sending $Z_k$ to $\tD(w^\ii_k\varpi_{i_k},\varpi_{i_k})$. Moreover, this isomorphism 
sends the seed $(\{ Z_k \}_{1 \le  k\le \ell}, \La^\ii,\tB^\ii )$ of $\calA_\ii$ to 
the seed $(\{ \tD(w^\ii_k\varpi_{i_k},\varpi_{i_k}) \}_{1 \le  k\le \ell}, \La^\ii,\tB^\ii )$ of $\calA_{\bbA}(\n)$.  
\item  There exists an $\bbA$-algebra monomorphism $(\cdot)_{\le \xi}: \frakK_{q,Q} \to \calX_{q,\le \xi}$
for each Dynkin quiver $Q=(\Dynkin,\xi)$. Thus the image $(\frakK_{q,Q})_{\le \xi}$ is isomorphic to $\frakK_{q,Q}$
as $\bbA$-algebras. Moreover  $(F_q(m^{(i)}[p,\xi_{i}])_{\le \xi}= \underline{m^{(i)}[p,\xi_{i}]}$ for $(i,p) \in \Gamma^Q_0$.  
\item For a reduced sequence $\ii$ of $w_\circ$ adapted to a Dynkin quiver $Q=(\Dynkin,\xi)$, there exists an $\bbA$-algebra isomorphism 
$$
\kappa_\ii : \calA_\ii \longrightarrow \frakK_{q,Q}
$$
sending 
the seed $(\{ Z_k \}_{1 \le  k\le \ell}, \La^\ii,\tB^\ii )$ of $\calA_\ii$ to 
the one $(\{ F_q([p_k,\xi_{i_k}] \}_{1 \le  k\le \ell}, \La^\ii,\tB^\ii )$ of $\frakK_{q,Q}$, where $\tii(k)=(i_k,p_k)$.  
\item For a reduced sequence $\ii$ of $w_\circ$ adapted to a Dynkin quiver $Q=(\Dynkin,\xi)$, Proposition~\ref{prop: Upomega L} tells us that 
$$
\Upomega(L^\ii_k) \cong \tD(w^\ii_k\varpi_{i_k},\varpi_{i_k}) \quad \text{ for all } 1 \le k \le \ell. 
$$
\ee
}\label{eq: recall}
\end{eqnarray}

Thus, for  a reduced sequence $\ii$ of $w_\circ$ adapted to a Dynkin quiver $Q=(\Dynkin,\xi)$, we have the following commutative diagram
of isomorphisms
\begin{align} \label{eq: isos}
 \raisebox{2em}{\xymatrix@R=5ex@C=12ex{   &\calK_\bbA(R\gmod) \ar[dl]_{\Xi_{Q,\le \xi}} \ar[d]_{\Xi_Q} \ar[r]^{\Upomega} & \calA_\bbA(\n) \ar[dl]_{\Psi_Q}  \\
 (\frakK_{q,Q})_{\le \xi} &  \frakK_{q,Q} \ar[l]^{(\cdot)_{\le \xi}} & \calA_{ii} \ar[l]^{\kappa_\ii} \ar[u]^{\varphi_\ii}
}  } 
\end{align}
such that
$$
\Xi_Q \seteq \Psi_Q \circ \Upomega: \calK_\bbA(R\gmod) \isoto \frakK_{q,Q} \quad \text{ and }  \quad 
\Xi_{Q,\le \xi} \seteq (\cdot)_{\le \xi} \circ \Xi_Q:  \calK_\bbA(R\gmod) \isoto (\frakK_{q,Q})_{\le \xi}.  
$$
Moreover, the isomorphism $\Xi_{Q,\le \xi}$ sends 
$ L(w^\ii_k\varpi_k,\varpi_k)$ for $1\le k \le \ell$ to $q^{a/2}\underline{m^{(i)}[p,\xi_i]}$ for some $a \in \Z$, where  $\tii(k)=(i,p) \in \Gamma^Q_0$.
\medskip

\begin{theorem} \label{thm: quantum positive}
Let $Q=(\Dynkin,\xi)$ be a Dynkin quiver. 
For an element $\sfX_{\le \xi} \in (\frakK_{q,Q})_{\le \xi}$, assume that there exists a module $X \in R\gmod$ and $\Xi_{Q,\le \xi}([X])\cong \sfX_{\le \xi}$. Write
$$ \sfX_{\le \xi}= \sum_{m' \in \calM^Q} a[m;m'] \um' \quad \text{ for } a[m:m'] \in \Z[q^{\pm 1/2}].
$$
Then $\sfX_{\le \xi}$ is quantum positive; that is
$$
a[m;m'] \in \N_0[q^{\pm 1/2}]. 
$$
\end{theorem}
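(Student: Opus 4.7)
The plan is to realize the initial cluster of the truncated heart subring $(\frakK_{q,Q})_{\le\xi}$ as the image of a Laurent family of determinantal modules in $R\gmod$, and then to transport the positive Laurent expansion guaranteed by Theorem~\ref{thm: Laurent} back to the expansion in Laurent monomials of $\calX_{q,\le\xi}$.

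First, fix a reduced sequence $\ii=(i_1,\ldots,i_\ell)$ of $w_\circ$ adapted to $Q$, and set $\tii(k)=(i_k,p_k)\in \Gamma^Q_0$. Consider the family
\[ \calL^\ii \seteq \bigl\{L^\ii_k = L(w^\ii_k\varpi_{i_k},\varpi_{i_k}) \bigr\}_{1 \le k \le \ell} \]
of determinantal modules from Proposition~\ref{prop: det}. By Proposition~\ref{prop: Upomega L} and Proposition~\ref{prop: La det}, each $L^\ii_k$ is an affreal self-dual simple module satisfying $\Upomega([L^\ii_k]) \cong \tD(w^\ii_k\varpi_{i_k},\varpi_{i_k})$ and $-\La(L^\ii_s,L^\ii_t)=\La^\ii_{s,t}$ for $s<t$. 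These identifications mean precisely that $\calL^\ii$ is a commuting family of affreal simple modules whose monoidal datum matches the initial seed $(\{\tD(w^\ii_k\varpi_{i_k},\varpi_{i_k})\}_k,\La^\ii,\tB^\ii)$ of $\calA_\bbA(\n)$. Hence, in the categorification $\calK_\bbA(R\gmod)\simeq\calA_\bbA(\n)$, the family $\calL^\ii$ is a monoidal cluster in the sense of Section~\ref{sec: quiver Hecke and Laurent}, and under $\Xi_Q=\Psi_Q\circ\Upomega$ it is sent, up to $q^{1/2}$-power shifts, onto the set of KR-polynomials $\{F_q(m^{(i_k)}[p_k,\xi_{i_k}])\}$, which is the initial cluster of the heart subring $\frakK_{q,Q}$.

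Invoking Theorem~\ref{thm: Laurent}, together with the primality of each cluster variable upon specializing $q=1$, shows that $\calL^\ii$ is in fact a Laurent family. Accordingly, for any $X\in R\gmod$ the class $[X]$ admits an expansion
\[ [X] \;=\; \sum_{\bsn \in \Z^\ell} c_\bsn\, q^{\alpha(\bsn)/2}\,[L^\ii_1]^{n_1} * \cdots * [L^\ii_\ell]^{n_\ell},\qquad c_\bsn\in \N_0[q^{\pm 1/2}], \]
with suitable half-integer shifts $\alpha(\bsn)$. Applying $\Xi_{Q,\le\xi}$ termwise, and using Theorem~\ref{thm: HLO iso truncation}~\eqref{it: image of KR under truncation} combined with~\eqref{eq: KR-imgae under truncation} to identify $\Xi_{Q,\le\xi}([L^\ii_k])$ with $q^{a_k/2}\underline{m^{(i_k)}[p_k,\xi_{i_k}]}$ for some $a_k \in \Z$, we arrive at
\[ \sfX_{\le\xi} \;\cong\; \sum_{\bsn \in \Z^\ell} c_\bsn\, q^{\beta(\bsn)/2} \prod^{\to}_{1 \le k \le \ell}\underline{m^{(i_k)}[p_k,\xi_{i_k}]}^{n_k}. \]
Since each $\underline{m^{(i_k)}[p_k,\xi_{i_k}]}^{\pm 1}$ is a Laurent monomial in $\{\tX_{j,s}\}_{(j,s)\in \lxi\hDynkin_0}$ with coefficient~$1$, every summand on the right is a single Laurent monomial $\underline{m'}$ multiplied by an element of $\N_0[q^{\pm 1/2}]$. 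Because the Laurent monomials form an $\bbA$-basis of $\calX_{q,\le\xi}$, the coefficient $a[m;m']$ of each $\underline{m'}$ in $\sfX_{\le\xi}$ is a non-negative sum of such elements of $\bbA$, and therefore lies in $\N_0[q^{\pm 1/2}]$.

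The principal difficulty is the verification that $\calL^\ii$ fits all the hypotheses of Theorem~\ref{thm: Laurent}: this requires a careful identification of the compatible pair $(\La^\ii,\tB^\ii)$ with the one coming from the $R$-matrix invariants, the correct tracking of $q^{1/2}$-shifts throughout the chain $\calK_\bbA(R\gmod) \xrightarrow{\Upomega} \calA_\bbA(\n) \xrightarrow{\Psi_Q} \frakK_{q,Q}$, and the primality of the cluster variables at $q=1$. Once these are secured, the positivity assertion follows formally from the Laurent family structure combined with the monomial nature of the KR-polynomials under the truncation map.
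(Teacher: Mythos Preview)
Your proposal is correct and follows essentially the same route as the paper: identify $\calL^\ii$ as a monoidal cluster in $R\gmod$, apply Theorem~\ref{thm: Laurent} to obtain a positive Laurent expansion of $[X]$ in the $[L^\ii_k]$, and then push this through $\Xi_{Q,\le\xi}$ using that each $L^\ii_k$ is sent to a single commutative monomial $\underline{m^{(i_k)}[p_k,\xi_{i_k}]}$ under truncation. One small point: you do not need primality or the full Laurent-family conclusion, since Theorem~\ref{thm: Laurent} already yields the positive Laurent expansion from the quasi-Laurent property alone; the paper's proof uses only this weaker consequence, so your ``principal difficulty'' concerning primality at $q=1$ is in fact not a difficulty at all.
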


\begin{proof} 
By Proposition~\ref{prop: La det} and~\eqref{eq: recall},
$$
\text{ the commuting family } \calL^\ii \text{ is a monoidal cluster of $R\gmod$}. 
$$
Hence Theorem~\ref{thm: Laurent} tells us that
$$
[X] = \sum_{\bsa \in \Z^{\ell}} \alpha(X;\bsa) \prod^{\to}_{r \in [1,\ell]} [L_r^{\ii}]^{\bsa_r}
\quad \text{for some $\alpha(X;\bsa) \in \N_0[q^{\pm1}]$.}
$$

Now let us take a reduced sequence $\ii$ of $w_\circ$ adapted to the Dynkin quiver $Q$. Then, for $1 \le k \le \ell$ with $\tii(k)=(i_k,p_k)$, we have 
$\Xi_{Q,\le \xi}(L^{\ii}_k) \cong  \underline{m^{(i)}[p_k,\xi_{i_k}]} \in \calM^Q_+$. 
Hence 
$$
\sfX_{\le \xi} \cong \Xi_{Q,\le \xi}([X]) = \sum_{\bsa \in \Z^{\ell}} \alpha'(X;\bsa) \prod^{\to}_{r \in [1,\ell]} \underline{m^{(i)}[p_r,\xi_{i_r}]}^{\bsa_r} \quad
\text{ where $ \alpha'(X;\bsa) \cong  \alpha'(X;\bsa)$.}$$
Since $\displaystyle\prod^{\to}_{r \in [1,\ell]} \underline{m^{(i)}[p_r,\xi_{i_r}]}^{\bsa_r} \in \calM_Q$ for any $\bsa \in \Z^\ell$, our assertion follows. 
\end{proof}

The following corollary provides a partial proof of Conjecture~\ref{conj: quatum positive}~\eqref{it: positive}. 

\begin{corollary} \label{cor: pos cor}
Let $m$ be a KR-monomial contained in $\calM^Q_+$ for some Dynkin quiver $Q=(\Dynkin,\xi)$. Then 
$(F_q(m))_{\le \xi}$ is quantum positive. In particular, $F_q(X_{i,p})$ is quantum positive for every $(i,p) \in \hDynkin_0$, and hence, so are the elements in the standard basis $\sfE_q$.
\end{corollary}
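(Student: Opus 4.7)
My plan is to derive all three assertions of the corollary from Theorem~\ref{thm: quantum positive} and Theorem~\ref{thm: KR-poly}. For the first claim on truncated KR-polynomials, I will invoke Theorem~\ref{thm: auto}\eqref{it: i-box}: for a KR-monomial $m \in \calM_Q^+$, we have $F_q(m) = \Psi_Q(\tD(w^\ii_u \varpi_{i_u}, w^\ii_t \varpi_{i_u}))$ for some pair $(t, u)$, and by Proposition~\ref{prop: Upomega L} this is $\Xi_Q([L^\ii[t+1, u]])$ for the simple determinantal $R$-module $L^\ii[t+1, u]$. The commutative diagram~\eqref{eq: isos} then yields $(F_q(m))_{\le\xi} = \Xi_{Q,\le\xi}([L^\ii[t+1, u]])$, and Theorem~\ref{thm: quantum positive} applies directly.

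The ``in particular'' clause on fundamental polynomials is the genuinely new step, since the truncation $(F_q(X_{i,p}))_{\le\xi}$ necessarily loses the anti-dominant monomial $\underline{X_{i^*, p+\sfh}}$ from $F_q(X_{i,p})$: indeed $(i^*, p+\sfh) \in \lxi\hDynkin_0$ would force $\xi_{i^*} \ge p+\sfh$, contradicting $(i,p) \in \Gamma_0^Q$. My strategy is to choose $Q = (\Dynkin, \xi)$ so that the truncation captures \emph{everything else}. Given $(i,p) \in \hDynkin_0$, I will orient every edge of $\Dynkin$ toward $i^*$ (so $i^*$ becomes the unique sink) and set $\xi_{i^*} = p + \sfh - 2$, forcing $\xi_j = p + \sfh - 2 + d(j, i^*)$ with all parities consistent. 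Elementary inequalities confirm $(i,p) \in \Gamma_0^Q$, while the parity identity $d(j, i^*) \equiv \ep_j - \ep_{i^*} \pmod 2$ delivers $\xi_j \ge u$ for every $(j, u) \in \hDynkin_0$ with $p < u < p + \sfh$. Combined with the sharp strict bound from the second sentence of Theorem~\ref{thm: KR-poly}\eqref{it: range of KR}---that every intermediate monomial of $F_q(X_{i,p})$ involves only factors $\tX_{j,u}^{\pm 1}$ with $p < u < p + \sfh$ strictly---the truncation retains the dominant monomial together with every intermediate monomial, discarding only $\underline{X_{i^*, p+\sfh}}$. The same part~\eqref{it: range of KR} forces the coefficient of $\underline{X_{i^*, p+\sfh}}$ in $F_q(X_{i,p})$ to be exactly~$1$, since otherwise the difference $F_q(\underline{\tX_{i,p}}) - \underline{\tX_{i,p}} - \underline{\tX_{i^*, p+\sfh}}$ would contain a monomial supported at $u \in \{p, p+\sfh\}$, violating the strict range. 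Hence
\[ F_q(X_{i,p}) = (F_q(X_{i,p}))_{\le\xi} + \underline{X_{i^*, p+\sfh}}, \]
and quantum positivity follows from the first paragraph.

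For the standard basis assertion, multiplicativity suffices: by definition $E_q(m)$ is a $q^{b}$-scaled ordered product of fundamentals $F_q(X_{j,u})$, and in the quantum torus $\calX_q$ the product of Laurent polynomials with coefficients in $\N_0[q^{\pm 1/2}]$ is again such, since reordering factors introduces only powers of $q$. The main obstacle in this plan is the combinatorial check in the second paragraph---verifying that the single recipe ``orient all edges toward $i^*$, set $\xi_{i^*} = p + \sfh - 2$'' simultaneously delivers $(i,p) \in \Gamma_0^Q$ and $\xi_j \ge u$ for every intermediate factor $(j, u)$. This ultimately boils down to the parity identity above together with the strict bound for fundamentals in Theorem~\ref{thm: KR-poly}\eqref{it: range of KR}, both of which are already recorded in the paper.
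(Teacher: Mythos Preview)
Your proof is correct and follows essentially the same route as the paper. For the first assertion you invoke exactly the same chain (Theorem~\ref{thm: auto}\eqref{it: i-box} plus Proposition~\ref{prop: Upomega L} and Theorem~\ref{thm: quantum positive}); for the third you use the same multiplicativity argument.

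For the second assertion the underlying idea is again the paper's: pick $Q$ so that truncation of $F_q(X_{i,p})$ discards only the anti-dominant monomial, then add it back. The paper exploits the standing non-simply-laced hypothesis ($i^*=i$), fixes any $Q$, applies the first assertion at the ``bottom'' vertex $(i,\xi_i-\sfh+2)\in\Gamma_0^Q$, and then invokes the shift $\sfT_r$ to reach every $(i,p)$. You instead tailor $Q$ directly to the given $(i,p)$ by making $i^*$ the unique sink with $\xi_{i^*}=p+\sfh-2$, and carry out the parity bookkeeping explicitly to verify that every intermediate factor $(j,u)$ with $p<u<p+\sfh$ satisfies $u\le\xi_j$. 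This is a genuine (minor) improvement in exposition: the equality $(F_q(X_{i,\xi_i-\sfh+2}))_{\le\xi}=F_q(X_{i,\xi_i-\sfh+2})-\underline{X_{i,\xi_i+2}^{-1}}$ actually fails for a general $Q$ (e.g.\ in $B_2$ with $\xi_2=\xi_1-1$ one loses extra intermediate monomials), so the paper's argument implicitly needs the same ``$i$ is a sink'' choice that you make explicit. Your version also avoids invoking $\sfT_r$ and does not use $i^*=i$, so it goes through verbatim in the simply-laced case as well.
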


\begin{proof}
The first assertion is a direct consequence of Theorem~\ref{thm: auto}~\eqref{it: i-box}, Proposition~\ref{prop: det},~\eqref{eq: recall}
and Theorem~\ref{thm: quantum positive}.
Let us focus on the second assertion which gives
a complete proof of Conjecture~\ref{conj: quatum positive}~\eqref{it: positive} for $m=X_{i,p}$. Recall that 
$i^*=i$ for all $i \in I$ because $\g$ is of non-simply-laced type. Hence 
$$
\Gamma^Q_0 = \{ (i,p) \in \hDynkin_0 \ |  \ \xi_{i} -\sfh < p \le \xi_i \}. 
$$
By the first assertion, 
$(F_q(X_{i,\xi_{i}-\sfh+2} ))_{\le \xi}$ is quantum positive as an element in $\N_0[q^{\pm 1/2}][\tX_{i,p}^{\pm 1}]_{(i,p)\in \Gamma^Q_0}$.
On the other hand, Theorem~\ref{thm: KR-poly}~\eqref{it: range of KR} implies that
$$(F_q(X_{i,\xi_{i}-\sfh+2} ))_{\le \xi} = F_q(X_{i,\xi_{i}-\sfh+2}) - \underline{X_{i,\xi_i+2}^{-1}}.$$
Hence our assertion follows from the automorphism $\sfT_r$ in Remark~\ref{rem: Tr}. 
\end{proof}

\begin{example} \label{ex: quantum positive} 
\bna
\item Let $\g$ be of type $B_4$. Then we have
\begin{align*}
F^{B_4}_q(X_{1,0}) & =  q \tX_{1,0} + q \tX_{2,1}*\tX_{1,2}^{-1} + q \tX_{3,2}*\tX_{2,3}^{-1} + q^{2} \tX_{4,3}^{2}*\tX_{3,4}^{-1} + (q^{-1} + q) \tX_{4,3}*\tX_{4,5}^{-1} \\
           &  \hspace{25ex} + q^{2} \tX_{3,4}*\tX_{4,5}^{-2} + q \tX_{2,5}*\tX_{3,6}^{-1} + q \tX_{1,6}*\tX_{2,7}^{-1} + q^{-1} \tX_{1,8}^{-1},    
\end{align*}
 where $q \tX_{1,0} = \underline{X_{1,0}}$ and $q^{-1} \tX_{1,8}^{-1} = \underline{X_{1,8}^{-1}}$. 
\item Let $\g$ be of type $B_3$. Take $Q = (\Dynkin_{B_3},\xi)$ with $\xi_1=\xi_3=0$ and $\xi_2=-1$, and a simple module $X= L(2) \hconv L(1)$ in $R^{B_3}\gmod$, where
$L(1)$ and $L(2)$ are $1$-dimensional cuspidal module with their weights $\al_1$ and $\al_2$, respectively. Note that the image $\sfX$ of $X$ under $\Xi_{Q}$ is neither a fundamental polynomial nor a KR-polynomial,
while $\Xi_{Q}(L(1) \hconv L(2)) \simeq F^{B_3}_q(X_{1,-4})$. 
Then one can compute that
\begin{align*}
& \Xi_{Q,\le \xi}([X]) =\sfX_{\le \xi}  = q \tX_{2,-5}*\tX_{1,0} + q^{4} \tX_{1,-4}*\tX_{3,-4}^{2}*\tX_{2,-3}^{-1}*\tX_{1,0} + q^{4} \tX_{3,-4}^{2}*\tX_{1,-2}^{-1}*\tX_{1,0}  \\
& \qquad + (q + q^{3}) \tX_{1,-4}*\tX_{3,-4}*\tX_{3,-2}^{-1}*\tX_{1,0} + q^{4} \tX_{1,-4}*\tX_{2,-3}*\tX_{3,-2}^{-2}*\tX_{1,0}  \\
& \qquad + (q^{3} + q^{5}) \tX_{3,-4}*\tX_{2,-3}*\tX_{1,-2}^{-1}*\tX_{3,-2}^{-1}*\tX_{1,0} + q^{8} \tX_{2,-3}^{2}*\tX_{1,-2}^{-1}*\tX_{3,-2}^{-2}*\tX_{1,0}  \\ 
& \qquad + q^{3}\tX_{1,-4}*\tX_{1,-2}*\tX_{2,-1}^{-1}*\tX_{1,0} + (q^{2} + q^{4}) \tX_{3,-4}*\tX_{3,-2}*\tX_{2,-1}^{-1}*\tX_{1,0}  \\
& \qquad + (q + q^{5}) \tX_{2,-3}*\tX_{2,-1}^{-1}*\tX_{1,0} + q^{8} \tX_{1,-2}*\tX_{3,-2}^{2}*\tX_{2,-1}^{-2}*\tX_{1,0} + (q + q^{3}) \tX_{3,-4}*\tX_{1,0}*\tX_{3,0}^{-1}  \\
& \qquad + (q^{2} + q^{4}) \tX_{2,-3}*\tX_{3,-2}^{-1}*\tX_{1,0}*\tX_{3,0}^{-1} + (q^{3} + q^{5}) \tX_{1,-2}*\tX_{3,-2}*\tX_{2,-1}^{-1}*\tX_{1,0}*\tX_{3,0}^{-1} \\
& \qquad + q^{4} \tX_{1,-2}*\tX_{1,0}*\tX_{3,0}^{-2},       
\end{align*}
all of whose coefficients are contained in $\Z_{\ge0}[q^{\pm 1/2}]$ and $q \tX_{2,-5}*\tX_{1,0} = \underline{X_{2,-5}X_{1,1}}$.
\ee
\end{example}

\bigskip

\appendix

\section{Examples of substitution formula} \label{sec: substitution formula}
In this appendix, we provide several examples of substitution formulas for $B_2$. 
We consider the situation when $q=1$ for simplicity. Its quantum analogue $\tPsi_{\frakQ,\frakQ'}$ can be calculated in a similar manner.

\medskip
Consider the height functions given by
\begin{align*}
\xi' &: \{ 1,2 \} \to \Z, \quad 1 \mapsto 0, \quad 2 \mapsto -1, \\
\xi &: \{ 1,2 \} \to \Z, \quad 1 \mapsto 0, \quad 2 \mapsto 1.
\end{align*}
Then the following infinite sequences $\ii' = (i'_u)_{u \in \N}, \ii = (i_u)_{u \in \N}$ are of the form in ~\eqref{eq: hii}:
\begin{align*}
    &\ii' =( \hspace{-2ex}\underset{\text{\scalebox{0.8}{reduced word for $w_{\circ}$}}}{\underbrace{  {\color{red}1, 2, 1,2,}}} \hspace{-2ex} \underbrace{{\color{red}1,2,1,2}}, \underbrace{{\color{red}1,2,1,2}},\dots),  \\
    &\ii=(\hspace{-2ex} \underset{\text{\scalebox{0.8}{reduced word for $w_{\circ}$}}}{\underbrace{{\color{red}2,1, 2, 1,}}} \hspace{-2ex} \underbrace{{\color{red}2,1,2,1}}, \underbrace{{\color{red}2,1,2,1,}},\dots).
\end{align*}
Then $\tGamma^\ii$ is the following quiver:
\[
\raisebox{3mm}{
\scalebox{0.9}{\xymatrix@!C=1mm@R=3mm{
(\im\setminus p) &\cdots&-12&-11&-10&-9& -8 & -7 & -6 &-5&-4 &-3& -2 &-1& 0 & 1\\
1&\cdots& \circled{14} \ar[dr]|{\ulcorner 2}&&\circled{12} \ar[ll]\ar[dr]|{\ulcorner 2}&&\circled{10} \ar[ll]\ar[dr]|{\ulcorner 2}&& \circled{8} \ar[ll]\ar[dr]|{\ulcorner 2} &&\circled{6} \ar[ll]\ar[dr]|{\ulcorner 2}
&&\circled{4} \ar[ll]\ar[dr]|{\ulcorner 2} && \circled{2} \ar[ll]\ar[dr]|{\ulcorner 2} &\\
2&\cdots&&\circled{13} \ar@{-}[l]\ar@{=>}[ur]&&\circled{11} \ar[ll]\ar@{=>}[ur]&&\circled{9} \ar[ll]\ar@{=>}[ur]&&\circled{7} \ar[ll]\ar@{=>}[ur] &&\circled{5} \ar[ll]\ar@{=>}[ur]
&& \ar[ll]\circled{3} \ar@{=>}[ur]&& \circled{1}\ar[ll] 
}}}
\]
Note that $\ii$ can be obtained from $\ii'$, and vise versa, by applying the braid moves in the red parts of $\ii$ and $\ii'$ above. Hence, by applying the mutations at the following vertices
\begin{equation}
(2, 1), (1,0), (2, 1), (2,-3), (1,-4), (2,-3), (2, -7),(1,-8),(2,-7), \ldots  \label{eq:B2toB2}
\end{equation}
from left to right to $\tGamma^{\ii}$, we obtain the quiver $\tGamma^{\ii'}$.
\[
\raisebox{3mm}{
\scalebox{0.9}{\xymatrix@!C=1mm@R=3mm{
(\im\setminus p) &\cdots&-12&-11&-10&-9& -8 & -7 & -6 &-5&-4 &-3& -2 &-1& 0  \\
1&\cdots&\circled{14} \ar[dr]|{\ulcorner 2}&&\circled{12} \ar[ll]\ar[dr]|{\ulcorner 2}&&\circled{10} \ar[ll]\ar[dr]|{\ulcorner 2}&& \circled{8} \ar[ll]\ar[dr]|{\ulcorner 2} &&\circled{6} \ar[ll]\ar[dr]|{\ulcorner 2}
&&\circled{4} \ar[ll]\ar[dr]|{\ulcorner 2} && \circled{2} \ar[ll]  &\\
2&\cdots&&\circled{11} \ar@{-}[l]\ar@{=>}[ur]&&\circled{9} \ar[ll]\ar@{=>}[ur]&&\circled{7} \ar[ll]\ar@{=>}[ur]&& \circled{5} \ar[ll]\ar@{=>}[ur] &&\circled{3} \ar[ll]\ar@{=>}[ur]
&& \ar[ll] \circled{1} \ar@{=>}[ur]  
}}}
\]
Here the correspondence of labelling of vertices is given by
\begin{equation} \label{eq:vertexcorrep2}
\begin{aligned}
   (1, -4m) & \mapsto   (1, -4m), \hspace{8.8ex}
    (1, -2-4m)\mapsto   (1, -2-4m),\\
    (2, 1-4m)&\mapsto  (2,-1-4m), \qquad
    (2, -1-4m)\mapsto   (2,-3-4m).
\end{aligned}
\end{equation}
Let us write the initial cluster variable for $\ii$ as $\{ Z_{i,p} \seteq Z^\xi_{i,p} \}$ and the one for $\ii'$ as $\{Z'_{i,p} \seteq Z^{\xi'}_{i,p} \}$. Similarly, we write
the generators of $\calX_{q,\le \xi}$ as $\{ X_{i,p} \}$
and the generators of $\calX_{q,\le \xi'}$ as $\{ X'_{i,p} \}$.

Using Keller's mutation applet, the isomorphism $\widehat{\bstau}^*(Z'_{(i, p)})$ is computed as follows:
\begin{align*}
Z'_{1,-4m} &\mapsto  \left( Z_{2,-4m+3}^2 Z_{1,-4m}^2 + 2 Z_{2,-4m+3} Z_{1,-4m+2} Z_{1,-4m} Z_{2,-4m-1} \right.  \\
     & \hspace{5ex} \left. + Z_{1,-4m+2}^2 Z_{2,-4m-1}^2 + Z_{1,-4m+2} Z_{2,-4m+1}^2 Z_{1,-4m-2}\right)  /  ( Z_{2,-4m+1}^2 Z_{1,-4m}),  \allowdisplaybreaks \\
Z'_{2,-4m-1} & \mapsto \dfrac{Z_{2,-4m+3} Z_{1,-4m} Z_{2,-4m-1} + Z_{1,-4m+2} Z_{2,-4m-1}^2 + Z_{2,-4m+1}^2 Z_{1,-4m-2}}{Z_{2,-4m+1} Z_{1,-4m}},   \allowdisplaybreaks\\
Z'_{1,-4m-2} & \mapsto Z_{1,-4m-2}, \qquad Z'_{2,-4m-3} \mapsto Z_{2,-4m-1}.
\end{align*}
Hence
\begin{align*}
&\tPsi_{\xi', \xi}(m^{(i)}[p, \xi'_{i}]) \\ &  = \begin{cases}
m^{(1)}[-4m-2,0] & \text{ if } (i,p) = (1, -4m-2), \\
m^{(2)}[-4m-1,1] & \text{ if } (i,p) = (2, -4m-3), \\
 \left(X_{2,-4m+1}^{-2}X_{1,-4m}  + 2 X_{2,-4m-1}X_{2,-4m+1}^{-1} \right. \\
\hspace{12ex} \left. + X_{1,-4m}^{-1}X_{2,-4m-1}^2   + X_{1,-4m-2}\right) m^{(1)}[-4m+2,0]          & \text{ if } (i,p) = (1, -4m), \\
(X_{2,-4m+1}^{-1} +    X_{1,-4m}^{-1}X_{2,-4m-1}  + X_{1,-4m-2}X_{2,-4m-1}^{-1})m^{(2)}[-4m-1,1]                                       & \text{ if } (i,p) = (2, -4m-1).
\end{cases}
\end{align*}
This calculation implies that, for $(i, p)\in {}^{\xi'[-2]}\hDynkin_0$,
\begin{align*}
&\tPsi_{\xi', \xi}(X'_{i, p})=\tPsi_{\xi', \xi}(m^{(i)}[p, \xi'_{i}])\tPsi_{\xi', \xi}(m^{(i)}[p+2, \xi'_{i}])^{-1} \\
& = \begin{cases}
\left( X_{1,-4m-2}^{-1} X_{2,-4m+1}^{-2} + 2 X_{2,-4m-1} X_{2,-4m+1}^{-1} X_{1,-4m}^{-1} X_{1,-4m-2}^{-1} \right.  \\
   \hspace{20ex} \left. + X_{1,-4m}^{-2}X_{1,-4m-2}^{-1} X_{2,-4m-1}^2 + X_{1,-4m}^{-1}\right)^{-1}  & \text{ if } (i,p) = (1,-4m-2), \\
 \left(X_{1,-4m}X_{2,-4m+1}^{-2}+2 X_{2,-4m-1}X_{2,-4m+1}^{-1} +X_{1,-4m}^{-1}X_{2,-4m-1}^2   + X_{1,-4m-2}\right)      & \text{ if } (i,p) = (1,-4m), \\
(X_{2,-4m+1}^{-1} +    X_{1,-4m}^{-1}X_{2,-4m-1}  + X_{1,-4m-2}X_{2,-4m-1}^{-1})^{-1}   & \text{ if } (i,p) = (2,-4m-3), \\
(X_{2,-4m-1}  +    X_{1,-4m}^{-1}X_{2,-4m-1}^2 X_{2,-4m+1} + X_{1,-4m-2}X_{2,-4m+1})   & \text{ if } (i,p) = (2,-4m-1),
\end{cases}
\end{align*}
which shows the substitution formula representing $T_2$ on $\mathfrak{K}(B_2)$. Then we can observe the following:
\begin{align*}
 &\tPsi_{\frakQ,\frakQ}|_{q=1}(X_{1, -4m-2}) \tPsi_{\frakQ,\frakQ}|_{q=1}(X_{1, -4m}) = X_{1, -4m}X_{1, -4m-2}, \\
 &\tPsi_{\frakQ,\frakQ}|_{q=1}(X_{2, -4m-3}) \tPsi_{\frakQ,\frakQ}|_{q=1}(X_{2, -4m-1}) = X_{2, -4m+1} X_{2, -4m-1}.
\end{align*}

\noindent 
(I) Note that we have
\begin{align} \label{eq: B2 to B2 itself}
L_{q=1}(X_{2,-7}) = X_{2,-7} + X_{1,-6} X_{2,-5}^{-1} + X_{2, -5} X_{1,-4}^{-1} +   X_{2,-3}^{-1}
\end{align}
(see \cite{JLO1} for more details). Here, we set $L_{q=1}(m) \seteq \ev_{q=1}(L_q(m))$.  We set
\begin{enumerate}
\item $\reced{1} = X_{2,-3}^{-1}+X_{1,-4}^{-1}X_{2,-5}+ X_{1,-6}X_{2,-5}^{-1} \leftrightarrow (X_{2,-7}^{-1})'$,
\item $\reced{2} = X_{2,-5}+ X_{1,-4}^{-1}X_{2,-3}X_{2,-5}^{2}+ X_{1,-6}X_{2,-3} \leftrightarrow (X_{2,-5})'$,
\item $\reced{3} = X_{1,-6}^{-1}X_{2,-3}^{-2}+2 X_{1,-4}^{-1}X_{1,-6}^{-1}X_{2,-3}^{-1}X_{2,-5} + X_{1,-4}^{-2}X_{1,-6}^{-1}X_{2,-5}^{2}+ X_{1,-4}^{-1} \leftrightarrow (X_{1,-6}^{-1})'$,
\item $\reced{4} = X_{1,-4}X_{2,-3}^{-2}+ 2X_{2,-3}^{-1}X_{2,-5} + X_{1,-4}^{-1}X_{2,-5}^{2} + X_{1,-6} \leftrightarrow (X_{1,-4})'$,
\item $\reced{5} = X_{2,1}^{-1}+X_{1,0}^{-1}X_{2,-1}+ X_{1,-2}X_{2,-1}^{-1} \leftrightarrow (X_{2,-3}^{-1})'$.
\end{enumerate}
Note that 
$$
\reced{1} \times X_{2,-3}X_{2,-5} =  \reced{2}  \quad \text{ and } \quad \reced{3} \times X_{1,-4}X_{1,-6} =  \reced{4}.
$$
Then the first three terms of ~\eqref{eq: B2 to B2 itself} become
\begin{align} \label{eq: B2 to B2 itself 2}
\dfrac{ \reced{3} \times X_{2,-3}X_{2,-5} + 1 + \reced{2}^2 \times   X_{1,-4}^{-1}X_{1,-6}^{-1}   }{\reced{2}  \times  \reced{3}}.
\end{align}

Let us compute each term in numerator: 
\begin{align*}
& {\rm (i)} \ \ \reced{3} \times X_{2,-3}X_{2,-5}  = (X_{1,-6}^{-1}X_{2,-3}^{-2}+2 X_{1,-4}^{-1}X_{1,-6}^{-1}X_{2,-3}^{-1}X_{2,-5} + X_{1,-4}^{-2}X_{1,-6}^{-1}X_{2,-5}^{2}+ X_{1,-4}^{-1} )  \times X_{2,-3}X_{2,-5}    \\
& \ \  =  X_{1,-6}^{-1}X_{2,-3}^{-1}X_{2,-5}  +2 X_{1,-4}^{-1}X_{1,-6}^{-1} X_{2,-5}^2 + X_{1,-4}^{-2}X_{1,-6}^{-1}X_{2,-3}X_{2,-5}^{3}+ X_{1,-4}^{-1}X_{2,-3}X_{2,-5}, \\
& {\rm (ii)} \ \  1, \\
& {\rm (iii)} \ \  \reced{2}^2 \times   X_{1,-4}^{-1}X_{1,-6}^{-1}  =   (X_{2,-5}+ X_{1,-4}^{-1}X_{2,-3}X_{2,-5}^{2}+ X_{1,-6}X_{2,-3} )^2 \times  X_{1,-4}^{-1}X_{1,-6}^{-1}  \\
&  \ \ = \left(  X_{2,-5}^2+ X_{1,-4}^{-2}X_{2,-3}^2X_{2,-5}^{4}+ X_{1,-6}^2X_{2,-3}^2 \right. \\
& \hspace{10ex} \left. + 2X_{1,-4}^{-1}X_{2,-3}X_{2,-5}^{3} +   2X_{1,-6}X_{2,-3} X_{2,-5}  +  2 X_{1,-4}^{-1}X_{1,-6}X_{2,-3}^2X_{2,-5}^{2} \right)   \times  X_{1,-4}^{-1}X_{1,-6}^{-1}  \\
&  \ \  =  X_{1,-4}^{-1}X_{1,-6}^{-1} X_{2,-5}^2+ X_{1,-4}^{-3}X_{1,-6}^{-1}X_{2,-3}^2X_{2,-5}^{4}+ X_{1,-4}^{-1}X_{1,-6}^{1} X_{2,-3}^2 \\
& \hspace{20ex}  + 2X_{1,-4}^{-2}X_{1,-6}^{-1}X_{2,-3}X_{2,-5}^{3} +   2 X_{1,-4}^{-1}X_{2,-3} X_{2,-5}  +  2 X_{1,-4}^{-2}X_{2,-3}^2X_{2,-5}^{2}.
\end{align*}
On the other hand, the denominator is
\begin{align*}
& \reced{2}  \times  \reced{3} = (X_{2,-5}+ X_{1,-4}^{-1}X_{2,-3}X_{2,-5}^{2}+ X_{1,-6}X_{2,-3} ) \\
& \hspace{15ex} \times (X_{1,-6}^{-1}X_{2,-3}^{-2}+2 X_{1,-4}^{-1}X_{1,-6}^{-1}X_{2,-3}^{-1}X_{2,-5} + X_{1,-4}^{-2}X_{1,-6}^{-1}X_{2,-5}^{2}+ X_{1,-4}^{-1}) \\
& = X_{1,-6}^{-1}X_{2,-3}^{-2}X_{2,-5}+2 X_{1,-4}^{-1}X_{1,-6}^{-1}X_{2,-3}^{-1}X_{2,-5}^2 + X_{1,-4}^{-2}X_{1,-6}^{-1}X_{2,-5}^{3}+ X_{1,-4}^{-1}X_{2,-5} \\ 
&  \hspace{3ex} + X_{1,-4}^{-1}X_{1,-6}^{-1}X_{2,-3}^{-1}X_{2,-5}^{2} +2 X_{1,-4}^{-2}X_{1,-6}^{-1}X_{2,-5}^{3}+ X_{1,-4}^{-3}X_{1,-6}^{-1}X_{2,-3} X_{2,-5}^{4}+  X_{1,-4}^{-2}X_{2,-3}X_{2,-5}^{2}\\ 
&  \hspace{6ex} +    X_{2,-3}^{-1}+2 X_{1,-4}^{-1}X_{2,-5} + X_{1,-4}^{-2}X_{2,-3}X_{2,-5}^{2}+ X_{1,-4}^{-1}X_{1,-6}X_{2,-3}. 
\end{align*}
Thus~\eqref{eq: B2 to B2 itself 2} becomes
$$\dfrac{ \reced{3} \times X_{2,-3}X_{2,-5} + 1 + \reced{2}^2 \times   X_{1,-4}^{-1}X_{1,-6}^{-1}   }{\reced{2}  \times  \reced{3}} =X_{2,-3}.$$  Consequently, \eqref{eq: B2 to B2 itself} is the same as
\begin{align} \label{eq: amazing}
X_{2,1}^{-1}+X_{1,0}^{-1}X_{2,-1}+ X_{1,-2}X_{2,-1}^{-1}  + X_{2,-3} = L_{q=1}(X_{2,-3}) =\frakD_q|_{q=1}(L_{q=1}(X_{2,-7})).
\end{align}

\noindent 
(II)  Now let us observe
\begin{align} \label{eq: B2 to B2 itself 1,4}
L_{q=1}(X_{1,-4}) =  X_{1,-4} + X_{2,-3}^{2}X_{1,-2}^{-1} + 2 X_{2,-3}X_{2, -1}^{-1} +   X_{1,-2}X_{2,-1}^{-2} +   X_{1,0}^{-1}.
\end{align}

We set
\begin{enumerate}
\item $\reced{1} = X_{2,-1}+ X_{1,0}^{-1}X_{2,1}X_{2,-1}^{2}+ X_{1,-2}X_{2,1} \leftrightarrow (X_{2,-1})'$,
\item $\reced{2} = X_{1,-4}X_{2,-3}^{-2}+ 2X_{2,-3}^{-1}X_{2,-5} + X_{1,-4}^{-1}X_{2,-5}^{2} + X_{1,-6} \leftrightarrow (X_{1,-4})'$,
\item $\reced{3} = X_{1,-2}^{-1}X_{2,1}^{-2}+2 X_{1,0}^{-1}X_{1,-2}^{-1}X_{2,1}^{-1}X_{2,-1} + X_{1,0}^{-2}X_{1,-2}^{-1}X_{2,-1}^{2}+ X_{1,0}^{-1} \leftrightarrow (X_{1,-2}^{-1})'$,
\item $\reced{4} = X_{2,1}^{-1}+X_{1,0}^{-1}X_{2,-1}+ X_{1,-2}X_{2,-1}^{-1} \leftrightarrow (X_{2,-3}^{-1})'$,
\item $\reced{5} = X_{1,0}X_{2,1}^{-2}+ 2X_{2,1}^{-1}X_{2,-1} + X_{1,0}^{-1}X_{2,-1}^{2} + X_{1,-2} \leftrightarrow (X_{1,0})'$.
\end{enumerate}
Note that
\begin{align*}
\reced{3} \times X_{1,-2} X_{1,0} =\reced{5}  \quad \text{ and } \quad \reced{4} \times X_{2,-1} X_{2,1} =\reced{1}.
\end{align*}
 
Thus~\eqref{eq: B2 to B2 itself 1,4} becomes
\begin{align}
& \reced{2} + \dfrac{\reced{3} \times  X_{2,-1}^2 X_{2,1}^2}{\reced{1}^2} + 2 \times \dfrac{X_{2,-1} X_{2,1}}{ \reced{1}^2} + \dfrac{X_{1,-2} X_{1,0}}{\reced{1}^2 \times \reced{5} } + \dfrac{1}{ \reced{5}}  \nonumber \\
& = \reced{2} + \dfrac{\reced{3} \times  X_{2,-1}^2 X_{2,1}^2  \times \reced{5} + 2  \times \reced{5} \times X_{2,-1} X_{2,1}+ X_{1,-2} X_{1,0} +\reced{1}^2  }{\reced{1}^2 \times \reced{5}}.  \label{eq: the last four terms}
\end{align}
After a miraculous reduction of the fraction, the second term in~\eqref{eq: the last four terms} become $X_{1,-2}^{-1}$  as follows: Here are the four terms in
the numerator  
\begin{align*}
& {\rm (i)} \ \  2  \times \reced{5} \times X_{2,-1} X_{2,1}   = 2X_{1,0}X_{2,-1}X_{2,1}^{-1} + 4X_{2,-1}^2   + 2X_{1,0}^{-1}X_{2,-1}^{3}  X_{2,1} + 2X_{1,-2} X_{2,-1} X_{2,1},   \\
& {\rm (ii)}  \ \ \reced{1}^2 = X_{2,-1}^2+ X_{1,0}^{-2}X_{2,1}^2X_{2,-1}^{4}+ X_{1,-2}^2X_{2,1}^2   \\ 
& \hspace{10ex} + 2  X_{1,0}^{-1}X_{2,1}X_{2,-1}^{3} + 2  X_{1,-2} X_{2,-1}X_{2,1}  + 2 X_{1,0}^{-1}X_{1,-2} X_{2,1}^2X_{2,-1}^{2},  \\
& {\rm (iii)}  \ \ \reced{3} \times  X_{2,-1}^2 X_{2,1}^2  \times \reced{5}  = ( X_{1,-2}^{-1}X_{2,-1}^2  +2 X_{1,0}^{-1}X_{1,-2}^{-1}X_{2,1}^3X_{2,-1}     + X_{1,0}^{-2}X_{1,-2}^{-1}X_{2,1}^2 X_{2,-1}^{4} + X_{1,0}^{-1}X_{2,-1}^2 X_{2,1}^2 ) \\
& \hspace{25ex} \times  (X_{1,0}X_{2,1}^{-2}+ 2X_{2,1}^{-1}X_{2,-1} + X_{1,0}^{-1}X_{2,-1}^{2} + X_{1,-2}) \\
& =  X_{1,-2}^{-1}X_{1,0}X_{2,1}^{-2}X_{2,-1}^2  +2  X_{1,-2}^{-1}X_{2,1}X_{2,-1}   + X_{1,0}^{-1} X_{1,-2}^{-1}  X_{2,-1}^{4} +   X_{2,-1}^2   \\
& \hspace{3ex}  + 2X_{2,1}^{-1} X_{1,-2}^{-1}X_{2,-1}^3  + 4   X_{1,0}^{-1}X_{1,-2}^{-1}X_{2,1}^2X_{2,-1}^2      + 2  X_{1,0}^{-2}X_{1,-2}^{-1}X_{2,1}  X_{2,-1}^{5} + 2  X_{1,0}^{-1}X_{2,-1}^3 X_{2,1}  \\
& \hspace{6ex}  + X_{1,-2}^{-1} X_{1,0}^{-1}X_{2,-1}^{4}  +2 X_{1,0}^{-2}X_{2,-1}^{3}X_{1,-2}^{-1}X_{2,1}^3      + X_{1,0}^{-3}X_{2,-1}^{6}X_{1,-2}^{-1}X_{2,1}^2   + X_{1,0}^{-2}X_{2,-1}^{4} X_{2,1}^2 \\
& \hspace{9ex}  +  X_{2,-1}^2  +2  X_{1,0}^{-1} X_{2,1}^3X_{2,-1}     + X_{1,0}^{-2} X_{2,1}^2 X_{2,-1}^{4} + X_{1,0}^{-1}X_{1,-2}X_{2,-1}^2 X_{2,1}^2, \\
& {\rm (iv)}  \ \  X_{1,-2} X_{1,0},
\end{align*}
while the denominator is 
\begin{align*} 
& \reced{1}^2  \times \reced{5} =   \\
& \hspace{2ex}  X_{1,0}X_{2,1}^{-2}X_{2,-1}^2+ X_{1,0}^{-1}  X_{2,-1}^{4}+ X_{1,-2}^2 X_{1,0}    + 2   X_{2,1}^{-1} X_{2,-1}^{3} + 2  X_{1,-2} X_{1,0}X_{2,1}^{-1} X_{2,-1}
+ 2   X_{1,-2}  X_{2,-1}^{2} \\ 
& \hspace{4ex} + 2X_{2,1}^{-1} X_{2,-1}^3+ 2  X_{1,0}^{-2}X_{2,1}^3 X_{2,-1}^{3}+ 2 X_{2,-1}X_{1,-2}^2X_{2,1}     +  4   X_{1,0}^{-1} X_{2,1}^{2}X_{2,-1}^{2} + 4    X_{1,-2} X_{2,-1}^2 
\\
& \hspace{6ex} + 4  X_{1,0}^{-1}X_{1,-2} X_{2,1} X_{2,-1}^{3}  +   X_{1,0}^{-1}X_{2,-1}^{4}+ X_{1,0}^{-3} X_{2,1}^2X_{2,-1}^{6}+ X_{1,-2}^2 X_{1,0}^{-1}X_{2,-1}^{2}X_{2,1}^2  \\
&  \hspace{8ex}  + 2  X_{1,0}^{-2} X_{2,1}X_{2,-1}^{5} + 2  X_{1,-2}  X_{1,0}^{-1}X_{2,-1}^{3}X_{2,1}  + 2 X_{1,0}^{-2} X_{1,-2} X_{2,1}^2X_{2,-1}^{4} + X_{1,-2}X_{2,-1}^2 \\
& \hspace{10ex} + X_{1,-2}X_{1,0}^{-2}X_{2,1}^2X_{2,-1}^{4}+ X_{1,-2}^3X_{2,1}^2    + 2  X_{1,-2}X_{1,0}^{-1}X_{2,1}X_{2,-1}^{3} +    2  X_{1,-2}^2 X_{2,-1}X_{2,1} \\
& \hspace{12ex} + 2 X_{1,-2}^2X_{1,0}^{-1}  X_{2,1}^2X_{2,-1}^{2}.
\end{align*} 
Thus~\eqref{eq: B2 to B2 itself 1,4} is transformed into
$$
X_{1,-4}X_{2,-3}^{-2}+ 2X_{2,-3}^{-1}X_{2,-5} + X_{1,-4}^{-1}X_{2,-5}^{2} + X_{1,-6} + X_{1,-2}^{-1} = L_{q=1}(X_{1,-6}).
$$

\section{$G_2$-equivalence} \label{sec: G2 equi}
The sequences of mutations that give the equivalence in~\eqref{eq: the G2 mutation} are listed below:
\begin{align*}
& \mu_{k}^* \mu_{k+1}^* \mu_{k+2}^* \mu_{k}^* \mu_{k+3}^* \mu_{k+1}^* \mu_{k}^* \mu_{k+2}^* \mu_{k+3}^* \mu_{k}^*,
& \mu_{k}^* \mu_{k+1}^* \mu_{k+2}^* \mu_{k}^* \mu_{k+3}^* \mu_{k+1}^* \mu_{k}^* \mu_{k+3}^* \mu_{k+2}^* \mu_{k}^*, \allowdisplaybreaks\\
& \mu_{k}^* \mu_{k+1}^* \mu_{k+2}^* \mu_{k+3}^* \mu_{k}^* \mu_{k+1}^* \mu_{k}^* \mu_{k+2}^* \mu_{k+3}^* \mu_{k}^*,
& \mu_{k}^* \mu_{k+1}^* \mu_{k+2}^* \mu_{k+3}^* \mu_{k}^* \mu_{k+1}^* \mu_{k}^* \mu_{k+3}^* \mu_{k+2}^* \mu_{k}^*,\allowdisplaybreaks\\
& \mu_{k}^* \mu_{k+2}^* \mu_{k+1}^* \mu_{k}^* \mu_{k+3}^* \mu_{k+1}^* \mu_{k}^* \mu_{k+2}^* \mu_{k+3}^* \mu_{k}^*,
& \mu_{k}^* \mu_{k+2}^* \mu_{k+1}^* \mu_{k}^* \mu_{k+3}^* \mu_{k+1}^* \mu_{k}^* \mu_{k+3}^* \mu_{k+2}^* \mu_{k}^*,\allowdisplaybreaks\\
& \mu_{k}^* \mu_{k+2}^* \mu_{k+1}^* \mu_{k+3}^* \mu_{k}^* \mu_{k+1}^* \mu_{k}^* \mu_{k+2}^* \mu_{k+3}^* \mu_{k}^*,
&\mu_{k}^* \mu_{k+2}^* \mu_{k+1}^* \mu_{k+3}^* \mu_{k}^* \mu_{k+1}^* \mu_{k}^* \mu_{k+3}^* \mu_{k+2}^* \mu_{k}^*,\allowdisplaybreaks\\
& \mu_{k+1}^* \mu_{k+2}^* \mu_{k+3}^* \mu_{k+1}^* \mu_{k}^* \mu_{k+1}^* \mu_{k+2}^* \mu_{k}^* \mu_{k+3}^* \mu_{k+1}^*,
& \mu_{k+1}^* \mu_{k+2}^* \mu_{k+3}^* \mu_{k+1}^* \mu_{k}^* \mu_{k+1}^* \mu_{k+2}^* \mu_{k+3}^* \mu_{k}^* \mu_{k+1}^*,\allowdisplaybreaks\\
& \mu_{k+1}^* \mu_{k+2}^* \mu_{k+3}^* \mu_{k+1}^* \mu_{k}^* \mu_{k+2}^* \mu_{k+1}^* \mu_{k}^* \mu_{k+3}^* \mu_{k+1}^*,
& \mu_{k+1}^* \mu_{k+2}^* \mu_{k+3}^* \mu_{k+1}^* \mu_{k}^* \mu_{k+2}^* \mu_{k+1}^* \mu_{k+3}^* \mu_{k}^* \mu_{k+1}^*,\allowdisplaybreaks\\
& \mu_{k+1}^* \mu_{k+3}^* \mu_{k+2}^* \mu_{k+1}^* \mu_{k}^* \mu_{k+1}^* \mu_{k+2}^* \mu_{k}^* \mu_{k+3}^* \mu_{k+1}^*,
& \mu_{k+1}^* \mu_{k+3}^* \mu_{k+2}^* \mu_{k+1}^* \mu_{k}^* \mu_{k+1}^* \mu_{k+2}^* \mu_{k+3}^* \mu_{k}^* \mu_{k+1}^*,\allowdisplaybreaks\\
& \mu_{k+1}^* \mu_{k+3}^* \mu_{k+2}^* \mu_{k+1}^* \mu_{k}^* \mu_{k+2}^* \mu_{k+1}^* \mu_{k}^* \mu_{k+3}^* \mu_{k+1}^*,
& \mu_{k+1}^* \mu_{k+3}^* \mu_{k+2}^* \mu_{k+1}^* \mu_{k}^* \mu_{k+2}^* \mu_{k+1}^* \mu_{k+3}^* \mu_{k}^* \mu_{k+1}^*,\allowdisplaybreaks\\
& \mu_{k+2}^* \mu_{k}^* \mu_{k+1}^* \mu_{k+2}^* \mu_{k+3}^* \mu_{k+1}^* \mu_{k+2}^* \mu_{k}^* \mu_{k+3}^* \mu_{k+2}^*,
& \mu_{k+2}^* \mu_{k}^* \mu_{k+1}^* \mu_{k+2}^* \mu_{k+3}^* \mu_{k+1}^* \mu_{k+2}^* \mu_{k+3}^* \mu_{k}^* \mu_{k+2}^*,\allowdisplaybreaks\\
& \mu_{k+2}^* \mu_{k}^* \mu_{k+1}^* \mu_{k+2}^* \mu_{k+3}^* \mu_{k+2}^* \mu_{k+1}^* \mu_{k}^* \mu_{k+3}^* \mu_{k+2}^*,
& \mu_{k+2}^* \mu_{k}^* \mu_{k+1}^* \mu_{k+2}^* \mu_{k+3}^* \mu_{k+2}^* \mu_{k+1}^* \mu_{k+3}^* \mu_{k}^* \mu_{k+2}^*,\allowdisplaybreaks\\
& \mu_{k+2}^* \mu_{k+1}^* \mu_{k}^* \mu_{k+2}^* \mu_{k+3}^* \mu_{k+1}^* \mu_{k+2}^* \mu_{k}^* \mu_{k+3}^* \mu_{k+2}^*
& \mu_{k+2}^* \mu_{k+1}^* \mu_{k}^* \mu_{k+2}^* \mu_{k+3}^* \mu_{k+1}^* \mu_{k+2}^* \mu_{k+3}^* \mu_{k}^* \mu_{k+2}^*,\allowdisplaybreaks\\
& \mu_{k+2}^* \mu_{k+1}^* \mu_{k}^* \mu_{k+2}^* \mu_{k+3}^* \mu_{k+2}^* \mu_{k+1}^* \mu_{k}^* \mu_{k+3}^* \mu_{k+2}^*,
& \mu_{k+2}^* \mu_{k+1}^* \mu_{k}^* \mu_{k+2}^* \mu_{k+3}^* \mu_{k+2}^* \mu_{k+1}^* \mu_{k+3}^* \mu_{k}^* \mu_{k+2}^*,\allowdisplaybreaks\\
& \mu_{k+3}^* \mu_{k+1}^* \mu_{k+2}^* \mu_{k}^* \mu_{k+3}^* \mu_{k+2}^* \mu_{k+3}^* \mu_{k}^* \mu_{k+1}^* \mu_{k+3}^*,
& \mu_{k+3}^* \mu_{k+1}^* \mu_{k+2}^* \mu_{k}^* \mu_{k+3}^* \mu_{k+2}^* \mu_{k+3}^* \mu_{k+1}^* \mu_{k}^* \mu_{k+3}^*,\allowdisplaybreaks\\
& \mu_{k+3}^* \mu_{k+1}^* \mu_{k+2}^* \mu_{k+3}^* \mu_{k}^* \mu_{k+2}^* \mu_{k+3}^* \mu_{k}^* \mu_{k+1}^* \mu_{k+3}^*,
& \mu_{k+3}^* \mu_{k+1}^* \mu_{k+2}^* \mu_{k+3}^* \mu_{k}^* \mu_{k+2}^* \mu_{k+3}^* \mu_{k+1}^* \mu_{k}^* \mu_{k+3}^*,\allowdisplaybreaks\\
& \mu_{k+3}^* \mu_{k+2}^* \mu_{k+1}^* \mu_{k}^* \mu_{k+3}^* \mu_{k+2}^* \mu_{k+3}^* \mu_{k}^* \mu_{k+1}^* \mu_{k+3}^*,
& \mu_{k+3}^* \mu_{k+2}^* \mu_{k+1}^* \mu_{k}^* \mu_{k+3}^* \mu_{k+2}^* \mu_{k+3}^* \mu_{k+1}^* \mu_{k}^* \mu_{k+3}^*,\allowdisplaybreaks\\
& \mu_{k+3}^* \mu_{k+2}^* \mu_{k+1}^* \mu_{k+3}^* \mu_{k}^* \mu_{k+2}^* \mu_{k+3}^* \mu_{k}^* \mu_{k+1}^* \mu_{k+3}^*,
& \mu_{k+3}^* \mu_{k+2}^* \mu_{k+1}^* \mu_{k+3}^* \mu_{k}^* \mu_{k+2}^* \mu_{k+3}^* \mu_{k+1}^* \mu_{k}^* \mu_{k+3}^*.
\end{align*}

\providecommand{\bysame}{\leavevmode\hbox to3em{\hrulefill}\thinspace}
\providecommand{\MR}{\relax\ifhmode\unskip\space\fi MR }
\providecommand{\MRhref}[2]{%
  \href{http://www.ams.org/mathscinet-getitem?mr=#1}{#2}
}
\providecommand{\href}[2]{#2}


\begin{thebibliography}{10}

\bibitem{BZ97}
Arkady Berenstein and Andrei Zelevinsky, \emph{Total positivity in schubert
  varieties}, Commentarii Mathematici Helvetici \textbf{72} (1997), no.~1,
  128--166.

\bibitem{BZ05}
\bysame, \emph{Quantum cluster algebras}, Advances in Mathematics \textbf{195}
  (2005), no.~2, 405--455. \MR{2146350}

\bibitem{B21}
L\'{e}a Bittmann, \emph{A quantum cluster algebra approach to representations
  of simply laced quantum affine algebras}, Mathematische Zeitschrift
  \textbf{298} (2021), no.~3-4, 1449--1485.

\bibitem{Dav18}
Ben Davison, \emph{Positivity for quantum cluster algebras}, Annals of
  Mathematics \textbf{187} (2018), no.~1, 157--219.

\bibitem{DWZ10}
Harm Derksen, Jerzy Weyman, and Andrei Zelevinsky, \emph{Quivers with
  potentials and their representations {II}: applications to cluster algebras},
  Journal of the American Mathematical Society \textbf{23} (2010), no.~3,
  749--790. \MR{2629987}

\bibitem{FZ02}
Sergey Fomin and Andrei Zelevinsky, \emph{Cluster algebras. {I}.
  {F}oundations}, Journal of the American Mathematical Society \textbf{15}
  (2002), no.~2, 497--529. \MR{1887642}

\bibitem{FZ07}
\bysame, \emph{Cluster algebras. {IV}. {C}oefficients}, Compositio Mathematica
  \textbf{143} (2007), no.~1, 112--164. \MR{2295199}

\bibitem{FM01}
Edward Frenkel and Evgeny Mukhin, \emph{Combinatorics of {$q$}-characters of
  finite-dimensional representations of quantum affine algebras},
  Communications in Mathematical Physics \textbf{216} (2001), no.~1, 23--57.
  \MR{1810773}

\bibitem{FR99}
Edward Frenkel and Nicolai Reshetikhin, \emph{The {$q$}-characters of
  representations of quantum affine algebras and deformations of
  {$\mathscr{W}$}-algebras}, Recent developments in quantum affine algebras and
  related topics ({R}aleigh, {NC}, 1998), Contemp. Math., vol. 248, Amer. Math.
  Soc., Providence, RI, 1999, pp.~163--205. \MR{1745260}

\bibitem{FHOO}
Ryo Fujita, David Hernandez, Se-jin Oh, and Hironori Oya, \emph{Isomorphisms
  among quantum {G}rothendieck rings and propagation of positivity}, Journal
  f\"{u}r die Reine und Angewandte Mathematik. (Crelle's Journal) \textbf{785}
  (2022), 117--185. \MR{4402493}

\bibitem{FHOO2}
\bysame, \emph{Isomorphisms among quantum {G}rothendieck rings and cluster
  algebras}, Preprint, \arxiv{2304.02562}, 2023.

\bibitem{FO21}
Ryo Fujita and Se-jin Oh, \emph{Q-data and representation theory of untwisted
  quantum affine algebras}, Communications in Mathematical Physics \textbf{384}
  (2021), no.~2, 1351--1407. \MR{4259388}

\bibitem{GLS13}
Christof Gei\ss, Bernard Leclerc, and Jan Schr\"{o}er, \emph{Cluster structures
  on quantum coordinate rings}, Selecta Mathematica. New Series \textbf{19}
  (2013), no.~2, 337--397. \MR{3090232}

\bibitem{GLS13A}
\bysame, \emph{Factorial cluster algebras}, Documenta Mathematica \textbf{18}
  (2013), 249--274.

\bibitem{GY17}
Ken Goodearl and Millen Yakimov, \emph{Quantum cluster algebra structures on
  quantum nilpotent algebras}, Memoirs of the American Mathematical Society
  \textbf{247} (2017), no.~1169, vii+119. \MR{3633289}

\bibitem{GY20}
\bysame, \emph{Integral quantum cluster structures}, Duke Mathematical Journal
  \textbf{170} (2021), no.~6, 1137--1200.

\bibitem{GHKK}
Mark Gross, Paul Hacking, Sean Keel, and Maxim Kontsevich, \emph{Canonical
  bases for cluster algebras}, J. Amer. Math. Soc. \textbf{31} (2018), no.~2,
  497--608. \MR{3758151}

\bibitem{Her04}
David Hernandez, \emph{Algebraic approach to {$q,t$}-characters}, Advances in
  Mathematics \textbf{187} (2004), no.~1, 1--52. \MR{2074171}

\bibitem{Her05}
\bysame, \emph{Monomials of {$q$} and {$q, t$}-characters for non simply-laced
  quantum affinizations}, Mathematische Zeitschrift \textbf{250} (2005), no.~2,
  443--473. \MR{2178794}

\bibitem{Her06}
\bysame, \emph{The {K}irillov-{R}eshetikhin conjecture and solutions of
  {T}-systems}, Journal f\"ur die Reine und Angewandte Mathematik. (Crelle's
  Journal) \textbf{596} (2006), 63--87. \MR{2254805 (2007j:17020)}

\bibitem{HL10}
David Hernandez and Bernard Leclerc, \emph{Cluster algebras and quantum affine
  algebras}, Duke Mathematical Journal \textbf{154} (2010), no.~2, 265--341.
  \MR{2682185}

\bibitem{HL15}
\bysame, \emph{Quantum {G}rothendieck rings and derived {H}all algebras},
  Journal f\"{u}r die Reine und Angewandte Mathematik. (Crelle's Journal)
  \textbf{701} (2015), 77--126. \MR{3331727}

\bibitem{HL16}
\bysame, \emph{A cluster algebra approach to {$q$}-characters of
  {K}irillov--{R}eshetikhin modules}, Journal of the European Mathematical
  Society \textbf{18} (2016), no.~5, 1113--1159. \MR{3500832}

\bibitem{HO19}
David Hernandez and Hironori Oya, \emph{Quantum {G}rothendieck ring
  isomorphisms, cluster algebras and {K}azhdan--{L}usztig algorithm}, Advances
  in Mathematics \textbf{347} (2019), 192--272. \MR{3916871}

\bibitem{JLO2}
Il-Seung Jang, Kyu-Hwan Lee, and Se-jin Oh, \emph{{B}raid group action on
  quantum virtual {G}rothendieck ring through constructing presentations},
  (2023), Preprint, \arxiv{2305.19471}v2.

\bibitem{JLO1}
\bysame, \emph{Quantization of virtual grothendieck rings and their structure
  including quantum cluster algebras}, Communications in Mathematical Physics
  \textbf{405} (2024), no.~7, 173.

\bibitem{KKK2}
Seok-Jin Kang, Masaki Kashiwara, and Myungho Kim, \emph{Symmetric quiver
  {H}ecke algebras and {$R$}-matrices of quantum affine algebras, {II}}, Duke
  Mathematical Journal \textbf{164} (2015), no.~8, 1549--1602.

\bibitem{KKKO15}
Seok-Jin Kang, Masaki Kashiwara, Myungho Kim, and Se-jin Oh, \emph{Simplicity
  of heads and socles of tensor products}, Compositio Mathematica \textbf{151}
  (2015), no.~2, 377--396.

\bibitem{KKKO18}
\bysame, \emph{Monoidal categorification of cluster algebras}, Journal of the
  American Mathematical Society \textbf{31} (2018), no.~2, 349--426.
  \MR{3758148}

\bibitem{K91}
Masaki Kashiwara, \emph{On crystal bases of the q-analogue of universal
  enveloping algebras}, Duke Mathematical Journal \textbf{63} (1991), no.~2,
  465--516.

\bibitem{KK19}
Masaki Kashiwara and Myungho Kim, \emph{{L}aurent phenomenon and simple modules
  of quiver {H}ecke algebras}, Compositio Mathematica \textbf{155} (2019),
  no.~12, 2263--2295. \MR{4016058}

\bibitem{KKOP18}
Masaki Kashiwara, Myungho Kim, Se-jin Oh, and Euiyong Park, \emph{Monoidal
  categories associated with strata of flag manifolds}, Advances in Mathematics
  \textbf{328} (2018), 959--1009.

\bibitem{KKOP20}
\bysame, \emph{Monoidal categorification and quantum affine algebras},
  Compositio Mathematica \textbf{156} (2020), no.~5, 1039--1077.

\bibitem{KKOP21A}
\bysame, \emph{{B}raid group action on the module category of quantum affine
  algebras}, Proceedings of the Japan Academy, Series A, Mathematical Sciences
  \textbf{97} (2021), no.~3, 13--18.

\bibitem{KKOP21}
\bysame, \emph{{L}ocalizations for quiver {H}ecke algebras}, Pure and Applied
  Mathematics Quarterly \textbf{17} (2021), no.~4, 1465--1548.

\bibitem{KKOP23}
\bysame, \emph{Laurent family of simple modules over quiver {H}ecke algebras},
  Compositio Mathematica \textbf{160} (2024), no.~8, 1916--1940.

\bibitem{KKOP2}
\bysame, \emph{Monoidal categorification and quantum affine algebras {II}},
  Inventiones mathematicae \textbf{236} (2024), no.~2, 837--924.

\bibitem{KO19}
Masaki Kashiwara and Se-jin Oh, \emph{Categorical relations between {L}anglands
  dual quantum affine algebras: doubly laced types}, Journal of Algebraic
  Combinatorics \textbf{49} (2019), 401--435.

\bibitem{KO22}
\bysame, \emph{The {$(q, t) $}-{C}artan matrix specialized at {$ q= 1$} and its
  applications}, Mathematische Zeitschrift \textbf{303} (2023), no.~42.

\bibitem{KP18}
Masaki Kashiwara and Euiyong Park, \emph{Affinizations and ${R}$-matrices for
  quiver {H}ecke algebras}, Journal of the European Mathematical Society
  \textbf{20} (2018), no.~5, 1161--1193.

\bibitem{KL1}
Mikhail Khovanov and Aaron~D. Lauda, \emph{A diagrammatic approach to
  categorification of quantum groups {$I$}}, Representation Theory of the
  American Mathematical Society \textbf{13} (2009), no.~14, 309--347.

\bibitem{KL2}
\bysame, \emph{A diagrammatic approach to categorification of quantum groups
  {$II$}}, Transactions of the American Mathematical Society (2011),
  2685--2700.

\bibitem{Kimura12}
Yoshiyuki Kimura, \emph{Quantum unipotent subgroup and dual canonical basis},
  Kyoto Journal of Mathematics \textbf{52} (2012), no.~2, 277--331.
  \MR{2914878}

\bibitem{L90}
George Lusztig, \emph{Canonical bases arising from quantized enveloping
  algebras}, {J}ournal of the {A}merican {M}athematical {S}ociety \textbf{3}
  (1990), no.~2, 447--498.

\bibitem{L91}
\bysame, \emph{Quivers, perverse sheaves, and quantized enveloping algebras},
  {J}ournal of the {A}merican {M}athematical {S}ociety \textbf{4} (1991),
  no.~2, 365--421.

\bibitem{LusztigBook}
\bysame, \emph{Introduction to quantum groups}, Progress in Mathematics, vol.
  110, Birkh\"{a}user Boston, Inc., Boston, MA, 1993. \MR{1227098}

\bibitem{Nak03}
Hiraku Nakajima, \emph{{$t$}-analogs of {$q$}-characters of
  {K}irillov-{R}eshetikhin modules of quantum affine algebras}, Representation
  Theory. An Electronic Journal of the American Mathematical Society \textbf{7}
  (2003), 259--274 (electronic). \MR{1993360}

\bibitem{Nak04}
\bysame, \emph{Quiver varieties and {$t$}-analogs of {$q$}-characters of
  quantum affine algebras}, Annals of Mathematics. Second Series \textbf{160}
  (2004), no.~3, 1057--1097. \MR{2144973}

\bibitem{OS19}
Se-jin Oh and Travis Scrimshaw, \emph{Categorical relations between {L}anglands
  dual quantum affine algebras: exceptional cases}, Communications in
  Mathematical Physics \textbf{368} (2019), 295--367.

\bibitem{Pla13}
Pierre-Guy Plamondon, \emph{Generic bases for cluster algebras from the cluster
  category}, International Mathematics Research Notices \textbf{2013} (2013),
  no.~10, 2368--2420.

\bibitem{Qin21}
Fan Qin, \emph{Dual canonical bases and quantum cluster algebras}, Preprint
  \arxiv{2003.13674} (2020).

\bibitem{R08}
Rapha{\"e}l Rouquier, \emph{2-{K}ac-{M}oody algebras},  (2008), Preprint,
  \arxiv{0812.5023}.

\bibitem{SST2018}
Ben Salisbury, Adam Schultze, and Peter Tingley, \emph{Combinatorial
  descriptions of the crystal structure on certain pbw bases}, Transformation
  Groups \textbf{23} (2018), 501--525.

\bibitem{Tran}
Thao Tran, \emph{{$F$}-polynomials in quantum cluster algebras}, Algebr.
  Represent. Theory \textbf{14} (2011), no.~6, 1025--1061. \MR{2844755}

\bibitem{VV02}
Michela Varagnolo and Eric Vasserot, \emph{Standard modules of quantum affine
  algebras}, Duke Mathematical Journal \textbf{111} (2002), no.~3, 509--533.

\bibitem{VV03}
\bysame, \emph{Perverse sheaves and quantum {G}rothendieck rings}, Studies in
  memory of {I}ssai {S}chur ({C}hevaleret/{R}ehovot, 2000), Progress in
  Mathematics, vol. 210, Birkh\"{a}user Boston, Boston, MA, 2003, pp.~345--365.
  \MR{1985732}

\end{thebibliography}
\end{document}